\documentclass[reqno]{amsart}
\usepackage{amsmath,amsthm,amssymb,amsfonts,color}
\usepackage{hyperref,comment}
\usepackage{todonotes}
\usepackage{esint}
\usepackage[latin1]{inputenc}

\usepackage[shortlabels]{enumitem}

\newcommand\bH{\mathbb{H}}

\newcommand\bR{\mathbb{R}}

\newcommand\bS{\mathbb{S}}

\newcommand\jb{\langle v \rangle}

\newcommand\cO{\mathcal{O}}

\newcommand\cL{\mathcal{L}}

\newcommand\cK{\mathcal{K}}

\newcommand\cQ{\mathcal{Q}}

\newcommand\sfA{\mathsf{A}}

\newcommand\sfK{\mathsf{K}}

\newcommand\sfL{\mathsf{L}}

\newcommand\sfG{\mathsf{\Gamma}}

 \theoremstyle{definition}

\newtheorem{theorem}{Theorem}[section]
\newtheorem{lemma}[theorem]{Lemma}
\newtheorem{corollary}[theorem]{Corollary}

\newtheorem{proposition}[theorem]{Proposition}

\newtheorem{definition}{Definition}[section]

\theoremstyle{remark}
\newtheorem{remark}[theorem]{Remark}

\newtheorem{assumption}[theorem]{Assumption}

\numberwithin{equation}{section}

\newcommand\sg{\mathsf{g}}

\begin{document}

\title[KFP  Equations with Specular Reflection Boundary Condition]{kinetic Fokker-Planck  and Landau Equations with Specular Reflection Boundary Condition}

\subjclass[2010]{35K70, 35H10, 35B45, 34A12}
\keywords{Kinetic Kolmogorov-Fokker-Planck equations,  specular boundary condition, Landau equation, $S_p$ estimates}

\begin{abstract}
We establish existence of finite energy weak solutions to the
 kinetic Fokker-Planck equation and the linearized Landau equation near Maxwellian, in the presence of
specular reflection boundary condition for general domains. Moreover,  by using a method of reflection and the $S_p$ estimate of \cite{DY_21a},
we prove regularity in the kinetic Sobolev spaces $S_p$ and anisotropic H\"older spaces
for such weak solutions. Such $S_p$ regularity leads to the uniqueness of weak solutions.
\end{abstract}

\author[Hongjie Dong]{Hongjie Dong}
\address[H. Dong]{Division of Applied Mathematics, Brown University, 182 George Street, Providence, RI 02912, USA}
\email{Hongjie\_Dong@brown.edu }
\thanks{H. Dong was partially supported by the Simons Foundation, grant no. 709545, a Simons fellowship, grant no. 007638, and the NSF under agreement DMS-2055244.}

\author[Yan Guo]{Yan Guo}
\address[Y. Guo]{Division of Applied Mathematics, Brown University, 182 George Street, Providence, RI 02912, USA}
\email{Yan\_Guo@brown.edu }
\thanks{Y. Guo's research is supported in part by NSF DMS-grant 2106650.}

\author[Timur Yastrzhembskiy]{Timur Yastrzhembskiy}
\address[T. Yastrzhembskiy]{Division of Applied Mathematics, Brown University, 182 George Street, Providence, RI 02912, USA}
\email{Timur\_Yastrzhembskiy@brown.edu}

\maketitle

\tableofcontents

\section{Introduction and Main results}
                            \label{section 1}
\subsection{Introduction}
We consider the following generalized kinetic Fokker-Planck equation in a bounded domain $\Omega$ with the specular
boundary condition:
\begin{equation}
			\label{eq1.0}
\begin{aligned}
\partial_{t}f+v\cdot\nabla_{x}f-\partial_{v_i} (a^{ij}(t,x,v)\partial_{v_j} f) &+ b (t, x, v) \cdot \nabla_v f
= \sg \\
&\quad f(t,x,v)    = f(t,x,R_{x}v),  (x, v) \in \gamma_{-}.
\end{aligned}
\end{equation}
Here
$$
    R_{x}v = v- 2 (n_{x}\cdot v)n_{x}
$$
is the specular reflected velocity,
 and $n_{x}$ is the outward unit normal vector at $x\in\partial\Omega$,
and
$$\gamma_{\pm} =  \{(x, v): x \in \partial \Omega,  \pm n_x \cdot v > 0 \}$$
 is the outgoing/incoming set.
In this paper, we will study two important cases of the generalized Fokker-Planck equations: the celebrated Kolmogorov-Fokker-Planck equation, which is given by Eq. \eqref{eq1.0} with $a^{i j}=\delta_{ij}$,
and the linearized Landau equation, which we introduce below.

A fundamental  model in plasma physics  is the nonlinear kinetic Landau equation with the specular reflection boundary condition  given by
\begin{equation}
\begin{aligned}
			\label{eq1.0.0}
    &	\partial_t F + v \cdot \nabla_x F  = \mathcal{Q} \,  [F, F] \, \, \text{in} \, \, (0, T) \times \Omega \times \bR^3, \\
     &F (0, x, v) = F_0 (x, v), (x, v) \in \Omega \times \bR^3, \, \,  F (t, x, v) = F (t, x, R_x v), (x, v) \in \gamma_{-}\notag,
\end{aligned}
\end{equation}
where
 $\mathcal{Q}$ is the Landau (Fokker-Planck) collision operator with Coulomb interaction defined as
\begin{align}
						\label{eq11.10}
   &	\mathcal{Q} \,  [F_1, F_2] (v) =  \nabla_v \cdot \int_{\bR^3} \Phi (v-v') [F_1 (v') \nabla_v F_2 (v) - F_2 (v) (\nabla_v F_1) (v')] \, dv',\\
&
	\Phi (v) = \bigg(I_3 - \frac{v}{|v|} \otimes  \frac{v}{|v|}\bigg) |v|^{-1}\notag.
\end{align}
Let $\mu (v) : = \pi^{-3/2} e^{-|v|^2}$ be the Maxwellian, which is a steady state of Eq. \eqref{eq1.0.0}. We rewrite the equation near the Maxwellian by replacing $\mathcal{Q} \,  [F, F]$ with $\mathcal{Q} \,  [\mu + \mu^{1/2} f, \mu + \mu^{1/2} f]$:
\begin{equation}
\begin{aligned}
					\label{11.1.0}
	&\partial_t f + v \cdot \nabla_x f + \sfL f -  \sfG [f, f] =  0 \, \, \text{in} \, \, (0, T) \times \Omega \times \bR^3, \\
	 &
	f (0, x, v) = f_0 (x, v), (x, v) \in \Omega \times \bR^3,  \, \,
		 f (t, x, v) = f (t, x, R_x v), \, (x, v) \in \gamma_{-}\notag,
\end{aligned}
\end{equation}
where
\begin{equation}
			\label{eq11.1.2}
\begin{aligned}
&\mathsf{L} = - \mathsf{A} - \mathsf{K}, \quad \sfA f = \mu^{-1/2} \cQ \,  [\mu, \mu^{1/2} f], \quad \sfK f =    \mu^{-1/2} \cQ \,  [\mu^{1/2} f, \mu], \\
& \mathsf{\Gamma} [g, f] =  \mu^{-1/2} \cQ \, [\mu^{1/2} g, \mu^{1/2} f].
\end{aligned}
\end{equation}
We consider the linear version of Eq. \eqref{11.1.0} given by
\begin{equation}
\begin{aligned}
					\label{11.1}
	&\partial_t f + v \cdot \nabla_x f + \sfL f -  \mathsf{\Gamma} [g, f] =  0 \, \, \text{in} \, \, (0, T) \times \Omega \times \bR^3, \\
	 &
	f (0, x, v) = f_0 (x, v), (x, v) \in \Omega \times \bR^3,  \, \,
		 f (t, x, v) = f (t, x, R_x v), \, (x, v) \in \gamma_{-}.
\end{aligned}
\end{equation}
Such an equation is useful for proving the global in time well-posedness of \eqref{11.1} for  sufficiently small initial data (see, for example, \cite{DGO}, \cite{GHJO_20}, \cite{KGH}).
Furthermore, we can rewrite   \eqref{11.1} in the divergence form
\begin{equation}
\begin{aligned}
			\label{eq11.1}
	&\partial_t f + v \cdot \nabla_x f = \nabla_{v} \cdot (\sigma_G \nabla_{v} f) + a_g \cdot \nabla_v f +\overline{K}_g f  \, \, \text{in} \, \, (0, T) \times \Omega \times \bR^3,\\
	 &
	f (0, x, v) = f_0 (x, v), (x, v) \in \Omega \times \bR^3,  \, \,
		 f (t, x, v) = f (t, x, R_x v), (x, v) \in  \gamma_{-},
\end{aligned}
\end{equation}
where
\begin{align}
		\label{eq11.5}
	 & \sigma = \Phi \ast \mu, \quad	\sigma_G =  \sigma + \Phi \ast (\mu^{1/2} g),
\quad
	a_g^i = - \Phi^{i j} \ast (v_j \mu^{1/2} g + \mu^{1/2} \partial_{v_j} g),\\
& \label{eq11.2}
	\overline{K}_g  = \sfK  + J_g,\\
&\label{eq11.2.1'}
	J_g f = \partial_{v_i} (\sigma^{i j} v_j) f - \sigma^{i j} v_i v_j f \\
&\qquad - \partial_{v_i} \big(\Phi^{i j} \ast (\mu^{1/2} \partial_{v_j} g)\big) f +  \big(\Phi^{i j} \ast (v_i \mu^{1/2} \partial_{v_j} g)\big) f\notag.
\end{align}
See the details in \cite{DGO}, \cite{G_02}, \cite{KGH}.
To establish the existence of the finite energy strong solution to the problem \eqref{eq11.1} (see Definition \ref{definition 11.0}), we work with the equation
\begin{equation}
\begin{aligned}
			\label{eq1.18}
	&\partial f + v \cdot \nabla_x f  = \nabla_{v} \cdot (\sigma_G \nabla_{v} f) + \nu \Delta_v f + a_g \cdot \nabla_v f - \lambda f + h \, \, \text{in} \, \, (0, T) \times \Omega \times \bR^3,\\
&	f (0, x, v) = f_0 (x, v), \,   (x, v) \in \Omega \times \bR^3,  	 f_{-} (t, x, v) = f_{+} (t, x, R_x v),  (x, v) \in \gamma_{-},
\end{aligned}
\end{equation}
where $\nu > 0, \lambda \ge 0$. We call this equation \textit{simplified viscous linearized Landau equation.}

While such boundary problems play an important role in many applications, there have been very limited study due to possible singularity forming from the grazing set (see  \cite{G_95}).
The paper \cite{HJV_14} initiated the study of the regularity of the boundary-value problems for generalized kinetic Fokker-Planck equations. For the related works, we refer the reader to \cite{AM},  \cite{HJJ_18}, \cite{HJV_19}, \cite{LN_21}.
The boundary-value problem for the Landau equation is considered in the articles  \cite{DLSS}, \cite{DGO}, \cite{GHJO_20}.
Our paper serves as a foundation of the linear theory for the nonlinear Landau equation used in the works \cite{DGO}, \cite{GHJO_20}.
The article  is organized as follows.  Section \ref{subsection 1.2} contains the necessary notation. In Section \ref{section 1.3}, we present the main results of this paper. We explain the key ideas of the proof  in Section \ref{subsection 1.4}.
The proof of the main results is divided into 3 sections: Sections \ref{section 2}, \ref{section 3}, and \ref{section 4}.

\subsection{Notation}
To state precisely our results, we now introduce necessary notation as follows.
 In  this subsection, $G \subset \bR^7$ is an open set, $\alpha \in (0, 1]$, $p \in (1, \infty]$, $\theta \in [0, \infty)$, and $T \in (0, \infty)$.
				\label{subsection 1.2}
\begin{itemize}
    \item
 Geometric notation:
\begin{align}
      &  z = (t, x, v), t \in \bR, x, v \in \bR^3,
      \quad B_r  (x_0) = \{\xi \in \bR^3: |\xi - x_0| < r\},\notag\\
&
   \Omega \subset \bR^3 \, \, \text{is a bounded domain}, \, \,  \Omega_{r} (x)  = \Omega \cap B_{r} (x), \, \, \bR^3_{\pm} = \{x \in \bR^3: \pm x_3 > 0\},\notag\\
&
\label{eq1.2.0}
  \mathbb{H}_{\pm} = \{(x, v) \in \bR^3_{\pm} \times \bR^3\},
\, \,  \mathbb{H}_{\pm}^T = \{z \in (0, T) \times  \mathbb{H}_{\pm}\}, \\
&
    \bR^7_T = \{z \in (0, T) \times \bR^6\},  \quad \Sigma^T = (0, T)\times \Omega \times \bR^3,  \notag\\
&
    \gamma_{\pm} = \{(x, v): x \in \partial \Omega, \pm n_x \cdot v > 0 \},
    \, \, \gamma_0 = \{(x, v): n_x \cdot v = 0\},
   \, \,  \Sigma^T_{\pm} = (0,T) \times \gamma_{\pm}\notag.
\end{align}

\item \textit{Matrices.} By $I_3$, we denote the $3 \times 3$ identity matrix and we set $\text{Sym} (\delta), \delta \in (0, 1)$ to be the space of $3 \times 3$ symmetric matrices $a$ such that
$$
        \delta   |\xi|^2 \le a^{i j} \xi_i \xi_j \le \delta^{-1} |\xi|^2, \quad \forall \xi \in \bR^3.
$$

\item Traces: $f_{\pm} = f|_{\gamma_{\pm}}$.

\item
\begin{equation*}
\begin{aligned}
 &   Y f = \partial_t f + v \cdot \nabla_x f.
\end{aligned}
\end{equation*}

\item Function spaces.

\begin{itemize}
\item[--] $C^1_0 (\overline{G})$ - the set of all continuously differentiable functions in $\overline{G}$ that vanish for large $z$.

\item[--] \textit{Anisotropic H\"older space.}
For  an open set $D \subset \bR^6$,
by $C^{\alpha/3, \alpha}_{x, v} (\overline{D})$,
we denote the set of bounded functions
$f = f (x, v)$ such
that
\begin{align*}
        &[f]_{C^{\alpha/3, \alpha}_{x, v} (\overline{D})  } \\
        &:= \sup_{ (x_i, v_i) \in \overline{D}:  (x_1, v_1) \neq (x_2, v_2) } \frac{|f (x_1, v_1)- f (x_2, v_2)|}{(|x_1-x_2|^{1/3}+|v_1-v_2|)^{\alpha}} < \infty.
\end{align*}
Furthermore,
\begin{equation}
			\label{1.2.0}
    \|f\|_{C^{\alpha/3, \alpha}_{ x, v } (\overline{D})  } : = \|f\|_{  L_{\infty} (D) }  +  [f]_{C^{\alpha/3, \alpha}_{x, v} (\overline{D})  }.
\end{equation}

\item[--]\textit{Weighted Lebesgue space.}
For
a  locally integrable function $w(x, v)$ such that $w > 0$ a.e.,
by $L_{p} (G, w)$
we denote the set of all Lebesgue measurable functions $u$
such that
$$
    \|u\|_{ L_p (G, w) }:= \|w^{1/p} u\|_{L_p (G)} < \infty.
$$
In particular, for
$$
    \jb  :=  (1+|v|^2)^{1/2},
$$
 we set
$$
    L_{p, \theta} (G)  = L_p (G, \jb^{\theta}).
$$

\item \textit{Sobolev spaces.}
Let $\mathbb{H}^1_p (G): = \{f \in L_p (G): \nabla_v f \in L_p (G)\}$ be the Banach space equipped with the norm
\begin{equation}
			\label{1.2.2}
	\|f\|_{  \mathbb{H}^1_p (G)  } : = \||f| +|\nabla_v f|\|_{ L_p (G) }.
\end{equation}
Furthermore, $\mathbb{H}^{-1}_p (G)$ is the set of all functions $f$ on $G$ such that there exists   $f_i \in L_p (G), i = 0, 1, 2,  3$, such that
$$
    f = f_0 + \partial_{v_i} f_i.
$$
The $\mathbb{H}^{-1}_p (G)$-norm is given by
\begin{equation}
			\label{1.2.3}
   \|f\|_{\mathbb{H}^{-1}_p (G) }  = \inf \sum_{i = 0}^3 \|f_i\|_{ L_p (G)},
\end{equation}
where the infimum is taken over all such $f_i, i  = 0, 1, 2, 3$.

\item \textit{Kinetic (ultraparabolic) Sobolev spaces.}

By $S_{p, \theta} (G) = \{f \in L_{p, \theta} (G): Y f, \nabla_v f, D^2 f \in L_{p, \theta} (G)\}$
we mean  the Banach space  with the norm
\begin{equation}
			\label{1.2.4}
    \|f\|_{S_{p, \theta} (G)} = \||f| + |\nabla_v f| + |D^2_v f| + |Y f|\|_{ L_{p, \theta} (G) }.
\end{equation}
 We also denote $S_p (G): = S_{p, 0} (G)$.

Furthermore, we set $\mathbb{S}_p (G)$
to be the space of functions $f$
such that $f, \nabla_v f \in L_p (G)$, and
$Y f \in \mathbb{H}^{-1}_p  (G)$. The norm is defined as follows:
\begin{equation}
		\label{1.2.6}
    \|f\|_{\mathbb{S}_p (G)} = \|f\|_{ \bH^1_p (G)}
    + \|Y f\|_{ \mathbb{H}^{-1}_p (G)}.
\end{equation}
\end{itemize}

\item \textit{Space of initial values.}  For $p \in (1, \infty]$ and $\theta \ge 0$, by $\cO_{\theta, p}$ we denote the set of all functions $u$ on $\Omega \times \bR^3$ such that
  $$
    u, v \cdot \nabla_x u, \nabla_v u, D^2_v u \in L_{p, \theta} (\Omega \times \bR^3), \, \, u_{\pm} \in L_{\infty} (  \gamma_{\pm}, |v \cdot n_x|),
  $$
and
  $$
    u_{-} (x, v) = u_{+} (x, R_x v),  \, \, \text{a.e.}  \, \,  (x, v) \in \gamma_{-}.
  $$
  The norm is given by
\begin{equation}
			\label{1.2.7}
\begin{aligned}
	& \|u\|_{\cO_{ p, \theta} }  =  |u|_{\cO_{ p, \theta} } + \|u_{\pm}\|_{   L_{\infty} (  \gamma_{\pm}, |v \cdot n_x|)  }, \\
	&|u|_{\cO_{ p, \theta} }  := \||u| + |v \cdot \nabla_x u| + |\nabla_v u| + |D^2_v u|\|_{ L_{p, \theta} ( \Omega \times \bR^3 ) }.
\end{aligned}
\end{equation}
For $\theta = 0$, we set $\cO_{p} = \cO_{p, 0}$.

\item \textit{Constants.} By $N = N (\cdots)$  we mean a constant depending only on the parameters inside the
parentheses. The constants $N$ might change from line to line. Sometimes, when it is clear what parameters $N$  depend on, we omit  them.

\end{itemize}

\subsection{Main results}
															\label{section 1.3}
\subsubsection{Kinetic Fokker-Planck equation}
									\label{subsection 1.3.1}

Let $a$ be a measurable function taking values in the set of symmetric $d \times d$ matrices.
\begin{definition}
                    \label{definition 7.0}
 Let $ \theta \ge 0, T > 0$ be  numbers.
 We say that  $f$
 is a finite energy weak solution  to
\begin{equation}
    \begin{aligned}
                                    \label{7.0}
    &	Y f  - \nabla_v \cdot (a \nabla_v  f) + b \cdot \nabla_v f + \lambda f =   \sg, \quad z \in \Sigma^T,\\
& f_{-} (t, x, v) =  f_{+} (t, x, R_x v),\, \quad z \in   \Sigma^T_{-},\\
	& f (0, x, v) = f_0 (x, v), \quad (x, v)  \in  \Omega \times \bR^3,
\end{aligned}
\end{equation}
 if
 \begin{enumerate}[i)]
     \item \label{i}
 $f, \nabla_v f \in L_{2, \theta} (\Sigma^T)$,
 $\sg \in L_{2, \theta} (\Sigma^T)$,
 $ f_{\pm}
 \in
 L_{\infty} ( \Sigma^T_{\pm}, |v \cdot n_x|)$,
 $f (T, \cdot), f (0, \cdot) \in L_{2, \theta} (\Omega \times \bR^3)$,   \\

 \item \label{ii} $f_{-} (t, x, v) =  f_{+} (t, x, R_x v)$ a.e. on $\Sigma^T_{-}$,\\

 \item   \label{iii}
 for any $\phi \in C^1_0 (\overline{\Sigma^T})$,
 \begin{align}
                \label{7.0.0}
	&	-\int_{\Sigma^T}
    (Y \phi)    f \, dz
		 + \int_{\Omega \times \bR^3 }     \big(f (T, x, v) \phi (T, x, v)  - f (0, x, v) \phi (0, x, v)\big)\, dx dv \notag\\
 	 &
		+  \int_{\Sigma^T_{+}}     f_{+} \phi \, |v \cdot n_x| d\sigma dt
	-   \int_{\Sigma^T_{-}}   f_{-} \phi \, |v \cdot n_x|  d\sigma dt\\
	  &
	  + \lambda \int_{\Sigma^T}
	  f \phi \, dz
	  +\int_{\Sigma^T}  (a\nabla_{v} f) \cdot \nabla_{v} \phi  \,dz
	+\int_{\Sigma^T}  b \cdot  \nabla_{v} f  \phi  \,dz
	  = \int_{\Sigma^T}   \sg \phi   \, dz\notag.
	  \end{align}
\end{enumerate}	

Furthermore, we say that $f$ is a finite energy strong  solution to Eq. \eqref{7.0}
if $f$ is a finite energy weak solution, and, additionally, $f \in S_2 (\Sigma^T)$,
where $S_2$ is defined in \eqref{1.2.4}.
\end{definition}

We impose the following assumptions on the coefficients and the domain $\Omega$.

\begin{assumption}
                \label{assumption 2.1}
There exists $\delta \in (0, 1)$ such that
$\forall z$, $a (z) \in \text{Sym} (\delta)$,  i.e.,
\begin{equation}
			\label{eq2.1.0}
	\delta |\xi|^2  \le a^{i j} (z) \xi_i \xi_j \le \delta^{-1} |\xi|^2.
\end{equation}
 \end{assumption}

\begin{assumption}
                            \label{assumption 7.3}
There exists a constant $K > 0$  such that
\begin{equation}
			\label{eq2.3.0}
	\|b\|_{ L_{\infty} (\Sigma^T)}\le K.
\end{equation}
\end{assumption}

\begin{assumption}
                            \label{assumption 2.2}
There exists a constant $K > 0$  such that
\begin{equation}
			\label{eq2.2.0}
	\|\nabla_v a \|_{ L_{\infty} (\Sigma^T)}   \le K , \quad \nabla_v b \in L_{\infty} (\Sigma^T).
\end{equation}
\end{assumption}

We present three results on the existence, uniqueness, and the $S_p$-regularity of the finite energy weak solutions.

\begin{theorem}[Existence of finite energy weak solutions]
			\label{theorem 3.3}
Let $\Omega$ be a bounded $C^2$ domain.
Under Assumptions  \ref{assumption 2.1} -  \ref{assumption 2.2},
for any $T > 0$, $\theta \ge 0$,
there exists $\lambda_0 = \lambda_0 (\theta, \delta,  K) > 0$
such that for any $\lambda \ge \lambda_0$
and $\sg \in L_{2, \theta} (\Sigma^T) \cap L_{\infty} (\Sigma^T)$,
$f_0 \in L_{2, \theta} (\Omega \times \bR^3) \cap L_{\infty} (\Omega \times \bR^3) $,
Eq. \eqref{7.0} has a finite energy weak solution $f$, and, in addition,
\begin{equation}
                \label{7.2.1}
\begin{aligned}
         & \|f (T, \cdot)\|_{ L_{2, \theta} (\Omega \times \bR^3) }
+    \lambda^{1/2} \|f\|_{ L_{2, \theta} (\Sigma^T) }
     + \|\nabla_v f\|_{ L_{2, \theta} (\Sigma^T) }\\
  &   \le
       N    (\lambda^{-1/2} \|\sg\|_{ L_{2, \theta} (\Sigma^T) } + \|f_0\|_{  L_{2, \theta} (\Omega \times \bR^3)  }),\\
    &\max \{ \|f (T, \cdot)\|_{ L_{\infty} (\Omega \times \bR^3) }, \|f\|_{ L_{\infty} (\Sigma^T)}, \| f_{\pm}\|_{ L_{\infty} (\Sigma^T_{\pm}, |v \cdot n_x|)  }\} \\
    &
    \le  \lambda^{-1} \|\sg\|_{ L_{\infty} (\Sigma^T) } +  \|f_0\|_{  L_{\infty} (\Omega \times \bR^3)  },
    \end{aligned}
    \end{equation}
where $N = N (\delta, K, \theta) > 0$.
\end{theorem}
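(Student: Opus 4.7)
The plan is to construct the finite energy weak solution by a vanishing viscosity/Galerkin approximation that preserves the specular structure, combined with uniform $L_{2,\theta}$ and $L_{\infty}$ a priori estimates. The cornerstone of both estimates is the same cancellation on $\gamma = \gamma_+ \cup \gamma_-$ produced by the specular reflection boundary condition.

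\textbf{Approximation scheme.} For $\nu \in (0,1)$ I would first solve the non-degenerate parabolic regularization
\begin{equation*}
Y f^\nu - \nabla_v\cdot (a\nabla_v f^\nu) - \nu\Delta_v f^\nu + b\cdot\nabla_v f^\nu + \lambda f^\nu = \sg
\end{equation*}
with the specular boundary condition $f^\nu_-(t,x,v) = f^\nu_+(t,x,R_x v)$ and initial data $f_0$; this is an instance of the simplified viscous linearized Landau equation \eqref{eq1.18}. Existence of a strong solution $f^\nu \in S_2$ may be obtained via a Galerkin method together with a flattening-of-boundary and reflection argument as in \cite{DY_21a}, piecing local solutions together using a partition of unity subordinate to a $C^2$ boundary chart. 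If needed, one first truncates the velocity support to $|v| \le R$ and passes $R \to \infty$ using the weight $\jb^\theta$.

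\textbf{Weighted $L_{2,\theta}$ estimate and boundary cancellation.} Testing the regularized equation against $f^\nu \jb^\theta$ and integrating over $\Sigma^T$, the transport part yields
\begin{equation*}
\tfrac12 \int_{\Omega\times\bR^3} |f^\nu|^2 \jb^\theta \Big|_0^T + \tfrac12 \int_{\Sigma^T_+} |f^\nu_+|^2 \jb^\theta |v\cdot n_x|\, d\sigma\, dt - \tfrac12 \int_{\Sigma^T_-} |f^\nu_-|^2 \jb^\theta |v\cdot n_x|\, d\sigma\, dt.
\end{equation*}
Because $v \mapsto R_x v$ is an involutive isometry with $|R_x v| = |v|$ and $|R_x v \cdot n_x| = |v \cdot n_x|$, and since $f^\nu_-(x,v) = f^\nu_+(x, R_x v)$, the change of variables $v \mapsto R_x v$ in the $\gamma_-$ integral makes it equal to the $\gamma_+$ integral, so the two boundary contributions cancel. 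The coercivity \eqref{eq2.1.0} controls $\delta \|\jb^{\theta/2} \nabla_v f^\nu\|_{L_2(\Sigma^T)}^2$; the commutator generated by $\nabla_v \jb^\theta = \theta v \jb^{\theta - 2}$ and the drift term $b \cdot \nabla_v f^\nu$ are absorbed using \eqref{eq2.3.0} and Young's inequality, while the source is split as $2\int f^\nu \sg \jb^\theta \le \lambda \int |f^\nu|^2 \jb^\theta + \lambda^{-1} \int \sg^2 \jb^\theta$. Choosing $\lambda \ge \lambda_0(\theta, \delta, K)$ large enough delivers the first line of \eqref{7.2.1} uniformly in $\nu$.

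\textbf{$L_\infty$ bound and passage to the limit.} With $M := \lambda^{-1} \|\sg\|_{L_\infty} + \|f_0\|_{L_\infty}$ one has $\lambda M \ge \sg$ pointwise, so $w := (f^\nu - M)_+$ satisfies an equation with nonpositive forcing. Specular reflection preserves positive parts as well as $|v \cdot n_x|$, so the $\gamma_{\pm}$ boundary integrals cancel as above. Testing against $w$ produces $\frac{d}{dt} \int w^2 + 2\lambda \int w^2 \le 0$, forcing $w \equiv 0$; applying the same argument to $-f^\nu$ yields the $L_\infty$ bound and, by restriction to the non-grazing part of $\gamma$, the trace bound on $f^\nu_\pm$. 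The uniform $L_{2,\theta}$ and $L_\infty$ bounds give weak-$*$ compactness along a subsequence, whose limit $f$ satisfies the identity \eqref{7.0.0}. The main technical obstacle is to justify the limits of the boundary integrals in \eqref{7.0.0} and to transfer the specular identity from $f^\nu_\pm$ to the limiting traces $f_\pm$: the transport operator $Y$ alone controls traces only in a weighted $L_{1,\mathrm{loc}}$-sense via the Green identity of Mischler--Ukai, so one must first localize with test functions vanishing near the grazing set $\gamma_0$ (which carries zero $|v\cdot n_x|\, d\sigma$-measure) and then invoke the uniform $L_\infty$ trace bound to recover $f_-(t,x,v) = f_+(t,x,R_x v)$ in the limit.
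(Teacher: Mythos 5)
Your proposal does identify the correct energy and $L_\infty$ a priori estimates (the specular cancellation of the $\gamma_\pm$ boundary integrals, Young's inequality for the drift and source, and a maximum-principle truncation for $L_\infty$), but it has a genuine gap exactly at the place where the real work is: \emph{establishing existence of the approximating sequence.} You propose vanishing viscosity and assert that the regularized problem $Y f^\nu - \nabla_v\cdot\big((a+\nu I_3)\nabla_v f^\nu\big) + b\cdot\nabla_v f^\nu + \lambda f^\nu = \sg$ with specular reflection ``may be obtained via a Galerkin method together with a flattening-of-boundary and reflection argument as in \cite{DY_21a}.'' This step does not go through. The mirror-extension construction in Section~\ref{subsubsection 1.4.3} produces an extended coefficient matrix $\mathbb{A}$ that is in general \emph{discontinuous} across $\{y_3=0\}$ (see \eqref{eq1.21}--\eqref{eq1.23}), unless the leading coefficient satisfies the special structural identity \eqref{eq1.23}. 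The paper stresses precisely this point: the reflection argument is usable only for $\Delta_v$ and $\nabla_v\cdot(\sigma_G\nabla_v)$, not for a generic $a \in \mathrm{Sym}(\delta)$, and adding $\nu I_3$ does not cure it. Without $\mathbb{A}\in L_\infty((0,T),C^{\varkappa/3,\varkappa}_{x,v})$, Theorem~\ref{theorem D.1} is unavailable, so the ``existence via reflection'' step is unjustified. In fact, the paper never uses the mirror extension to prove existence of weak solutions; it is reserved for proving higher regularity and uniqueness (Theorem~\ref{theorem 7.3}) in the special cases where it applies.

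The paper's route sidesteps this entirely: instead of a viscous regularization it \emph{discretizes} the second-order operator into the finite-difference operator $A_h$ in \eqref{7.1.4}, built from the Krylov stencil representation $a^{ij}=\sum_k\lambda_k(a)l_k^il_k^j$ (Lemma~\ref{theorem 7.1}). Since $A_h$ is a \emph{bounded} operator on $L_{p,\theta}(\Sigma^T)$, the approximate equation \eqref{7.1} is a pure transport equation with bounded lower-order perturbations, for which existence in $E_{p,\theta}$ follows from the classical Beals--Protopopescu framework (Theorem~1 of \cite{BP_87}). The paper also replaces the exact specular condition by the absorbing condition $(f_{\varepsilon,h})_- = (1-\varepsilon)(f_{\varepsilon,h})_+(\cdot,R_x\cdot)$, which produces the nonnegative boundary term $\varepsilon\int_{\Sigma^T_+}|(f_{\varepsilon,h})_+|^2\jb^\theta|v\cdot n_x|\,d\sigma\,dt$ in the energy identity and fits the BP framework; your exact cancellation gives no such leverage, though you do recover trace control from the $L_\infty$ bound, which is consistent with the final statement. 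Your $L_\infty$ argument by testing against $(f^\nu - M)_+$ is a legitimate alternative to the paper's $L_p$-identity-then-$p\to\infty$ method (Lemma~\ref{lemma 7.5}, Step 2), but it again presupposes the existence and regularity of $f^\nu$ that you have not furnished. In short: the energy estimates you wrote are essentially correct, but they are conditional on an existence statement that your proposed reflection/Galerkin scheme cannot supply for a general $a$, and this is exactly the obstacle the paper's discretization is designed to remove.
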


The following corollary is derived from Theorem \ref{theorem 3.3}  by using an exponential weight.
\begin{corollary}
One can take $\lambda_0 = 0$ in Theorem \ref{theorem 3.3}.
However, in that case, the constant $N$ also depends on $T$, i.e., $N = N (\delta, K, \theta, T)$.
\end{corollary}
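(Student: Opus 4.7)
The plan is to reduce the case of general $\lambda \ge 0$ to the case $\lambda \ge \lambda_0$ already handled by Theorem \ref{theorem 3.3}, by means of a standard exponential time-weight.

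Concretely, given $\lambda \ge 0$, $\sg \in L_{2,\theta}(\Sigma^T) \cap L_\infty(\Sigma^T)$, and $f_0 \in L_{2,\theta}(\Omega \times \bR^3) \cap L_\infty(\Omega \times \bR^3)$, set
\begin{equation*}
\tilde{\sg}(t,x,v) := e^{-\lambda_0 t}\, \sg(t,x,v), \qquad \tilde{\lambda} := \lambda + \lambda_0,
\end{equation*}
where $\lambda_0 = \lambda_0(\delta, K, \theta)$ is the threshold from Theorem \ref{theorem 3.3}. Since $|\tilde\sg| \le |\sg|$ pointwise, $\tilde\sg$ lies in the same function spaces as $\sg$ with smaller norm. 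Apply Theorem \ref{theorem 3.3} with source $\tilde\sg$, coefficient $\tilde\lambda \ge \lambda_0$, and initial data $f_0$ to obtain a finite energy weak solution $g$ satisfying the estimates \eqref{7.2.1}.

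Now define $f(t, x, v) := e^{\lambda_0 t} g(t, x, v)$. A direct computation shows $Y f = e^{\lambda_0 t}(\lambda_0 g + Y g)$, and since the factor $e^{\lambda_0 t}$ depends only on $t$, it commutes with $\nabla_v$ and with the specular reflection $v \mapsto R_x v$. Multiplying the equation for $g$ by $e^{\lambda_0 t}$ then gives
\begin{equation*}
Y f - \nabla_v \cdot (a \nabla_v f) + b \cdot \nabla_v f + \lambda f = \sg \quad \text{in } \Sigma^T,
\end{equation*}
with $f(0,\cdot) = f_0$ and $f_-(t,x,v) = f_+(t, x, R_x v)$ on $\Sigma^T_-$. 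The weak formulation \eqref{7.0.0} is verified by testing the weak formulation for $g$ against $e^{\lambda_0 t}\phi$, which is an admissible test function whenever $\phi \in C^1_0(\overline{\Sigma^T})$ is.

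Finally, since $1 \le e^{\lambda_0 t} \le e^{\lambda_0 T}$ on $[0, T]$, every norm appearing in \eqref{7.2.1} for $f$ is equivalent to the corresponding norm for $g$ up to a multiplicative constant depending on $\lambda_0$ and $T$; translating $\tilde\lambda = \lambda + \lambda_0$ back to $\lambda$ and absorbing $\lambda_0$ into the constant produces the claimed estimates with $N = N(\delta, K, \theta, T)$. There is no real obstacle here: the entire argument is a bookkeeping exercise around a one-line change of variables, and the only point to check is the invariance of the boundary condition and the solution class under multiplication by the time-dependent scalar $e^{\lambda_0 t}$, which is immediate.
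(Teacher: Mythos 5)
Your proof is correct and follows exactly the approach the paper indicates: it states that the corollary is "derived from Theorem \ref{theorem 3.3} by using an exponential weight," which is precisely your substitution $f = e^{\lambda_0 t} g$ with the rescaled source $\tilde\sg = e^{-\lambda_0 t}\sg$ and shifted parameter $\tilde\lambda = \lambda + \lambda_0$. The verification of the weak formulation by testing the equation for $g$ against $e^{\lambda_0 t}\phi$ and the observation that $1 \le e^{\lambda_0 t} \le e^{\lambda_0 T}$ introduces the $T$-dependence into the constant are exactly what is needed.
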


We remark that the uniqueness of such weak solutions remains an open problem in general.
By using the method of reflection (see Section \ref{subsection 1.4}), we prove the uniqueness and higher regularity for Eq. \eqref{7.0} in the case when $a = I_3$.


\begin{theorem}[$S_2$ regularity of  a finite energy weak solution]
                \label{theorem 7.3}
Let $\Omega$ be a bounded $C^3$ domain, $T > 0, \theta  \ge 2$ be numbers,
and $\sg \in L_{2, \theta -2} (\Sigma^T), f_0 \in \cO_{2, \theta - 2}$ (see \eqref{1.2.7}), and $a=I_3$. Under Assumptions \ref{assumption 2.1} - \ref{assumption 7.3},
there exists $\lambda_0 = \lambda_0 (K, \theta, \Omega) > 0$ such that
for any $\lambda \ge \lambda_0$,
if $f$ is a finite  energy weak solution to Eq. \eqref{7.0} with parameter $\theta$,
 then, $f \in S_{2, \theta - 2} (\Sigma^T)$ and $f, \nabla_v f \in L_{7/3, \theta - 2} (\Sigma^T)$.
 Furthermore, we have
 \begin{equation}
                \label{7.3.0}
 \begin{aligned}
    \|f\|_{ S_{2, \theta-2} (\Sigma^T) }& + \||f| +|\nabla_v f|\|_{L_{7/3, \theta - 2} (\Sigma^T)} \\
&\le N (\|\sg\|_{ L_{2, \theta-2} (\Sigma^T) } + |f_0|_{  \cO_{2, \theta - 2} }\\
    &+   \|\nabla_v  f\|_{ L_{2, \theta} (\Sigma^T) } +  \|f\|_{ L_{2, \theta-1} (\Sigma^T) }),
 \end{aligned}
 \end{equation}
 where $N = N (K,  \Omega, \theta)$.
\end{theorem}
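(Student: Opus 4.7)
The plan is to upgrade the $L_{2,\theta}$ weak solution to an $S_{2,\theta-2}$ one by a method of reflection that, since $a = I_3$, converts the boundary-value problem into an interior problem to which the $S_p$-estimate of \cite{DY_21a} applies. Since the statement is local in $(x, v)$, I would first cover $\overline{\Omega}$ by finitely many charts. In an interior chart I localize $f$ with a spatial cutoff $\zeta(x)$ and a dyadic velocity cutoff $\eta_k(v)$ supported on $\{|v| \sim 2^k\}$, and apply the interior $S_2$-estimate of \cite{DY_21a} to $\zeta \eta_k f$; the lower-order terms $b \cdot \nabla_v f$ and $\lambda f$ are absorbed on the left provided $\lambda_0$ is taken large enough in terms of $\|b\|_{L_{\infty}}$, which is available by Assumption \ref{assumption 7.3}.

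In a boundary chart I would straighten $\partial \Omega$ by a $C^3$ diffeomorphism $x = \Psi(y)$, accompanied by a matching linear-in-$y$ change of velocity variables $w$, so that the specular reflection $R_x$ becomes the coordinate reflection $(w', w_3) \mapsto (w', -w_3)$ on $\{y_3 = 0\}$. In the new variables $Y$ retains its kinetic form, the Laplacian $\Delta_v$ is transformed into $\partial_{w_i}(\tilde a^{ij}(y) \partial_{w_j} \cdot)$ with $\tilde a \in C^2$ uniformly elliptic, the drift becomes a bounded $\tilde b$, and the boundary condition reduces to $\tilde f(t, y', 0, w', w_3) = \tilde f(t, y', 0, w', -w_3)$. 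I then extend $\tilde f$ to $y_3 < 0$ by setting $\tilde f(t, y', y_3, w', w_3) := \tilde f(t, y', -y_3, w', -w_3)$ and extend $\tilde a, \tilde b, \tilde \sg$ by the same reflection. Testing \eqref{7.0.0} against $\phi$ that is either even or odd under the same $(y_3, w_3) \mapsto (-y_3, -w_3)$ involution, the traces on $\Sigma^T_+$ and $\Sigma^T_-$ cancel pairwise via the specular identification, and a direct computation shows that the extended $\tilde f$ is a distributional finite-energy weak solution on a full neighborhood of the origin in $(y, w)$-space, so the interior $S_2$-estimate of \cite{DY_21a} applies.

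To promote the unweighted estimate to the $L_{2, \theta - 2}$-weighted one, I would use a dyadic partition of unity $\{\eta_k(v)\}_{k \ge 0}$ in velocity, run the localized estimate above for each $\eta_k f$, and sum in $k$ with weights $2^{k(\theta - 2)}$. Commutators of $\eta_k$ with $D^2_v$ and $\nabla_v$ each lose one power of $v$, while $[\eta_k, v \cdot \nabla_x]$ is of order zero but loses one power of $\jb$; collecting these commutator errors produces precisely the terms $\|\nabla_v f\|_{L_{2, \theta}}$ and $\|f\|_{L_{2, \theta - 1}}$ on the right-hand side of \eqref{7.3.0}. The hypothesis $f_0 \in \cO_{2, \theta - 2}$ is used only in the localization near $t = 0$, since $\cO_{2, \theta - 2}$ is the compatible initial-trace space for $S_{2, \theta - 2}$. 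Finally, the $L_{7/3, \theta - 2}$-bound for $f$ and $\nabla_v f$ follows from the anisotropic kinetic Sobolev embedding applied to the $S_{2, \theta - 2}$-estimate just obtained: under the kinetic scaling the homogeneous dimension is $Q = 2 + 3 \cdot 3 + 3 = 14$, and the first-order embedding exponent is $1/p^\ast = 1/2 - 1/14 = 3/7$, i.e., $p^\ast = 7/3$.

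The principal obstacle I foresee is the justification that the reflected function is genuinely a distributional solution with no residual surface contribution. This is exactly where $a = I_3$ is essential: for a general symmetric $a^{ij}$, the mixed entries $a^{13}, a^{23}$ flip sign under $w_3 \mapsto -w_3$, so the reflected extension would solve a different equation on the two sides and a spurious jump would appear in the conormal flux. Organizing the surface integrals in \eqref{7.0.0} so that they cancel between $\Sigma^T_+$ and $\Sigma^T_-$ under the reflection of the test function is the one genuinely non-routine calculation in the scheme; the remaining steps are the by-now-standard localization, flattening, dyadic velocity decomposition, and application of \cite{DY_21a}.
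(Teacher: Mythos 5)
Your overall scheme --- flatten the boundary, mirror-extend across $\{y_3=0\}$ via $(y_3,w_3)\mapsto(-y_3,-w_3)$, and invoke the interior $S_2$ theory of \cite{DY_21a} --- is the same as the paper's. Two details differ but are inessential and repairable: you carry the weight $\jb^{\theta-2}$ by a dyadic decomposition in $|v|$, while the paper works with $f_k=f\eta_k\jb^{\theta-2}$ directly and collects commutators; and you describe the extension as a bare reflection, whereas the paper's $\overline{f_k}$ also carries the Jacobian factor $J=|\partial x/\partial y|^2$ and the transported transport operator picks up an extra drift $X\cdot\nabla_w$ (see \eqref{eq1.4.3.5} and \eqref{7.3.2}), which you would need to account for to get the clean equation \eqref{eq1.19}.

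The genuine gap is the last sentence of your boundary-chart paragraph: once you have shown the reflected $\tilde f$ is a distributional finite-energy weak solution on a full neighborhood, you conclude that ``the interior $S_2$-estimate of \cite{DY_21a} applies.'' But an a priori bound $\|u\|_{S_2}\le N\|\mathcal Lu\|_{L_2}$ only controls $u$ once $u$ is already known to lie in $S_2$; it does not by itself upgrade a weak solution lying in $\bS_2$ (i.e., $f,\nabla_v f\in L_2$ and $Yf\in\bH^{-1}_2$) to an $S_2$ solution. The paper bridges this gap by first using the solvability part of Theorem \ref{theorem D.1}$(ii)$ to manufacture a strong solution $u_k\in S_2$ with the same right-hand side and zero initial data, and then proving $\overline{f_k}-u_k\equiv 0$ by an energy-identity uniqueness argument in $\bS_2$ (Lemma \ref{lemma B.5}); only after that identification does the a priori estimate of Theorem \ref{theorem D.1}$(i)$ become usable. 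Your plan needs this existence-plus-uniqueness (or an equivalent approximation) step. Relatedly, the hypothesis $f_0\in\cO_{2,\theta-2}$ is not ``used only in the localization near $t=0$''; it enters precisely so that one can reduce to $f_0\equiv 0$ and view $\overline{f_k}$ as an element of $\bS_2((-\infty,T)\times\bR^6)$ with vanishing initial trace, which is what makes the uniqueness argument run.
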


\begin{corollary}[Uniqueness of finite energy solutions]
                \label{corollary 7.3.1}
Under assumptions of Theorem \ref{theorem 7.3}, any two finite energy weak solutions to Eq. \eqref{7.0} must coincide.
\end{corollary}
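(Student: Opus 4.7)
The plan is to take $w := f_1 - f_2$ of two finite energy weak solutions, upgrade its regularity to $S_2$ via Theorem \ref{theorem 7.3}, and then close an $L_2$ energy estimate in which the specular boundary contributions cancel exactly. By linearity of Eq.~\eqref{7.0} with $a=I_3$, the difference $w$ is a finite energy weak solution with zero source and zero initial datum, still satisfying the specular reflection condition. Applying Theorem \ref{theorem 7.3} to $w$, the right-hand side of \eqref{7.3.0} reduces to $\|\nabla_v w\|_{L_{2,\theta}(\Sigma^T)}+\|w\|_{L_{2,\theta-1}(\Sigma^T)}$, which is finite by the finite-energy hypothesis on $f_1,f_2$. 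Hence $w\in S_{2,\theta-2}(\Sigma^T)$, and the equation $Yw-\Delta_v w+b\cdot\nabla_v w+\lambda w=0$ holds almost everywhere in $\Sigma^T$ with $Yw\in L_{2,\theta-2}(\Sigma^T)$.

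Once the equation is pointwise, the next step is to multiply by $w$ and integrate over $\Sigma^T$. The transport part yields the Green-type identity
\begin{align*}
\int_{\Sigma^T}(Yw)\,w\,dz &= \tfrac12\!\int_{\Omega\times\bR^3}\!w(T,x,v)^2\,dx\,dv \\
&\quad +\tfrac12\!\int_{\Sigma^T_+}\!w_+^2|v\cdot n_x|\,d\sigma dt -\tfrac12\!\int_{\Sigma^T_-}\!w_-^2|v\cdot n_x|\,d\sigma dt,
\end{align*}
using $w(0,\cdot)=0$, together with $\int w\,\Delta_v w\,dz=-\int|\nabla_v w|^2\,dz$ coming from integration by parts in $v$ (the spatial boundary at $|v|=\infty$ contributes nothing thanks to the weighted $L_2$ decay supplied by $S_{2,\theta-2}$). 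The specular reflection is precisely what makes the two boundary terms cancel: for each $x\in\partial\Omega$ the map $v\mapsto R_x v$ is an isometry of $\bR^3$ preserving $|v\cdot n_x|$ and swapping $\{n_x\cdot v<0\}$ with $\{n_x\cdot v>0\}$, so combined with $w_-(t,x,v)=w_+(t,x,R_x v)$ the change of variables yields
\[
\int_{\Sigma^T_-}w_-^2|v\cdot n_x|\,d\sigma dt=\int_{\Sigma^T_+}w_+^2|v\cdot n_x|\,d\sigma dt.
\]

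Collecting everything gives
\[
\tfrac12\|w(T,\cdot)\|_{L_2(\Omega\times\bR^3)}^2+\|\nabla_v w\|_{L_2(\Sigma^T)}^2+\lambda\|w\|_{L_2(\Sigma^T)}^2 = -\int_{\Sigma^T}w\,(b\cdot\nabla_v w)\,dz,
\]
and since $|b|\le K$ by Assumption \ref{assumption 7.3}, Young's inequality absorbs the right-hand side into $\tfrac12\|\nabla_v w\|_{L_2(\Sigma^T)}^2+\tfrac{K^2}{2}\|w\|_{L_2(\Sigma^T)}^2$. Performing the same computation on $(0,t)\times\Omega\times\bR^3$ for each $t\in(0,T]$ and invoking Gronwall's inequality together with $w(0,\cdot)=0$ forces $w\equiv 0$, i.e.\ $f_1=f_2$.

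The main technical obstacle is the rigorous justification of the Green identity displayed above at the $S_2$ level: the traces $w_\pm$ must be square-integrable against $|v\cdot n_x|\,d\sigma dt$, which does not follow from the finite-energy definition alone (it only ensures $w_\pm\in L_\infty(\Sigma^T_\pm,|v\cdot n_x|)$) and must be extracted from the $S_{2,\theta-2}$ regularity, either via a trace theorem for the kinetic transport operator $Y$ or via a careful mollification of $w$ that respects the specular boundary condition. Since this trace and approximation machinery is exactly what underpins Theorem \ref{theorem 7.3} itself, it is expected to be available from the tools developed in Sections \ref{section 2}--\ref{section 4}.
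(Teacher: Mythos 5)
Your proposal is correct and follows essentially the same route as the paper: form $w=f_1-f_2$, upgrade it to $S_2(\Sigma^T)$ via Theorem~\ref{theorem 7.3} (so that the equation holds a.e.\ and $Yw\in L_2$), then run an $L_2$ energy argument in which the specular boundary contributions cancel and the drift is absorbed. The technical concern you flag at the end is real—finite energy only gives $w_\pm\in L_\infty(\Sigma^T_\pm,|v\cdot n_x|)$, not square integrability of the individual boundary integrals—and it is addressed in the paper by Lemma~\ref{lemma B.3}, which establishes the identity
\begin{equation*}
\int_{\Omega\times\bR^3}\big(u^2(T,x,v)-u^2(0,x,v)\big)\,\jb^\theta\,dx\,dv
= 2\int_{\Sigma^T} u\,(Yu)\,\jb^\theta\,dz
\end{equation*}
for any $u$ with $u,Yu\in L_{2,\theta}$, $u_\pm\in L_\infty(\Sigma^T_\pm,|v\cdot n_x|)$, and the specular boundary condition, by truncating with $\mu_\varepsilon=e^{-|v|^2/\varepsilon}$ (which decays fast enough to put $u\mu_\varepsilon$ in $E_{2,\theta}$), applying Lemma~\ref{lemma 7.1}, and passing to the limit. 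Notice the cancellation is built into the statement of Lemma~\ref{lemma B.3}: the two boundary integrals are never isolated, which is exactly how one sidesteps the need for $w_\pm\in L_2(\Sigma^T_\pm,|v\cdot n_x|)$. The only remaining difference is cosmetic: the paper replaces your Gronwall step with the equivalent device of setting $F=w\,e^{-\lambda' t}$ so that the drift $N(K)\|F\|_{L_2}^2$ is absorbed by $(\lambda+\lambda')\|F\|_{L_2}^2$ for $\lambda'$ large, giving $F\equiv0$ in one shot.
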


\begin{theorem}[Higher regularity of the finite energy strong solution]
			\label{theorem 1.9}
Let $p > 14$ and
invoke the assumptions of  Theorem  \ref{theorem 7.3} and assume, additionally, $\theta \ge 16$,
and $\sg \in  L_{p, \theta - 4} (\Sigma^T)$, $f_0 \in  \cO_{p, \theta - 4} (\Sigma^T)$ (see \eqref{1.2.7}).
Then, for any $\lambda \ge 0$, one has $f \in S_{p, \theta - 16} (\Sigma^T)$, and
\begin{equation}
			\label{eq1.9.0}
\begin{aligned}
     \|f\|_{ S_{p, \theta - 16} (\Sigma^T) }
    &+ \|f\|_{  L_{\infty} ((0, T), C^{\alpha/3, \alpha}_{x, v} (\overline{\Omega} \times \bR^3) )  }  + \|\nabla_v f|\|_{  L_{\infty} ((0, T), C^{\alpha/3, \alpha}_{x, v} (\overline{\Omega} \times \bR^3) )  }  \\
   & \le  N  ( \|\sg\|_{L_{2, \theta - 2} (\Sigma^T) } + \|\sg\|_{L_{p, \theta - 4} (\Sigma^T) }\\
	&\quad +  |f_0|_{ \cO_{2, \theta-2} } + |f_0|_{\cO_{p, \theta-4}}\\
    &\quad  + \|\nabla_v f\|_{ L_{2, \theta} (\Sigma^T) }  +  \|f\|_{ L_{2, \theta - 1} (\Sigma^T) }),
\end{aligned}
\end{equation}
 where
\begin{itemize}
\item[--] $\alpha = 1 - 14/p$,
\item[--] $C^{\alpha/3, \alpha}_{x, v} (\overline{\Omega} \times \bR^3)$ is the  anisotropic H\"older space defined in \eqref{1.2.0},
\item[--] $N = N (p,  \theta, K, \Omega)$.
\end{itemize}
\end{theorem}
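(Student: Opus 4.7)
The plan is to bootstrap the $S_{2, \theta-2}$ regularity of Theorem \ref{theorem 7.3} up to $S_{p, \theta-16}$ for $p > 14$ via iterated application of the interior $S_p$ estimate of \cite{DY_21a}, using the method of reflection from Theorem \ref{theorem 7.3} to handle the boundary. The H\"older bound then comes from an anisotropic kinetic Morrey-type embedding available exactly once $p$ exceeds the kinetic homogeneous dimension $14 = 2 + 3\cdot 3 + 3$, producing $\alpha = 1-14/p$.

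The reflection step proceeds as follows. After introducing a partition of unity subordinate to a finite atlas of $\overline\Omega$, interior patches are handled directly by the interior $S_p$ estimate. For a boundary patch, use a $C^3$ diffeomorphism $y = \Psi(x)$ flattening $\partial\Omega$ to $\{y_3 = 0\}$ together with the linear velocity substitution $w = D\Psi(x) v$. Under this transformation, $v \cdot \nabla_x$ becomes $w \cdot \nabla_y$ modulo a bounded first-order drift, $\Delta_v$ becomes the uniformly elliptic operator $\mathrm{tr}\bigl(D\Psi(y) D\Psi(y)^\top D^2_w\bigr)$ with $C^1$ coefficients, and the specular condition reduces to $\tilde f(t, y', 0, w', w_3) = \tilde f(t, y', 0, w', -w_3)$. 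Extending $\tilde f$ to $y_3 < 0$ by the joint reflection $\tilde f(t, y', y_3, w', w_3) = \tilde f(t, y', -y_3, w', -w_3)$, a direct computation verifies that $\tilde f$ solves the same equation on a full neighborhood of $\{y_3 = 0\}$, with coefficients and source extended by the identical symmetry, so that the interior $S_p$ estimate of \cite{DY_21a} applies to a cutoff of $\tilde f$.

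The core step is the integrability bootstrap. One iterates two ingredients: (i) the anisotropic kinetic Sobolev embedding $S_{q, \mu} \hookrightarrow L_{q^*, \mu}$ valid for $q < 14$, which starting from $q = 2$ gives $L_{7/3}$ (consistent with Theorem \ref{theorem 7.3}), and (ii) the weighted $S_q$ estimate applied to $\jb^{\mu/q} f$, which converts the newly gained $L_q$ bound into an $S_q$ bound at the cost of a bounded power of $\jb$ absorbed through the drift $b \cdot \nabla_v f$ (Assumption \ref{assumption 7.3}) and the commutators arising from the velocity change of variables. A finite chain $2 \to 7/3 \to 14/5 \to 7/2 \to 14/3 \to 7 \to 14 \to p$ reaches any prescribed $p > 14$, with cumulative weight loss bounded by $16$ — hence the $\theta - 16$ in the final norm. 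Once $p > 14$ is achieved, the kinetic Morrey-type embedding yields a kinetic $C^{1, \alpha}$ bound giving both $f, \nabla_v f \in L_\infty\bigl((0, T), C^{\alpha/3, \alpha}_{x, v}(\overline\Omega \times \bR^3)\bigr)$ with $\alpha = 1 - 14/p$.

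The principal obstacle is cleanly tracking the $\jb^\mu$ weight through each iteration: the commutators arising from the cutoff, from the velocity change of variables $w = D\Psi(x) v$, and from the drift $b \cdot \nabla_v f$ each contribute one or two factors of $\jb$, and the total loss must remain bounded by $16$. A secondary point is verifying that the $S_p$ estimate of \cite{DY_21a} accommodates the variable, $C^1$-in-$y$ diffusion coefficient produced by the boundary flattening and the reflected right-hand side; this fits within its hypotheses given Assumption \ref{assumption 2.2} and $\Omega \in C^3$.
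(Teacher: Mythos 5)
Your overall strategy matches the paper's: iterate the $S_p$ estimate of \cite{DY_21a} through an increasing chain of Lebesgue exponents governed by the kinetic Sobolev embedding with homogeneous dimension $14$, use the boundary-flattening diffeomorphism $\Psi$ plus mirror extension on boundary patches, track the $\jb$ weight loss (two units per iteration), and finish with the Morrey-type embedding into $L_{\infty}((0,T), C^{\alpha/3,\alpha}_{x,v})$. The paper packages the single bootstrap step in Lemma \ref{lemma 3.7} and the whole iteration is in the proof of Theorem \ref{theorem 1.9}; your description of the reflection and of the commutator terms from the cutoff and from $w = D\Psi(x)v$ is consistent with Appendix \ref{Appendix A} and \eqref{eq1.7.4}, and your weight count of $16$ agrees with the paper's.

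There is, however, a concrete defect in the proposed integrability chain $2 \to 7/3 \to 14/5 \to 7/2 \to 14/3 \to 7 \to 14 \to p$. The engine of the bootstrap is the gain $S_q \hookrightarrow L_{q^*}$ with $1/q^* = 1/q - 1/14$, and this relation is degenerate at $q = 14$: from $S_{14}$ the embedding yields $L_\infty$, not $L_p$ for a finite prescribed $p > 14$, and it is undefined past that point. Consequently the final step $14 \to p$ cannot be produced by the same mechanism as the earlier steps, and as written the chain does not reach $p$. The paper sidesteps exactly this issue by stopping the Sobolev chain at $r_6 = 7$ (which yields $f, \nabla_v f \in L_{14,\theta-12}$), then \emph{interpolating down} to $f,\nabla_v f \in L_{r_7,\theta-12}$ with $r_7$ fixed by $1/r_7 = 1/p + 1/14 \in (1/14, 1/7)$, so that one further application of Lemma \ref{lemma 3.7} lands precisely on $r_8 = p$; a last application then produces $S_{p,\theta-16}$ and, since the starting exponent now exceeds $14$, the H\"older conclusion $(iii)$ of Lemma \ref{lemma 3.7}. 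You should replace the step $7 \to 14 \to p$ with the interpolation $L_{14} \cap L_2 \hookrightarrow L_{r_7}$ followed by the single Sobolev gain $r_7 \to p$, and account for the associated two applications of Lemma \ref{lemma 3.7} (each costing two weight units), which is exactly how the budget of $16$ is spent.
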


\subsubsection{Linearized Landau equation}

\begin{definition}
			\label{definition 1}
Let $H_{\sigma, \theta}  (\Sigma^T)$ be the Hilbert space of all Lebesgue measurable functions such that the norm
\begin{equation}
			\label{eq1}
	\|u\|_{\sigma, \theta}: = \bigg( \int_{\Sigma^T} (\sigma^{i j} \partial_{v_i} u \, \partial_{v_j} u + \sigma^{i j} v_i v_j u^2) \jb^{\theta}  \, dz\bigg)^{1/2}
\end{equation}
is finite. Here $\sigma$ is defined by \eqref{eq11.5}.
Since $\sigma$ is a positive definite matrix (see Remark \ref{remark 2.17}), $\|\cdot\|_{\sigma, \theta}$ is, indeed, a norm.
\end{definition}

\begin{remark}
			\label{remark 2.17}
Let $\theta \ge 0$ and $u \in L_{2, \theta} (\Sigma^T)$ be a function such that $\nabla_v u \in L_{2, \theta} (\Sigma^T)$. Then,
\begin{equation}
			\label{eq2.17}
	\|u\|_{\sigma, \theta} \le N \||u| + |\nabla_ v u|\|_{  L_{2, \theta - 1} (\Sigma^T) }.	
\end{equation}
This follows from the facts that, for any $v \in \bR^3$,
$$
	\sigma^{i j} (v)  v_i v_j \le N \jb^{-1}, \quad \sigma (v) \le N \jb^{-1}I_3.
$$
Furthermore, due to the inequality
$$
	\jb^{-3} I_3   \le  C_1	\sigma (v)
$$
for some constant $C_1 > 0$,  we have
$$
	\|u\|_{ L_{2, \theta - 3} (\Sigma^T)  } \le C_1  \|u\|_{\sigma, \theta}.
$$
See the details in Lemma 3 of \cite{G_02}.
\end{remark}

\begin{definition}
			\label{definition 11.0}
We say that $f$ is a  finite energy strong solution to   \eqref{eq11.1} if there exists $\theta \ge 0$ such that
\begin{enumerate}

\item 	$f \in  S_2  (\Sigma^T) \cap L_{2, \theta} (\Sigma^T) \cap H_{\sigma, \theta}  (\Sigma^T)$,

\item
 $
	f (T, \cdot), f (0, \cdot) \in   L_{2, \theta} (\Omega \times \bR^3),
$
$f_{\pm} \in L_{\infty} (\Sigma^T_{\pm}, |v \cdot n_x|)$,

\item for any $\phi \in C^1_0 (\overline{\Sigma^T})$,
\begin{align*}
	&	-\int_{\Sigma^T}  (Y \phi)   f \, dz
		 + \int_{\Omega \times \bR^3 }     \big(f (T, x, v) \phi (T, x, v)  - f (0, x, v) \phi (0, x, v)\big)\, dx dv \notag\\
 	 &
		+  \int_{\Sigma^T_{+}}    f_{+} \phi \,  |v \cdot n_x|   d\sigma dt
	-   \int_{\Sigma^T_{-}}   f_{-} \phi  \,  |v \cdot n_x|  d\sigma dt\\
	  &
	  +\int_{\Sigma^T}   [ (\sigma_G \nabla_v f) \cdot \nabla_v \phi - (a_g \cdot \nabla_v f) \phi  - (\overline{K}_g f) \phi]   \,dz = 0,
  \end{align*}

 \item  Eq. \eqref{eq11.1} is satisfied almost everywhere (including the initial and the boundary conditions).
\end{enumerate}
\end{definition}

\begin{assumption}
			\label{assumption 11.1}
The function  $g \in L_{\infty} ((0, T), C^{\varkappa/3, \varkappa}_{x, v} (\overline{\Omega}))$
is such that
\begin{itemize}
\item[--] $\nabla_v g \in L_{\infty} ((0, T), C^{\varkappa/3, \varkappa}_{x, v} (\overline{\Omega}))$ for some $\varkappa \in (0, 1]$, and
 \begin{equation}
			\label{con11.1}
	\|g\|_{   L_{\infty} ((0, T), C^{\varkappa/3, \varkappa}_{x, v} (\overline{\Omega})) } + \|\nabla_v g\|_{  L_{\infty} ((0, T), C^{\varkappa/3, \varkappa}_{x, v} (\overline{\Omega})) } \le K
 \end{equation}
for some $K > 0$.
\item[--] \begin{equation}
			\label{con11.1'}
		g_{-} (t, x, v) = g_{+} (t, x, R_x v) \quad   \forall z \in \Sigma^T_{-}.
	      \end{equation}
\end{itemize}
\end{assumption}

\begin{theorem}
			\label{theorem 11.2}
Let
\begin{itemize}
\item[--] $\Omega$ be a bounded $C^3$ domain,

\item[--] $\varkappa \in (0, 1)$, $T > 0$, $p > 14$ be numbers,

\item[--] Assumption \ref{assumption 11.1} (see \eqref{con11.1} - \eqref{con11.1'}) hold,

\item[--] $f_0  \in \cO_{2, \theta} \cap \cO_{\infty}$ (see \eqref{1.2.7}),

\item[--] $\|g\|_{ L_{\infty} (\Sigma^T ) } \le \varepsilon$.
\end{itemize}
Then, there exists a number $\theta_0=\theta_0(p,\varkappa) > 4$ such that for any $\theta \ge \theta_0$, there exist numbers  $\theta = \theta (p, \varkappa) > 4 $, $\varepsilon  = \varepsilon (\theta) \in (0, 1)$,
and $\theta'  = \theta' (p, \varkappa), \theta''  = \theta'' (p, \varkappa) \in (1, \theta -3)$
 such that
Eq. \eqref{11.1}   has a unique finite energy strong solution  $f$ (see Definition \ref{definition 11.0}), and furthermore,
\begin{equation}
			\label{eq11.2.0}
\begin{aligned}
    &	\|f\|_{S_{2,  \theta' } (\Sigma^T)} + \|f\|_{S_{p,  \theta'' } (\Sigma^T)} +  \|f (T, \cdot)\|_{ L_{2, \theta} (\Omega \times \bR^3)  } +  \|f\|_{  L_{2, \theta} (\Sigma^T) }  +   \|f\|_{ \sigma, \theta  }   \\
&	+ \|f\|_{  L_{\infty} ((0, T), C^{\alpha/3, \alpha}_{x, v} (\overline{\Omega}\times \bR^3))   } + \|\nabla_v f\|_{  L_{\infty} ((0, T), C^{\alpha/3, \alpha}_{x, v} (\overline{\Omega}\times \bR^3))   } \\
& \le N  (\|f_0\|_{\cO_{2, \theta} } + \|f_0\|_{\cO_{\infty} }),
\end{aligned}
\end{equation}
where $\alpha = 1 - 14/p$, the $\|\cdot\|_{\sigma, \theta}$-norm is defined in \eqref{eq1}, and
$
	N = N (p, \theta, \varkappa, \Omega, K, T).
$
\end{theorem}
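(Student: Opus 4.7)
The plan is a vanishing viscosity approximation based on the simplified viscous linearized Landau equation \eqref{eq1.18}. For small $\nu > 0$, the diffusion matrix $A_\nu := \sigma_G + \nu I_3$ is uniformly elliptic (with $\delta$ depending on $\nu$), so Theorem \ref{theorem 3.3} yields a finite energy weak solution $f^\nu$ once the coefficients are checked to satisfy Assumptions \ref{assumption 2.1}, \ref{assumption 7.3}, and \ref{assumption 2.2}. Verifying these assumptions for $\sigma_G$, $a_g$, and $\overline{K}_g$ given by \eqref{eq11.5}--\eqref{eq11.2.1'} reduces to standard pointwise estimates for convolutions with $\Phi$ and $\mu^{1/2}$, together with Assumption \ref{assumption 11.1} and the smallness hypothesis $\|g\|_{L_\infty (\Sigma^T)} \le \varepsilon$.

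The core of the argument consists in deriving estimates on $f^\nu$ that are uniform in $\nu$. Testing the equation against $f^\nu \jb^{\theta}$ and integrating, the boundary integrals cancel because $|v \cdot n_x|$ is invariant under $v \mapsto R_x v$; this produces control of $\|f^\nu (T, \cdot)\|_{L_{2, \theta}} + \|f^\nu\|_{\sigma, \theta}$ after absorbing the perturbation terms coming from $\Phi * (\mu^{1/2} g)$, $a_g$, and $\overline{K}_g$ using the coercivity $\jb^{-3} I_3 \le C \sigma$ (Remark \ref{remark 2.17}) together with the smallness of $\varepsilon$, for $\theta$ taken sufficiently large. I would then rewrite \eqref{eq1.18} as a kinetic Kolmogorov-Fokker-Planck equation with $a = \nu I_3$ (which reduces to $a = I_3$ after the natural kinetic rescaling) and source $\nabla_v \cdot (\sigma_G \nabla_v f^\nu) + a_g \cdot \nabla_v f^\nu + \overline{K}_g f^\nu - \lambda f^\nu + h$, and apply Theorems \ref{theorem 7.3} and \ref{theorem 1.9} to bootstrap to $S_{p, \theta''}$ and anisotropic Hölder regularity. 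Since $|\sigma_G (v)| \lesssim \jb^{-1}$, the source can be estimated in the appropriate weighted norms in terms of the energy quantities already controlled.

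With these uniform bounds in hand, passing $\nu \to 0^+$ via Aubin-Lions compactness (using boundedness of $Y f^\nu$ in $\mathbb{H}^{-1}_p$ and of $\nabla_v f^\nu$ in $L_p$) extracts a finite energy strong solution $f$ of \eqref{eq11.1} satisfying \eqref{eq11.2.0}. Convergence of the traces on $\Sigma^T_{\pm}$, which preserves the specular boundary condition in the limit, follows from the $S_p$ bound and standard kinetic trace theorems. Uniqueness is obtained by applying the argument of Corollary \ref{corollary 7.3.1} to the difference of two solutions: the $S_p$ regularity promotes the weak formulation to a pointwise equation, and a weighted energy estimate on the difference closes for small enough $\varepsilon$.

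The main difficulty is the joint calibration of the weight exponents $\theta$, $\theta'$, $\theta''$ together with the smallness threshold $\varepsilon$. The weighted coercivity of $\sigma$ must absorb the commutator errors generated by multiplying the equation by $\jb^{\theta}$, forcing $\theta$ to be large. Simultaneously, Theorems \ref{theorem 7.3} and \ref{theorem 1.9} each lose several powers of $\jb$ when upgrading $L_{2, \theta}$ energy to $S_p$ or Hölder regularity, constraining $\theta''$ from above relative to $\theta$. Coordinating these constraints while also ensuring that the source term $\nabla_v \cdot (\sigma_G \nabla_v f^\nu)$ lies in the right weighted space — so that it can be treated as a lower-order perturbation of the KFP operator rather than as part of the principal part — is the most delicate aspect of the proof.
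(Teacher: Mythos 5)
Your outline correctly identifies vanishing viscosity as the overarching scheme and captures the right energy estimate (test against $f^\nu \jb^\theta$ and use the coercivity of $\sigma$), which mirrors the paper's Lemma~\ref{lemma 4.6}. The uniqueness step via an energy estimate on the difference of two solutions is also the right idea. However, there are two genuine gaps in the middle of the argument.

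The central problem is your choice of decomposition for the $S_p$ bootstrap. You propose to treat \eqref{eq1.18} as a kinetic Fokker--Planck equation with leading coefficient $a=\nu I_3$ (after rescaling, $I_3$) and move $\nabla_v \cdot (\sigma_G \nabla_v f^\nu) + a_g \cdot \nabla_v f^\nu + \overline{K}_g f^\nu$ to the source. Both Theorems~\ref{theorem 7.3} and~\ref{theorem 1.9} and the underlying $S_p$ estimates of \cite{DY_21a} produce constants that depend on the ellipticity parameter $\delta$; if $\delta=\nu$ the constants blow up as $\nu\to 0$, and the ``kinetic rescaling'' that normalizes $\nu$ to $1$ also inflates the source by negative powers of $\nu$. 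Moreover, absorbing $\nabla_v\cdot(\sigma_G\nabla_v f^\nu)$ as a source requires $\nu$-uniform control of $D^2_v f^\nu$ in the very weighted $L_p$ spaces you are trying to reach, which is circular. The paper avoids this by keeping $\sigma_G+\nu I_3$ as the principal part: for each dyadic shell $\{|v|\sim 2^n\}$, $\sigma_G\gtrsim \jb^{-3}$ furnishes a $\nu$-independent lower ellipticity bound $\sim 2^{-3n}$, and after applying Theorem~\ref{corollary 3.4} to the cut-off coefficient $\sigma_{G,n}$ (Lemma~\ref{lemma 4.8}, Proposition~\ref{proposition 4.7}) the resulting $2^{\beta n}$ factors are absorbed by the $\jb^{-\theta}$ weights for $\theta$ large. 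This dyadic localization in $v$ is the key mechanism that makes the bounds uniform in $\nu$, and it is entirely missing from your plan. It also means the mirror extension must preserve the Hölder class of $\sigma_G$ across $\{y_3=0\}$, which is a nontrivial symmetry property of the Landau kernel checked in Appendix~\ref{Appendix E}; in your decomposition this issue does not surface, which is another sign the principal part was chosen incorrectly.

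A secondary gap is the treatment of $\overline{K}_g$. You suggest verifying Assumptions~\ref{assumption 2.1}--\ref{assumption 2.2} for $\overline{K}_g$ and invoking Theorem~\ref{theorem 3.3} directly, but $\overline{K}_g$ is a nonlocal zeroth-order operator and does not fit the drift term $b\cdot\nabla_v f$ in Eq.~\eqref{7.0}. The paper first proves well-posedness of the simplified viscous equation \eqref{eq1.18} (Propositions~\ref{proposition 4.1}--\ref{proposition 4.2}), and then restores $\overline{K}_g$ via a fixed-point argument in the weighted space $\cK_{\theta,\lambda,p}$ (Proposition~\ref{lemma 10.2}), using the bounds of Remark~\ref{remark 10.1}. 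Without this perturbation step, the existence of the viscous approximants $f^\nu$ is not established.
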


\section{Method of the proof}
                                                            \label{subsection 1.4}
\begin{enumerate}
\item \textbf{Kinetic Fokker-Planck equation,}

\textit{Existence.}
To construct the finite energy weak solutions to \eqref{7.0}, we discretize the diffusion operator
$ \nabla_v \cdot (a \nabla_v f)$  and prove uniform bounds in Section \ref{section 2} by using the results of  \cite{BP_87}.
The discretized second-order operator $A_h$ (see \eqref{7.1.4}) is inspired by the representation of symmetric matrices in Theorem 3.1 of \cite{Kr_11}  (see Lemma \ref{theorem 7.1}).
Such representation (see \eqref{7.1.1}) was first stated  in the work  \cite{MW_53} without smoothness properties of the weights $\lambda_k$.
 The  form of $A_h$ enables us to prove an analogue of the $L_p$-energy inequality for the heat equation (see \eqref{eq7.5.3}).
 We then use the weak* compactness argument to conclude the existence of finite energy weak solutions.

\textit{Uniqueness and higher regularity.} Thanks to the specular reflection boundary condition, we are able to construct a
reflection operator (see \eqref{eq1.4.2.2}) to extend solutions to $x \in \bR^3$.
In the present authors' opinion, such a
reflection operator  is
useless for the study of the regularity for the Vlasov type equations in the absence of velocity diffusion.
In particular, when the same reflection operator is applied to such an equation,
it preserves the form of an equation but yields a drift term with discontinuous coefficients.
On the other
hand, in the presence of velocity diffusion, one can apply the $S_{2}$ regularity theory in $\bR^{7}_T$  (see Section \ref{section 3})
to the extended equation \eqref{eq1.19} and conclude that a finite energy weak solution with $\theta \ge 2$ is of class $S_{2} (\Sigma^T)$.
We then use the $S_p$ regularity results of \cite{DY_21a} (see Appendix \ref{Appendix D}) to conclude that $f \in S_{p} (\Sigma^T)$.
Unfortunately, the mirror extension argument works only for  very particular diffusion operators in the velocity variable,
for example,   $\Delta_v$ and $\nabla_v \cdot (\sigma_G \nabla_v)$, where $\sigma_G$ is defined in \eqref{eq11.5}. See the discussion in Section \ref{subsection 1.4.4}.

\item \textbf{Linearized Landau equation.}
To prove Theorem \ref{theorem 11.2}, we use the method of vanishing viscosity.
	  We first consider a  simplified version of  Eq. \eqref{eq11.1} given by Eq. \eqref{eq1.18}.
By the existence result for the generalized kinetic Fokker-Planck equation (see Theorem \ref{theorem 3.3}), we prove the existence of a finite energy weak  solution in the sense of Definition \ref{definition 7.0}.
To prove the unique solvability in  the class of the finite energy strong  solutions,  we use the mirror extension method combined with the $S_2$ estimate of \cite{DY_21a} (see Appendix \ref{Appendix D}).

\textit{Viscous linearized Landau equation.} By using a perturbation argument, we extend the aforementioned unique solvability result to the viscous linearized Landau equation, which contains the non-local term $\overline{K}_g f$ defined in \eqref{eq11.2}.
We then prove uniform in $\nu$  bounds  by combining the standard energy estimate  (see Lemma \ref{lemma 4.6})
with the $S_p$ estimates of \cite{DY_21a}.
Finally, by using the weak* compactness argument, we prove the existence of the finite energy strong  solution (see Definition \ref{definition 11.0}) to the linearized Landau equation \eqref{eq11.1}.
The uniqueness in the class of the finite energy strong solutions follows from the aforementioned energy estimate.
\end{enumerate}

Our argument for proving the existence/uniqueness of the finite energy strong solution for  Eqs. \eqref{7.0} and \eqref{eq1.18} goes as follows:
$$
		\textit{existence of finite energy weak solution}\, \,  f \rightarrow f \in S_2  \rightarrow \textit{uniqueness}.
$$

In the rest of this section, we
\begin{itemize}
\item  define the mirror extension operator and
show that
it preserves the form of Eq \eqref{7.0},

\item delineate the proofs of the $S_2$ bound, $S_p$ estimate, and the H\"older estimate,

\item elaborate on the importance of the condition \eqref{con11.1} in Assumption \ref{assumption 11.1}.
\end{itemize}

\subsection{A boundary-flattening diffeomorphism that preserves the specular reflection boundary condition}
                    \label{subsection 1.4.1}
We  may assume that there exists a sufficiently small number $r_0 > 0$ such that for any $x_0 \in \Omega$, there exists a function $\rho  = \rho (x_1, x_2)$  such that
\begin{equation}
\begin{aligned}
                \label{eq1.4.1}
     &   \partial \Omega \cap B_{r_0} (x_0) \subset \{x: x_3 = \rho (x_1, x_2) \},\\
    &\Omega_{r_0} (x_0) := \Omega \cap B_{r_0} (x_0) \subset \{x: x_3 < \rho (x_1, x_2)\},
\end{aligned}
\end{equation}
and  $\rho$ is a bounded function with bounded continuous partial derivatives up to order $3$.

Next, denote
$
    \rho_i = \frac{\partial \rho} {\partial x_i}, i = 1, 2.
$
  Following  \cite{GHJO_20} (see Section 7.1.1 of the reference),
we introduce the boundary-flattening diffeomorphism
$$
    \Psi: \Omega_{r_0} (x_0)  \times \bR^3 \to \bH_{-} = \bR^3_{-}
\times \bR^3, \quad (x, v) \to (y, w)
$$
given by
\begin{equation}
			\label{eq1.4.2.4}
    y = \psi (x), \quad w = D \psi (x) v,
\end{equation}
 where $\psi$
is defined
as the inverse of the transformation
\begin{equation}
			\label{eq1.4.2.8}
    \psi^{-1} (y)
    = \begin{pmatrix}
	y_1 \\ y_2 \\ \rho (y_1, y_2)
       \end{pmatrix}
	+ y_3 \begin{pmatrix}
			-\rho_1 \\ -\rho_2 \\ 1
	          \end{pmatrix}.
\end{equation}
It is easy to check that  $D \psi^{-1}$
sends the vector $(0, 0, 1)^T$ to $(- \rho_1, - \rho_2, 1)^T$,
which is an outward normal vector to $\partial \Omega$.
Furthermore, by direct computations
  (see \cite{GHJO_20}),
  \begin{equation}
                \label{7.3.6}
     R_x v = v - 2 (v\cdot n_x) n_x = \begin{pmatrix}w_1 + \rho_1 w_3\\
     w_2 + \rho_2 w_3 \\ \rho_1 w_1 + \rho_2 w_2 - w_3 \end{pmatrix}
     = \frac{\partial x}{\partial y}|_{y_3 = 0} (w_1, w_2, -w_3)^T,
    \end{equation}
which shows that $\Psi$ preserves the specular reflection boundary condition.

\subsubsection{Mirror extension transformation}
                    \label{subsubsection 1.4.2}
Denote
$$
    \widehat z = (t, y, w),     \quad J = \bigg|\frac{\partial x}{ \partial y}\bigg|^2,
$$
and, for any function $u$, supported on $\bR \times  \Omega_{r_0} (x_0) \times \bR^3$,  we set
\begin{equation}
			\label{eq1.4.2.1}
     \widehat u (\widehat z) = u (t, x(y), v (y, w)), \quad \widetilde u (\widehat z)  =  \widehat u (\widehat z) J (y).
\end{equation}
Then,  the mirror extension of $u$ is defined as
\begin{equation}
			\label{eq1.4.2.2}
    \overline{u} (t, y, w) : = \begin{cases} 	 \widetilde u (t,  y,  w), (t, y, w) \in \bH^T_{-},\\
								\widetilde u (t, R y, R w), (t, y, w) \in \bH^T_{+},
					\end{cases}
			\quad R =  \text{diag} (1, 1, -1).
\end{equation}
Note that, strictly speaking, $\overline{u}$ is not an extension of $\widehat u$, however, since the Jacobian $J \approx 1$
near $x_0$, $\overline{u}$ is close to the function
$$
   \widehat z \to \widehat u (t, R y, R w)
$$
provided that $r_0$ is sufficiently small.

\subsubsection{The generalized kinetic Fokker-Planck equation under the mirror extension mapping}
                                        \label{subsubsection 1.4.3}
We consider Eq. \eqref{7.0} and assume that there exists some $\delta \in (0, 1)$ such that for any $z$,
$a \in \text{Sym} (\delta)$, i.e.,
\eqref{eq2.1.0} holds.
Let   $f$ be a finite energy solution to Eq. \eqref{7.0} (see Definition \ref{definition 7.0}) supported on $\bR \times \Omega_{r_0/2} (x_0) \times \bR^3$.
Here we show that the mirror extension $\overline{f}$ (see \eqref{eq1.4.2.2}) also solves a generalized kinetic Fokker-Planck type equation.
To this end, we  change variables in the weak formulation of Eq. \eqref{7.0}  for some fixed $\phi \in C^1_0 (\overline{\Sigma^T})$  (see \eqref{7.0.0} in Definition \ref{definition 7.0}).

First, we find an equation satisfied by $\widetilde f$ (defined in \eqref{eq1.4.2.1})  on $\mathbb{H}_{-}^T$.
By direct computations (see Appendix \ref{Appendix A}),  $\widetilde f$ satisfies the identity
\begin{equation}
                \label{7.3.2}
 	\begin{aligned}
 	&- \int_{  \mathbb{H}^T_{-} }
      (Y \widehat \phi) \,     \widetilde f  d\widehat z
+	 \int_{\mathbb{H}_{-}}  \big(\widetilde   f  (T, y, w)
 	    \widehat \phi (T, y, w)   - \widetilde f  (0, y, w)
 	    \widehat \phi (0, y, w)\big) \, dy dw
	  \\
	   &
       + \int_{  \mathbb{H}_{-}^T }
       \big(   (\nabla_w \widetilde f)^T A \nabla_{w} \widehat \phi
       - \widetilde f X \cdot \nabla_{w} \widehat \phi
	+ \big(B \cdot \nabla_{w} \widetilde f \, \big) \, \widehat \phi
       + \lambda
         \widetilde f  \, \widehat \phi \,\big) \,  d\widehat z\\
         &
		+ \int_0^T \int_{\bR^2 \times \bR^3_{+}}  |w_3|    \widetilde f_{+} \widehat \phi \, dy_1 dy_2  dw dt \\
 &
	-   \int_0^T \int_{\bR^2 \times \bR^3_{-}}  |w_3|    \widetilde f_{-} \widehat \phi \, dy_1 dy_2 dw  dt
	= \int_{ \mathbb{H}^T_{-}} \widehat \phi \widetilde \sg \, d\widehat z,
	\end{aligned}
	\end{equation}
where $\widehat \phi$ is defined by \eqref{eq1.4.2.1}, and
\begin{align}
		\label{eq1.4.3.2}
   &  A  = \bigg(\frac{\partial y}{\partial x}\bigg)\,  \widehat a  \, \bigg(\frac{\partial y}{\partial x}\bigg)^T,  \quad   B = \bigg(\frac{\partial y}{\partial x}\bigg) b, \\
&
\label{eq1.4.3.5}
        X =  (X_1, X_2, X_3)^T =   \bigg(\frac{\partial y}{\partial x}\bigg) \bigg(\frac{\partial v}{\partial y}\bigg) w
        =  \bigg(\frac{\partial y}{\partial x}\bigg) \frac{\partial \big(\frac{\partial x}{\partial y} w\big)} {\partial y} w.
\end{align}

Next, we extend the coefficients  to $\bH^T_{-}$.
First, we set $B (t, \cdot, w), X (\cdot, w)$ to be $0$ if $y \in \bR^3_{-} \setminus \psi (\Omega_{r_0} (x_0))$.
Second, let $\kappa \in C^{\infty}_0 (\bR^3)$  be a function such that $0 \le \kappa \le 1$,
and $\kappa =  1$ on $|y| \le  3r_0/4 $, $\kappa = 0$ on $|y| \ge 7 r_0/8$ and denote
\begin{equation}
			\label{eq1.4.3.3}
    \mathcal{A} (t, y, w) = A (t, y, w) \kappa (y) + \delta I_3 (1-\kappa (y)).
\end{equation}
Note that for sufficiently small $r_0$,
\begin{equation}
                    \label{eq10.19}
\widehat f\text{ is supported on }B_{ 3 r_0/4 } (x_0),
\end{equation}
 and then, so is $\widetilde f$.
Since $\kappa = 1$ on the support of $\widetilde f$, Eq. \eqref{7.3.2}
holds with $\widehat \phi$ replaced with any $\eta \in C^1_0 (\overline{\bH^T_{-}})$.

We now extend $\mathcal{A}, B, X$  to $\bR^7_T$  by setting
\begin{align}
		\label{eq1.20}
 &           \mathbb{A} (t, y, w)
         = \begin{cases} \mathcal{A} (t, y, w), \quad (t, y, w) \in \bH^T_{-},\\
            R \,  \mathcal{A} (t, R y, R w)\,   R, \, \, (t, y, w) \in \bH^T_{+},
            \end{cases}  \\
&
	\label{eq1.4.3.6}
    \mathbb{X} (y, w) = \begin{cases} X (y, w), \, \, (y, w) \in \bH_{-}, \\ R \,  X (R y, R w), \,\,  (y, w) \in \bH_{+}, \end{cases} \\
&\label{eq1.4.3.4}
 \mathbb{B} (t,  y,  w) = \begin{cases} B (t, y, w), \, \, (t, y, w) \in \bH^T_{-}, \\
R \,  B (t, R y, R w), \, \, (t, y, w) \in \bH^T_{+}. \end{cases}
\end{align}

Finally, we check that $\overline{f}$ satisfies an equation on the whole space.
We fix an arbitrary function $\eta \in C^{1}_0 (\overline{\bR^7_T})$
    and denote by
    $
        \eta_{\pm}
    $
     the restriction of
    $\eta$ to $\overline{\mathbb{H}^T_{\pm}}$.
      Replacing $\widehat \phi$
    with $\eta_{+} (t, R y, R w)$
    in \eqref{7.3.2} and changing variables,
     we obtain
   \begin{equation}
                    \label{7.3.5}
    	\begin{aligned}
 	    &   -	\int_{ \mathbb{H}_{+}^T }
    (Y  \eta_{+})    \overline{f}  \,  d\widehat z
	+ \int_{ \mathbb{H}_{+}}   [\overline{f}  (T, y, w)
 	     \eta_{+} (T, y, w) -  \overline{f}  (0, y, w)
 	     \eta_{+} (0, y, w)]   \, dy dw
	  \\
	   &
       + \int_{\mathbb{H}_{+}^T }
       \big((\nabla_w \overline{f})^T \mathbb{A}  \nabla_{w} \eta_{+}
       - \overline{f} \, \mathbb{X} \cdot \nabla_w \eta_{+}
	+\mathbb{B} \cdot \nabla_{w}	\, \overline{f} \,   \eta_{+}
       + \lambda
         \overline{f} \,  \eta_{+}\big) \,  d\widehat z\\
         &
		+ \int_0^T \int_{\bR^2 \times \bR^3_{-}}
		 |w_3|     \widetilde f_{+} (t, y_1, y_2, R w)  \eta \, dy_1 dy_2 \, dw\, dt \\
    &
	-   \int_0^T \int_{\bR^2 \times \bR^3_{+}}    |w_3|   \widetilde f_{-} (t, y_1, y_2, R w) \eta\, dy_1 dy_2 \, dw\, dt\\
&	= \int_{ \mathbb{H}^T_{+}}  \overline{\sg} \eta_{+} \, d\widehat z.
	\end{aligned}
	   	\end{equation}
    Note that by \eqref{7.3.6}
and the fact that $f$
    satisfies the specular reflection boundary condition, we have
    \begin{equation}
                    \label{7.3.7}
        \widehat f_{+} (t, y_1, y_2,  R w)
        = \widehat  f_{-} (t, y_1, y_2,  w).
    \end{equation}
  Then, adding \eqref{7.3.5} to \eqref{7.3.2} with
  $\widehat \phi = \eta_{-}$
  and using \eqref{7.3.7} give
     	\begin{align*}
 	    &    -	\int
    (Y \eta)    \overline{f}  \,  d\widehat z
	+ \int   [\overline{f}  (T, y, w)
 	     \eta (T, y, w)  -  \overline{f}  (0, y, w)
 	     \eta (0, y, w)]   \, dy dw
	  \\
	   &
       + \int
       \big((\nabla_{w} \overline{f})^T \mathbb{A}  \nabla_{w} \eta
 - \overline{f} \mathbb{X} \cdot \nabla_{w} \eta
+ \mathbb{B} \cdot \nabla_{w} \overline{f} \eta
       + \lambda
         \bar f  \eta\big) \,  d\widehat z \\
         &= \int  \overline{\sg} \eta  \, d\widehat z .
	\end{align*}
Thus, in $\bR^7_T$, $\overline{f}$ satisfies the equation
\begin{equation}
			\label{eq1.19}
    Y \overline{f} - \nabla_w \cdot (\mathbb{A} \nabla_w \, \overline{f}) + \nabla_w \cdot (\mathbb{X} \overline{f}) + \mathbb{B} \cdot \nabla_{w} \overline{f} + \lambda \overline{f} = \overline{\sg}
\end{equation}
in the weak sense (see Definition \ref{definition 7.0} \eqref{7.0.0}).

\subsection{Strategy for proving the \texorpdfstring{$S_2$}{} estimate}
						\label{subsection 1.4.4}
\begin{itemize}
\item
\textbf{Interior estimate for Eqs. \eqref{7.0} and \eqref{eq1.18}.}  Away from the boundary $\partial \Omega$, by the  partition of unity argument, we may reduce these equations to the ones on the whole space $\bR^7_T$.
To show that $f \in S_2$ away from the boundary, we use  the unique solvability result  of \cite{DY_21a} (see Theorem \ref{theorem D.1}).
As discussed in the Appendix \ref{Appendix D} (see Remark \ref{remark 3.1}), this theorem is applicable if the leading coefficients are of class
$L_{\infty} ((0, T), C^{\varkappa/3, \varkappa}_{x, v} (\bR^6) )$, $\varkappa \in (0, 1]$, where $C^{\varkappa/3, \varkappa}_{x, v} (\bR^6)$  is defined in \eqref{1.2.0}.
 For Eq. \eqref{eq1.18}, the validity of this condition follows from  Assumption \ref{assumption 11.1} (\eqref{con11.1} - \eqref{con11.1'}). See Lemma \ref{lemma C.1}.

\item
\textbf{Boundary estimate.}
Near the boundary, we use the boundary flattening diffeomorphism $\Psi$ (see \eqref{eq1.4.2.4}) and the mirror extension transformation \eqref{eq1.4.2.2} to reduce Eqs. \eqref{7.0}, \eqref{eq1.18} to the generalized kinetic Fokker-Planck equation \eqref{eq1.19}.
Let us check if the leading coefficients belong to $L_{\infty} ((0, T), C^{\varkappa/3, \varkappa}_{x, v} (\bR^6) )$.
Due to \eqref{eq1.20}, for $(t, y, w) \in \bH^T_{+}$, we have
\begin{equation}
			\label{eq1.21}
	\mathbb{A} (t, y, w) = \begin{pmatrix}
				\mathcal{A}^{11} & \mathcal{A}^{12} & -\mathcal{A}^{13}\\
				\mathcal{A}^{12} & \mathcal{A}^{22} & -\mathcal{A}^{23}\\
				-\mathcal{A}^{13} & -\mathcal{A}^{23} & \mathcal{A}^{33}
				\end{pmatrix} (t, y, R w).
\end{equation}
Note that, unless  $\mathcal{A}$ satisfies the condition
\begin{equation}
			\label{eq1.23}
	\mathcal{A}^{i3} (t, y_1, y_2, 0, w) = - \mathcal{A}^{i3} (t, y_1, y_2, 0, R w), i = 1, 2,
\end{equation}
the function $\mathcal{A} (t, \cdot)$ might be discontinuous on the set  $\{y_3 = 0\} \times \bR^3$.
Fortunately, \eqref{eq1.23} holds for both Eqs. \eqref{7.0} and \eqref{eq1.18}. See the details in Appendix \ref{Appendix E}.
By the unique solvability result in the $S_2$ space (see Theorem \ref{theorem D.1}) applied to Eq. \eqref{eq1.19}, we conclude that $f \in S_2$ near the boundary.
Thus,  for large $\lambda$,  finite energy weak solutions to Eqs. \eqref{7.0} and \eqref{eq1.18} must be finite energy strong solutions in the sense of Definition \ref{definition 7.0}.
\end{itemize}

\subsection{Strategy for proving the \texorpdfstring{$S_p$}{} bound and H\"older continuity}
					\label{subsection 1.4.5}
We use a bootstrap argument combined with the localization method described above.
In particular, we estimate the $S_p$ norm of a localized solution by using  the a priori estimate in Theorem \ref{corollary 3.4} (see \eqref{2.2.1}).
When $p > 14$, by the embedding theorem for the $S_p$ space (see Theorem 2.1 of \cite{PR_98}),
our localized solutions and their derivatives in the velocity direction are of class $C_{\text{kin}}^{\alpha}, \alpha = 1 - 14/p$, where
$$
	C_{\text{kin}}^{\alpha} = \{u \in L_{\infty} (\bR^7_T): [u]_{  C^{\alpha}_{\text{kin}} (\bR^7_T) } < \infty\},
$$
and
$$
	[u]_{  C^{\alpha}_{\text{kin}}  } : = \sup_{z, z' \in \bR^7_T: z \ne z'} \frac{ |u (z) -u (z')| }{(|t-t'|^{1/2} + |x-x' - (t-t')v'|^{1/3} + |v-v'|)^{\alpha}} < \infty.
$$
Unfortunately, the condition $[u]_{  C^{\alpha}_{\text{kin}}  } < \infty$  is not preserved under the \textit{local} diffeomorphism $\Psi^{-1}$.
In other words, if $u (t, y, w) \in C^{\alpha}_{\text{kin}}$, then, $U (t, x, v) =   u (t,  \Psi (x, v))$ is not of class $C_{\text{kin}}^{\alpha}$
since $x \to U (t, \cdot, v)$ is not defined globally.

To overcome this issue, we work with a weaker space $L_{\infty} ((0, T), C^{\alpha/3, \alpha}_{x, v} (\bR^6)) \supset C^{\alpha}_{\text{kin}}$.
This space has two important properties which we state below.
\begin{itemize}
 \item If $u \in S_p ((-\infty, T) \times \bR^6)$, then, $u, \nabla_v u \in L_{\infty} ((-\infty, T), C^{\alpha/3, \alpha}_{x, v} (\bR^6))$.
	This follows from the aforementioned Morrey type embedding theorem for the $S_p$ spaces (see \cite{PR_98}).

\item The condition $u \in L_{\infty} ((0, T), C^{\alpha/3, \alpha}_{x, v} (\bR^6))$  is preserved under the local diffeomorphism $\Psi^{-1}$ in the sense explained above.
\end{itemize}
By the above reasoning, we are able to prove that, under certain assumptions,  if $f$ is a finite energy strong solution  to Eq. \eqref{7.0} or \eqref{eq1.18}, then $f  \in S_p (\bR^7_T)$, and, in addition,
$f, \nabla_v f \in L_{\infty} ((0, T), C^{\alpha/3, \alpha}_{x, v} (\bR^6))$.

\subsection{The reason we consider the linearized Landau equation with rough in time coefficients}
																\label{subsection 1.4.7}
To prove the existence of the local in time solution to the nonlinear Landau equation (Eq. \eqref{11.1} with $g =  f$), one can consider the Picard iteration sequence $f^{(0)} = f_0$,
\begin{equation}
			\label{eq2.4.1}
\begin{aligned}
	&Y f^{(n+1)} - \nabla_{v} \cdot (\sigma_{ F^{(n)} } \nabla_{v} f^{(n+1)}) + a_g \cdot \nabla_v f^{(n+1)} + \overline{K}_{f^{(n)} } f^{(n+1)} = 0 \, \, \text{in} \, \, \Sigma^T,\\
	 &
	f^{(n+1)} (0, x, v) = f_0 (x, v), x \in \Omega, v \in \bR^3,  \, \,
		 f^{(n+1)}_{-} (t, x, v) = f^{(n+1)}_{+} (t, x, R_x v), z \in \Sigma^T_{-},
\end{aligned}
\end{equation}
where $\sigma_{ F^{(n)} }$ is defined by \eqref{eq11.5} with $g$ replaced with $f^{(n)}$.
Let us consider the equation for $f^{(1)}$. Even if $f_0$ is very regular, by using our method,  we can only prove that
$f^{(1)}, \nabla_v f^{(1)} \in L_{\infty} ((0, T), C^{\alpha/3, \alpha}_{x, v} (\Sigma^T))$,
$\alpha \in (0, 1]$.
Then, for the equation  \eqref{eq2.4.1},   Assumption \ref{assumption 11.1} (see \eqref{con11.1} - \eqref{con11.1'}) is satisfied with $g = f^{(1)}$.


\section{Finite energy weak solutions to the generalized kinetic Fokker-Planck equation}
                                    \label{section 2}
In this section, we prove the existence of finite energy weak solution to Eq. \eqref{7.0} (see Theorem \ref{theorem 3.3}).
The conclusion of  Theorem \ref{theorem 3.3} implies the existence of finite energy weak solutions to Eqs. \eqref{7.0} and \eqref{eq1.18}.
This is obvious for Eq. \eqref{7.0}, and for Eq. \eqref{eq1.18}, it is verified in the proof of Proposition \ref{proposition 4.1}.

\subsection{Discretization of the second-order  operator}

The following lemma, which is due to N.V. Krylov, provides a way to rewrite a second-order operator
as  a pure second-order derivative operator. By using this result, we construct a finite difference approximation of the operator  $\nabla_v \cdot (a \nabla_v)$, which we denote by $A_h$. Thanks to Lemma \ref{theorem 7.1}, the stencil depends only on the lower eigenvalue bound of the matrix $a$, and in addition, the `diffusion' coefficients of $A_h$ are as regular as  $a$.
\begin{lemma}[Theorem 3.1 of \cite{Kr_11}]
                \label{theorem 7.1}
Denote
	$
		l_i = e_i, i = 1, 2, 3.
	$
	Then, there exists a number $d_1 > 3$ and \bigskip
	\begin{itemize}
	\item[--]	 vectors
	$
		l_k \in \bR^3, k = 4, \ldots, d_1
	$,
	\item[--] real-analytic functions
	$
		\lambda_k, k = 1, \ldots, d_1,
	$
	 on
	 $
		\text{Sym} (\delta)
	$ (see Section \ref{subsection 1.2}),
	and a number $\delta_1  = \delta_1 (\delta) > 0$
	\end{itemize}
 		 such that, for any
	$
		a \in \text{Sym} (\delta)
	$,
	\begin{equation}
	            \label{7.1.1}
	    	a^{i j}  =  \sum_{k = 1}^{d_1} \lambda_k (a) l_k^i l_k^j,
			\quad \delta_1 < |\lambda_k|  < \delta_1^{-1}, k = 1, \ldots, d_1.
	\end{equation}
\end{lemma}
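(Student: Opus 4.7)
The plan is to produce an explicit decomposition with $d_1 = 9$, taking $l_1 = e_1$, $l_2 = e_2$, $l_3 = e_3$ together with the six vectors $e_i + e_j$ and $e_i - e_j$ for $1 \le i < j \le 3$. The coefficients $\lambda_k(a)$ will be affine linear in the entries of $a$, which is trivially real-analytic, so the real-analyticity requirement is automatic and the only real content is choosing the coefficients so that $|\lambda_k(a)|$ stays uniformly bounded above and away from zero on $\text{Sym}(\delta)$.

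The driving algebraic identity is
\[
\alpha (e_i + e_j)(e_i + e_j)^T + \beta (e_i - e_j)(e_i - e_j)^T = (\alpha + \beta)(e_i e_i^T + e_j e_j^T) + (\alpha - \beta)(e_i e_j^T + e_j e_i^T).
\]
For each ordered pair $i < j$, I would set $\alpha = C + a^{ij}/2$ and $\beta = C - a^{ij}/2$, where $C = C(\delta) > 0$ is a constant to be fixed later. Summing over $i < j$, the off-diagonal contributions assemble exactly into $\sum_{i \neq j} a^{ij} e_i e_j^T$, while the diagonal contributions add up to $4 C \, I_3$ since each index appears in exactly two such pairs. Finally, assigning $\lambda_k := a^{k k} - 4 C$ to $l_k = e_k$ for $k = 1, 2, 3$ cancels the spurious diagonal $4 C \, I_3$ and restores $\operatorname{diag}(a)$, producing the identity $a = \sum_{k=1}^{9} \lambda_k(a) l_k l_k^T$.

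It remains to choose $C$ so that all nine coefficients are bounded away from $0$ and from $\infty$ uniformly in $a \in \text{Sym}(\delta)$. From the ellipticity bound \eqref{eq2.1.0} one has $\delta \le a^{ii} \le \delta^{-1}$, and Cauchy-Schwarz applied to the bilinear form $a$ yields $|a^{ij}| \le \sqrt{a^{ii} a^{jj}} \le \delta^{-1}$. Taking $C = 3 \delta^{-1}$ then gives $|C \pm a^{ij}/2| \in [5 \delta^{-1}/2,\, 7 \delta^{-1}/2]$ and $|a^{k k} - 4 C| \in [11 \delta^{-1},\, 12 \delta^{-1}]$, so one may take $\delta_1 := \min\bigl(5 \delta^{-1}/2,\, (12 \delta^{-1})^{-1}\bigr)$, which depends only on $\delta$.

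There is no serious obstacle in this argument; the only conceptual point worth flagging is that the naive decomposition using only the vectors $e_i + e_j$ with coefficient $a^{ij}$ would produce coefficients that vanish whenever $a^{ij} = 0$. Pairing $e_i + e_j$ with its companion $e_i - e_j$ opens up a one-parameter family of valid decompositions (parametrized by $C$), and this extra freedom is exactly what allows us to shift every coefficient uniformly away from zero.
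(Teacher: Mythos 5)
Your algebraic verification is correct and the bounds you compute are internally consistent, but the construction fails the property that the lemma actually needs. With $C = 3\delta^{-1}$ you get $\lambda_k(a) = a^{kk} - 12\delta^{-1}$ for $k = 1, 2, 3$, and since $a^{kk} \le \delta^{-1}$ this forces $\lambda_k \le -11\delta^{-1} < 0$. The statement as typeset uses $|\lambda_k|$, but this is a typographical slip: Theorem 3.1 of Krylov's paper asserts the existence of a decomposition with \emph{positive} coefficients, $\delta_1 < \lambda_k < \delta_1^{-1}$, and the rest of this paper relies on that. In the proof of Lemma \ref{lemma 7.5} the coercivity step reads ``By \eqref{7.1.2}, $I_{1,1} \ge 2\delta_1 \int_{\Sigma^T} \jb^{\theta} |\delta_{h, l_k} f_{\varepsilon, h}|^2 \,dz$,'' which is only the case when $a_k = \lambda_k(a) \ge \delta_1 > 0$; the same positivity is used in the $L_p$ estimate leading to \eqref{eq7.5.3}. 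With your decomposition $a_1, a_2, a_3$ are large and negative and both steps collapse.

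This is not an artifact of your particular constant $C$: the nine-vector stencil $\{e_1, e_2, e_3, e_i \pm e_j\}$ cannot represent every $a \in \text{Sym}(\delta)$ with positive weights. Writing $a = \sum_k \mu_k e_k e_k^T + \sum_{i<j}\big[\alpha_{ij}(e_i + e_j)(e_i + e_j)^T + \beta_{ij}(e_i - e_j)(e_i - e_j)^T\big]$ with $\mu_k, \alpha_{ij}, \beta_{ij} > 0$ forces $\alpha_{ij} + \beta_{ij} > |\alpha_{ij}-\beta_{ij}| = |a^{ij}|$, hence $\mu_i < a^{ii} - \sum_{j\ne i}|a^{ij}|$, so positivity of $\mu_i$ requires strict diagonal dominance of $a$ --- but the matrix with $1$ on the diagonal and $0.8$ off the diagonal lies in $\text{Sym}(\delta)$ for $\delta \le 0.2$ and is not diagonally dominant. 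This is the classical Motzkin--Wasow obstruction. Krylov's theorem overcomes it with a larger, more carefully chosen stencil (so $d_1$ is in general much bigger than $9$) and is nontrivial precisely because it also makes the positive weights $\lambda_k$ real-analytic on all of $\text{Sym}(\delta)$; your decomposition gets real-analyticity for free only because it gives up on positivity, which is the actual content of the theorem. There is no short self-contained substitute for the citation.
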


Let $d_1$, $\Lambda = \{l_1, \ldots, l_{d_1}\}$
and $\lambda_k, k = 1, \ldots, d_1,$ be the number, stencil, and
functions in Lemma \ref{theorem 7.1}.
We set
$$
    a_k (z) = \lambda_k (a (z)), k = 1, \ldots, d_1.
$$
Then, by the above lemma, there exist a constant $\delta_1 = \delta_1 (\delta) > 0$ such that for any $z$,
\begin{equation}
            \label{7.1.2}
     \delta_1 \le |a_k (z)| \le \delta_1^{-1} ,\quad k = 1, \ldots, d_1.
\end{equation}

For any $h \in \bR$ and any function $u$ on $\bR^3$, we define the following  operators:
\begin{align}
&	T_{h, l} u (v) = u (v+hl),
	\quad
 	\delta_{h, l} u (v) = \frac{u (v+h l ) -  u (v)}{h},\notag\\
& \label{7.1.4}
	A_h u = - \sum_{k = 1}^{d_1} \delta_{h, -l_k} (a_k \delta_{h, l_k} u).
\end{align}
We now check the consistency with $\nabla_{v} \cdot (a \nabla_{v})$.
Denote
\begin{equation}
			\label{7.1.6}
    \Delta_{h, \xi} = \frac{T_{h, \xi} - 2 I_3+ T_{h, -\xi} }{h^2},
\quad
     \partial_{(\xi)} = \xi_i \partial_{v_i}, \quad \partial_{(\xi)(\xi)} = \xi_i \xi_j \partial_{v_i} \partial_{v_j}.
\end{equation}
Observe that the following product rule holds:
\begin{equation}
                \label{7.5.1}
    \delta_{h, l} (f g)
    =  g \delta_{h, l} f
    + (T_{h, l} f) \delta_{h, l} g.
    \end{equation}
We fix arbitrary $\phi \in C^2_0 (\bR^3)$.
Then, by
\eqref{7.5.1}, we have
 \begin{equation}
                \label{7.1.3}
             A_h \phi
        = - a_k (\delta_{h, -l_k} \delta_{h, l_k} \phi)
        + (- \delta_{h, -l_k} a_k)\,  T_{h, -l_k} \delta_{h, l_k} \phi.
  \end{equation}
We note that the last term equals
$$
	(- \delta_{h, -l_k} a_k)\,  (-\delta_{h, -l_k} \phi).
$$
Then,  using the Cauchy-Schwartz inequality,  Assumption \ref{assumption 2.2} (see \eqref{eq2.2.0}),  and the fact that $\phi$ has bounded support, we get
$$
	\lim_{h \to 0} (- \delta_{h, -l_k} a_k)\,  (-\delta_{h, -l_k} \phi) = \partial_{(l_k)} a_k \partial_{(l_k)} \phi \quad \text{in} \, \, L_{2} (\Sigma^T).
$$
By this, \eqref{7.1.3}, and \eqref{7.1.1}, we conclude
\begin{equation}
				\label{eq2.1.1}
\begin{aligned}
	\lim_{h \to 0}	A_h \phi &=  a_k \partial_{(l_k) (l_k) } \phi  +  \partial_{(l_k)} a_k \partial_{(l_k)} \phi\\
  	&= \nabla_{v} \cdot (a \nabla_{v} \phi) \quad \text{in} \, \, L_{2} (\Sigma^T).
\end{aligned}
\end{equation}

\subsection{Discretized equation}
For $\varepsilon, h \in (0,1)$,
we consider the equation
\begin{align}
                \label{7.1}
    & Y f_{\varepsilon, h} - A_h f_{\varepsilon, h}  + b^i \delta_{ h, e_i} f_{\varepsilon, h}   + \lambda f_{\varepsilon, h} = \sg  \quad \text{in} \, \,  \Sigma^T \notag\\
& (f_{ \varepsilon, h})_{-} (t, x, v) = (1-\varepsilon) (f_{\varepsilon, h})_{+} (t, x, R_x v)\quad \text{on} \, \,  \Sigma^T_{-},\\
	& f_{\varepsilon, h} (0, x, v) = f_0 (x, v), \, \, (x, v) \in \Omega \times \bR^3.\notag
\end{align}	

Below we state the definition of the solution space. To do that, we first need to introduce  a set of test functions from \cite{BP_87}.

\begin{definition}
                \label{definition 2.2.3}
 We say that $\phi  \in \mathsf{\Phi}$ if
\begin{itemize}[--]
    \item $\phi$ is continuously differentiable along the  characteristic lines $(t+s, x + v s, v)$,

    \item $\phi$, $Y \phi$ are bounded functions on $\Sigma^T$,

    \item $\phi$ has bounded support, and there is a positive lower bound of the length of the aforementioned  characteristic lines inside $\Sigma^T$ which intersect the support of $\phi$.
\end{itemize}

\end{definition}

\begin{remark}
		\label{remark 2.2.4}
One can show that
$$
	C^1_0 \big(([0, T] \times \overline{\Omega} \times \bR^3)
\setminus ((0, T) \times \gamma_0 \cup \{0\} \times \partial \Omega \times \bR^3   \cup \{T\} \times \partial \Omega \times \bR^3)\big) \subset \mathsf{\Phi}.
$$
A proof can be found in Lemma 2.1 of \cite{G_93}.
\end{remark}

\begin{definition}
                \label{definition 2.2.1}
For $p \in [1, \infty)$, and $\theta \ge 0$,
 $E_{p, \theta} (\Sigma^T)$ is the Banach space of functions $u$
 with the  following properties:
\begin{itemize}
    \item[--]
$u, Y u \in L_{p, \theta} (\Sigma^T)$,

\item[--] There exist functions
$
     u_{\pm} \in L_p (\Sigma^T_{\pm}, |v \cdot n_x|),
$
$
     u (T, \cdot), u (0, \cdot) \in L_p (\Omega \times \bR^3)
 $
 such that
for any  $\phi \in \mathsf{\Phi}$ (see Definition \ref{definition 2.2.3}),
 the following Green's identity holds:
 	\begin{equation}
 	                \label{7.2}
 	\begin{aligned}
 	    &		\int_{\Sigma^T}  (Y u)  \phi
  +   (Y \phi)    u \, dz
	\\
	  &
		 =  \int_{\Omega \times \bR^3 }       u (T, x, v) \phi (T, x, v) \, dx dv
		 -
		 \int_{\Omega \times \bR^3 }     u (0, x, v)  \phi (0, x, v) \, dx dv
	  \\
 	   &
		\quad +  \int_{\Sigma^T_{+}}    u_{+} \phi \, |v \cdot n_x| \, d\sigma  dt
	-  \int_{\Sigma^T_{-}}    u_{-} \phi \, |v \cdot n_x| \, d\sigma  dt.
	\end{aligned}
	   	\end{equation}
\end{itemize}
\end{definition}

\begin{remark}
			\label{remark 2.2.3}
By Proposition 1 of \cite{BP_87}, if $u, Y u \in L_p (\Sigma^T)$,
then, there exists unique  functions $u_{\pm} \in L_{p, \text{loc} } (\Sigma^T_{\pm})$,
$  u (T, \cdot), u (0, \cdot)  \in L_{p, \text{loc} } (\Omega \times \bR^3)$ such that \eqref{7.2} holds.
See p. 393 of \cite{BP_87} for the definition of $L_{p, \text{loc} } (\Sigma^T_{\pm})$ and $L_{p, \text{loc} } (\Omega \times \bR^3)$.
\end{remark}

\begin{remark}
                \label{remark 2.2.2}
For any $\tau \in (0, T)$,
 there exist functions $u_{\tau},  u_{\pm; \tau}$ defined on $\Omega \times \bR^3$ and $\Sigma_{\pm}^{\tau}$, respectively,
 such that the identity \eqref{7.2} holds with $\tau, u_{\tau},  u_{\pm; \tau}$ in place of $T,  u (T, \cdot)$ and  $u_{\pm}$, respectively.
 Furthermore, it follows from the proof of Proposition 1 in \cite{BP_87} that for a.e. $\tau \in (0, T)$,
 $$
    u_{\tau} (\cdot)  = u|_{t = \tau}, \quad u_{\pm;  \tau}  = (u I_{t \in (0, \tau) })|_{ \gamma_{\pm} }.
 $$
\end{remark}

\begin{proposition}
            \label{proposition 7.4}
Under assumptions of Theorem \ref{theorem 3.3}, for any numbers $p > 1, \theta \ge 0, \varepsilon, h \in (0, 1]$,
and $\sg \in L_{p, \theta} (\Sigma^T)$,
 Eq. \eqref{7.1} has a unique (strong) solution $f_{\varepsilon, h} \in E_{p,  \theta } (\Sigma^T)$.
\end{proposition}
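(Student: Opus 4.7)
The approach is to treat the finite-difference operators $A_h$ and $b^i \delta_{h, e_i}$ as bounded perturbations of the transport operator $Y + \lambda$. For fixed $h \in (0, 1]$, the uniform bound \eqref{7.1.2} on the coefficients $a_k$, Assumption \ref{assumption 7.3} on $b$, and the comparability of $\langle v \rangle^\theta$ under $v$-translations of size at most $h$ together imply that both $A_h$ and $b^i \delta_{h, e_i}$ map $L_{p, \theta}(\Sigma^T)$ into itself boundedly, with operator norm depending only on $h, \delta, K, \theta$. Consequently \eqref{7.1} has the schematic form $(Y + \lambda) u = \sg + \mathcal{L}_h u$ with $\mathcal{L}_h$ a bounded linear operator on $L_{p, \theta}$.

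The plan is to solve first the pure transport initial-boundary value problem
\begin{equation*}
Y u + \mu u = F \text{ in } \Sigma^T, \quad u_{-}(t, x, v) = (1-\varepsilon)\, u_{+}(t, x, R_x v), \quad u(0, \cdot) = f_0,
\end{equation*}
for $\mu \geq 0$, $F \in L_{p, \theta}(\Sigma^T)$ and $f_0 \in L_{p, \theta}(\Omega \times \bR^3)$, using the Beals--Protopopescu theory of \cite{BP_87}: the dissipative factor $1 - \varepsilon < 1$ makes the trace operation a strict contraction on the incoming set, fitting precisely into their framework, and yields a unique $u \in E_{p, \theta}(\Sigma^T)$. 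Denote the solution operator by $(F, f_0) \mapsto S_\mu(F, f_0)$. Testing the equation against a regularisation of $|u|^{p-2} u \langle v \rangle^\theta$ in the Green's identity \eqref{7.2}, and using the change of variables $v \mapsto R_x v$ to combine the two boundary integrals into $(1 - (1-\varepsilon)^p)\int_{\Sigma^T_+} |u_+|^p |v \cdot n_x|\, d\sigma\, dt \geq 0$, one extracts the resolvent bound
\begin{equation*}
\mu\, \|S_\mu(F, 0)\|_{L_{p, \theta}}^p \leq C\, \|F\|_{L_{p, \theta}} \|S_\mu(F, 0)\|_{L_{p, \theta}}^{p-1},
\end{equation*}
hence $\|S_\mu(F, 0)\|_{L_{p, \theta}} \leq C\|F\|_{L_{p, \theta}}/\mu$. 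Rewriting \eqref{7.1} as the fixed-point equation $u = S_\lambda(\sg + A_h u - b^i \delta_{h, e_i} u,\, f_0)$, the right-hand side is a contraction on $L_{p, \theta}(\Sigma^T)$ once $\lambda$ exceeds a constant depending on $h$; the Banach fixed point theorem then yields a unique $u \in L_{p, \theta}$ which automatically lies in $E_{p, \theta}$ because it is in the image of $S_\lambda$. The general case of $\lambda$ is reduced to the large-$\lambda$ regime by the exponential time weight $u \mapsto e^{-\mu t} u$, which shifts $\lambda$ to $\lambda + \mu$ and preserves the equation, the reflection boundary condition, and the function class.

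Uniqueness follows from the same $L_p$ energy identity applied to the difference of two solutions: Gr\"onwall's inequality then forces the homogeneous problem with zero data to have only $u = 0$. The delicate point is the treatment of $A_h$ in this identity; after a discrete summation-by-parts based on the product rule \eqref{7.5.1} and the uniform bound on $\delta_{h, -l_k} a_k$ from Assumption \ref{assumption 2.2}, the quantity $-\int (A_h u)\, |u|^{p-2} u\, \langle v \rangle^\theta\, dz$ decomposes into a nonnegative main term and a remainder absorbable into $\lambda \|u\|_{L_{p, \theta}}^p$ by Young's inequality. The main obstacle is bookkeeping rather than conceptual: one must verify that a suitable regularisation of $|u|^{p-2} u \langle v \rangle^\theta$ qualifies as a test function in the class $\mathsf{\Phi}$ of Definition \ref{definition 2.2.3}, and that the passage to the limit preserves the favourable sign of the boundary contribution.
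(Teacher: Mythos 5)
Your proposal is correct and rests on the same key observation as the paper: for fixed $h$ the finite-difference operators $A_h$ and $b^i\delta_{h,e_i}$ are bounded on $L_{p,\theta}(\Sigma^T)$, so \eqref{7.1} falls within the Beals--Protopopescu framework for transport equations with dissipative boundary conditions. The paper closes by directly invoking Theorem~1 of \cite{BP_87}, which already covers bounded perturbations of the transport generator, whereas you re-derive that theorem in this instance (pure transport solvability, resolvent estimate via the $L_p$ energy identity of Lemma~\ref{lemma 7.1}, contraction for large $\lambda$, and the exponential-in-time weight to remove the largeness restriction); the underlying mathematics is the same, only the level of detail differs.
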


\begin{proof}
By Assumptions \ref{assumption 2.1} - \ref{assumption 2.2} (see \eqref{eq2.1.0} - \eqref{eq2.2.0}),
$A_h$ is a bounded operator on $L_{p, \theta} (\Sigma^T)$.
Now the assertion follows from
 Theorem 1 of \cite{BP_87}.
\end{proof}

\subsection{Uniform bounds for the discretized equation}

The following energy identity contained Proposition 1 of \cite{BP_87} is crucial in the proof of the uniform $L_p$ bounds for $f_{\varepsilon, h}$.

\begin{lemma}
                \label{lemma 7.1}
For any numbers $p \in [1, \infty), \theta \ge 0$ and $f \in E_{p, \theta} (\Sigma^T)$,
 one has
\begin{equation}
		\label{eq7.1.1}
\begin{aligned}
    &    \int_{\Omega \times \bR^3} (|f (T, x, v)|^p  - |f (0, x, v)|^p) \, \jb^{\theta} dx dv\\
&
   +  \int_{\Sigma^T_{+}}    |f_{+}|^p  \jb^{\theta}\,  |v \cdot n_x| d\sigma \, dt
    -  \int_{ \Sigma^T_{-} }    |f_{-}|^p  \jb^{\theta}\,  |v \cdot n_x| d\sigma \, dt\\
 &
  =
    p \int_{\Sigma^T} (Y f) |f|^{p-1} (\text{sgn} f) \jb^{\theta} \, dz.
 \end{aligned}
\end{equation}
\end{lemma}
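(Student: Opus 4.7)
The strategy is to reduce the weighted identity to the unweighted case $\theta = 0$ and then to invoke Proposition 1 of \cite{BP_87}. For the reduction, I set $g = f\jb^{\theta/p}$. Because $\jb^{\theta/p}$ depends only on $v$, it is annihilated by $Y$, so $Yg = \jb^{\theta/p}\, Yf \in L_p(\Sigma^T)$, and one verifies that $g \in E_{p,0}(\Sigma^T)$ with traces $g_\pm = f_\pm\,\jb^{\theta/p}$ and time slices $g(0,\cdot) = f(0,\cdot)\,\jb^{\theta/p}$, $g(T,\cdot) = f(T,\cdot)\,\jb^{\theta/p}$ (the Green's identity \eqref{7.2} for $g$ with any $\phi \in \mathsf{\Phi}$ follows from the corresponding identity for $f$ with test function $\phi\,\jb^{\theta/p}$, which also lies in $\mathsf{\Phi}$). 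Since $|g|^p = |f|^p\,\jb^\theta$ and $p\,(Yg)\,|g|^{p-1}\operatorname{sgn}(g) = p\,(Yf)\,|f|^{p-1}\operatorname{sgn}(f)\,\jb^\theta$, the identity \eqref{eq7.1.1} for $f$ with weight $\theta$ is equivalent to its $\theta = 0$ counterpart for $g$.

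For the unweighted identity, the idea is to apply Green's identity \eqref{7.2} to the renormalized function $|g|^p$ with constant test function $\phi \equiv 1$, using the formal chain rule $Y(|g|^p) = p\,|g|^{p-1}\operatorname{sgn}(g)\,Yg$. To make this rigorous for $g \in E_{p,0}$ (which carries no separate $(x,v)$ regularity), I would proceed by double regularization. First, replace $|\cdot|^p$ by $\beta_\eta(s) := (s^2 + \eta^2)^{p/2}$, which is $C^1$ with $\beta_\eta \to |\cdot|^p$ and $\beta_\eta'(s) \to p\,|s|^{p-1}\operatorname{sgn}(s)$ as $\eta \downarrow 0$. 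Second, mollify $g$ along the characteristic flow of $Y$ to obtain smooth $g_\varepsilon$ with $Yg_\varepsilon \to Yg$ in $L_p(\Sigma^T)$. For the smooth $g_\varepsilon$ the chain rule $Y(\beta_\eta(g_\varepsilon)) = \beta_\eta'(g_\varepsilon)\,Yg_\varepsilon$ holds classically, and applying the Green's identity of Definition \ref{definition 2.2.1} to $\beta_\eta(g_\varepsilon)$ with $\phi \equiv 1$ (or a truncation thereof) yields the analogue of \eqref{eq7.1.1} for the mollified, renormalized function. One then passes $\varepsilon \to 0$ first, and $\eta \to 0$ second, using dominated convergence on the bulk terms.

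The principal technical obstacle is the convergence of the boundary traces $\beta_\eta(g_\varepsilon)_\pm \to \beta_\eta(g_\pm)$ and of the time slices $\beta_\eta(g_\varepsilon)(0,\cdot), \beta_\eta(g_\varepsilon)(T,\cdot)$ as $\varepsilon \downarrow 0$, since the trace map $u \mapsto u_\pm$ is not continuous on $E_{p,0}$ in the graph norm. This is precisely what is established by Proposition 1 of \cite{BP_87} via a DiPerna--Lions type commutator estimate for mollification along the characteristics of $Y$, which both justifies the renormalization and identifies the traces of $\beta_\eta \circ g$ with $\beta_\eta(g_\pm)$. Once that step is granted, the proof reduces to the substitution in the first paragraph combined with a direct invocation of that proposition.
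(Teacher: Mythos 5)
Your proof is essentially the paper's: the lemma is stated with a bare citation to Proposition 1 of Beals--Protopopescu, which is exactly the unweighted energy identity you ultimately invoke, so your renormalization/mollification sketch is a description of what that cited proposition establishes rather than an independent argument (as you yourself acknowledge when you note that the trace convergence is "precisely what is established by Proposition 1"). The one step you make explicit that the paper leaves implicit is the reduction to $\theta = 0$ via $g = f\jb^{\theta/p}$: since $\jb^{\theta/p}$ depends only on $v$, $Y(\jb^{\theta/p}) = 0$, hence $Yg = \jb^{\theta/p}\,Yf$, $|g|^p = |f|^p\jb^\theta$, and $p(Yg)|g|^{p-1}\text{sgn}(g) = p(Yf)|f|^{p-1}\text{sgn}(f)\,\jb^\theta$, so the unweighted identity for $g$ is exactly \eqref{eq7.1.1} for $f$ --- a worthwhile clarification given that the cited proposition concerns the unweighted space.
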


\begin{lemma}
             \label{lemma 7.5}
Under the assumptions of Theorem \ref{theorem 3.3},  for any $\varepsilon, h \in (0, 1]$ and $\theta \ge 0$,
there exists
$\lambda_0  = \lambda_0 (\delta, \theta, K) > 0$
such that
for any
$\lambda \ge \lambda_0$,
$\sg \in L_{2, \theta} (\Sigma^T)
\cap L_{\infty} (\Sigma^T)$,
one has
\begin{equation}
                \label{eq7.5.0}
\begin{aligned}
   & \int_{\Omega \times \bR^3}  |f_{\varepsilon, h}|^2 (T, x, v)   \jb^{\theta} \, dx dv
    +  \delta_1 \sum_{k=1}^{d_1} \int_{\Sigma^T}
     |\delta_{h, l_k}f_{\varepsilon, h}|^2 \jb^{\theta} \, dz  \\
&
    +  (\lambda/2) \int_{\Sigma^T}
    |f_{\varepsilon, h}|^2  \jb^{\theta}\, dz
    + \varepsilon  \int_{\Sigma^T_{+}}
      |(f_{ \varepsilon, h})_{+}|^2  \jb^{\theta} \,  |v \cdot n_x|    d\sigma  dt  \\
      &
    \le
   \lambda^{-1}	 \int_{\Sigma^T}
   |\sg|^2 \jb^{\theta} \, dz
 +  \int_{   \Omega \times \bR^3 }
   f_0^2 \jb^{\theta} \, dx dv,
\end{aligned}
\end{equation}
and
\begin{align*}
    &   \max\{ \|f (T, \cdot)\|_{ L_{\infty} (\Omega \times \bR^3) },
    \|f_{\varepsilon, h}\|_{ L_{\infty} (\Sigma^T) },
   \| (f_{ \varepsilon, h})_{\pm}\|_{ L_{\infty} (\Sigma^T_{\pm}, |v \cdot n_x|)}\}\\
  &  \le
    	\lambda^{-1}  \| \sg\|_{ L_{\infty} (\Sigma^T) }
    	+  \|f_0\|_{L_{\infty} (\Omega \times \bR^3)  },
 \end{align*}
 where $\delta_1 = \delta_1 (\delta) > 0$ is defined on page \pageref{7.1.2}.
\end{lemma}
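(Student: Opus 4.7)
My plan is to derive both bounds from the Green identity of Lemma~\ref{lemma 7.1} combined with the discrete integration-by-parts formula $\int(\delta_{h,-l}u)w\,dv=\int u\,\delta_{h,l}w\,dv$ (a direct consequence of translation invariance of Lebesgue measure), treating the specular-reflection condition through the isometry $v\mapsto R_xv$. Applying Lemma~\ref{lemma 7.1} with $p=2$ and weight $\jb^{\theta}$ converts $2\int(Yf_{\varepsilon,h})f_{\varepsilon,h}\jb^{\theta}\,dz$ into endpoint and boundary integrals. On $\Sigma^T_-$, the change of variables $v\mapsto R_xv$ preserves $|v|$ (hence $\jb^{\theta}$), $|v\cdot n_x|$, and the fiberwise Lebesgue measure, and swaps $\gamma_{-}\leftrightarrow\gamma_{+}$; together with the boundary condition $(f_{\varepsilon,h})_{-}=(1-\varepsilon)(f_{\varepsilon,h})_{+}(\cdot,R_xv)$ in \eqref{7.1} this gives
\[
\int_{\Sigma^T_{-}}|(f_{\varepsilon,h})_-|^2\jb^{\theta}|v\cdot n_x|\,d\sigma\,dt=(1-\varepsilon)^2\int_{\Sigma^T_{+}}|(f_{\varepsilon,h})_+|^2\jb^{\theta}|v\cdot n_x|\,d\sigma\,dt,
\]
so the net boundary contribution on the left-hand side is at least $\varepsilon\int_{\Sigma^T_+}|(f_{\varepsilon,h})_+|^2\jb^{\theta}|v\cdot n_x|\,d\sigma\,dt$, using $1-(1-\varepsilon)^2\ge\varepsilon$ for $\varepsilon\in(0,1]$.

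Next I would multiply \eqref{7.1} by $2f_{\varepsilon,h}\jb^{\theta}$ and integrate. By the summation-by-parts identity and the product rule \eqref{7.5.1},
\[
-2\int(A_hf_{\varepsilon,h})f_{\varepsilon,h}\jb^{\theta}\,dz
=2\sum_{k=1}^{d_1}\int a_k\jb^{\theta}(\delta_{h,l_k}f_{\varepsilon,h})^2\,dz+R_h,
\]
where the commutator $R_h$ involves $\delta_{h,l_k}\jb^{\theta}$, which is bounded pointwise by $C\jb^{\theta-1}$ uniformly in $h\in(0,1]$. The leading sum is at least $2\delta_1\sum_k\int(\delta_{h,l_k}f_{\varepsilon,h})^2\jb^{\theta}$ by \eqref{7.1.2}, while $R_h$ is controlled by Cauchy-Schwarz into a small multiple of the same discrete Dirichlet form plus a zeroth-order term $C\int f_{\varepsilon,h}^2\jb^{\theta}$. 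The drift contribution $2\int b^i(\delta_{h,e_i}f_{\varepsilon,h})f_{\varepsilon,h}\jb^{\theta}$ is split by Young's inequality using Assumption~\ref{assumption 7.3}, with the discrete-derivative piece absorbed into the diffusion gain since each $e_i$ belongs to the stencil $\{l_1,\dots,l_{d_1}\}$. The source is split as $2\int\sg f_{\varepsilon,h}\jb^{\theta}\le\lambda^{-1}\int\sg^2\jb^{\theta}+\lambda\int f_{\varepsilon,h}^2\jb^{\theta}$. Choosing $\lambda_0=\lambda_0(\delta,K,\theta)$ large enough to absorb every surviving zeroth-order term into the $(\lambda/2)$ coefficient on the left-hand side yields \eqref{eq7.5.0}.

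For the $L_\infty$ bound I would run the $L_p$ analog of the above argument with weight $1$ and test function $p|f_{\varepsilon,h}|^{p-2}f_{\varepsilon,h}$ (equivalently, truncation $w=(f_{\varepsilon,h}-M)_+$ with $M=\lambda^{-1}\|\sg\|_\infty+\|f_0\|_\infty$). Monotonicity of $s\mapsto|s|^{p-2}s$ gives $(\delta_{h,l_k}f_{\varepsilon,h})\delta_{h,l_k}(|f_{\varepsilon,h}|^{p-2}f_{\varepsilon,h})\ge 0$, so after summation by parts the discrete diffusion contributes nonnegatively. The specular reflection again contributes a nonnegative boundary gain $[1-(1-\varepsilon)^p]\int_{\Sigma^T_{+}}|(f_{\varepsilon,h})_+|^p|v\cdot n_x|\,d\sigma\,dt$; the drift plus source are absorbed by Young's inequality against $\lambda p\int|f_{\varepsilon,h}|^p$. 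This yields an $L_p$-bound on $f_{\varepsilon,h}$, $f_{\varepsilon,h}(T,\cdot)$, and the traces $(f_{\varepsilon,h})_\pm$ of the form $C_p^{1/p}(\lambda^{-1}\|\sg\|_\infty+\|f_0\|_\infty)$ with $C_p$ polynomial in $p$; taking $p$-th roots and letting $p\to\infty$ delivers the maximum-principle bound with no multiplicative constant.

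The main obstacle is keeping constants uniform in $h$ and $\varepsilon$. For \eqref{eq7.5.0} this rests on the uniform pointwise estimate $|\delta_{h,l_k}\jb^{\theta}|\le C\jb^{\theta-1}$ valid for all $h\in(0,1]$, which prevents the commutator $R_h$ from degrading the coercive principal part, combined with the lower bound $|a_k|\ge\delta_1$ from Lemma~\ref{theorem 7.1}. For the $L_\infty$ step the delicate point is showing $C_p^{1/p}\to 1$, which follows once the $L_p$ identity is assembled without loss in the applications of Young's inequality.
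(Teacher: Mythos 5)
Your $L_2$ argument matches the paper's almost step for step: discrete summation by parts, the commutator bound $|\delta_{h,l_k}\jb^{\theta}|\lesssim\jb^{\theta-1}$ uniform in $h\in(0,1]$, and absorption of the boundary contribution via the elementary inequality $1-(1-\varepsilon)^2\ge\varepsilon$. That part is correct and is essentially the paper's proof.

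The $L_{\infty}$ part has two genuine gaps. First, Lemma~\ref{lemma 7.1} only holds for $f\in E_{p,\theta}(\Sigma^T)$, and the solution $f_{\varepsilon,h}$ supplied by Proposition~\ref{proposition 7.4} with the given data is a priori only in $E_{2,\theta}(\Sigma^T)$; one cannot simply test against $p|f_{\varepsilon,h}|^{p-2}f_{\varepsilon,h}$ for $p>2$ without first knowing that $f_{\varepsilon,h}\in E_p(\Sigma^T)$. The paper devotes Step~1 of its proof to exactly this: it uses Gaussian cutoffs $\mu_n=e^{-|v|^2/n}$ and the $L_2$ estimate already proved to show that the $E_{2,\theta}$-solution coincides with the $E_p$-solution that Proposition~\ref{proposition 7.4} produces for the same data with exponent $p$. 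You skip this entirely, and the parenthetical alternative via the truncation $(f_{\varepsilon,h}-M)_+$ does not avoid the issue, since that test function is not in the admissible class for Lemma~\ref{lemma 7.1} either.

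Second, you estimate the discrete diffusion term only by nonnegativity, i.e. by mere monotonicity of $s\mapsto|s|^{p-2}s$, and then claim the drift is ``absorbed by Young's inequality against $\lambda p\int|f_{\varepsilon,h}|^p$.'' This cannot work with $h$-uniform constants: the drift integrand involves $\delta_{h,e_i}f_{\varepsilon,h}$, which is not comparable to $f_{\varepsilon,h}$ pointwise; writing $\delta_{h,e_i}f=h^{-1}(T_{h,e_i}f-f)$ and applying Young would introduce an uncontrolled $h^{-1}$ factor. One genuinely needs the quantitative coercivity
\[
(a-b)\big(|a|^{q}a-|b|^{q}b\big)\ge N_0(q)\,(a-b)^2\big(|a|^{q}+|b|^{q}\big),
\]
which produces the $h$-uniform discrete Dirichlet gain $N_0 p\,\delta_1\int|f_{\varepsilon,h}|^{p-2}|\delta_{h,l_k}f_{\varepsilon,h}|^2$. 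Only then can the drift be split by Young into a small multiple of that gain (using that each $e_i$ lies in the stencil $\Lambda$) plus $Cp\int|f_{\varepsilon,h}|^p$, and the latter absorbed by choosing $\lambda_0$ large. The conclusion of your $L_\infty$ step (taking $p$-th roots and letting $p\to\infty$) is fine once these two points are repaired, and does coincide with the paper's final step.
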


\begin{proof}
For the sake of convenience, we omit the summation with respect to $k \in \{1, \ldots, d_1\}$.

\textbf{Weighted $L_2$-estimate.}
First,  by Lemma \ref{lemma 7.1} with $p = 2$,
\begin{align*}
    &    \int_{\Omega \times \bR^3} \jb^{ \theta} |f_{\varepsilon, h}|^2 (T, x, v) \, dx dv
    + 2 \lambda \int_{\Sigma^T}
  \jb^{ \theta}   |f_{\varepsilon, h}|^2 \, dz\\
&
    + \varepsilon \int_{ \Sigma^T_{+}} \jb^{ \theta}   |(f_{\varepsilon, h})_{+}|^2  \, |v \cdot n_x|  d\sigma  dt \\
 &
   \underbrace{ - 2 \int_{\Sigma^T} \jb^{ \theta} (A_h f_{\varepsilon, h}) \,  f_{\varepsilon, h} \, dz }_{ = I_1}
	+  \underbrace{ 2 \int_{\Sigma^T} \jb^{ \theta} b^i (\delta_{h, e_i} f_{\varepsilon, h}) \,  f_{\varepsilon, h} \, dz}_{ = I_2} \\
    & \le 2 \int_{\Sigma^T}
     \jb^{\theta} \sg  f_{\varepsilon, h} \, dz + \int_{\Omega \times \bR^3} f^2_0 \jb^{\theta} \, dxdv.
 \end{align*}
  Furthermore, by using  the product rule (see \eqref{7.5.1})
and  change of variables, we get
 \begin{align*}
	I_1 &= 2 \int_{\Sigma^T} \jb^{ \theta} f_{\varepsilon, h} \, \delta_{h, -l_k} (a_k  \delta_{ h, l_k} f_{\varepsilon, h})  \,   \, dz  \\
		&= 2 \int_{\Sigma^T}  a_k  \, (\delta_{h, l_k} f_{\varepsilon, h}) \,  \delta_{ h, l_k} (f_{\varepsilon, h} \jb^{ \theta}) \,   \, dz    \quad \text{(change of variables)}\\
 	  &= 2 \int_{\Sigma^T} \jb^{  \theta} a_k   |\delta_{ h, l_k} f_{\varepsilon, h}|^2 \, dz\\
 	    &\quad + 2\int_{\Sigma^T} a_k (\delta_{h, l_k} f_{\varepsilon, h}) (T_{h, l_k} f_{\varepsilon, h}) \delta_{ h, l_k} \jb^{ \theta}  \, dz =: I_{1, 1} + I_{1, 2}  \quad \text{(product rule)}.
 \end{align*}
  By \eqref{7.1.2},
   $$
	I_{1, 1} \ge 2 \delta_1 \int_{\bR^3} \jb^{  \theta}  |\delta_{ h, l_k} f_{\varepsilon, h}|^2 \, dz.
   $$
       By the mean value theorem,
       for $h \in (0, 1]$ and fixed $k$,
       one has
       $$
           |\delta_{ h, l_k} \jb^{\theta}|
         \le N (l_k, \theta) \jb^{\theta - 1}.
       $$
       Next, separating $\delta_{h, l_k} f_{\varepsilon, h}$
       from $T_{h, l_k} f_{\varepsilon, h}$ by the
       Cauchy-Schwartz inequality,
       we get
        \begin{align*}
		I_{1, 2} \ge
		& -  (\delta_1/2)
		 \int_{\Sigma^T} \jb^{ \theta}    |\delta_{ h, l_k} f_{\varepsilon, h}|^2 \, dz\\
		&-  N (\theta, \Lambda) \delta_1^{-3 }
		\int_{\Sigma^T} \jb^{ \theta} | T_{h, l_k} f_{\varepsilon, h}|^2  \, dz.
     \end{align*}
     By a change of variables and the triangle inequality,
     $$
         	\int_{\Sigma^T} \jb^{ \theta} | T_{h, l_k} f_{\varepsilon, h}|^2  \, dz
         	= \int_{\Sigma^T}  T_{h, -l_k} \jb^{ \theta} | f_{\varepsilon, h}|^2  \, dz
         	\le N (\Lambda, \theta)
         	\int_{\Sigma^T}   \jb^{ \theta} | f_{\varepsilon, h}|^2  \, dz.
     $$
	Furthermore, 	
	        \begin{align*}
	&	  2\int_{\Sigma^T} \jb^{ \theta} b^i (\delta_{h, e_i} f_{\varepsilon, h}) \,  f_{\varepsilon, h} \, dz\\
	&
	\ge - (\delta_1/2) \int_{\Sigma^T} \jb^{\theta}|\delta_{h, e_i} f_{\varepsilon, h}|^2 \, dz
		-  N (K, \delta_1) \int_{\Sigma^T} \jb^{\theta}|f_{\varepsilon, h}|^2 \, dz.
	     \end{align*}
     Thus, by the above
     and the Cauchy-Schwartz inequality, we obtain
     	        \begin{align*}
   & \int_{\Omega \times \bR^3} \jb^{  \theta } |f_{\varepsilon, h}|^2 (T, x, v) \, dx dv
    +
  \delta_1
    \int_{\Sigma^T}
   \jb^{ \theta } |\delta_{h, l_k} f_{\varepsilon, h}|^2 \, dz\\
&
  +   \varepsilon   \int_{\Sigma^T_{+}} \jb^{ \theta }      |f_{+, \varepsilon, h}|^2  \, |v \cdot n_x| d\sigma  dt \\
&
   + ( \lambda   - N)
    \int_{\Sigma^T}
  \jb^{   \theta }   |f_{\varepsilon, h}|^2 \, dz
 \le   \lambda^{-1} \int_{\Sigma^T}  \jb^{ \theta } \sg^2   \, dz  + \int_{\Omega \times \bR^3} f^2_0 \jb^{\theta} \, dxdv,
  	     \end{align*}
  where $N = N (d_1, \delta_1, \Lambda, K)$.
  Thus, for $\lambda_0 >   2 N$ and $\lambda \ge \lambda_0$, the weighted energy estimate hold.

  \textbf{$L_{\infty}$ estimate.}
  \textit{Step 1: Higher regularity of $f_{\varepsilon, h}$.}
Here we show that
$$
    f_{\varepsilon, h} \in E_p (\Sigma^T), \, \,  \forall p \in [2, \infty),
$$
which allows us to apply Lemma \ref{lemma 7.1} in Step 2.

Fix any $p \in (2, \infty)$.
 By Proposition \ref{proposition 7.4} with $\theta = 0$, Eq. \eqref{7.1} has a unique solution  $\widetilde f_{\varepsilon, h} \in E_p (\Sigma^T)$.
Denote
$$
    \mu_{ n }(v) =  e^{-|v|^2/n},
    \quad \widetilde f^{(n)}_{\varepsilon, h} =  \mu_n \widetilde f_{\varepsilon, h}.
$$
Note that $\widetilde f^{(n)}_{\varepsilon, h} \in E_2 (\Sigma^T)$, and
$$
     (\widetilde f^{(n)}_{\varepsilon, h})_{-} (t, x, v)= (1-\varepsilon) (\widetilde f^{(n)}_{\varepsilon, h})_{+} (t, x, R_x v) \, \,  \text{on} \, \, \Sigma^T_{-}.
$$
Furthermore, since $\mu_n$ satisfies the specular reflection boundary condition, the function $F^{(n)} = \widetilde f^{(n)}_{\varepsilon, h} - f_{\varepsilon, h}$ satisfies the equation
\begin{align*}
 &       Y F^{(n)} - A_h F^{(n)} +  b^i \delta_{ h, e_i} F^{(n)} + \lambda F^{(n)} = \sg  (\mu_n  - 1) + \text{Comm}_n, \\
&
    F^{(n)}_{-} (t, x, v)) = (1-\varepsilon) F^{(n)}_{+} (t, x, R_x v),\\
&
     F^{(n)} (0, \cdot) = f_0 (\mu_n - 1),
\end{align*}
where
$$
   \text{Comm}_n =      - A_h [ \widetilde f_{\varepsilon, h} \mu_n]   + A_h [ \widetilde f_{\varepsilon, h}] \mu_n    - b^i [\mu_n (\delta_{ h, e_i} \widetilde f_{\varepsilon, h})  - \delta_{ h, e_i} \widetilde f_{\varepsilon, h}^{(n)}].
$$
Then, since $F^{(n)} \in E_2 (\Sigma^T)$, by the above $L_2$ estimate \eqref{eq7.5.0}, for $\lambda \ge \lambda_0$, we get
\begin{equation}
\begin{aligned}
             \label{eq7.5.1}
        \|F^{(n)} (T, \cdot)\|_{ L_2 (\Omega \times \bR^3) }  & +  \|F^{(n)}_{\pm}\|_{ L_2 (\Sigma^T_{\pm}, |v \cdot n_x|) } \\
	&+   \lambda \|F^{(n)}\|_{ L_2 (\Sigma^T) }
  	 \le  N \|f_0 (\mu_n  - 1)\|_{ L_2 (\Omega \times \bR^3) } \\
	& + N \|\sg  (\mu_n  - 1)\|_{ L_2 (\Sigma^T) } + N \|\text{Comm}_n\|_{ L_2 (\Sigma^T) },
\end{aligned}
\end{equation}
where $N$ is independent of $n$ and $F^{(n)}$.
Note that the first two terms on the right-hand side of \eqref{eq7.5.1} converge to $0$ as $n \to \infty$ by the dominated convergence theorem.
Thus, to prove that $\widetilde f_{\varepsilon, h}$ and $f_{\varepsilon, h}$ coincide a.e. in $\Sigma^T$
 along with their initial values and traces, it suffices to show  that
$\text{Comm}_n \to 0$ in $L_2 (\Sigma^T)$ as $n \to \infty$.

First, by  \eqref{7.5.1},  \eqref{7.1.3} and \eqref{7.1.6}, for a function $u$ on $\bR^3$, we have
\begin{align}
	& \delta_{ h, e_i} (u \mu_n) - \mu_n \delta_{ h, e_i} u = \delta_{h, e_i} \mu_n \,  T_h  u,\notag\\
    &
	\label{eq7.5.2}
	 A_h [u \mu_n] = a_k  (\Delta_{h, l_k} [u \mu_n])  + (\delta_{h, -l_k} a_k)\,  \delta_{h, -l_k} [u \mu_n].
\end{align}
Next, by using \eqref{7.5.1}, \eqref{7.1.3},  the product rule for the second-order differences
\begin{align*}
    \Delta_{h, l_k} [u \mu_n] &= \mu_n \Delta_{h, l_k} u + u  \Delta_{h, l_k} \mu_n + (\delta_{h, l_k} u)(\delta_{h, l_k} \mu_n)\\
&
    + (\delta_{h, -l_k} u)(\delta_{h, -l_k} \mu_n),
 \end{align*}
 and the product rule for the first-order differences (see \eqref{7.5.1}),
 we conclude
 \begin{align*}
  &     A_h [u \mu_n] =  \mu_n A_h u \\
 &
    + a_k \big( u  \Delta_{h, l_k} \mu_n + (\delta_{h, l_k} u)(\delta_{h, l_k} \mu_n) + (\delta_{h, -l_k} u)(\delta_{h, -l_k} \mu_n)\big)\\
  &
        +(\delta_{h, -l_k} a_k) (\delta_{h, -l_k} \mu_n) T_{h, -l_k} u.
   \end{align*}
Then, by  the above identity and the mean value theorem,  for $n \ge 1$,
$$
    \|\text{Comm}_n\|_{ L_2 (\Sigma^T) } \le N  n^{-1} \big(\|\widetilde f_{\varepsilon, h}\|_{ L_2 (\Sigma^T) } + \sum_{k = 1}^{d_1} \|\delta_{h, l_k} \widetilde f_{\varepsilon, h}|\|_{ L_2 (\Sigma^T)  }\big),
$$
where $N = N (h, \delta, K,  \Lambda)$. Hence $\text{Comm}_n \to 0$
as $n \to \infty$ in $L_2 (\Sigma^T)$.
Thus, the assertion of this step is proved.

\textit{Step 2: $L_p$ bound.}
Fix arbitrary $p \in [2, \infty)$.
  Let us start with the term containing $A_{h}$. By a change of variables
   and the inequality
$$
	(a-b) (|a|^q a - |b|^q b ) \ge N_0 (q)  (a-b)^2 (|a|^q + |b|^q), \, \, q \ge 0, a, b \in \bR, a\neq b,
$$
 we have
\begin{align*}
    &   p \int_{\bR^3}  \delta_{h, -l_k} (a_k  \delta_{ h, l_k} f_{\varepsilon, h})  |f_{\varepsilon, h}|^{p-2} f_{\varepsilon, h}\,   \, dv  \\
&	= p \int_{ \bR^3} a_k (\delta_{h, l_k}  f_{\varepsilon, h})   \, \delta_{h, l_k} (|f_{\varepsilon, h}|^{p-2} f_{\varepsilon, h})  \, dv
	\ge N_0 p \delta_1 \int_{\bR^3}  |f_{\varepsilon, h}|^{p-2} |\delta_{h, l_k} f_{\varepsilon, h}|^2 \, dv.
\end{align*}

Furthermore, by the  Young's inequality,
\begin{align}
&\label{eq7.5.4}
	p \int_{\bR^3} b^i (\delta_{h, e_i} f_{\varepsilon, h}) 	 |f_{\varepsilon, h}|^{p-2} f_{\varepsilon, h} \, dv \\
&	\ge  - N_0 p (\delta_1/2)  \int_{\bR^3}  |f_{\varepsilon, h}|^{p-2} |\delta_{ h, e_i } f_{\varepsilon, h}|^2 \, dv   -   2 N_0^{-1} \delta_1^{-1} K^2 p  \int_{\bR^3} |f_{\varepsilon, h}|^p \, dv \notag.
 \end{align}
Since the stencil $\Lambda$ contains the standard basis $e_i, i = 1, \ldots, d$,  we may replace $|\delta_{h,  e_i} f_{\varepsilon, h}|^2$  with  the sum $\sum_{k = 1}^{d_1} |\delta_{h, l_k} f_{\varepsilon, h}|^2$
in the first integral on the right-hand side of \eqref{eq7.5.4}.

 Thus, by the above
 and the Young's inequality, we obtain
\begin{equation}
			\label{eq7.5.3}
 \begin{aligned}
  &  \int_{\Omega \times \bR^3} |f_{\varepsilon, h}|^p (T, x, v) \, dx dv
	+ p N_0 (\delta_1/2) \int_{\Sigma^T} |f_{\varepsilon, h}|^{p-2} |\delta_{h, l_k} f_{\varepsilon, h}|^2 \, dz\\
&
  +   \varepsilon  \int_{\Sigma^T_{+}}    |(f_{\varepsilon, h})_{+}|^p  \, |v \cdot n_x| \, d\sigma  dt+ p (\lambda/2   - N_1 (K, \delta_1) )
    \int_{\Sigma^T}
     |f_{\varepsilon, h}|^p \, dz \\
 &
 \le   2^{p-1} \lambda^{1-p} \int_{ \Sigma^T }  |\sg|^p  \, dz  + \int_{\Omega\times \bR^3}  |f_0|^p  \, dz.
 \end{aligned}
\end{equation}
Finally, we set  $\lambda_0 =  2 N_1$,
 and take the $p$-th root in the above inequality and pass to the limit as $p \to \infty$.
 This gives the desired $L_{\infty}$ bound.
\end{proof}

\subsection{Verification of the weak formulation}

The goal of this section is to prove the following Green's type identity for the solution $f_{\varepsilon, h}$ (see \eqref{eq7.7.1}).
This result is used in the proof of  Theorem \ref{theorem 3.3} when we show that a solution constructed by the weak* compactness method  satisfies the weak formulation (see \eqref{7.0.0} in Definition \ref{definition 7.0}).

By $C^{1, 1, 2}_0 (\overline{\Sigma^T})$ we denote the space of all functions $\phi$ with compact support such that $\partial_t \phi$,$\nabla_x \phi$, $\nabla_v \phi$, $D^2_v \phi \in C (\overline{\Sigma^T})$.
\begin{lemma}
                \label{lemma 7.7}
Under the assumptions of Proposition \ref{proposition 7.4},
for any $\phi \in C^1_0 (\overline{\Sigma^T})$,
\begin{align}
        			\label{eq7.7.1}
 	    &	
 	     -  \int_{\Sigma^T} (Y \phi)    f_{\varepsilon, h} \, dz
    + \int_{\Omega \times \bR^3 }    (f_{\varepsilon, h}  (T, x, v) \phi (T, x, v) -   f_{\varepsilon, h}  (0, x, v) \phi (0, x, v)) \, dx dv \notag
	\\
 	   &
		+ \int_{\Sigma^T_{+}}    (f_{\varepsilon, h})_{ +} \phi \, |v \cdot n_x|  d\sigma dt
	-    \int_{\Sigma^T_{-}}     (f_{\varepsilon, h})_{-} \phi \, |v \cdot n_x| d\sigma dt\\
	&
	-\int_{\Sigma^T} f_{\varepsilon, h} A_h \phi \, dz
	+\int_{\Sigma^T}  f_{\varepsilon, h} \delta_{ h, -e_i} (b^i \phi) \, dz
	+ \lambda  \int_{\Sigma^T} f_{\varepsilon, h} \phi \, dz
	 = \int_{\Sigma^T} \sg \phi \, dz\notag.
	\end{align}
\end{lemma}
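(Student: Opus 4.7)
The identity \eqref{eq7.7.1} is the weak formulation of the discretized equation \eqref{7.1}, obtained by testing \eqref{7.1} against $\phi$, transferring the transport operator $Y$ via the Green's identity built into the $E_{p,\theta}$ class (Definition~\ref{definition 2.2.1}), and transferring the finite-difference operators $A_h$ and $b^i \delta_{h,e_i}$ onto $\phi$ by translation change of variables. My plan is to first verify the identity for test functions $\phi \in \mathsf{\Phi}$, and then extend it to arbitrary $\phi \in C^1_0(\overline{\Sigma^T})$ by a cutoff approximation.

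For $\phi \in \mathsf{\Phi}$, Proposition~\ref{proposition 7.4} gives $f_{\varepsilon,h} \in E_{p,\theta}(\Sigma^T)$, so the Green's identity \eqref{7.2} applies to the transport operator. Replacing $Y f_{\varepsilon,h}$ by the right-hand side of \eqref{7.1} and applying the substitutions $v \mapsto v \pm h l_k$ and $v \mapsto v \pm h e_i$ (without boundary contributions since $\phi$ has compact $v$-support) yields the symmetry relations
\begin{equation*}
\int (A_h f_{\varepsilon,h})\phi\,dv = \int f_{\varepsilon,h}(A_h\phi)\,dv, \qquad \int b^i(\delta_{h,e_i}f_{\varepsilon,h})\phi\,dv = \int f_{\varepsilon,h}\,\delta_{h,-e_i}(b^i\phi)\,dv.
\end{equation*}
Integrating in $(t,x)$ and combining with the Green's identity for $Y$ produces \eqref{eq7.7.1} for $\phi \in \mathsf{\Phi}$.

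To extend to $\phi \in C^1_0(\overline{\Sigma^T})$, I set $\phi_n := \phi\,\chi_n$ where $\chi_n$ is a smooth cutoff vanishing in a $1/n$-neighborhood of the ``bad set'' $\bigl((0,T)\times\gamma_0\bigr) \cup \bigl(\{0,T\}\times\partial\Omega\times\bR^3\bigr)$ and equal to $1$ off a $2/n$-neighborhood; by Remark~\ref{remark 2.2.4}, $\phi_n \in \mathsf{\Phi}$. Applying the previous step to $\phi_n$ and sending $n \to \infty$, the volume integrals $\int f_{\varepsilon,h}\phi_n$, $\int f_{\varepsilon,h}(A_h\phi_n)$, $\int f_{\varepsilon,h}\,\delta_{h,-e_i}(b^i\phi_n)$, $\int \sg\,\phi_n$, together with the initial/final and $\gamma_\pm$-boundary integrals, converge by dominated convergence to their $\phi$-counterparts, using the $L^\infty$ bounds on $f_{\varepsilon,h}$, its traces, and $\sg$ from Lemma~\ref{lemma 7.5}, the boundedness of $A_h$ and $\delta_{h,-e_i}$ for fixed $h$, and the fact that the bad set has measure zero with respect to the relevant measures ($\gamma_0 = \{v\cdot n_x = 0\}$ is null for $|v\cdot n_x|\,d\sigma$).

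The main technical obstacle is the convergence of $\int f_{\varepsilon,h}(Y\phi_n)\,dz$. Decomposing $Y\phi_n = \chi_n\,Y\phi + \phi\,Y\chi_n$, the first piece is handled by dominated convergence, whereas the second demands $\int f_{\varepsilon,h}\phi(Y\chi_n)\,dz \to 0$, which is not immediate: a naive Euclidean-distance cutoff gives $|Y\chi_n|\lesssim n(1+|v|)$ on a tube of volume $O(1/n)$, an only $O(1)$ bound. I would resolve this by choosing $\chi_n$ adapted to the transport flow, e.g., as a product $\eta_n(t)\,\zeta_n(x,v)$ in which $\eta_n$ localizes near $t \in \{0,T\}$ (the $\phi Y\eta_n = \phi\,\eta_n'(t)$ contribution being absorbed through the time-slice existence in Remark~\ref{remark 2.2.2}), and $\zeta_n$ localizes near $\gamma_0$ via a function whose $v\cdot\nabla_x$-derivative carries an extra $|v\cdot n_x|$-factor that vanishes on the grazing set. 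With this transport-adapted cutoff, the offending limit is zero and \eqref{eq7.7.1} extends to all $\phi \in C^1_0(\overline{\Sigma^T})$.
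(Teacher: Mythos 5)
Your proposal follows essentially the paper's approach. The paper factors the key step into Lemma \ref{lemma 7.6} (extending the Green's identity \eqref{7.2} from $\mathsf{\Phi}$ to $C^1_0(\overline{\Sigma^T})$), proved by exactly the kind of transport-adapted cutoff you sketch (after flattening the boundary with $\Pi$, the cutoff is $\xi_\varepsilon(y,w)=\xi\bigl((y_3^2+w_3^2)/\varepsilon^2\bigr)$ times $\xi(t/\varepsilon)\xi((T-t)/\varepsilon)$), and then derives \eqref{eq7.7.1} by plugging in the equation and summing by parts, as in your first step.

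Two corrections to your third paragraph. First, the statement that the argument ``demands $\int f_{\varepsilon,h}\phi(Y\chi_n)\,dz\to 0$'' is inconsistent with the rest of your plan: if $\eta_n$ vanishes at $t=0,T$, then the time-boundary terms in the $\phi_n$-identity are identically zero for each $n$, so the $\eta_n'(t)$-contribution must \emph{produce} $\int[f_{\varepsilon,h}(T)\phi(T)-f_{\varepsilon,h}(0)\phi(0)]\,dxdv$ in the limit, not vanish; the tool for this is the continuity $f_{\varepsilon,h}\in C([0,T],L_2(\Omega\times\bR^3))$ (Lemma \ref{lemma B.4}), not Remark \ref{remark 2.2.2}. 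Second, your heuristic that the spatial cutoff works because its transport derivative ``carries an extra $|v\cdot n_x|$-factor'' captures only the piece coming from $v\cdot\nabla_x\,d(x,\partial\Omega)=-v\cdot n_x$. There is a second piece, from $v\cdot\nabla_x(v\cdot n_x)$ (the term $X\cdot\nabla_w\xi_\varepsilon$ in the paper's $I_{3,2}$), which involves the curvature of $\partial\Omega$ and is genuinely $O(\varepsilon^{-1})$ on the shell with no cancelling small factor. The paper kills it by Cauchy--Schwartz against the $O(\varepsilon^2)$ measure of the shell together with absolute continuity of $\|\widehat u\,\mathbf 1_{y_3^2+w_3^2<2\varepsilon^2}\|_{L_2}$. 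So the cutoff you propose does work, but the mechanism for the curvature term is the shrinking shell measure, not the $|v\cdot n_x|$-cancellation you describe.
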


It follows from   \eqref{7.2} that \eqref{eq7.7.1} holds for any $\phi \in \Phi$ (see Definition \ref{definition 2.2.3}).
To prove Lemma \ref{lemma 7.7}, it suffices to extend the identity \eqref{7.2} for $\phi \in C^1_0 (\overline{\Sigma^T})$, which is done in the next lemma.

\begin{lemma}
            \label{lemma 7.6}
For any  $u \in E_{2} (\Sigma^T)$,
the Green's formula
\eqref{7.2}
holds for any
$
\phi \in C^{1}_{0} (\overline{ \Sigma^T}).
$
\end{lemma}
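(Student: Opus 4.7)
The plan is to extend the identity \eqref{7.2} from $\phi \in \mathsf{\Phi}$ to $\phi \in C^1_0(\overline{\Sigma^T})$ by a cutoff and limiting argument. By Remark \ref{remark 2.2.4}, any $C^1_0$ function vanishing in a neighborhood of the singular set
\[
B := \big((0,T) \times \gamma_0\big) \cup \big(\{0\} \times \partial \Omega \times \bR^3\big) \cup \big(\{T\} \times \partial \Omega \times \bR^3\big)
\]
lies in $\mathsf{\Phi}$, so \eqref{7.2} is already known on this subclass. Given arbitrary $\phi \in C^1_0(\overline{\Sigma^T})$, I would construct cutoffs $\chi_n \in C^1(\overline{\Sigma^T})$ with $0 \le \chi_n \le 1$, $\chi_n \equiv 0$ in a neighborhood of $B$, and $\chi_n \to 1$ pointwise off $B$; then $\phi_n := \phi \chi_n \in \mathsf{\Phi}$, and \eqref{7.2} for $\phi$ would follow by passing to the limit in the identity applied to $\phi_n$.

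A concrete choice is
\[
\chi_n(t,x,v) := \eta\bigl(n(d(x) + (\tilde n(x) \cdot v)^2)\bigr)\, \eta\bigl(n(d(x) + t)\bigr)\, \eta\bigl(n(d(x) + (T-t))\bigr),
\]
where $d$ is a nonnegative $C^2$ extension of $\operatorname{dist}(\cdot, \partial \Omega)$, $\tilde n$ is a smooth extension of the outward unit normal to a neighborhood of $\partial \Omega$, and $\eta \in C^\infty(\bR)$ is nondecreasing with $\eta \equiv 0$ on $(-\infty, 1]$ and $\eta \equiv 1$ on $[2, \infty)$. The set $B$ is null with respect to Lebesgue measure on $\Omega \times \bR^3$ at $t = 0, T$ (as $\partial \Omega$ is Lebesgue-null in $\Omega$), with respect to $|v \cdot n_x|\, d\sigma\, dt$ on $\Sigma^T_\pm$ (as $\gamma_0$ is $|v\cdot n_x|$-null), and with respect to Lebesgue measure on $\Sigma^T$, so $\chi_n \to 1$ almost everywhere in every domain of integration in \eqref{7.2}.

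All the integrals in \eqref{7.2} applied to $\phi_n$ pass to the limit by dominated convergence except one. The local $L_2$-integrability of $u, Yu, u(0,\cdot), u(T,\cdot), u_\pm$ built into $u \in E_2(\Sigma^T)$, together with the compact support of $\phi$ and the bound $|\phi_n| \le |\phi|$, provides suitable dominating functions for the terms at $t = 0, T$, on $\Sigma^T_\pm$, and for $\int_{\Sigma^T}(Yu)\phi_n\,dz$. Writing $Y\phi_n = \chi_n Y\phi + \phi \, Y\chi_n$, the term $\int_{\Sigma^T} u (Y\phi)\chi_n\,dz$ is also routine.

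The main obstacle is to show $\int_{\Sigma^T} u \phi \, Y\chi_n \, dz \to 0$. Each factor of $\chi_n$ transitions from $0$ to $1$ across a strip of thickness of order $1/n$, so a priori $|Y\chi_n|$ is of order $n$. Using the geometric fact that $\nabla d \approx -\tilde n$ near $\partial \Omega$, one verifies that $v \cdot \nabla d$ inherits a factor $|\tilde n \cdot v|$ of order $n^{-1/2}$ on the support of the velocity cutoff, so that the corresponding part of $Y\chi_n$ is in fact of order $\sqrt n$ on a set $E_n^{(1)}$ of Lebesgue measure of order $n^{-3/2}$; the time-cutoff factors give $|Y\chi_n|$ of order $n$ but on sets of Lebesgue measure of order $n^{-2}$. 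Combining H\"older's inequality with the \emph{absolute continuity of the Lebesgue integral} --- which yields $\|u\|_{L_2(E_n)} \to 0$ as $|E_n| \to 0$ since $u \in L_2(\Sigma^T)$ --- one obtains in each case
\[
\Bigl|\int_{\Sigma^T} u\phi\, Y\chi_n\,dz\Bigr| \le N\, \|Y\chi_n\|_{L_\infty}\, \|u\|_{L_2(E_n)}\, |E_n|^{1/2} \to 0.
\]
Collecting all the limits yields \eqref{7.2} for $\phi$, completing the proof.
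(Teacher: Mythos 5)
Your proof is correct, and it takes a genuinely different route from the paper's. The paper localizes via a partition of unity, flattens the boundary with the diffeomorphism $\Pi$ defined by \eqref{eq3.11.1}, and then uses the cutoff $\xi\bigl((y_3^2+w_3^2)/\varepsilon^2\bigr)\,\xi(t/\varepsilon)\,\xi((T-t)/\varepsilon)$; because this cutoff vanishes identically at $t=0,T$, the initial and terminal terms of \eqref{7.2} must be recovered as limits of boundary-layer integrals ($I_{1,2}$, $I_{1,3}$), and showing the residual layer terms $I_{1,4}$, $I_{1,5}$ vanish invokes the $L_2$-in-time continuity $u \in C([0,T],L_2)$ from Lemma \ref{lemma B.4}. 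You instead construct a single global cutoff in the physical variables, built from $d(x)+(\tilde n(x)\cdot v)^2$, $d(x)+t$, $d(x)+(T-t)$. This has two advantages: first, the function $g=d+(\tilde n\cdot v)^2$ has transport derivative $Yg = v\cdot\nabla d + 2(\tilde n\cdot v)(v\cdot\nabla_x\tilde n\cdot v) = O(|\tilde n\cdot v|)$ since $\nabla d = -\tilde n$ on a tubular neighborhood, so $Yg = O(n^{-1/2})$ on the transition region, giving the crucial gain over the naive $O(n)$ bound; second, since your $\phi_n$ vanishes only near $\{0,T\}\times\partial\Omega\times\bR^3$ rather than on all of $\{t=0\}$ and $\{t=T\}$, the initial/terminal terms are already present in \eqref{7.2} applied to $\phi_n$ and converge by plain dominated convergence, bypassing Lemma \ref{lemma B.4} entirely. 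Your measure estimates $|E_n^{(1)}|=O(n^{-3/2})$ and $|E_n^{(2)}|=O(n^{-2})$ are correct — in particular, placing $d(x)$ inside the time cutoffs is what confines the transition region to $d(x)\lesssim 1/n$ in space as well as $t\lesssim 1/n$ in time, so that the $O(n)$ derivative is compensated by the $O(n^{-2})$ measure and absolute continuity of $\|u\|_{L_2}$ closes the argument. The paper's route has the virtue of reusing the boundary-flattening machinery developed elsewhere in the article; yours is shorter and more elementary for this particular lemma.
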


\begin{proof}
Let $\eta_j, j = 1, \ldots, m$
be a partition of unity in $\Omega$. Since
$$
    \sum_j \eta_j (Y \phi)
    = \sum_j Y (\phi \eta_j)
    - \sum_j \phi v \cdot \nabla_x \eta_j = \sum_j Y (\phi \eta_j),
$$
we may assume that
$\text{supp} \,  \phi \subset [0, T]\times \overline{B_r (x_0)} \times \overline{B_r (v_0)}$,
where $x_0 \in \partial \Omega$,
and $r > 0$ is sufficiently small.
Furthermore, we may also assume that
\eqref{eq1.4.1} holds. We will reduce the problem to the case when $\Omega  = \bR^3$
by using the boundary flattening diffeomorphism $\Pi$ defined below, which is somewhat simpler than $\Psi$ defined in  \eqref{eq1.4.2.4} - \eqref{eq1.4.2.8}.
 The same argument also works if we use $\Psi$ instead of $\Pi$ but require $\Omega$ to be a $C^3$ domain.

Let $\Pi: \Omega \cap B_{ r_0} (x_0) \times \bR^3 \to \bR^3_{-}
\times \bR^3,  (x, v) \to (y, w)$ be the diffeomorphism defined as $\Psi$ in Subsection \ref{subsection 1.4} but with $\psi^{-1}$ replaced with
\begin{equation}
			\label{eq3.11.1}
    \pi^{-1} (y) = (y_1, y_2, y_3 + \rho (y_1, y_2)).
\end{equation}
In particular,
$$
	w = (D \pi) v.
$$
We fix any smooth function $\xi$  on $\bR$ such that
$$
\int \xi' (t) \, dt = 1, \quad \quad
    \begin{cases}
    \xi (t)  = 0,\quad \quad t \le 1,\\
     \xi (t) \in (0, 1), \, \, t \in (1, 2), \\
    \xi (t) = 1, \quad  \quad t \ge 2.
    \end{cases}
$$
Furthermore, for $\varepsilon >0$, denote
\begin{align*}
  & \xi_{\varepsilon} (y, w) = \xi \big(\frac{y^2_3 + w_3^2}{\varepsilon^2}\big),
    \quad
     \chi_{\varepsilon} (x, v) =  \xi_{\varepsilon} (\Pi (x, v)),\\
     &
     \phi_{\varepsilon} (z) = \phi (z) \chi_{\varepsilon} (x, v) \xi \bigg(\frac{t}{\varepsilon}\bigg) \xi \bigg(\frac{T-t}{\varepsilon}\bigg).
\end{align*}
Note that $\phi_{\varepsilon}$ vanishes near $t = 0$, $t = T$ and the grazing set $(0, T) \times \gamma_0$ (see \eqref{eq1.2.0}).
By Remark \ref{remark 2.2.4},
 $\phi_{\varepsilon} \in \mathsf{\Phi}$ (see Definition \ref{definition 2.2.3}).
Therefore, by \eqref{7.2},
 \begin{align*}
      		\int_{\Sigma^T} \big[ (Y u)  \phi_{\varepsilon}
  +   (Y \phi_{\varepsilon})    u\big] \, dz
	-    \int_{\Sigma^T_{+}}     u_{+} \phi_{\varepsilon} \, |v \cdot n_x|\, d\sigma  dt
	+  \int_{\Sigma^T_{-}}    u_{-} \phi_{\varepsilon} \, |v \cdot n_x|\, d\sigma  dt = 0.
\end{align*}
 We will pass to the limit as $\varepsilon \to 0$ in this identity.
Note that by the dominated convergence theorem we only need to show that
\begin{align*}
 &\lim_{\varepsilon \to 0}     \int_{\Sigma^T}
 (Y \phi_{\varepsilon})    u  \,  dz\\
& = \int_{\Sigma^T}  (Y \phi) u  \, dz -   \int_{\Sigma^T} \big[u (T, x, v) \phi (T, x, v) - u (0, x, v) \phi (0, x, v)\big] \, dxdv.
\end{align*}
Invoke the notation of Subsection \ref{subsection 1.4.1}
and replace the diffeomorphism $\Psi$ with $\Pi$.
Then, by \eqref{eq.A1} and the fact that $J  = |\frac{\partial x}{\partial y}|^2   \equiv 1$, we get
\begin{equation}
			\label{eq7.6.3}
\begin{aligned}
 &  \int_{\Sigma^T}
 (Y \phi_{\varepsilon})    u  \,  dz    = \int_{  \Sigma^T }  (\partial_t   \phi_{\varepsilon}) u \, dy dw dt \\
&
    \quad+ \int_{ \mathbb{H}^T_{-} }  (w \cdot \nabla_y  \widehat \phi_{\varepsilon}) \widehat u \, dy dw dt - \int_{ \mathbb{H}^T_{-} }  (X \cdot \nabla_w \widehat \phi_{\varepsilon}) \widehat u \, dy dw dt
    =:  I_1 + I_2 + I_3,
\end{aligned}
\end{equation}
where $\widehat u$ is defined in \eqref{eq1.4.2.1}.

\textit{Convergence of $I_1$.}
We split $I_1$ into 5 integrals given by
\begin{align*}
& I_{1, 1} = \int_{ \Sigma^T } \xi \big(\frac{t}{\varepsilon}\big) \xi \big(\frac{T-t}{\varepsilon}\big) u  \partial_t \phi \chi_{\varepsilon}  \,  dz, \\
    &    I_{1, 2} = \varepsilon^{-1} \int_{ \Sigma^T } \xi' \big(\frac{t}{\varepsilon}\big) \xi \big(\frac{T-t}{\varepsilon}\big) u (0, x, v) \phi \chi_{\varepsilon}  \, dz,\\
    & I_{1, 3} = - \varepsilon^{-1}\int_{ \Sigma^T } \xi' \big(\frac{T-t}{\varepsilon}\big)  \xi \big(\frac{t}{\varepsilon}\big) u (T, x, v) \phi \chi_{\varepsilon} \, dz,\\
    & I_{1, 4} =  \varepsilon^{-1} \int_{ \Sigma^T } \xi' \big(\frac{t}{\varepsilon}\big)  \xi \big(\frac{T-t}{\varepsilon}\big)
    \big[u (t, x, v) -  u (0, x, v)] \phi \chi_{\varepsilon} \, dz,\\
    &  I_{1, 5} = - \varepsilon^{-1} \int_{ \Sigma^T } \xi \big(\frac{t}{\varepsilon}\big)  \xi ' \big(\frac{T-t}{\varepsilon}\big)
    \big[u (t, x, v) -  u (T, x, v)] \phi \chi_{\varepsilon} \, dz.
\end{align*}
The first integral converges to $\int_{ \Sigma^T }  u  \partial_t \phi   \, dy dw dt$ due to the dominated convergence theorem.
By a change of  variables and the dominated convergence theorem combined with the fact that
$\int \xi' (t) \, dt = 1$, we conclude that
$$
    I_{1, 2} \to   \int_{\Sigma^T } u (0, x, v) \phi (0, x, v) \, dxdv, \quad
     I_{1, 3} \to - \int_{\Sigma^T } u (T, x, v) \phi (T, x, v) \, dxdv.
$$
Changing variables and using the Cauchy-Schwartz inequality give
$$
    |I_{1, 4}| \le  N \varepsilon^{-1} \int_{\varepsilon}^{2 \varepsilon} \|u (t, \cdot) - u (0, \cdot)\|_{ L_2 (\Omega \times \bR^3)  } \, dt.
$$
The expression on the right-hand side of the above inequality converges to $0$ as $\varepsilon \to 0$ because $u \in C ([0, T], L_2 (\Omega \times \bR^3))$ (see Lemma \ref{lemma B.4}).
 Similarly, the same convergence holds for $I_{1, 5}$.
Hence, by the above, we conclude
\begin{equation}
			\label{eq7.6.2}
I_1 \to   \int_{\Sigma^T } [u (0, x, v) \phi (0, x, v) - u (T, x, v) \phi (T, x, v)] \, dxdv.
\end{equation}

\textit{ Convergence of $I_2$ and $I_3$.}
Note that
 \begin{align*}
&	I_2 =   I_{2, 1} + I_{2, 2} : =   \int_{\bH^T_{-}}  (w \cdot \nabla_y \widehat \phi) \xi_{\varepsilon} (y, w) \xi \big(\frac{t}{\varepsilon}\big) \xi\big(\frac{T-t}{\varepsilon}\big) \widehat u \, dydwdt  \\
	&\quad +  2\int_{   \bH^T_{-}:  \varepsilon^2 < y_3^2 + w_3^2 < 2 \varepsilon^2 }  \widehat u  \widehat \phi   \xi \big(\frac{t}{\varepsilon}\big) \xi \big(\frac{T-t}{\varepsilon}\big)   \frac{ w_3  y_3}{\varepsilon^2}
	\, \xi' \big(\frac{y_3^2 + w^2_3}{\varepsilon^2}\big) \,  dydwdt,\\
&
    I_3 =  I_{3, 1} + I_{3, 2} : =   - \int_{ \mathbb{H}^T_{-} }  (X \cdot \nabla_w \widehat \phi)\xi_{\varepsilon} (y, w)   \xi \big(\frac{t}{\varepsilon}\big) \xi \big(\frac{T-t}{\varepsilon}\big)    \widehat u \,  dydwdt\\
&- 2 \varepsilon^{-2}
    \int_{  \bH^T_{-}: \varepsilon^2 < y_3^2 + w_3^2 < 2 \varepsilon^2 }
     \widehat u  \widehat \phi \xi \big(\frac{t}{\varepsilon}\big) \xi \big(\frac{T-t}{\varepsilon}\big)
    X_3  w_3 \,  \xi' \big(\frac{ y_3^2 + w_3^2}{\varepsilon^2}\big) \,  dydwdt.
\end{align*}
By the dominated convergence theorem,
\begin{align*}
\lim_{\varepsilon \to 0} I_{2, 1} + I_{3, 1} =  \int_{\bH^T_{-}}  (w \cdot \nabla_y \widehat \phi)  \widehat u \, dydwdt
	- \int_{ \mathbb{H}^T_{-} }  (X \cdot \nabla_w \widehat \phi)   \widehat u \,  dydwdt.
\end{align*}
Then, by  the above equality,  \eqref{eq7.6.3} -  \eqref{eq7.6.2}, it suffices to show that
\begin{equation}
			\label{eq7.6.4}
I_{2, 2}, I_{3, 2} \to  0.
\end{equation}
By the dominated convergence theorem,
$$
	|I_{2, 2 }| \le  \int_{    \bH^T_{-}: \varepsilon^2 < |y_3^2 + w_3^2| < 2 \varepsilon^2 } |\widehat u \widehat \phi| \,   dydwdt \to 0 \quad \text{as} \, \, \varepsilon \to 0.
 $$
Furthermore, since $\Omega$ is a $C^{2}$ bounded domain, it follows from \eqref{eq3.11.1} and  \eqref{eq1.4.3.5} that $X$ is  bounded on the support of $\widehat \phi$.
By this, the Cauchy-Schwartz inequality,
and a change of variables, we get
\begin{align*}
    |I_{3, 2 }|
    &\le
    N \varepsilon^{-1}
     \|\widehat u \, I_{ y_3^2 + w_3^2 <  2 \varepsilon^2 }\|_{ L_2 (\bH^T_{-}  )  }\\ &
    \times \|  X_3 \widehat \phi  \|_{   L_{2}^{t, y_1, y_2, w_1, w_2   } (\bR^5_T) L_{\infty}^{y_3, w_3} ((0, \infty) \times \bR)}   \|\xi' \big(\frac{   y_3^2 + w_3^2}{\varepsilon^2}\big) \|_{ L_2 (\bR^2) } \to 0
\end{align*}
as $\varepsilon \to 0$.
Thus, \eqref{eq7.6.4} holds, and the lemma is proved.
\end{proof}

\subsection{Proof of Theorem \ref{theorem 3.3}}
By Proposition \ref{proposition 7.4}, for any $\varepsilon, h \in (0, 1)$,
there exists a unique (strong) solution
 $f_{\varepsilon, h} \in E_{2, \theta} (\Sigma^T)$  to Eq. \eqref{7.1}.
 Furthermore, by the uniform bounds in Lemma \ref{lemma 7.5}, the Banach-Alaoglu theorem,
and the Eberlein-Smulian theorem, there exists a function $f$ that satisfies the condition $(\ref{i}$ of Definition \ref{definition 7.0} and
\begin{equation}
                \label{7.2.2}
    \begin{aligned}
&  f_{\varepsilon, h} \to f  \, \, \text{weakly in} \, \, L_{2, \theta} (\Sigma^T), \quad
f_{\varepsilon, h} (T, \cdot)
\to f (T, \cdot)
\, \, \text{weakly in} \, \, L_{2, \theta} (\Omega \times \bR^3),\\
&
 (f_{\varepsilon, h})_{ \pm} \to f_{\pm}  \, \, \text{in the weak* topology of} \, \, L_{\infty} (\Sigma^T_{\pm}, |v \cdot n_x|).
\end{aligned}
\end{equation}
It remains  to show that $f$ satisfies conditions $(\ref{ii}$ and $(\ref{iii}$
of Definition \ref{definition 7.0}.

Fix any $\phi \in C^{1, 1, 2}_0 (\overline{\Sigma^T})$. Then, by  Lemma \ref{lemma 7.7}, the weak formulation \eqref{eq7.7.1} holds.
By  \eqref{7.2.2}, we pass to the limit  as $\varepsilon, h \to 0$   in  \eqref{7.1} and conclude that the specular reflection boundary condition is satisfied.
Next, note that by  \eqref{eq2.2.0} in Assumption \ref{assumption 2.2}, $\nabla_v b \in L_{\infty} (\Sigma^T)$, and, therefore,
	$$
		\delta_{ h, -e_i} (b^i \phi) \to -\partial_{v_i} (b^i \phi) \quad \text{as} \, \, h \to 0 \, \,  \text{in}\, \,L_{\infty} (\Sigma^T).
	$$
Combining this with \eqref{eq2.1.1} and \eqref{7.2.2}, we pass to the limit in  \eqref{eq7.7.1} (with respect to a subsequence).
This proves the validity of the condition $(\ref{iii}$ of Definition \ref{definition 7.0} with $\phi \in C^{1, 1, 2}_0 (\overline{ \Sigma^T})$. By using a limiting argument, we prove that the weak formulation \eqref{7.0.0} holds with $\phi \in C^1_0 (\overline{ \Sigma^T})$.
Finally, passing to the limit in the bounds in Lemma \ref{lemma 7.5}, we obtain the estimates \eqref{7.2.1}.

\section{Regularity of the finite energy weak solutions to the kinetic Fokker-Planck equation}
                                                        \label{section 3}
Here we prove Theorem \ref{theorem 7.3}, Corollary \ref{corollary 7.3.1}, and Theorem \ref{theorem 1.9}.

\begin{proof}[Proof of Theorem  \ref{theorem 7.3}]

Let $\zeta \in C^{\infty}_0 (\bR)$  such that $\zeta (0) = 1$.
Replacing $f$ with $f - f_0 \zeta$ and $\sg$ with
$$
    	\sg   - f_0 \partial_t \zeta - v \cdot \nabla_x f_0 \zeta   + \Delta_v (f_0 \zeta)  - b \cdot \nabla_v (f_0 \zeta)  -  \lambda (f_0 \zeta) \in L_{2, \theta} (\Sigma^T),
$$
 we may assume that $f_0 \equiv 0$.

Let $\eta_k, k =  1, \ldots, m,$ be the standard partition of unity
	 in $\Omega$ (see, for example, Section 8.4 of \cite{Kr_08})
	 such that
	 supp $\eta_1 \subset \Omega$,
	$0 \le \eta_k \le 1$, $k = 1, \ldots, m$,
	and
\begin{equation}
			\label{eq1.7.11}
  |\nabla_x \eta_k| \le N/r_0, \quad
  \begin{cases}
    \eta_k = 1\quad \text{in} \, \,  B_{r_0/4} (x_k)\\
    \eta_k = 0 \quad  \text{in} \, \,  B_{ r_0/2 }^c (x_k)
    \end{cases},\quad k  = 2, \ldots, m,
\end{equation}
	    where  $x_i \in \partial \Omega$,
	 and $r_0$ is the number in Subsection \ref{subsection 1.4.1} such that \eqref{eq1.4.1} and \eqref{eq10.19} hold.

    Note that
	\begin{equation}
					\label{eq1.7.10}
		f_k := f \eta_k \jb^{\theta-2}
	\end{equation}
    satisfies the identity
    \begin{equation}
                    \label{eq1.7.2}
	\begin{aligned}
            &Y f_k  - \Delta_v f_k  + b \cdot \nabla_v f_k + \lambda f_k \\
	&= \sg\eta_k \jb^{\theta-2} +  f (\jb^{\theta-2} v \cdot \nabla_x \eta_k  \\
	&+ \eta_k  b \cdot \nabla_v \jb^{\theta-2} - \Delta_v \jb^{\theta-2} \eta_k) - 2 \nabla_v f \cdot \nabla_v \jb^{\theta-2} \eta_k =: \sg_k
	\end{aligned}
    \end{equation}
    in the weak sense, i.e.,
       \begin{align*}
      & - \int_{\Sigma^T} (Y \phi) f_{k} \, dz +  \int_{    \Omega \times \bR^3 } f_k (T, x, v) \phi (T, x, v) \, dx dv \\
    &
        + \int_{\Sigma^T } \nabla_v   f_{k} \cdot \nabla_{v} \phi  \, dz
	+ \int_{\Sigma^T}   (b \cdot \nabla_{v} f_{k}) \phi  \, dz
        + \lambda  \int_{  \Sigma^T } f_{k} \phi \, dz
        =  \int_{ \Sigma^T } \sg_{k} \phi \, dz.
     \end{align*}

     \textbf{Interior estimate.}
     We extend $f_1$ and $\sg_1$ for negative $t$ by replacing them with $f_1 1_{t \ge 0}$ and $\sg_1 1_{t \ge 0}$, respectively. 	
Since  $f (0, \cdot) \equiv 0$, by the chain rule for distributions,
	we conclude that $f_1 \in  \bS_2 ((-\infty, T) \times \bR^6)$.
     Then,  $f_1$ satisfies \eqref{eq1.7.2} in $\bH^{-1}_2 ((-\infty, T) \times \bR^6)$.

	Let $
 	  \lambda_0  > 0
	$
	be the constant in Theorem \ref{theorem D.1} with $p  = 2$.
	Then, due to  Theorem \ref{theorem D.1} $(ii)$,
	for any $\lambda \ge \lambda_0$, the equation
	$$
		Y u_1 - \Delta_v u_1 + b \cdot \nabla_v u_1 + \lambda u_1 =  \sg_1, \quad u_1 (0, \cdot, \cdot) \equiv 0
	$$
	has a unique solution $u_1 \in S_2 ((-\infty, T) \times \bR^6)$.
	Then, $U_1 = f_1 - u_1 \in  \bS_2 ((-\infty, T) \times \bR^6)$ satisfies the equation
	$$
			Y U_1 - \Delta_v U_1 + b \cdot \nabla_v U_1 + \lambda U_1  = 0 \quad \text{in} \, \, \bH^{-1}_2 ((-\infty, T) \times \bR^6), \quad  U_1 (0, \cdot, \cdot) \equiv 0.
	$$
	Therefore, by the energy identity of Lemma \ref{lemma B.5}, integration by parts, and the Cauchy-Schwartz inequality,
	for a.e. $s \in (-\infty, T)$, we obtain
	\begin{align*}
		\int_{  \bR^6}  U_1^2 (s, x, v) \, dxdv  &+  \int_{  (-\infty, s) \times \bR^6 }  |\nabla_v U_1|^2 \, dz \\
		&+ (\lambda - N_1 (K)) \int_{  (-\infty, s) \times \bR^6 } U_1^2 \, dz \le 0.
	\end{align*}
	It follows that for $\lambda >   N_1 (K)$,  we have $U_1 = 0$ a.e. in $(-\infty, T) \times \bR^6$.
	Then, by Theorem \ref{theorem D.1} $(i)$,
	  for $\lambda \ge \lambda_0$  with, possibly, larger $\lambda_0$,
	 one has
    \begin{equation}
                                        \label{eq1.7.7}
	\begin{aligned}
       &   \|f_1\|_{S_2 (\Sigma^T) }  =    \|u_1\|_{S_2 (\Sigma^T) }  \le  N \|\sg_1\|_{ L_2 (\Sigma^T)} \\
	 & \le N  (\|\sg \|_{ L_{2, \theta - 2} (\Sigma^T) } + \| f\|_{ L_{2, \theta - 1} (\Sigma^T)} + \|\nabla_v f\|_{ L_{2, \theta-3} (\Sigma^T)}),
	\end{aligned}
    \end{equation}
	where $N = N (K, \Omega, \theta)$.
	In addition, by the embedding theorem for $S_p$ spaces (see Theorem 2.1 of \cite{PR_98}),
	$$
		\||f_1| + |\nabla_v f_1| \|_{ L_{7/3} (\Sigma^T)}
	$$
	is bounded by the right-hand side of \eqref{eq1.7.7}.

    \textbf{Boundary estimate.} We fix some $k \in \{2, \ldots, m\}$.
    We redo the construction of the mirror extension mapping in Section \ref{subsubsection 1.4.2} with $x_0$ replaced with $x_k$.
    Then, by the argument of Section \ref{subsubsection 1.4.3}, $\overline{f_k}$ satisfies the equation
  \begin{equation}
				\label{eq1.7.1}
       \partial_t \overline{f_k} + w \cdot \nabla_y \overline{f_k} -  \nabla_w \cdot (\mathbb{A} \nabla_w \overline{f_k}) + \mathbb{B} \cdot \nabla_v \overline{f_k} + \lambda \overline{f_k}
       = \overline{\sg_k} - \nabla_w \cdot (\mathbb{X} \overline{f_k})
\end{equation}
	in $\bH^{-1}_2 ((-\infty, T) \times \bR^6)$   and  $\overline{f_k} (0, \cdot, \cdot) = 0$.
    Here
\begin{itemize}
\item[--] $\mathbb{A}$ is defined by \eqref{eq1.4.3.2}, \eqref{eq1.4.3.3}, \eqref{eq1.20} with $a = I_3$,
\item[--] $\mathbb{B}$ is defined by formulas \eqref{eq1.4.3.2}, \eqref{eq1.4.3.4},
\item[--] $\mathbb{X}$ is given by \eqref{eq1.4.3.5}, \eqref{eq1.4.3.6}.
\end{itemize}
First, observe that
since $\psi$ is a local $C^3$ diffeomorphism,
\begin{equation}
			\label{eq1.7.4}
    |\mathbb{X}| \le N (\Omega) |w|^2, \quad   |\nabla_w \mathbb{X}|\le  N (\Omega) |w|,
\end{equation}
and, therefore by \eqref{eq1.7.4} and \eqref{eq1.7.2},  the right-hand side of \eqref{eq1.7.1} belongs to $L_2 ((-\infty, T) \times \bR^6)$.
   Next, note that $\mathbb{A}$ satisfies  the nondegeneracy condition (see \eqref{eq2.1.0} in Assumption  \ref{assumption 2.1}) with $\delta = \delta (\Omega) > 0$ because $\psi$ defined by \eqref{eq1.4.2.8} is a local diffeomorphism.
	Furthermore, by the conclusion of Appendix \ref{Appendix E},   $\mathbb{A} \in L_{\infty} ((0, T), C^{\varkappa/3, \varkappa}_{x, v} (\overline{\Omega} \times \bR^3))$,
and, thus,  by Remark \ref{remark 3.1}, Theorem \ref{theorem D.1} $(ii)$ is applicable.
	By this theorem,  there exist $\lambda_0  = \lambda_0 (K, \Omega) > 0$ such that for any $\lambda \ge \lambda_0$, the equation
	$$
		 \partial_t u_k + w \cdot \nabla_y u_k -  \nabla_w \cdot (\mathbb{A} \nabla_w u_k) + \mathbb{B} \cdot \nabla_w u_k + \lambda u_k =   \overline{\sg_k} - \nabla_w \cdot (\mathbb{X} \overline{f_k}),
		\quad u_k (0, \cdot, \cdot)  = 0,
	$$
	has a unique solution $u_k \in S_2 ((-\infty, T) \times \bR^6)$.
Then, repeating the bootstrap argument used for the interior estimate, we conclude that $\overline{f_k} \equiv u_k$.
Hence,  by Theorem \ref{theorem D.1} $(i)$,
\begin{equation}
			\label{eq1.7.8}
\begin{aligned}
    &    \|\overline{f_k}\|_{ S_2 ((0, T) \times \bR^6) }
    \le N  \bigg(\|\overline{\sg_k}\|_{ L_2 ((0, T) \times \bR^6) } \\
   & + \| \nabla_w \overline{f_k}\|_{ L_{2, 2} ((0, T) \times \bR^6) }
	+ \|\overline{f_k}\|_{ L_{2, 1} ((0, T) \times \bR^6) }\bigg),
\end{aligned}
\end{equation}
where $N = N (K, \Omega, \theta)$.
Again, by the embedding theorem for $S_p$ spaces,
	$$
		\||\overline{f_k}| + |\nabla_v \overline{f_k}| \|_{ L_{7/3} (\Sigma^T)}
	$$
is controlled by the right-hand side of \eqref{eq1.7.8}.
Then, by Lemma \ref{lemma B.2},  we have
\begin{align*}
        \|f_k\|_{ S_2 (\Sigma^T) } &+ \||f_k| + |\nabla_v f_k| \|_{ L_{7/3} (\Sigma^T)} \\
 &   \le N   \bigg(\|\sg\|_{ L_{2, \theta - 2} (\Sigma^T) }  + \| \nabla_v f\|_{ L_{2, \theta} (\Sigma^T) }
	+ \|f\|_{ L_{2, \theta-1} (\Sigma^T) }\bigg).
\end{align*}
Finally, combining the above estimate with \eqref{eq1.7.7}, we prove the theorem.
\end{proof}

\begin{proof}[Proof of Corollary \ref{corollary 7.3.1}]
										\label{proof of corollary 7.3.1}
Let $f_1, f_2$ be finite energy weak solutions to Eq. \eqref{7.0}.
Then, by Theorem \ref{theorem 7.3},
$f_1, f_2 \in S_2 (\Sigma^T)$.

Next, let $\lambda' > 0$ be a number, which we will choose later,
and denote
$F =(f_1 - f_2) e^{-\lambda' t}$.
Note that $F \in S_2 (\Sigma^T)$
satisfies Eq. \eqref{7.0} with $\sg \equiv 0$, $f_0 \equiv 0$,
and $\lambda + \lambda'$ in place of $\lambda$.
Then, by a variant of the energy identity (see Lemma \ref{lemma B.3}), we  get
\begin{align*}
    &    \int_{\Omega \times \bR^3} |F (T, x, v)|^2 \, dx dv
    + 2 (\lambda + \lambda') \int_{\Sigma^T}
    |F|^2 \, dz\\
 & -
    2 \int_{\Sigma^T}  (\Delta_{v}   F) F \, dz
+  2 \int_{\Sigma^T} (b \cdot  \nabla_{v} F) F\, dz
    = 0.
 \end{align*}
Integrating by parts and using the  Cauchy-Schwartz inequality,
we obtain
$$
    \int_{\Omega \times \bR^3} |F (T, x, v)|^2  \, dx dv
  +
      \int_{\Sigma^T}   |\nabla_{v} F|^2  \, dz
	+ 2 (\lambda + \lambda' - N (K)) \int_{\Sigma^T}
    |F|^2 \, dz
 \le 0.
 $$
Thus, taking $\lambda' >  N$, we conclude $F \equiv 0$. The corollary is proved.
\end{proof}

To prove Theorem \ref{theorem 1.9}, we need the following result.
\begin{lemma}
			\label{lemma 3.7}
Let
\begin{itemize}
\item[--] $p \ge 2,  T > 0, \lambda \ge 0, \theta \ge 2$ be  numbers, and $r > 1$ be determined by the relation
     \begin{equation}
                \label{7.3.0.0}
        \frac{1}{r} = \frac{1}{p} - \frac{1}{14},
     \end{equation}
\item[--] Assumptions \ref{assumption 2.1} - \ref{assumption 7.3} be satisfied,
\item[--] $f \in S_{p, \theta'} (\Sigma^T)$ with $\theta' \ge \theta - 2$, and $f, \nabla_v f  \in L_{r, \theta} (\Sigma^T)$,
\item[--] $f_{\pm} \in L_{\infty} (\Sigma^T_{\pm}, |v \cdot n_x|)$, $f (T, \cdot) \in L_{2} (\Omega \times \bR^3)$, $f (\cdot, 0) \equiv 0$,
\item[--]  $\sg \in  L_{r, \theta - 2} (\Sigma^T)$,
\item[--] $f$ satisfy Eq. \eqref{7.0} with  $f_0 \equiv 0$ in the weak sense (see Definition \ref{definition 7.0}).
\end{itemize}
Then,  the following assertions hold.

$(i)$  One has $f \in S_{r, \theta - 2} (\Sigma^T)$, and
 \begin{equation}
                \label{eq3.7.0}
    \|f\|_{ S_{r, \theta - 2} (\Sigma^T) } \le N  \bigg(\|\sg\|_{ L_{r, \theta - 2} (\Sigma^T) }
   +   \|\nabla_v  f\|_{ L_{r, \theta} (\Sigma^T) }
	+ 	\|f\|_{ L_{r, \theta - 1} (\Sigma^T)  }\bigg),
 \end{equation}
 where $N = N (p, K, \theta, \Omega)$.

$(ii)$ If  $p < 14$, then, $f, \nabla_v f \in L_{r, \theta - 2} (\Sigma^T)$,
    and, furthermore,
$$
\||f| + |\nabla f|\|_{  L_{r, \theta - 2} (\Sigma^T)  }
$$
is less than the right-hand side of \eqref{eq3.7.0}.

 $(iii)$    If $p > 14$, then, $f, \nabla_v f \in L_{\infty} ((0, T), C^{\alpha/3, \alpha}_{x, v} (\overline{\Omega} \times \bR^3) )$,
     where $\alpha = 1  - 14/p$, and $C^{\alpha/3, \alpha}_{x, v} (\overline{\Omega} \times \bR^3)$ is defined in \eqref{1.2.0}.
     In addition, the norms
$$
	\|f\|_{  L_{\infty} ((0, T), C^{\alpha/3, \alpha}_{x, v} (\overline{\Omega} \times \bR^3) )  }, \|\nabla f|\|_{  L_{\infty} ((0, T), C^{\alpha/3, \alpha}_{x, v} (\overline{\Omega} \times \bR^3) )  }
$$
are bounded above by the right-hand side of \eqref{eq3.7.0}.
\end{lemma}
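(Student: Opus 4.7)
The plan is to mirror the proof of Theorem~\ref{theorem 7.3} with the exponent $2$ replaced by $r$ (for parts (i)--(ii)) or by $p$ (for part (iii)). The gain over Theorem~\ref{theorem 7.3} is built directly into the hypotheses: $\sg\in L_{r,\theta-2}$ together with $f,\nabla_v f\in L_{r,\theta}$ supply $L_r$-forcing for each localized equation from the outset.

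For part~(i), reduce to $f_0\equiv 0$ by subtracting $f_0\zeta(t)$, introduce the standard partition of unity $\{\eta_k\}_{k=1}^m$ with $\operatorname{supp}\eta_1\subset\Omega$ and $\eta_k$ $(k\ge 2)$ localizing near boundary points $x_k\in\partial\Omega$, and set $f_k:=f\eta_k\jb^{\theta-2}$. Each $f_k$ satisfies~\eqref{eq1.7.2} with forcing $\sg_k\in L_{r,\theta-2}(\Sigma^T)$ controlled by the right-hand side of~\eqref{eq3.7.0}. For $k=1$, extend by zero for $t<0$ (legitimate since $f(0,\cdot)\equiv 0$), invoke Theorem~\ref{theorem D.1}(ii) at exponent $r$ to obtain a candidate $u_1\in S_r$, and run the energy-identity bootstrap on the difference $f_1-u_1$ to conclude $f_1=u_1$ after taking $\lambda$ large. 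For $k\ge 2$, apply the mirror extension of Section~\ref{subsubsection 1.4.3}, invoke the H\"older regularity of $\mathbb{A}$ established in Appendix~\ref{Appendix E}, solve the extended equation in $S_r(\bR^7_T)$ via Theorem~\ref{theorem D.1}, and identify with $\overline{f_k}$ by the same bootstrap. Undoing the boundary-flattening diffeomorphism via Lemma~\ref{lemma B.2} and summing yields~(i).

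Part~(ii) is a trivial restatement of~(i), because the definition~\eqref{1.2.4} of $S_{r,\theta-2}$ already contains both $f$ and $\nabla_v f$ in $L_{r,\theta-2}$. For part~(iii) with $p>14$, I repeat the same localization and mirror-extension scheme but at exponent $p$ to obtain $\overline{f_k}\in S_p(\bR^7_T)$ (the hypothesis $f\in S_{p,\theta'}$ supplies the required forcing regularity directly). The Morrey-type embedding of Theorem~2.1 of~\cite{PR_98} then gives $\overline{f_k},\nabla_w\overline{f_k}\in C^\alpha_{\mathrm{kin}}(\bR^7_T)$ with $\alpha=1-14/p$; as explained in Section~\ref{subsection 1.4.5}, degrading to the weaker norm $L_\infty((0,T),C^{\alpha/3,\alpha}_{x,v}(\bR^6))$ is precisely what is needed for the estimate to survive the local diffeomorphism $\Psi^{-1}$ when transporting back to $\Omega\times\bR^3$.

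The principal obstacle is the identification step that glues the $S_r$- or $S_p$-candidate produced by Theorem~\ref{theorem D.1} to the localized piece of the given weak solution: the candidate a priori lives in a stronger space than $f_k$, so to run an energy identity one must find a common ambient space. The saving is that the difference lies in $\mathbb{S}_{\min(p,r)}$, in which the energy identity of Lemma~\ref{lemma B.5} is available; absorbing the drift into a large-$\lambda$ shift then annihilates the difference, exactly as in the $S_2$-argument for Theorem~\ref{theorem 7.3}. The only remaining work is weight bookkeeping through the partition of unity and the mirror extension, producing the weighted lower-order terms $\|\nabla_v f\|_{L_{r,\theta}}$ and $\|f\|_{L_{r,\theta-1}}$ on the right-hand side of~\eqref{eq3.7.0} with exponents matching the hypothesized regularity of $f$ and $\nabla_v f$.
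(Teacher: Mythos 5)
Your overall scaffolding — reduction to $f_0\equiv 0$, the partition of unity, the mirror extension and the appeal to Theorem~\ref{theorem D.1} / Theorem~\ref{corollary 3.4} — matches the paper. Where you depart, and where the gap lies, is in the identification step. You propose to build an $S_r$-candidate $u_k$ via Theorem~\ref{theorem D.1}$(ii)$ and then annihilate the difference $f_k-u_k$ with the energy identity of Lemma~\ref{lemma B.5}, asserting that this difference lies in $\mathbb{S}_{\min(p,r)}$ where Lemma~\ref{lemma B.5} ``is available.'' Neither half of that assertion holds when $p>2$. First, Lemma~\ref{lemma B.5} is an $L_2$-energy identity: it requires $u\in\bH^1_2$ and $Yu\in\bH^{-1}_2$, with the pairing $\langle\cdot,\cdot\rangle_T$ set up specifically in $L_2$-duality. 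There is no version of it in the paper for $\bH^1_p$ with $p\neq 2$, and the quadratic pairing $\langle Yu,u\rangle$ does not yield an $L_p$-identity. Second, even the claimed common ambient space is not there: $f_k\in S_p$ and $u_k\in S_r$ with $r>p$, and on the domain $\bR^7_T$ (unbounded in $v$, and $u_k$ is not compactly supported in $x$ either) neither $S_r\subset S_p$ nor $S_p\subset S_r$, so $f_k-u_k$ does not land in $\mathbb{S}_{\min(p,r)}$. Nor do the localized pieces lie in $L_2$ for large $\theta$: $f_k=f\eta_k\jb^{\theta-2}$ with $f\in L_{2,\theta}$ gives $\int|f_k|^2\lesssim\int|f|^2\jb^{2\theta-4}$, which is not controlled by $\|f\|_{L_{2,\theta}}$ once $\theta>4$. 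So the $L_2$-energy route that worked for Theorem~\ref{theorem 7.3} (where $p=2$) cannot be re-run at the bootstrapped exponents.

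The paper avoids this entirely by invoking Lemma~\ref{lemma B.6}, which is the mechanism you are missing. That lemma is precisely the regularity-upgrade step: given $u\in S_q(\bR^7_T)$ with $u(0,\cdot)\equiv 0$ and forcing $h\in L_p$, $q<p$, it concludes $u\in S_p$. Its proof does not use an energy identity at all: it constructs the $S_p$-solution $U$ by Theorem~\ref{theorem D.1}$(ii)$, then shows $U\in S_q$ by testing against dilating cutoffs $\phi_n$ and applying H\"older at the rate dictated by \eqref{B.6.1}, and finally identifies $U=u$ by \emph{uniqueness in $S_q$}, the lower-exponent space where $u$ already lives. That is the correct direction of reduction — you bring the constructed solution down to $f_k$'s space, not vice versa. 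In the proof of Lemma~\ref{lemma 3.7} one takes $q=p$ and target $r$ with $1/p-1/r=1/14<1/12$, so Lemma~\ref{lemma B.6} applies in one step; then the quantitative bound \eqref{eq3.7.0} follows from the a priori estimate of Theorem~\ref{corollary 3.4} (with Remark~\ref{remark 3.4} to handle the zero-initial-data case), and Lemma~\ref{lemma B.2} transports it back through the diffeomorphism. Your reading of part~$(ii)$ as ``trivially contained in~$(i)$'' is a fair complaint about the statement as written (the embedding of Theorem~2.1 of~\cite{PR_98} actually raises the exponent once more), and your remark for~$(iii)$ about working in the weaker space $L_\infty((0,T),C^{\alpha/3,\alpha}_{x,v})$ so the estimate survives $\Psi^{-1}$ is correct and matches the paper.
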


\begin{proof}[Proof of Lemma \ref{lemma 3.7}]
Denote
\begin{equation}
			\label{eq7.2.6}
    B^p = \begin{cases}
            L_r ((-\infty, T) \times \bR^6), \, \, \text{if} \, \, p \in [2, 14),   \text{where} \, \,  r \, \,  \text{is determined by} \, \eqref{7.3.0.0},\\
           C^{\alpha/3, \alpha}_{x, v} ((-\infty, T) \times \bR^6),  \, \,  \text{if} \, \, p > 14, \, \, \text{where} \, \,  \alpha = 1-14/p.
    \end{cases}
\end{equation}
We follow the argument of Theorem \ref{theorem 7.3}  closely. Again, as in the proof of that theorem, we may assume that $f_0 \equiv 0$.

\textbf{Interior estimate.}
Recall that
\begin{equation}
			\label{eq7.2.8}
	f_1 = f \eta_1 \jb^{\theta - 2} \in S_p ((0, T) \times \bR^6)
\end{equation}
 solves Eq. \eqref{eq1.7.2} with the zero initial-value condition
and note that due to the assumptions of this lemma, for any $k  = 1, \ldots, m$, we have
\begin{align*}
	\sg_k &=  \sg \eta_k \jb^{\theta-2} +  f (\jb^{\theta-2} v \cdot \nabla_x \eta_k  \\
	&+ \eta_k  b \cdot \nabla_v \jb^{\theta-2} - \Delta_v \jb^{\theta-2} \eta_k) - 2 \nabla_v f_k \cdot \nabla_v \jb^{\theta-2} \eta_k
 \in L_r ((0, T) \times \bR^6).
\end{align*}
Then, due to Lemma \ref{lemma B.6},   $f_1 \in S_r ((0, T) \times \bR^6)$.
Using  Theorem \ref{corollary 3.4} combined with Remark \ref{remark 3.4}, we get
    \begin{equation}
	\begin{aligned}
                                        \label{eq7.2.2}
          &\|f_1\|_{S_r (\Sigma^T)}
       \le  N (p, K, \theta, \Omega) \bigg(\|\sg\|_{ L_{r, \theta-2} (\Sigma^T) } \\
	&+ \|f\|_{ L_{r, \theta-1} (\Sigma^T) }
       +  \|\nabla_v f\|_{ L_{r, \theta-3} (\Sigma^T) }\bigg).
    \end{aligned}
  \end{equation}

\textbf{Boundary estimate.}
Recall that  $f_k$ is given by  \eqref{eq1.7.10}, and $\overline{f_k}$ is its ``mirror extension'' defined as in \eqref{eq1.4.2.2}.
The function $\overline{f_k} \in S_p ((-\infty, T) \times \bR^6)$ satisfies Eq. \eqref{eq1.7.1}.
In addition, due to \eqref{eq1.7.4},
$$
	 \overline{\sg_k} - \nabla_w \cdot (\mathbb{X} \overline{f_k}) \in L_r ((-\infty, T) \times \bR^6).
$$
 Then, again, by Lemma \ref{lemma B.6}, we conclude that $\overline{f_k}  \in S_r ((-\infty, T) \times \bR^6)$.

Next, by the a priori estimate of Corollary \ref{corollary 3.4} applied to Eq.  \eqref{eq1.7.1} and using \eqref{eq1.7.4}, we get
\begin{align*}
        \|\overline{f_k}\|_{ S_r ((-\infty, T) \times \bR^6) }
    \le N  &  \big(\|\overline{\sg_k}\|_{ L_r ((-\infty, T) \times \bR^6) }
	+  \| \nabla_w \overline{f_k}\|_{ L_{r, 2} ((-\infty, T) \times \bR^6) }\\
   &
	+ \|\overline{f_k}\|_{ L_{r, 1} ((-\infty, T) \times \bR^6) }),
\end{align*}
where $N = N (p, K, \theta, \Omega) > 0$.
Then, by this and Lemma \ref{lemma B.2},
\begin{equation}
			\label{eq7.2.4}
\begin{aligned}
      \|f_k\|_{ S_r (\bR^7_T) }
    \le N  \big(\|\sg\|_{ L_{r, \theta-2} (\Sigma^T) }
 	+  \| \nabla_v f\|_{ L_{r, \theta} (\Sigma^T) }
	+ \| f\|_{ L_{r, \theta-1} (\Sigma^T) }\big).
\end{aligned}
\end{equation}
Thus, the desired estimate \eqref{eq3.7.0} follows from  \eqref{eq7.2.2} combined with \eqref{eq7.2.4}.
As in the proof of Theorem \ref{theorem 7.3}, by using the embedding theorem for the $S_p$ space (Theorem 2.1 of \cite{PR_98}),
we prove the assertions $(ii)$ and $(iii)$.
\end{proof}

\begin{proof}[Proof of Theorem \ref{theorem 1.9}]
We will proof the theorem by using a bootstrap argument.
As in the proof of Theorem \ref{theorem 7.3}, we may assume that $f_0 \equiv 0$.

Let  $r_k, k \ge 1$  be the numbers defined as follows:
\begin{equation}
			\label{8.1.7}
\begin{aligned}
&    r_1 = 2, \quad    \frac{1}{r_k} = \frac{1}{r_{k-1}} - \frac{1}{14},\, \,  k = 2, \ldots, 6, \\
&
    \frac{1}{r_7} = \frac{1}{p} + \frac{1}{14}, \, \,
r_8 = p.
\end{aligned}
\end{equation}
Note that $r_6 = 7$, $r_7 \in \big(7, 14)$ since $p > 14$.
First, by Theorem \ref{theorem 7.3},  the inequality \eqref{7.3.0} holds.
By this estimate,   Lemma \ref{lemma 3.7},  and an induction argument for $k = 1, \ldots, 6$, we conclude
\begin{equation}
			\label{8.1.6}
    \begin{aligned}
&  \|f\|_{ S_{7, \theta - 12} (\Sigma^T)} + \||f| + |\nabla_v f|\|_{L_{14, \theta - 12} (\Sigma^T) }\\
  & \quad \le N   \sum_{k = 1}^6 \|\sg\|_{ L_{r_k, \theta-2 k} (\Sigma^T)  }+ N \|\nabla_v f\|_{L_{2, \theta} (\Sigma^T) }
 +  N \| f \|_{ L_{2, \theta - 1} (\Sigma^T)}.
 \end{aligned}
\end{equation}
Then, by \eqref{8.1.6} and the interpolation inequality, $f, \nabla_v f \in L_{r_7, \theta - 12} (\Sigma^T)$.
Again, by Lemma \ref{lemma 3.7}, \eqref{8.1.6}, and the fact that $r_8 = p$,
we get
\begin{equation}
	            \label{8.1.5}
\begin{aligned}
    &\|f\|_{ S_{r_7, \theta-14} (\Sigma^T) }  +  \||f| + |\nabla_v f|\|_{  L_{p, \theta - 14} (\Sigma^T) }\\
&\le N  \big(\|\sg\|_{ L_{2, \theta-2} (\Sigma^T)  }
  + \|\sg\|_{ L_{p, \theta-4} (\Sigma^T)  }\\
 &\quad +\|\nabla_v f\|_{L_{2, \theta} (\Sigma^T) }
\quad +  \| f \|_{ L_{2, \theta - 1} (\Sigma^T)}\big).
\end{aligned}
\end{equation}
Finally, applying Lemma \ref{lemma 3.7} once more, we prove the theorem.
\end{proof}

\section{Proof of the main result for the linearized Landau equation}
						\label{section 4}
\subsection{Unique solvability result for the simplified viscous  linearized Landau equation}
																														\label{subsection 4.1}
Here, we prove the existence/uniqueness of the finite energy strong solutions to Eq. \eqref{eq1.18} in the sense of Definition \ref{definition 7.0}.
We follow the scheme that we used to show the existence and uniqueness  for the kinetic Fokker-Planck equation \eqref{7.0}.

\begin{proposition}[Existence of finite energy weak solution]
			\label{proposition 4.1}
Let
\begin{itemize}
\item[--] $\Omega$ be a bounded $C^2$ domain,
\item[--]  $T > 0$, $\nu \in (0, 1]$, $\varkappa \in (0, 1]$, $\theta \ge 0$  be numbers,
\item[--] Assumption  \ref{assumption 11.1} (see \eqref{con11.1} - \eqref{con11.1'})  be satisfied,
\item[--] $f_0 \in  L_{2, \theta} (\Omega \times \bR^3) \cap L_{\infty} (\Omega \times \bR^3)$, $h \in L_{2, \theta} (\Sigma^T) \cap L_{\infty} (\Sigma^T)$.
\end{itemize}
Then,  there exists  a number $\varepsilon \in (0, 1)$ (independent of $\Omega, T, \nu, \varkappa, \theta$) such that, if, additionally,
$$
	\|g\|_{  L_{\infty} ( \Sigma^T ) } \le \varepsilon,
$$
then, there exists $\lambda_0 = \lambda_0 (\nu, K, \theta) > 0$, such that
for any $\lambda \ge \lambda_0$, Eq. \eqref{eq1.18} has a   finite energy weak solution $f$ in the sense of Definition \ref{definition 7.0}, and, furthermore,
\begin{align}
	\label{eq4.1.2}
         & \|f (T, \cdot)\|_{ L_{2, \theta} (\Omega \times \bR^3) }
	+ \|\nabla_v f\|_{ L_{2, \theta} (\Sigma^T) } +    \lambda^{1/2} \|f\|_{ L_{2, \theta} (\Sigma^T) }\\
	&
	\le N  (\lambda^{-1/2}\|h\|_{ L_{2, \theta} (\Sigma^T) } + \|f_0\|_{  L_{2, \theta} (\Omega \times \bR^3)  }), \notag\\
	&\label{eq4.1.3}
	 \max\{ \|f\|_{ L_{\infty} (\Sigma^T)}, \| f_{\pm}\|_{ L_{\infty} (\Sigma^T_{\pm}, |v \cdot n_x|)  }\}  \\
  &   \le
        \lambda^{-1} \|h\|_{ L_{\infty} (\Sigma^T) } +  \|f_0\|_{  L_{\infty} (\Omega \times \bR^3)  },\notag
\end{align}
where $N =  N (\theta, K, \nu) > 0$.
\end{proposition}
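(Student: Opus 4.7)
The plan is to recognize equation \eqref{eq1.18} as a special case of the generalized kinetic Fokker-Planck equation \eqref{7.0} and then invoke Theorem \ref{theorem 3.3}. Setting
$$
    a = \sigma_G + \nu I_3, \qquad b = -a_g, \qquad \sg = h,
$$
equation \eqref{eq1.18} takes exactly the form of \eqref{7.0}, and the specular reflection boundary condition is identical in both. Once Assumptions \ref{assumption 2.1}, \ref{assumption 7.3}, and \ref{assumption 2.2} are verified for $a$ and $b$ with constants $\delta = \delta(\nu)$ and $K' = K'(K,\nu)$, the bounds \eqref{eq4.1.2}--\eqref{eq4.1.3} follow immediately from \eqref{7.2.1} with $\lambda_0 = \lambda_0(\nu, K, \theta)$.

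For the ellipticity (Assumption \ref{assumption 2.1}), I would exploit the fact that $\sigma = \Phi \ast \mu$ is a smooth, bounded, positive semi-definite matrix valued function (cf.\ Remark \ref{remark 2.17}), while the perturbation $\Phi \ast (\mu^{1/2} g)$ has its pointwise operator norm bounded by $N \|g\|_{L_\infty(\Sigma^T)} \le N \varepsilon$ via the convolution inequality, the local integrability of $\Phi$, and the Gaussian decay of $\mu^{1/2}$. Adding the viscous regularization $\nu I_3$ then yields
$$
    (\nu - N\varepsilon)\,I_3 \;\le\; \sigma_G + \nu I_3 \;\le\; \bigl(\|\sigma\|_{L_\infty}+N\varepsilon+\nu\bigr) I_3,
$$
so fixing $\varepsilon$ to be a small absolute constant (determined only by the structural constant $N$ coming from $\Phi$ and $\mu$) produces an ellipticity constant $\delta = \delta(\nu) > 0$; it is precisely this $\nu$-dependence that will be inherited by $\lambda_0$ and $N$ in the conclusion.

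The main technical obstacle is Assumption \ref{assumption 2.2}, namely that $\nabla_v a = \nabla_v \sigma_G$ and $\nabla_v b = -\nabla_v a_g$ lie in $L_\infty(\Sigma^T)$. A naive differentiation inside the convolution defining $a_g$ (see \eqref{eq11.5}) produces a term containing $\mu^{1/2} \partial^2_{v_i v_j} g$, but we possess no second $v$-derivatives of $g$ under Assumption \ref{assumption 11.1}. The workaround is integration by parts inside the convolution:
$$
    \nabla_v\bigl[\Phi \ast (\mu^{1/2}\psi)\bigr] = \Phi \ast \nabla_v\bigl(\mu^{1/2}\psi\bigr),
$$
which transfers one derivative onto the Maxwellian factor and onto $\psi$, requiring only $\psi \in W^{1,\infty}$. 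Applied with $\psi \in \{g,\ \partial_{v_j}g,\ v_j g\}$, this bounds both $\nabla_v \sigma_G$ and $\nabla_v a_g$ in $L_\infty$ purely in terms of $\|g\|_{L_\infty} + \|\nabla_v g\|_{L_\infty} \le 2K$. The analogous direct convolution bound (without integration by parts) controls $\|a_g\|_{L_\infty}$ and yields Assumption \ref{assumption 7.3}. With all three assumptions verified, a direct application of Theorem \ref{theorem 3.3} produces a finite energy weak solution to \eqref{eq1.18} satisfying \eqref{eq4.1.2}--\eqref{eq4.1.3}.
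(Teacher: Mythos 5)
Your overall strategy is correct and matches the paper's: recast Eq.~\eqref{eq1.18} as a special case of Eq.~\eqref{7.0} with $a=\sigma_G+\nu I_3$, $b=-a_g$, $\sg=h$, verify Assumptions~\ref{assumption 2.1}--\ref{assumption 2.2}, and invoke Theorem~\ref{theorem 3.3}. However, the ellipticity verification has a genuine gap. You bound the perturbation $\Phi\ast(\mu^{1/2}g)$ by a flat $N\varepsilon$ in pointwise operator norm, use $\sigma\ge 0$, and conclude
$\sigma_G+\nu I_3\ge(\nu-N\varepsilon)I_3$. For this lower bound to be positive you need $\varepsilon<\nu/N$, i.e.\ $\varepsilon$ must shrink with $\nu$ --- contradicting the claim that $\varepsilon$ is independent of $\nu$, and fatal for the vanishing-viscosity step that this proposition is built to feed. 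The issue is that the flat bound ignores the anisotropic decay of $\sigma$: since the smallest eigenvalue of $\sigma(v)$ decays like $\jb^{-3}$, a constant negative perturbation $-N\varepsilon I_3$ can overwhelm $\sigma(v)$ for large $|v|$. The paper instead cites Lemma~3 of \cite{G_02} (see \eqref{eq11.2.1}), which exploits that $\Phi\ast(\mu^{1/2}g)$ has the \emph{same} anisotropic decay as $\sigma$, yielding $\sigma_G\ge C_1\jb^{-3}I_3\ge 0$ for $\varepsilon$ small and independent of $\nu$; the $\nu I_3$ term then supplies the uniform lower bound $\nu I_3$, so the ellipticity constant $\delta$ --- but not $\varepsilon$ --- depends on $\nu$.

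A second, smaller slip: your integration-by-parts formula $\nabla_v[\Phi\ast(\mu^{1/2}\psi)]=\Phi\ast\nabla_v(\mu^{1/2}\psi)$ puts the derivative on $\mu^{1/2}\psi$. When $\psi=\partial_{v_j}g$, this reintroduces $\mu^{1/2}D^2_v g$, exactly the term you were trying to avoid, and ``requiring only $\psi\in W^{1,\infty}$'' then means requiring $D^2_v g\in L_\infty$. The derivative should instead land on the kernel: $\nabla_v[\Phi^{ij}\ast(\mu^{1/2}\partial_{v_j}g)]=(\nabla_v\Phi^{ij})\ast(\mu^{1/2}\partial_{v_j}g)$, which needs only $\nabla_v g\in L_\infty$ and is still well defined because $|\nabla\Phi|\sim|v|^{-2}$ is locally integrable in $\bR^3$ against the Gaussian weight. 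With that correction your argument for Assumptions~\ref{assumption 7.3} and \ref{assumption 2.2} recovers what the paper again delegates to Lemma~3 of \cite{G_02}, namely the bound \eqref{11.2.9}.
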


\begin{proof}
We use Theorem \ref{theorem 3.3}. We need to check that $\sigma_G + \nu I_d$ and $a_g$ satisfy  Assumptions \ref{assumption 2.1} -  \ref{assumption 2.2} (see \eqref{eq2.1.0} - \eqref{eq2.2.0}).

\textit{Assumption \ref{assumption 2.1}.}
By Lemma 3 of \cite{G_02}, for sufficiently small $\varepsilon$, there exist constants $C_1, C_2 > 0$ such that
\begin{equation}
				\label{eq11.2.1}
	C_1 \jb^{-3} I_3 \le \sigma_G (z)  \le C_2 \jb^{-1} I_3, \, \, \forall z \in \bR^7_T.
\end{equation}
Then, \eqref{eq2.1.0} in Assumption \ref{assumption 2.1} holds with $\delta  = \nu$.

\textit{Assumption  \ref{assumption 7.3} and \ref{assumption 2.2}.}
 By using Lemma  3  in \cite{G_02} combined with Assumption \ref{assumption 11.1} (\eqref{con11.1} - \eqref{con11.1'}), one can show that for sufficiently small $\varepsilon > 0$,
 \begin{equation}
			\label{11.2.9}
	\||a_g| + |\nabla_v a_g| + |\nabla_v \sigma_G|\|_{  L_{\infty} (\bR^7_T) } \le N (K),
  \end{equation}
and, hence, \eqref{eq2.3.0} and \eqref{eq2.2.0}   hold.

Now all the assertions of this proposition follow directly from Theorem \ref{theorem 3.3}.
\end{proof}

The following proposition is analogous to Theorems \ref{theorem 7.3} and \ref{theorem 1.9}.
\begin{proposition}[$S_p$ regularity of the simplified linearized viscous Landau equation]
			\label{proposition 4.2}
Let
\begin{itemize}
\item[--] $\Omega$ be a bounded $C^3$ domain,
\item[--]  $T > 0$, $\nu  \in (0, 1]$, $\varkappa \in (0, 1]$, $\theta \ge 16$, $p > 14$  be numbers,
\item[--] $f_0 \in  \cO_{2, \theta-2} \cap \cO_{p, \theta-4}$, $h \in L_{2, \theta-2} (\Sigma^T) \cap L_{p, \theta-4} (\Sigma^T)$,
\item[--] Assumption \ref{assumption 11.1}  (\eqref{con11.1} - \eqref{con11.1'}) be satisfied,
\item[--] $	\|g\|_{  L_{\infty} ( \Sigma^T ) } \le \varepsilon$,
\item[--] $f$ be a finite energy  weak solution to Eq. \eqref{eq1.18} with parameter $\theta$ in the sense of Definition \ref{definition 7.0},  which exists due to Proposition \ref{proposition 4.1}.
\end{itemize}
Then,   there exists  a number $\varepsilon \in (0, 1)$ (independent of $\Omega, T, \nu, \varkappa, \theta$)
 and
  $\lambda_0 = \lambda_0 (\nu, K, \theta, \Omega) \ge 1$ such that  $f \in S_{2, \theta-2} (\Sigma^T) \cap S_{p, \theta - 16} (\Sigma^T)$,
and, furthermore,
\begin{equation}
			\label{eq4.2.1}
\begin{aligned}
	\|f\|_{  S_{2, \theta-2} (\Sigma^T)  } + \|f\|_{ S_{p, \theta - 16} (\Sigma^T)  }
	&\le N  (\|\nabla_v f\|_{ L_{2, \theta} (\Sigma^T) }  + \|f\|_{  L_{2, \theta-1} (\Sigma^T)  }\\
	& \quad+ \|h\|_{ L_{2, \theta - 2} (\Sigma^T)  } + \|h\|_{  L_{p, \theta - 4} (\Sigma^T) }\\
		&\quad+ |f_0|_{  \cO_{2, \theta - 2} } + |f_0|_{  \cO_{p, \theta - 4} }),
\end{aligned}
\end{equation}
where $N = N (\nu, K, \theta, \varkappa, \Omega) > 0$.
\end{proposition}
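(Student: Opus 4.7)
The plan is to mimic the proofs of Theorem \ref{theorem 7.3} and Theorem \ref{theorem 1.9} essentially verbatim, with Eq. \eqref{eq1.18} playing the role of Eq. \eqref{7.0} and the leading coefficient matrix $a = \sigma_G + \nu I_3$. The first task is therefore to check that the hypotheses of Theorem \ref{theorem D.1} are satisfied for Eq. \eqref{eq1.18} after localization and mirror extension. Specifically, I need to verify: (i) nondegeneracy, which follows from \eqref{eq11.2.1} with lower constant $\nu$ (provided $\varepsilon$ is chosen small enough in $\|g\|_{L_\infty(\Sigma^T)} \le \varepsilon$); (ii) the Hölder regularity $\sigma_G \in L_\infty((0,T), C^{\varkappa/3,\varkappa}_{x,v}(\overline{\Omega}\times\bR^3))$, which is exactly the content of Lemma \ref{lemma C.1} under Assumption \ref{assumption 11.1}; (iii) boundedness of $a_g$ and $\nabla_v a_g$, provided by \eqref{11.2.9}; and (iv) the mirror-symmetry condition \eqref{eq1.23} for the leading coefficient, verified in Appendix \ref{Appendix E}. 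The boundary condition \eqref{con11.1'} on $g$ is what forces $\sigma_G$ to satisfy \eqref{eq1.23}.

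Second, I would reduce to the case $f_0 \equiv 0$ by the same substitution used in the proof of Theorem \ref{theorem 7.3}: replace $f$ by $f - f_0 \zeta(t)$ for a cutoff $\zeta$ with $\zeta(0)=1$, absorbing the resulting terms into $h$. The assumptions $f_0 \in \cO_{2,\theta-2} \cap \cO_{p,\theta-4}$ are exactly what is needed so that the modified $h$ lies in $L_{2,\theta-2}(\Sigma^T) \cap L_{p,\theta-4}(\Sigma^T)$; the specular reflection condition on $f_0$ encoded in the definition of $\cO$ ensures that $f - f_0\zeta$ still satisfies the boundary condition.

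Third, introduce the partition of unity $\{\eta_k\}_{k=1}^m$ as in \eqref{eq1.7.11} and consider the localized weighted functions $f_k := f\eta_k \jb^{\theta-2}$. For the interior cutoff $k=1$, extend $f_1$ by zero for $t<0$ and apply Theorem \ref{theorem D.1} together with the energy/uniqueness argument of Theorem \ref{theorem 7.3} to obtain $f_1 \in S_2((-\infty,T)\times \bR^6)$ with the corresponding weighted estimate. For the boundary cutoffs $k \geq 2$, apply the mirror extension \eqref{eq1.4.2.2} to reduce to the whole-space equation \eqref{eq1.19} with leading coefficient $\mathbb{A}$ built from $\sigma_G + \nu I_3$; the mirror-symmetry condition guarantees $\mathbb{A} \in L_\infty((0,T), C^{\varkappa/3,\varkappa}_{x,v}(\bR^6))$, so Theorem \ref{theorem D.1} applies and gives $\overline{f_k}\in S_2$. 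Pulling back via Lemma \ref{lemma B.2} yields the $S_{2,\theta-2}(\Sigma^T)$ estimate. To pass to $S_{p,\theta-16}(\Sigma^T)$, run the same bootstrap scheme used in the proof of Theorem \ref{theorem 1.9}: iterate Lemma \ref{lemma 3.7} six times along the sequence $r_k$ in \eqref{8.1.7} to move from $r_1=2$ up to $r_7\in(7,14)$, and finally one more application to reach $r_8=p$. Each step uses the Morrey-type embedding Theorem 2.1 of \cite{PR_98} to gain integrability in the nonlinear term, and Lemma \ref{lemma 3.7} itself applies because the leading coefficient $\sigma_G + \nu I_3$ and drift $a_g$ verify exactly the assumptions invoked there (with $\delta$ depending on $\nu$).

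The main obstacle I anticipate is verifying that Lemma \ref{lemma 3.7} applies in the present setting even though it was stated for Eq. \eqref{7.0} with $a=I_3$: the proof of that lemma used $\Delta_v$ only through the interior/mirror-extension structure, but for $\sigma_G + \nu I_3$ one must check that (i) the estimate \eqref{eq1.7.4} for the quadratic drift $\mathbb{X}$ produced by the boundary flattening survives when the leading coefficient is non-constant, and (ii) the mirror extension still yields a weak solution of the whole-space equation in the sense required by Theorem \ref{theorem D.1}. Both points follow from the discussion in Sections \ref{subsection 1.4.1}--\ref{subsubsection 1.4.3}, but require tracking how the additional terms $\nu\Delta_v f$, $\nabla_v \cdot(\sigma_G \nabla_v f)$, and $a_g \cdot \nabla_v f$ transform; once the coefficient $\mathbb{A}$ corresponding to $a = \sigma_G + \nu I_3$ is shown to belong to $L_\infty((0,T), C^{\varkappa/3,\varkappa}_{x,v})$ via Appendix \ref{Appendix E}, the rest of the argument is routine. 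The final estimate \eqref{eq4.2.1} then follows by collecting the constants, with $\lambda_0$ absorbing the lower-order commutator terms generated by the weight $\jb^{\theta-2}$ and the cutoffs $\eta_k$.
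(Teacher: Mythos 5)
Your proposal reproduces the paper's own argument faithfully: the paper's proof of Proposition \ref{proposition 4.2} likewise reduces to $f_0\equiv 0$, localizes via the partition of unity $\eta_k$ into weighted pieces $f_k=f\eta_k\jb^{\theta-2}$, applies Theorem \ref{theorem D.1} to the interior piece with $a=\sigma_G+\nu I_3$, $b^i=-a_g^i-\partial_{v_j}\sigma^{ji}$, and to the mirror-extended boundary pieces via Appendix \ref{Appendix E} and Lemma \ref{lemma C.1}, and then finishes by bootstrapping word-for-word as in Theorem \ref{theorem 1.9} and Lemma \ref{lemma 3.7}. The ``obstacle'' you flag about Lemma \ref{lemma 3.7} being stated for $a=I_3$ is real but you resolve it the same way the paper does implicitly; your account is correct and complete.
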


\begin{proof}
We fix $\varepsilon > 0$ sufficiently small such that \eqref{eq11.2.1} holds.

\textbf{Step 1: $S_2$-regularity.}
We inspect the argument of Theorem \ref{theorem 7.3}.
As in the aforementioned theorem, since the right-hand side of \eqref{eq4.2.1} contains the terms $|f_0|_{ \cO_{2, \theta-2} }, |f_0|_{ \cO_{p, \theta-4} }$,  we may assume that $f_0 \equiv 0$.
  We use a partition of unity argument combined with the mirror extension method and the $S_p$ estimate of Theorem \ref{theorem D.1}.
Let $\eta_k, k  = 1, \ldots, m$ be the partition of unity defined in the proof of Theorem \ref{theorem 7.3} (see \eqref{eq1.7.11}).
Note that $f_k = f \eta_k \jb^{\theta-2}$ satisfies the equation
\begin{equation}
			\label{eq4.1.1}
	Y f_k -  \nabla_v \cdot \big((\sigma_G + \nu I_3) \nabla_v f_k\big) - a_g \cdot \nabla_v f_k + \lambda f_k  =: h_k,
\end{equation}
where
\begin{align*}
&	h_k =  h\eta_k \jb^{\theta-2} +  f ( \jb^{\theta-2} v \cdot \nabla_x \eta_k) -  (\sigma_G^{i j} + \nu \delta_{i j}) (\partial_{v_i v_j} \jb^{\theta-2}) f \eta_k  \\
	& +  (-\partial_{v_i} \sigma_G^{i j} - a_g^j) (\partial_{v_j} \jb^{\theta-2}) f \eta_k
	  - 2  (\sigma^{i j}_G + \nu \delta_{i j}) (\partial_{v_{  i }}  \jb^{\theta-2}) (\partial_{v_j} f) \eta_k.
\end{align*}

\textit{Interior estimate.}
We conclude that $f_1 \in S_2 ((-\infty, T) \times \Omega \times \bR^3)$ by using the same bootstrap argument as in the proof of  Theorem \ref{theorem 7.3}.
One minor difference is that one needs to apply Theorem \ref{theorem D.1} of Appendix \ref{Appendix D} with $a = \sigma_G + \nu I_3$, $b^{   i }  = -a_g^{i} - \partial_{v_j} \sigma^{ji}, i = 1, 2, 3$, and $c = 0$. Let us check the conditions of this theorem.
Note that Assumptions  \ref{assumption 2.1} (see \eqref{eq2.1.0}) and \ref{assumption 3.2} (see \eqref{eq3.2.0}) hold due to \eqref{eq11.2.1} and \eqref{11.2.9}.
Furthermore, by  Lemma \ref{lemma C.1},  $\sigma_G \in  L_{\infty} ((0, T), C^{\varkappa/3, \varkappa}_{x, v} (\bR^6))$ so that due to Remark \ref{remark 3.1},  Assumption \ref{assumption 3.1} $(\gamma)$ (see \eqref{eq3.1.0}) is satisfied for any $\gamma \in (0, 1)$.
Then, by the aforementioned theorem,  for sufficiently large $\lambda_0 = \lambda_0 (\nu, K, \varkappa, \theta, \Omega) \ge 1$ and any $\lambda \ge \lambda_0$,  one has $f_1 \in S_2 ((-\infty, T) \times  \bR^6)$, and
\begin{equation}
			\label{eq1.4.4}
\begin{aligned}
	& \|f_1\|_{  S_2 ((-\infty, T)  \times \bR^6)  }
	 \le  N \|h_1\|_{  L_2 ((-\infty, T) \times  \bR^6)  } \\
	&\le N (\|h\|_{  L_{2, \theta} (\Sigma^T) }   + \|f \|_{  L_{2, \theta-1} (\Sigma^T) } + \|\nabla_v f \|_{  L_{2, \theta-3} (\Sigma^T) }),
\end{aligned}
\end{equation}
where $N = N (\nu, K, \varkappa, \theta, \Omega) > 0$.

\textit{Boundary estimate.} As in the proof of Theorem \ref{theorem 7.3}, for $k \in \{2, \ldots, m\}$, the function $\overline{f_k}$ satisfies the identity
\begin{equation}
			\label{eq1.4.3}
\begin{aligned}
	&       \partial_t \overline{f_k} + w \cdot \nabla_y \overline{f_k} -  \nabla_w \cdot (\mathbb{A} \nabla_w \overline{f_k}) + \mathbb{B} \cdot \nabla_v \overline{f_k} + \lambda \overline{f_k}\\
  &     = \overline{h_k} - \nabla_w \cdot (\mathbb{X} \overline{f_k}), \quad \overline{f_k} (0, \cdot) \equiv 0,
\end{aligned}
\end{equation}
where
\begin{itemize}
\item[--]
$\mathbb{A}$ is defined by \eqref{eq1.4.3.2}  \eqref{eq1.4.3.3}, and \eqref{eq1.20} with $a$ replaced with $\sigma_G + \nu I_d$,
\item[--] $\mathbb{B}$ is defined by \eqref{eq1.4.3.2} and \eqref{eq1.4.3.4} with $b$ replaced with $-a_g$,
\item[--] $\mathbb{X}$ is given by \eqref{eq1.4.3.5} and \eqref{eq1.4.3.6}.
\end{itemize}
 This time, one needs to apply Theorem \ref{theorem D.1} to Eq. \eqref{eq1.4.3}.
Let us check its assumptions.
Note that by \eqref{eq11.2.1} -  \eqref{11.2.9}, $\mathbb{A}$ and $\mathbb{B}$ are bounded functions, and by the discussion in Appendix \ref{Appendix E},
and by Lemma \ref{lemma C.1},
$$
	\|\mathbb{A}\|_{  L_{\infty} ((0, T), C^{\varkappa/3, \varkappa}_{x, v} (\bR^6))  } \le N (K, \varkappa).
$$
Then, by Theorem \ref{theorem D.1} and \eqref{eq1.7.4},   we get
\begin{equation}
			\label{eq1.4.5}
\begin{aligned}
&	\|\overline{f_k}\|_{  S_2 ((-\infty, T)  \times \bR^6 )  } \le N  (\|\overline{h_k}\|_{  L_{2} ((-\infty, T)  \times \bR^6 )   }  +  \|\nabla_w \cdot (\mathbb{X} \overline{f_k})\|_{  L_{2} ((-\infty, T)  \times \bR^6 )   })\\
&
	\le N (\|h\|_{  L_{2, \theta - 2} (\Sigma^T)  } + \|f\|_{ L_{2, \theta -  1} (\Sigma^T) } + \|\nabla_v f\|_{ L_{2, \theta} (\Sigma^T) }),
\end{aligned}
\end{equation}
where $N = N (\nu, K, \theta, \Omega) > 0$.
Finally combining \eqref{eq1.4.4} with \eqref{eq1.4.5} and using Lemma \ref{lemma B.2}, we conclude that $f \in S_{2, \theta - 2} (\Sigma^T)$ and  that the estimate \eqref{eq4.2.1} holds.
As in the proof of Theorem \ref{theorem 7.3}, the $L_{7/3, \theta-2} (\Sigma^T)$ norm estimate of $f, \nabla_v f$ is obtained by combining the above estimates with the embedding theorem for $S_p$ spaces in \cite{PR_98}.

\textbf{Step 2: $S_p$ regularity}
To finish the proof, we repeat  word-for-word the argument  of Theorem \ref{theorem 1.9} and  Lemma \ref{lemma 3.7}.
\end{proof}

\begin{corollary}
			\label{corollary 4.3}
Any two finite energy solutions to Eq. \eqref{eq1.18} coincide.
\end{corollary}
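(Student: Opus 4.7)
My plan is to follow the proof of Corollary \ref{corollary 7.3.1} essentially verbatim, substituting the principal part $\nabla_v \cdot ((\sigma_G + \nu I_3) \nabla_v \cdot )$ for $\Delta_v$. The ingredients that made that argument work -- $S_2$-regularity of any finite energy weak solution, a coercive diffusion, and a bounded drift -- are all in place here.

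First, under the hypotheses inherited from Proposition \ref{proposition 4.2}, any finite energy weak solution to Eq. \eqref{eq1.18} (in the sense of Definition \ref{definition 7.0}) actually lies in $S_{2, \theta-2}(\Sigma^T) \subset S_2(\Sigma^T)$ and is therefore a finite energy strong solution. So it suffices to prove uniqueness in that stronger class. Given two such solutions $f_1, f_2$ and a parameter $\lambda' > 0$ to be chosen, set $F := (f_1 - f_2) e^{-\lambda' t}$. Then $F \in S_2(\Sigma^T)$, satisfies $F(0,\cdot) \equiv 0$ and the specular reflection condition, and solves
\begin{equation*}
    Y F - \nabla_v \cdot ((\sigma_G + \nu I_3) \nabla_v F) - a_g \cdot \nabla_v F + (\lambda + \lambda') F = 0
\end{equation*}
in the strong sense on $\Sigma^T$.

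Next, I would apply the energy identity of Lemma \ref{lemma B.3} (as in Corollary \ref{corollary 7.3.1}) to $F$. The grazing set integrals $\int_{\Sigma^T_\pm} |F_{\pm}|^2 |v \cdot n_x| \, d\sigma dt$ cancel: the change of variables $v \mapsto R_x v$ preserves $|v \cdot n_x|$, and the specular identity $F_-(t,x,v) = F_+(t,x,R_x v)$ converts the outgoing integral into the incoming one. Integrating the diffusion term by parts in $v$ (no velocity boundary terms, since $v \in \bR^3$) gives
\begin{equation*}
    2 \int_{\Sigma^T} (\sigma_G + \nu I_3) \nabla_v F \cdot \nabla_v F \, dz \ge 2 \nu \|\nabla_v F\|^2_{L_2(\Sigma^T)},
\end{equation*}
using the pointwise bound $\sigma_G + \nu I_3 \ge \nu I_3$ from \eqref{eq11.2.1}. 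The drift term is absorbed via the Cauchy--Schwartz inequality using the $L_\infty$ bound $|a_g| \le N(K)$ from \eqref{11.2.9}:
\begin{equation*}
    \bigg| 2 \int_{\Sigma^T} (a_g \cdot \nabla_v F) F \, dz \bigg| \le \nu \|\nabla_v F\|^2_{L_2(\Sigma^T)} + N(K, \nu) \|F\|^2_{L_2(\Sigma^T)}.
\end{equation*}
Combining these, one obtains
\begin{equation*}
    \|F(T, \cdot)\|^2_{L_2(\Omega \times \bR^3)} + \nu \|\nabla_v F\|^2_{L_2(\Sigma^T)} + 2 (\lambda + \lambda' - N(K, \nu)) \|F\|^2_{L_2(\Sigma^T)} \le 0,
\end{equation*}
so choosing $\lambda' > N(K, \nu)$ forces $F \equiv 0$ and hence $f_1 = f_2$.

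The only delicate point is justifying the energy identity for an $S_2$ function obeying specular reflection; this is exactly what is invoked in Corollary \ref{corollary 7.3.1}, and nothing in that step depends on the particular form $\Delta_v$. A minor complication relative to the Fokker--Planck case is that $\sigma_G$ only satisfies the degenerate lower bound $\jb^{-3} I_3$, but the additive viscosity $\nu I_3$ restores uniform ellipticity, which is what allows the clean absorption of the $a_g$-term without introducing weights. This is the main reason the result is formulated for the \emph{viscous} simplified equation rather than for \eqref{eq11.1} directly.
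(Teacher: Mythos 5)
Your proposal is correct and follows essentially the same route the paper takes: the paper's proof of Corollary \ref{corollary 4.3} simply invokes Proposition \ref{proposition 4.2} to place any finite energy weak solution in $S_2(\Sigma^T)$, then applies the energy identity of Lemma \ref{lemma B.3} and repeats the argument of Corollary \ref{corollary 7.3.1} verbatim. Your write-up fills in the details (boundary cancellation via specular reflection, coercivity from $\nu I_3$, drift absorption by Cauchy--Schwartz) that the paper compresses into ``similar to Corollary \ref{corollary 7.3.1}.''
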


\begin{proof}
 Since due to Proposition \ref{proposition 4.2},  any  finite energy weak solution to Eq. \eqref{eq1.18} must be of class $S_2 (\Sigma^T)$, we may use the energy identity  of Lemma \ref{lemma B.3}.
The rest of the argument is similar to that of   Corollary  \ref{corollary 7.3.1} (see page \pageref{proof of corollary 7.3.1}).
\end{proof}

\subsection{Unique solvability result for the  viscous  linearized Landau equation}

We now prove the unique solvability  for large $\lambda > 0$ for the  equation
\begin{equation}
			\label{eq5.2.0}
\begin{aligned}
	&Y f  - \nabla_v \cdot (\sigma_G \nabla_v f) - \nu \Delta_v f  -  a_g \cdot \nabla_v f + C f + \lambda f = h \,\, \text{in} \, \, \Sigma^T, \\
&
	 f (0, \cdot) = f_0 (\cdot) \,\, \text{in} \, \, \Omega \times \bR^3,\quad
 f_{-} (t, x, v) =  f_{+} (t, x, R_x v), \,   z \in \Sigma^T_{-},
\end{aligned}
\end{equation}
 where $C$ is some linear operator, for example $- \overline{K}_g$ defined in \eqref{eq11.2}.

To implement a perturbation argument, we will work with the following weighted kinetic Sobolev spaces.
\begin{definition}
			\label{definition 10.1}
Let $T > 0$, $\theta \ge 16$, $p > 14$,  $\lambda > 0$ be numbers.
 We say that $f \in \cK_{\theta, \lambda, p} (\Sigma^T)$
if the following hold:
 \begin{enumerate}
     \item
$f \in S_{2, \theta-2} (\Sigma^T) \cap S_{p, \theta - 16} (\Sigma^T)  \cap L_{2, \theta} (\Sigma^T) \cap L_{\infty} (\Sigma^T)$,\\

\item $\nabla_v f \in L_{2, \theta} (\Sigma^T)$,\\
\item
$f (0, \cdot) \in L_{2, \theta} (\Omega \times \bR^3) \cap L_{\infty} (\Omega \times \bR^3) \cap \cO_{2, \theta - 2} \cap \cO_{p, \theta - 16}$,
$
	f (T, \cdot) \in   L_{2, \theta} (\Omega \times \bR^3),
$
 $f_{\pm} \in L_{\infty} (\Sigma^T_{\pm}, |v \cdot n_x|)$, \\

 \item $f_{-} (t, x, v) =  f_{+} (t, x, R_x v)$  a.e. on $\Sigma^T_{-}$.\\

\end{enumerate}

The $\cK_{\theta, \lambda, p} (\Sigma^T)$-norm is defined as
\begin{equation}
			\label{eq10.0}
\begin{aligned}
&	\|f\|_{\cK_{\theta, \lambda, p} (\Sigma^T) } =    \lambda   \|f\|_{L_{2, \theta} (\Sigma^T)}   +  \lambda \|f\|_{L_{\infty} (\Sigma^T)} \\
&
	  + \|\nabla_v f\|_{L_{2, \theta} (\Sigma^T)} +  \|f\|_{  S_{2, \theta - 2} (\Sigma^T) } + \|f\|_{  S_{p, \theta - 16} (\Sigma^T) }\\
&
	+   \|f_{\pm}\|_{L_{\infty} (\Sigma^T_{\pm}, |v \cdot n_x|)}  +  \|f (T, \cdot)\|_{L_{2, \theta} (\Omega \times \bR^3)}\\
&
       + \|f (0, \cdot)\|_{  L_{2, \theta} (\Omega \times \bR^3)  } + \|f (0, \cdot)\|_{  L_{\infty } (\Omega \times \bR^3)  }+  \|f (0, \cdot)\|_{ \cO_{2, \theta - 2}  } +   \|f (0, \cdot)\|_{ \cO_{p, \theta - 16}  }.
 \end{aligned}
\end{equation}
\end{definition}

\begin{assumption}
			\label{assumption 10.1}
There exists a  linear operator $C$ on
$\cK_{\theta, \lambda, p}(\Sigma^T)$
and a constant $\kappa  > 0$
such for any $u \in  \cK_{\theta, \lambda, p} (\Sigma^T)$,
\begin{equation}		
			\label{eq10.1.0}
	\|C  u\|_{ L_{2, \theta} (\Sigma^T)  } + \|C  u\|_{   L_{\infty}  (\Sigma^T) }
	\le \kappa (\|u\|_{ L_{2, \theta} (\Sigma^T)  } +  \|u\|_{ L_{\infty}  (\Sigma^T) }).
\end{equation}
\end{assumption}

\begin{remark}
			\label{remark 10.1}
An example of an operator $C$ that satisfies \eqref{eq10.1.0} is the operator $\overline{K}_g$ given by \eqref{eq11.2}.
If Assumption \ref{assumption 11.1} (see \eqref{con11.1} - \eqref{con11.1'}) holds and $\|g\|_{ L_{\infty} (\Sigma^T) }$ is sufficiently small,
then, by Lemmas 2.9 and formula $(7.5)$ of \cite{KGH},
for any  $u \in L_{2, \theta} (\Sigma^T) \cap L_{\infty} (\Sigma^T)$ such that $\nabla_v u \in L_{2} (\Sigma^T)$, one has
\begin{align*}
	&\|\overline{ K}_g u\|_{ L_{2, \theta} (\Sigma^T)  } \le N (\theta)  (\|u\|_{ L_{2, \theta} (\Sigma^T)  } +  \|u\|_{ L_{\infty} (\Sigma^T)  }),\\
	&   \|\overline{ K}_g u\|_{ L_{\infty} (\Sigma^T)  }  \le N (\theta) \|u\|_{ L_{\infty} (\Sigma^T)  }.
\end{align*}
\end{remark}

\begin{proposition}
			\label{lemma 10.2}
Let
\begin{itemize}
\item[--] $\Omega$ be a bounded $C^3$ domain,
\item[--]  $T > 0$, $\nu \in (0, 1]$, $\varkappa \in (0, 1]$, $\theta \ge 16$  be numbers,
\item[--] $f_0 \in  L_{2, \theta} (\Omega \times \bR^3) \cap L_{\infty} (\Omega \times \bR^3)  \cap \cO_{2, \theta - 2 }  \cap \cO_{p, \theta - 4  }$, $h \in L_{2, \theta} (\Sigma^T) \cap L_{\infty} (\Sigma^T)$,
\item[--] Assumption \ref{assumption 11.1} (see \eqref{con11.1} - \eqref{con11.1'}) be satisfied,
\item [--] $\|g\|_{  L_{\infty} ( \sigma^T ) } \le \varepsilon$.
\end{itemize}
Then,  for sufficiently small $\varepsilon > 0$ (independent of $\Omega, T, \nu, \varkappa, \theta$,   there exists some $\lambda_0  = \lambda_0 (\theta,  K, \varkappa, \Omega, \nu) > 0$  such that
Eq. \eqref{eq5.2.0}
has a unique finite energy  strong solution in the sense of Definition \ref{definition 7.0}, and
\begin{equation}
			\label{eq10.2.1}
	\|f\|_{   \cK_{\theta, \lambda, p} (\Sigma^T) } \le N (\|h\|_{ L_{2, \theta} (\Sigma^T) } + \|h\|_{ L_{\infty} (\Sigma^T) }  + |f_0|_{\cO_{2, \theta-2} } + |f_0|_{  \cO_{p, \theta-4} }   ),
\end{equation}
where $N = N (\theta, K, \Omega, p, \varkappa, \nu) > 0$.
\end{proposition}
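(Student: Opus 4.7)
My plan is to treat $Cf$ as a lower-order perturbation of the simplified viscous linearized Landau equation \eqref{eq1.18} (whose finite energy strong solvability is given by Propositions \ref{proposition 4.1}, \ref{proposition 4.2} and Corollary \ref{corollary 4.3}), and to solve \eqref{eq5.2.0} by a contraction mapping argument in $\cK_{\theta, \lambda, p}(\Sigma^T)$ for large $\lambda$. For $u \in \cK_{\theta, \lambda, p}(\Sigma^T)$, I define $\mathcal{M} u$ to be the unique finite energy strong solution of
\[
    Y f - \nabla_v \cdot(\sigma_G \nabla_v f) - \nu \Delta_v f - a_g \cdot \nabla_v f + \lambda f = h - Cu, \qquad f(0,\cdot) = f_0,
\]
with specular reflection, which exists by Propositions \ref{proposition 4.1}--\ref{proposition 4.2} provided $h - Cu$ belongs to the relevant source spaces. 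Only $L_{2, \theta} \cap L_\infty$ control of $Cu$ is granted by Assumption \ref{assumption 10.1}, but since $\Omega$ is bounded and $\langle v\rangle^{\theta-4}\le \langle v\rangle^{\theta}$, the interpolation $|Cu|^{p}\le \|Cu\|_{\infty}^{p-2}|Cu|^{2}$ yields
\[
    \|Cu\|_{L_{2,\theta-2}(\Sigma^T)} + \|Cu\|_{L_{p,\theta-4}(\Sigma^T)} \le N \bigl(\|Cu\|_{L_{2,\theta}(\Sigma^T)} + \|Cu\|_{L_\infty(\Sigma^T)}\bigr),
\]
so $\mathcal{M}$ is well defined as a map $\cK_{\theta, \lambda, p}(\Sigma^T) \to \cK_{\theta, \lambda, p}(\Sigma^T)$ once $\lambda \ge \lambda_0^{\ast}(\nu, K, \theta, \varkappa, \Omega)$ exceeds the thresholds from those propositions.

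The contraction property is the heart of the argument. Inspecting the definition \eqref{eq10.0} of $\|\cdot\|_{\cK}$, one has $\lambda(\|u\|_{L_{2,\theta}} + \|u\|_{L_\infty}) \le \|u\|_{\cK_{\theta, \lambda, p}}$, and Assumption \ref{assumption 10.1} therefore gives
\[
    \|Cu\|_{L_{2,\theta}} + \|Cu\|_{L_\infty} \le \kappa \lambda^{-1} \|u\|_{\cK_{\theta, \lambda, p}}.
\]
Since $\mathcal{M} u_1 - \mathcal{M} u_2$ solves the simplified equation \eqref{eq1.18} with source $-C(u_1 - u_2)$ and zero initial/boundary data, combining the a priori bounds \eqref{eq4.1.2}--\eqref{eq4.1.3} and \eqref{eq4.2.1} with the interpolation inequality above produces a constant $N_0 = N_0(\nu, K, \theta, \varkappa, \Omega, p)$, independent of $\lambda \ge \lambda_0^\ast$, such that
\[
    \|\mathcal{M} u_1 - \mathcal{M} u_2\|_{\cK_{\theta, \lambda, p}} \le N_0 \kappa \lambda^{-1} \|u_1 - u_2\|_{\cK_{\theta, \lambda, p}}.
\]
Choosing $\lambda_0 \ge \max(\lambda_0^\ast, 2 N_0 \kappa)$ makes $\mathcal{M}$ a $1/2$-contraction, so the Banach fixed-point theorem yields a unique fixed point $f \in \cK_{\theta, \lambda, p}(\Sigma^T)$ that solves \eqref{eq5.2.0}. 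Uniqueness within the class of all finite energy strong solutions follows because any such solution belongs to $\cK_{\theta, \lambda, p}$ by Proposition \ref{proposition 4.2} (applied after transferring $Cf$ to the right-hand side and using the interpolation bound), whence the contraction rules out a second fixed point. The estimate \eqref{eq10.2.1} is obtained by writing $f = \mathcal{M} f$, applying \eqref{eq4.2.1} together with \eqref{eq4.1.2}--\eqref{eq4.1.3} to the right-hand side $h - Cf$, and absorbing $N_0(\|Cf\|_{L_{2, \theta}} + \|Cf\|_{L_\infty}) \le \tfrac{1}{2}\|f\|_{\cK_{\theta, \lambda, p}}$ into the left-hand side.

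The only real obstacle is the gap between the regularity that Assumption \ref{assumption 10.1} delivers for $Cu$ (only $L_{2,\theta}\cap L_\infty$) and the $L_{p, \theta - 4}$ source regularity required to apply the $S_p$-estimate of Proposition \ref{proposition 4.2}; this is precisely what the interpolation bound above bridges and it is the reason the proposition requires $\theta$ large and $p$ finite. Beyond this point, the argument is a standard large-$\lambda$ absorption/fixed-point procedure.
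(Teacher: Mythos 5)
Your proposal is correct and takes essentially the same route as the paper: you introduce the solution operator (your $\mathcal{M}$, the paper's $R_\lambda$) for the simplified viscous Landau equation with source $h - Cu$, and show it is a contraction on $\cK_{\theta,\lambda,p}(\Sigma^T)$ for large $\lambda$ by exploiting the $\lambda$-weight on the $L_{2,\theta}$ and $L_\infty$ components of the $\cK$-norm, which produces a factor $\lambda^{-1}$ in front of $\|u_1-u_2\|_{\cK}$ once Assumption \ref{assumption 10.1} is invoked. Your explicit interpolation $|Cu|^p \le \|Cu\|_\infty^{p-2}|Cu|^2$ bridging $L_{2,\theta}\cap L_\infty$ to $L_{p,\theta-4}$ is a genuine and correct addition — the paper's proof only records that $h - C\xi \in L_{2,\theta}\cap L_\infty$ before citing Proposition \ref{proposition 4.2}, so this step is left tacit there. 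One minor gap worth closing: the paper verifies explicitly that the fixed point satisfies the weak formulation \eqref{7.0.0} by passing to the limit along the Picard iterates $f^{(n+1)} = R_\lambda f^{(n)}$ (using \eqref{eq10.1.0} to control $Cf^{(n)}\to Cf$); your phrase ``yields a unique fixed point $f$ that solves \eqref{eq5.2.0}'' should be unpacked this way, since the fixed point a priori is only a $\cK$-limit and one must check it actually satisfies Definition \ref{definition 7.0}.
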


\begin{proof}
Let $\varepsilon > 0$ be a number such that \eqref{eq11.2.1} holds.
For an arbitrary function $\xi \in  \cK_{\theta, \lambda, p} (\Sigma^T)$, let us consider the equation
\begin{align}
			\label{10.3}
	&Y u  - \nabla_v \cdot (\sigma_G \nabla_v u) - \nu \Delta_v u   - a_g \cdot \nabla_v u  + \lambda u = h  - C \xi \,\, \text{in} \, \, \Sigma^T,    \\
&	u (0, \cdot) = f_0 (\cdot) \,\, \text{in} \, \, \Omega \times \bR^3,
\quad  u_{-} (t, x, v) =  u_{+} (t, x, R_x v), \,   z \in \Sigma^T_{-}\notag.
\end{align}
Note that by Assumption \ref{assumption 10.1} (see \eqref{eq10.1.0}),
the right-hand side of Eq. \eqref{10.3} is of class $L_{2, \theta}  (\Sigma^T) \cap L_{\infty} (\Sigma^T)$.
Then, by this,  Propositions \ref{proposition 4.1} - \ref{proposition 4.2} and Corollary  \ref{corollary 4.3},
there exists $\lambda_0  = \lambda_0 (\theta, K, \Omega, \varkappa, \nu) \ge 1$,
 such that
for any $\lambda \ge \lambda_0$, Eq. \eqref{10.3},
has a unique finite energy strong solution $u$ in the sense of Definition \ref{definition 7.0}, and, in addition,
\begin{equation}
			\label{10.4}
\begin{aligned}
	&  \|u (T, \cdot)\|_{L_{2, \theta} (\Omega \times \bR^3)} + \lambda^{1/2} \|u\|_{L_{2, \theta} (\Sigma^T)}    + \|\nabla_v u\|_{L_{2, \theta} (\Sigma^T)} \\
	&\le N  \big(\|f_0\|_{L_{2, \theta} (\Omega \times \bR^3)} +  \lambda^{-1/2}  \|h\|_{L_{2, \theta} (\Sigma^T)}  +  \lambda^{-1/2} \|\xi\|_{L_{2, \theta} (\Sigma^T)} + \lambda^{-1/2} \|\xi\|_{L_{\infty} (\Sigma^T)}\big),\\
	&  \max\{\|u (T, \cdot)\|_{L_{\infty} (\Omega \times \bR^3)},    \|u\|_{L_{\infty} (\Sigma^T)}, \|u_{\pm}\|_{L_{\infty} (\Sigma^T_{\pm}, |v \cdot n_x|)}\}\\
	&\le N  \big(\|f_0\|_{L_{\infty} (\Sigma^T)}  +  \lambda^{-1} \|h\|_{L_{\infty} (\Sigma^T)}
	+ \lambda^{-1}  \|\xi\|_{L_{2, \theta} (\Sigma^T)} + \lambda^{-1} \|\xi\|_{L_{\infty} (\Sigma^T)}\big), \\
	& \|u\|_{ S_{2, \theta - 2} (\Sigma^T) } + \|u\|_{  S_{p, \theta-16} (\Sigma^T) } \le N \big(\||u| + |\nabla_v u|\|_{ L_{2, \theta} (\Sigma^T)} \\
	& +  |f_0|_{  \cO_{2, \theta - 2} } +  |f_0|_{  \cO_{p, \theta - 4} }  +  \|h\|_{L_{2, \theta - 2} (\Sigma^T)} +   \|h\|_{L_{p, \theta - 4} (\Sigma^T)}\\
	&   + \|\xi\|_{L_{2, \theta} (\Sigma^T)} + \|\xi\|_{L_{\infty} (\Sigma^T)}),
\end{aligned}
\end{equation}
where $N = N (\theta, p,   K, \Omega, \kappa, \varkappa,  \nu) > 0$.
Furthermore, for any $\xi \in  \cK_{\theta, \lambda, p} (\Sigma^T)$,
we denote $R_{\lambda} \xi = u$, where $u$ is the finite energy strong solution to \eqref{10.3}.
Thus, by \eqref{10.4}, $R_{\lambda}$ is a bounded linear operator on $\cK_{\theta, \lambda, p} (\Sigma^T)$.

Next, note that, since a solution to \eqref{eq5.2.0} is  a fixed point of $R_{\lambda}$, it suffices to show that
$R_{\lambda}$ is a contraction on $\cK_{\theta, \lambda, p} (\Sigma^T)$.
For any $\xi_1, \xi_2 \in \cK_{\theta, \lambda, p} (\Sigma^T)$, by   \eqref{eq10.0}, \eqref{10.4},  and the fact that $\lambda_0 \ge 1$, we get
$$
	\|R_{\lambda} \xi_1 - R_{\lambda} \xi_2\|_{\cK_{\theta, \lambda, p}(\Sigma^T)} \le N \big(\|\xi_1 - \xi_2 \|_{L_{\infty}  (\Sigma^T)}
	 +    \|\xi_1 - \xi_2\|_{L_{2, \theta} (\Sigma^T)}\big),
$$
where $N$ is independent of $\lambda$.
Furthermore,  by  the definition of the $\cK_{\theta, \lambda, p} (\Sigma^T)$ norm, for $\lambda \ge \lambda_0$,
$$
		\|\xi_1 - \xi_2  \|_{L_{2, \theta} (\Sigma^T)}  + \|\xi_1 - \xi_2 \|_{L_{\infty}  (\Sigma^T)} \le   \lambda^{-1}_0  \|\xi_1  - \xi_2\|_{\cK_{\theta, \lambda, p} (\Sigma^T) }.
 $$
Thus, for $\lambda_0 > N +1$, $R_{\lambda}$ is a contraction mapping.
Thus, Eq. \eqref{eq5.2.0} has a unique solution $f$ of class $\cK_{\theta, \lambda, p} (\Sigma^T)$.
To prove that $f$ satisfies the weak formulation  \eqref{7.0.0} for any $\phi \in C^1_0 (\overline{\Sigma^T})$, we define a Picard iteration sequence $f^{(0)} \equiv f_0$, $f^{(n+1)} = R_{\lambda} f^{(n)}$,
which converges to $f$ in $\cK_{\theta, \lambda, p} (\Sigma^T)$.
We pass to the limit in the weak formulation for $f^{(n)}$ and use \eqref{eq10.1.0} in Assumption \ref{assumption 10.1}.
 \end{proof}

\subsection{Proof of Theorem \ref{theorem 11.2}}
To prove the existence part, we work with  the viscous linearized Landau equation
\begin{equation}
		\label{11.2}
\begin{aligned}
	&Y  f +  \sfL  f - \nu \Delta_v  f +  \mathsf{\Gamma} [g,  f]  +  \lambda f = h,\\
	&  f (0, \cdot, \cdot) = f_0,  \quad  f_{-} (t, x, v) = f_{+} (t, x, R_x v), z \in \Sigma^T_{-}.
\end{aligned}
\end{equation}
 An equivalent form of this equation is
\begin{equation}
		\label{11.2div}
\begin{aligned}
	&Y f  - \nabla_v \cdot (\sigma_G \nabla_v f) - \nu \Delta_v f  -  a_g \cdot \nabla_v f -\overline{K}_g f + \lambda f = h \,\, \text{in} \, \, \Sigma^T, \\
&
	 f (0, \cdot) = f_0 (\cdot) \,\, \text{in} \, \, \Omega \times \bR^3,\quad
 f_{-} (t, x, v) =  f_{+} (t, x, R_x v), \,   z \in \Sigma^T_{-}.
\end{aligned}
\end{equation}

Thanks to Proposition \ref{lemma 10.2}, Eq.  \eqref{11.2div} has a unique strong solution (in the sense of Definition \ref{definition 11.0}) of class $\cK_{\theta, \lambda, p} (\Sigma^T)$.
However, the a priori bound in \eqref{eq10.2.1} has  a constant depending on $\nu$, and, hence, such estimate cannot be used in the method of vanishing viscosity. To establish a priori estimates that are uniform in $\nu$,
we use the following ingredients: the weighted $L_2$ bound (see Lemma \ref{lemma 4.6}), and the $S_p$ estimate in Proposition \ref{proposition 4.7}.
Once we prove the former, we bootstrap by using the latter.
We point out that in contrast to the kinetic Fokker-Planck equation,
we do not derive a priori bounds of $\|f_{\pm}\|_{ L_{\infty} (\Sigma^T_{\pm}, |v \cdot n_x|)}$ by applying the $L_p$ energy identity for the operator $Y$ (c.f. Lemma \ref{lemma 7.5}).
The present authors are not aware of such estimates for Eq. \eqref{11.2}. Instead, we use the $S_p$ estimates to  bound  $\|f_{\pm}\|_{  L_{\infty} (\Sigma^T_{\pm}, |v \cdot n_x|) }$.

\begin{lemma}
			\label{lemma 4.6}
Let $\nu \in [0, 1], \lambda,  \theta \ge 0$, $T > 0$ be numbers,
 $f$ be a finite energy strong solution to  Eq.  \eqref{11.2}  in the sense of Definition \ref{definition 11.0}.
Assume that $h \in L_{2, \theta} (\Sigma^T)$
and $f_0 \in L_{2, \theta} (\Omega \times \bR^3)$.
Then, there exists a constant $\varepsilon  = \varepsilon (\theta) > 0$
 such that if
$$
	\|g\|_{  L_{\infty} ( \Sigma^T ) } \le \varepsilon,
$$
then, one has
\begin{align*}
& \|f (T, \cdot)\|_{ L_{2, \theta} (\Omega \times \bR^3) } +    \| f\|_{ \sigma, \theta  }
	  + (\lambda^{1/2}  + 1) \|f\|_{ L_{2, \theta } (\Sigma^T)  }\\
	& \le   N  (\|h\|_{  L_{2, \theta} (\Sigma^T) } + \|f_0\|_{ L_{2, \theta} (\Omega \times \bR^3) }),
\end{align*}
where $\|\cdot\|_{\sigma, \theta}$ is defined in \eqref{eq1} and $N = N (\theta, T) > 0$.
\end{lemma}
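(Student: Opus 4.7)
The proof will follow the classical weighted $L_2$ energy method, testing the equation against $f\jb^{\theta}$ and exploiting the cancellation of boundary terms due to specular reflection.

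First, since $f$ is a finite energy strong solution (so $f \in S_2(\Sigma^T)$ and the traces $f_\pm$ are well-defined in the appropriate spaces), I can apply the $L_p$ energy identity of Lemma \ref{lemma 7.1} with $p=2$ to the weight $|f|^2\jb^{\theta}$. Using the specular reflection boundary condition $f_-(t,x,v)=f_+(t,x,R_xv)$ together with the invariance $|R_xv\cdot n_x|=|v\cdot n_x|$ and $|R_xv|=|v|$, the outgoing and incoming boundary integrals cancel exactly, giving
\begin{equation*}
  2\int_{\Sigma^T}(Yf)\,f\,\jb^{\theta}\,dz
  = \|f(T,\cdot)\|_{L_{2,\theta}(\Omega\times\bR^3)}^2 - \|f_0\|_{L_{2,\theta}(\Omega\times\bR^3)}^2.
\end{equation*}
This is the only place where the boundary condition enters, and it is the crucial geometric ingredient.

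Next, I test the divergence form \eqref{11.2div} against $f\jb^{\theta}$. Integration by parts in $v$ on the terms $-\nabla_v\cdot(\sigma_G\nabla_v f)$ and $-\nu\Delta_v f$ produces $\int \sigma_G^{ij}\partial_{v_i}f\,\partial_{v_j}(f\jb^{\theta})\,dz$ and $\nu\int|\nabla_v f|^2\jb^{\theta}\,dz$ plus cross-terms. Using the two-sided bound $C_1\jb^{-3}I_3\le\sigma_G\le C_2\jb^{-1}I_3$ from \eqref{eq11.2.1} (valid for small $\varepsilon$), plus the identity structure of the non-divergence Landau operator $\sfL=-\sfA-\sfK$ which creates both a $\sigma^{ij}\partial_{v_i}f\,\partial_{v_j}f$ piece and the dissipative zero-order contribution $\sigma^{ij}v_iv_jf^2$, I recover the full coercive quantity $\|f\|_{\sigma,\theta}^2$ defined in \eqref{eq1}, modulo lower-order errors proportional to $\|\jb^{-1/2}f\|_{L_{2,\theta}}^2$ (coming from $\nabla_v\jb^{\theta}$-type commutators). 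These commutator terms are absorbed either into $\|f\|_{\sigma,\theta}^2$ itself (since $\jb^{-3}\le N\sigma$ in the sense of Remark \ref{remark 2.17}) or into $\lambda\|f\|_{L_{2,\theta}}^2$ for $\lambda$ large, or into the $T$-dependent constant via Gronwall.

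For the remaining pieces, the drift term $a_g\cdot\nabla_v f$ and the non-local $\overline{K}_g f$ term are handled using the standard estimates of \cite{G_02, KGH}: under Assumption \ref{assumption 11.1} and smallness of $\|g\|_{L_\infty(\Sigma^T)}$, one has $|a_g|\le N$ and $\overline{K}_g$ maps $L_{2,\theta}$-type spaces into $L_{2,\theta}$ with controlled norm (and moreover $\int(\overline{K}_g f)f\jb^{\theta}\,dz$ is essentially dominated by $\eta\|f\|_{\sigma,\theta}^2+N_\eta\|f\|_{L_{2,\theta}}^2$ for arbitrary $\eta>0$ --- this is the standard hypocoercivity-type splitting). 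The source term is treated by Cauchy–Schwarz: $|\int hf\jb^{\theta}|\le\frac12\|h\|_{L_{2,\theta}}^2+\frac12\|f\|_{L_{2,\theta}}^2$.

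Combining everything and choosing $\varepsilon$ sufficiently small so that the $\sfG[g,f]$ perturbation is absorbed into the $\|f\|_{\sigma,\theta}^2$ term, I obtain a differential inequality of the form
\begin{equation*}
  \frac{d}{dt}\|f(t,\cdot)\|_{L_{2,\theta}}^2 + c\|f(t,\cdot)\|_{\sigma,\theta}^2 + 2\lambda\|f(t,\cdot)\|_{L_{2,\theta}}^2
  \le N\|h(t,\cdot)\|_{L_{2,\theta}}^2 + N\|f(t,\cdot)\|_{L_{2,\theta}}^2.
\end{equation*}
Integrating in $t\in[0,T]$ and applying Gronwall (which introduces the $T$-dependence of $N$), I get the stated bound --- the $\lambda^{1/2}$ on $\|f\|_{L_{2,\theta}}$ comes from the $\lambda$ coercivity, while the $+1$ comes absorbing $\|f\|_{L_{2,\theta-3}}^2\le N\|f\|_{\sigma,\theta}^2$ (Remark \ref{remark 2.17}) back into the left side. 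The main technical obstacle is the careful choice of $\varepsilon$ so that perturbative terms from $\sigma_G-\sigma$, $a_g$, and $\sfG[g,f]$ do not destroy the coercivity provided by the unperturbed $\sigma$-norm; this ultimately relies on the estimates of Lemma 3 of \cite{G_02} and smallness of $\|g\|_{L_\infty}$.
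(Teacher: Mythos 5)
Your proposal is essentially the paper's argument, and it is correct. The paper also uses the weighted $L_2$ energy method, with the specular boundary cancellation entering through an energy identity (Lemma \ref{lemma B.3}, which is the variant of Lemma \ref{lemma 7.1} adapted to finite energy strong solutions --- a small technical point: $f$ need not belong to $E_{2,\theta}$, so Lemma \ref{lemma B.3} is the right reference rather than Lemma \ref{lemma 7.1} directly, though Lemma \ref{lemma B.3} is derived from Lemma \ref{lemma 7.1} by a cut-off). Where you test the divergence form against $f\jb^\theta$ and integrate by parts to reconstruct the $\sigma$-norm coercivity from scratch, the paper instead works directly with the non-divergence form and cites Lemmas 2.7 and 2.8 of \cite{KGH}, which package precisely the two key facts you rederive: $\int(\sfL f)f\jb^\theta \ge \tfrac12\|f\|_{\sigma,\theta}^2 - N_0\|f\|_{L_{2,\theta}}^2$ and $\int\sfG[g,f]f\jb^\theta \le N_1\|g\|_{L_\infty}\|f\|_{\sigma,\theta}^2$, from which $\varepsilon<(4N_1)^{-1}$ suffices. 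Finally, the paper obtains the ``$+1$'' on $\|f\|_{L_{2,\theta}}$ and the $T$-dependence by multiplying the equation by $e^{-2\lambda' t}$ with $\lambda'>2N_0$ rather than by Gronwall, but these are interchangeable devices. So: same strategy, same ingredients; the only substantive difference is that you re-derive the KGH coercivity and smallness estimates in-line, while the paper invokes them as black boxes.
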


\begin{proof}
We follow the argument of Lemma 8.2 in \cite{KGH}.

Let $\lambda' > 0$ be a number which we will determine later.
Multiplying Eq. \eqref{11.2} by $e^{-2\lambda' t}$,  and using the energy identity   in Lemma \ref{lemma B.3} with $\jb^{\theta}$ give
\begin{equation}
			\label{eq4.6.1}
\begin{aligned}
&	 \|f (T, \cdot)  e^{-\lambda' T} \|^2_{ L_{2, \theta} (\Omega \times \bR^3) }
	+ 2\int_{\Sigma^T} (\sfL f) f  \jb^{\theta} e^{-2\lambda' t} \, dz \\
&
	+ 2 \int_{\Sigma^T} \mathsf{\Gamma} [g, f]  f \jb^{\theta} e^{-2\lambda' t} \, dz + 2 (\lambda + \lambda') \|f e^{-\lambda' t}\|^2_{  L_{2, \theta} (\Sigma^T)  }\\
	&\le  2  \int_{\Sigma^T} f h \jb^{\theta} e^{-2\lambda' t} \, dz +   \|f_0\|^2_{ L_{2, \theta} (\Omega \times \bR^3) }.
 \end{aligned}
\end{equation}
By Lemmas 2.7 and 2.8 of \cite{KGH} and \eqref{eq2.17},  there exists $N_0, N_1 > 0$, depending only on $\theta$, such that
 \begin{align*}
 &
	   \int_{\Sigma^T} (\sfL f) f  \jb^{\theta} e^{-2\lambda' t} \, dz \ge (1/2) \|f  e^{-\lambda' t}\|^2_{\sigma, \theta} - N_0 \|f e^{-\lambda' t}\|^2_{  L_{2, \theta} (\Sigma^T) },\\
 &
	\int_{\Sigma^T} \mathsf{\Gamma} [g, f]  f\jb^{\theta} e^{-2\lambda' t} \, dz \le   N_1  \|g\|_{ L_{\infty} (\bR^7_T) }  \|f e^{-\lambda' t}\|^2_{\sigma, \theta} \le N_1 \varepsilon  \|f e^{-\lambda' t}\|^2_{\sigma, \theta}.
\end{align*}
Combining this with \eqref{eq4.6.1} and the Cauchy-Schwartz inequality,  we obtain
\begin{align*}
	 \|f (T, \cdot) e^{-\lambda' T}\|^2_{ L_{2, \theta} (\Omega \times \bR^3) }  &+  (1 - 2 N_1 \varepsilon) \|f e^{-\lambda' t}\|^2_{\sigma, \theta}
	+    (2\lambda  + \lambda' -  N_0 (\theta))  \|f e^{-\lambda' t}\|^2_{ L_{2, \theta} (\Sigma^T) }\\
&
 \le
	   \|f_0\|^2_{ L_{2, \theta} (\Omega \times \bR^3) } +  (\lambda')^{-1} \|h e^{-\lambda' t}\|^2_{  L_{2, \theta} (\Sigma^T) }.
\end{align*}
Taking $\varepsilon < (4N_1)^{-1}$  and $\lambda' > 2 N_0$, we prove the desired estimate.
\end{proof}

\begin{remark}
Note that by Remark \ref{remark 2.17}, one can extract the estimate of $\|f\|_{ L_{2, \theta - 3} (\Sigma^T)  }$ from the above result.
\end{remark}

We will use the next proposition with $\theta$ replaced with $\theta - 3$.
\begin{proposition}[$S_p$ bound]
				\label{proposition 4.7}
Let
\begin{itemize}
\item[--] $\Omega$ be a bounded $C^3$ domain,
\item[--] $\nu \in [0, 1]$, $\lambda \ge 0$,  $\varkappa \in (0, 1]$, $p > 14$ be numbers,
\item[--] Assumption \ref{assumption 11.1} (see \eqref{con11.1} - \eqref{con11.1'})  hold,
\item[--] $\|g\|_{ L_{\infty} ( \Sigma^T )} \le \varepsilon$,
\item[--]   $$
			\|f\|_{ L_{2, \theta} (\Sigma^T) } + \|\nabla_v f\|_{ L_{2, \theta} (\Sigma^T)   } \le M
		$$
		for some $M > 0$,
\item[--]  $f$ is a finite energy strong solution to Eq.  \eqref{11.2div}  (see Definition \ref{definition 11.0}) with $f_0 \equiv 0$ and $h \in L_{2, \theta} (\Sigma^T) \cap L_{p, \theta} (\Sigma^T)$,
\item[--]  $f \in \cK_{\lambda, \theta, p} (\Sigma^T)$ (see Definition \ref{definition 10.1}).
\end{itemize}
Then, there exist numbers $\varepsilon \in (0, 1)$, $\theta =  \theta (\varkappa, p) > 1$, and $\theta' = \theta' (\varkappa, p), \theta''  = \theta'' (\varkappa, p) \in (1, \theta)$  such that
\begin{equation}
			\label{eq4.7.0}
\begin{aligned}
&\|f\|_{ S_{2, \theta'  } (\Sigma^T) } +	\|f\|_{ S_{p,  \theta'' } (\Sigma^T) }  + \|f_{\pm}\|_{  L_{\infty} (\Sigma^T_{\pm}, |v \cdot n_x|)  }  \\
& + \|f\|_{ L_{\infty} ((0, T), C^{\alpha/3, \alpha}_{x, v} (\overline{\Omega} \times \bR^3))  } +  \|\nabla_v f\|_{ L_{\infty} ((0, T), C^{\alpha/3, \alpha}_{x, v} (\overline{\Omega} \times \bR^3))  }
	+ \|f\|_{  L_{\infty} (\Sigma^T_{\pm}, |v \cdot n_x|) }  \\
&
	\le N (M  + \|h\|_{ L_{2, \theta} (\Sigma^T) } +  \|h\|_{ L_{p, \theta} (\Sigma^T) }),
\end{aligned}
\end{equation}
where $N = N (\theta, p, \varkappa, K, \Omega) > 0$, and $\alpha = 1- 14/p$.
\end{proposition}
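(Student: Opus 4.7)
The plan is to combine the $L_{2,\theta}$-type a priori bound from Lemma \ref{lemma 4.6} with a bootstrap based on successive applications of the $S_p$ theory from Theorem \ref{theorem D.1} of \cite{DY_21a}, implemented via the partition-of-unity and mirror-extension scheme used in the proofs of Theorem \ref{theorem 7.3} and Proposition \ref{proposition 4.2}. The non-local term $\overline K_g f$ is treated as part of the source, since Remark \ref{remark 10.1} provides estimates in $L_{2,\theta}$ and $L_{\infty}$ that are independent of $\nu$.

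First, I would apply Lemma \ref{lemma 4.6} with weight $\theta$ (chosen sufficiently large depending on $\varkappa$ and $p$) and use the lower bound recorded in Remark \ref{remark 2.17} to extract $\|f\|_{L_{2,\theta-3}(\Sigma^T)} + \|\nabla_v f\|_{L_{2,\theta-3}(\Sigma^T)} \lesssim M + \|h\|_{L_{2,\theta}(\Sigma^T)}$ uniformly in $\nu$. Then I introduce a partition of unity $\{\eta_k\}_{k=1}^m$ in $x$ exactly as in the proof of Theorem \ref{theorem 7.3} (see \eqref{eq1.7.11}) and set $f_k = f\eta_k\jb^{\theta_1}$ for a suitable exponent $\theta_1 < \theta-3$. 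Each $f_k$ satisfies a divergence-form equation with leading coefficient $\sigma_G + \nu I_3$ and source $h_k$ absorbing $h$, the commutators arising from $\eta_k\jb^{\theta_1}$, and $\overline K_g f$ (controlled via Remark \ref{remark 10.1}). For $k \ge 2$, the mirror extension $\overline{f_k}$ from Section \ref{subsubsection 1.4.2} satisfies a related equation on $\bR^7_T$; the compatibility condition \eqref{eq1.23} verified in Appendix \ref{Appendix E} ensures the transformed matrix $\mathbb{A}$ lies in $L_{\infty}((0,T); C^{\varkappa/3,\varkappa}_{x,v}(\bR^6))$ with norm independent of $\nu$.

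Next, I would apply Theorem \ref{theorem D.1} to each localized or mirror-extended equation. Its inputs---the upper bound and H\"older modulus of $\sigma_G + \nu I_3$ (via \eqref{eq11.2.1} and Lemma \ref{lemma C.1}), the ellipticity of $\sigma_G$ built into the weighted framework, and the $L_{\infty}$ bounds on $a_g$ and $\mathbb{B}$ from \eqref{11.2.9}---are all uniform in $\nu \in [0,1]$, producing a $\nu$-independent $S_p$ bound at each stage. Starting from the $L_{2,\theta-3}$ estimate, I iterate as in Lemma \ref{lemma 3.7} and Theorem \ref{theorem 1.9}: the kinetic Morrey embedding of \cite{PR_98} raises integrability from $r$ to $r'$ with $1/r' = 1/r - 1/14$, losing a fixed number of weight powers at each step. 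After finitely many iterations the integrability index exceeds $14$, at which point the embedding $S_p \hookrightarrow L_{\infty}((0,T); C^{\alpha/3,\alpha}_{x,v})$ with $\alpha = 1 - 14/p$ delivers the H\"older estimates and fixes the specific choices of $\theta, \theta', \theta''$ depending on $\varkappa$ and $p$. The trace bound $\|f_{\pm}\|_{L_{\infty}(\Sigma^T_{\pm}, |v\cdot n_x|)}$ then follows from the continuity of $f$ on $\overline{\Omega}\times\bR^3$.

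The main obstacle is keeping the $S_p$ constants uniform in $\nu$: the coefficient $\sigma_G + \nu I_3$ is degenerate in the standard pointwise sense as $\nu \to 0$, so one cannot directly invoke Proposition \ref{proposition 4.2} (whose constants depend on $\nu$). Instead, Theorem \ref{theorem D.1} must be applied in a form whose constants depend only on the upper bound and H\"older modulus of the leading coefficient together with the weighted ellipticity of $\sigma_G$---all of which remain bounded as $\nu \to 0$---and the commutator terms generated by the weight $\jb^{\theta_1}$ and the mirror extension must be estimated carefully so that the weight losses accumulated across the finitely many bootstrap steps still leave $\theta', \theta'' > 1$, allowing the scheme to close.
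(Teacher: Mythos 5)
There is a genuine gap at the point you flag as ``the main obstacle'' but do not actually resolve. You correctly note that $\sigma_G + \nu I_3$ is uniformly elliptic only with constant $\delta = \nu$, so a naive application of Theorem~\ref{theorem D.1} (or Theorem~\ref{corollary 3.4}) produces constants that blow up as $\nu \to 0$. But your proposed fix --- ``Theorem~\ref{theorem D.1} must be applied in a form whose constants depend only on the upper bound and H\"older modulus of the leading coefficient together with the weighted ellipticity of $\sigma_G$'' --- does not exist: the $S_p$ theory from \cite{DY_21a} is stated for uniformly nondegenerate coefficients, its constants genuinely depend on $\delta$ in Assumption~\ref{assumption 2.1}, and $\sigma_G$ alone satisfies only $C_1 \jb^{-3} I_3 \le \sigma_G$, which degenerates as $|v| \to \infty$. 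There is no version of the theorem that accepts a weighted ellipticity as a substitute, so as written your argument cannot close.

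The paper's resolution is a dyadic decomposition in the velocity variable, which is entirely absent from your outline. One introduces radial cutoffs $\zeta_n, \xi_n$ supported on annuli $\{|v| \sim 2^n\}$, replaces the coefficient by $\sigma_{G,n} = (\sigma_G + \nu I_3)\zeta_n + (1-\zeta_n)I_3$, which is uniformly elliptic with constant $\sim 2^{-3n}$, applies the a priori $S_p$ estimate of Theorem~\ref{corollary 3.4} to the localized piece $f^{(n)} = f\xi_n$ (this is what Lemma~\ref{lemma 4.8} does, carefully recording the resulting growth $2^{\beta n}$ of the constant), and then absorbs that growth by spending a controlled number of powers of $\jb$ and by the dyadic decay of the nonlocal term provided by Lemma~\ref{lemma C.2}, which shows that $\|\overline{K}_g u\|_{L_p((0,T)\times\Omega\times\{v\sim m\})} \lesssim m^{-\theta}\||u|+|\nabla_v u|\|_{L_{p,\theta}}$. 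Summing over $n$ in $\ell^p$ then yields a $\nu$-independent bound in $S_{r,\theta_r - 2 - \beta}$. Only after this summation does the integrability bootstrap you describe (raising $r$ to $r'$ with $1/r' = 1/r - 1/14$ via the kinetic Morrey embedding of \cite{PR_98}, losing weight at each step, and landing at $r > 14$ for the H\"older estimate) become meaningful; the weight budget $\theta(\varkappa, p)$ must be chosen large enough to survive both the dyadic loss $\beta$ and the finitely many bootstrap steps. The overall skeleton in your proposal --- Lemma~\ref{lemma 4.6} for the energy bound, partition of unity and mirror extension, iterated $S_p$ estimates --- is the right one, but without the dyadic decomposition in $v$ and the compensating decay estimate for $\overline K_g$, the uniform-in-$\nu$ $S_p$ bound cannot be obtained.
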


To prove this estimate, we need a technical result, which is similar to Lemma \ref{lemma 3.7}. We will state the lemma  after we introduce some notation.

Let  $\zeta_0 \in C^{\infty}_0 (B_2)$,  $\zeta  \in C^{\infty}_0 (\{ 2^{-1} <  |v| < 2^{3/2} \})$ be radially symmetric functions such that
    \begin{itemize}[--]
    \item
    $\zeta_0   = 1$ on $\overline{B}_{2^{1/2}}$,
    and   $0 \le \zeta_0 < 1$ on the complement of this set,

    \item
        $\zeta  = 1$ if $\{ 2^{-1/2} \le |v| \le 2\}$,
    and $0 \le \zeta < 1$ otherwise.
\end{itemize}
 For $n  \in \{ 1, 2, \ldots\}$ and a bounded measurable function $b = (b_1, \ldots, b_d)$, we set
\begin{align}
	\label{eq4.7.16}
	&\zeta_n (v) = \zeta (v 2^{-n}),\\
	&\label{eq4.7.7}
	\sigma_{G, n} = (\sigma_G  + \nu I_3) \zeta_n + (1 -  \zeta_n) I_3,\\
	&\cL_n = Y  - \nabla_v \cdot (\sigma_{G, n} \nabla_v) + b \cdot \nabla_v\notag.
\end{align}

\begin{lemma}
			\label{lemma 4.8}
Let
\begin{itemize}
\item[--] $\Omega$ be a bounded $C^3$ domain,
\item[--]  $T > 0$, $\nu \in [0, 1]$, $\theta \ge 2$,  $\varkappa \in (0, 1]$, $p \ge 2$, $\lambda \ge 0$ be numbers,
\item[--] Assumption  \ref{assumption 11.1} (see \eqref{con11.1} - \eqref{con11.1'}) be satisfied,
\item[--] $\|b\|_{ L_{\infty} (\Sigma^T) } \le K$,
\item[--] $\|g\|_{  L_{\infty} (\Sigma^T  ) } \le \varepsilon$ for some $\varepsilon > 0$,
\item[--]$f \in S_{p,  \widetilde \theta (\Sigma^T)} \cap L_{p, \theta} (\Sigma^T)$ for some $\widetilde \theta \ge \theta -2$, $\nabla_v f \in L_{p, \theta} (\Sigma^T)$, $f_{\pm} \in L_{\infty} (\Sigma^T_{\pm}, |v \cdot n_x|)$,
\item[--] $f$ satisfies the equation
\begin{align*}
&	\cL_n  f + \lambda f = h \, \,   \text{in} \, \Sigma^T,\\
 & f (0, \cdot) \equiv 0 \,\, \text{in} \, \, \Omega \times \bR^3
\end{align*}
in the weak    sense (see  Definition \ref{definition 7.0})
with $h \in L_{p, \theta - 2} (\Sigma^T)$
 and the specular reflection boundary condition
$$
   f_{-} (t, x, v)=  f_{+} (t, x, R_x v).
$$
 almost everywhere.
\end{itemize}
Then,   there exists  a number $\varepsilon \in (0, 1)$ (independent of $\Omega, T, \nu, \varkappa, \theta$)
and $\beta = \beta (p, \varkappa) > 0$ such that
\begin{equation}
			\label{eq4.8.0}
     \|f\|_{  S_{p, \theta  - 2} (\Sigma^T)  } \le N 2^{ \beta n} (\|h\|_{ L_{p, \theta - 2} (\Sigma^T) } + \|f\|_{ L_{p, \theta - 1} (\Sigma^T) } + \|\nabla_v f\|_{  L_{p, \theta} (\Sigma^T) }),
\end{equation}
where $N = N (K, \varkappa, p, \Omega, \theta)$.

Furthermore,
\begin{itemize}
\item if $p < 14$,  the norm $\||f| +|\nabla_v f|\|_{ L_{r, \theta-2} (\Sigma^T) }$ is bounded by the right-hand side of  \eqref{eq4.8.0},
where $r$ is given by \eqref{7.3.0.0}.

\item if $p > 14$, then, for $\alpha = 1 - 14/p$, the norms
$$
	\|f\|_{ L_{\infty} ((0, T), C^{\alpha/3, \alpha}_{x, v} (\overline{\Omega} \times \bR^3)) }, \|\nabla_v f\|_{ L_{\infty} ((0, T), C^{\alpha/3, \alpha}_{x, v} (\overline{\Omega} \times \bR^3))  },
	 \|f_{\pm}\|_{ L_{\infty} (\Sigma^T_{\pm}, \jb^{\theta-2}) }
$$
are bounded by the right-hand side of \eqref{eq4.8.0}.
\end{itemize}

\end{lemma}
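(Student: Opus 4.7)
The plan is to follow the strategy of Proposition~\ref{proposition 4.2}, Theorem~\ref{theorem 7.3}, and Lemma~\ref{lemma 3.7}, the new ingredient being the careful tracking of the $n$-dependence introduced by the cutoff $\zeta_n$. Let $\{\eta_k\}_{k=1}^m$ be the partition of unity on $\overline{\Omega}$ used in the proof of Theorem~\ref{theorem 7.3}, with $\eta_1$ compactly supported in $\Omega$ and each $\eta_k$ ($k\ge 2$) localized near a boundary point $x_k\in\partial\Omega$. Set $f_k := f\eta_k\jb^{\theta-2}$. A direct computation shows that each $f_k$ weakly solves
\[
  \cL_n f_k + \lambda f_k = h_k
\]
with specular reflection on $\Sigma^T_{-}$ and $f_k(0,\cdot)\equiv 0$, where $h_k$ equals $h\eta_k\jb^{\theta-2}$ plus commutator terms linear in $f$ and $\nabla_v f$ whose coefficients involve $|\sigma_{G,n}|+|\nabla_v\sigma_{G,n}|+|b|$ multiplied by derivatives of $\eta_k$ or of $\jb^{\theta-2}$. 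Using \eqref{eq11.2.1}--\eqref{11.2.9}, an argument identical to the one in the proof of Theorem~\ref{theorem 7.3} bounds $\|h_k\|_{L_{p,\theta-2}(\Sigma^T)}$ by the right-hand side of \eqref{eq4.8.0}, uniformly in $n$.

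For the interior piece, extend $f_1$ by zero in $x$ and apply Theorem~\ref{theorem D.1} on $(-\infty,T)\times\bR^6$ with leading coefficient $\sigma_{G,n}$. The two $n$-independent hypotheses that must be checked are the boundedness $\|\sigma_{G,n}\|_{L_\infty}\lesssim 1$ and the Hölder regularity $\sigma_{G,n}\in L_\infty((0,T),C^{\varkappa/3,\varkappa}_{x,v}(\bR^6))$; both follow because $[\zeta_n]_{C^\varkappa}\lesssim 2^{-\varkappa n}$ and Lemma~\ref{lemma C.1} controls $\sigma_G$ uniformly. The nondegeneracy constant, however, does depend on $n$: on $\{\zeta_n>0\}$ the bound \eqref{eq11.2.1} gives $\sigma_{G,n}\ge c\jb^{-3}I_3\gtrsim 2^{-3n}I_3$, and $\sigma_{G,n}=I_3$ elsewhere, so the Assumption~\ref{assumption 2.1} constant is $\delta\sim 2^{-3n}$. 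Applying Theorem~\ref{theorem D.1} and tracking how $\delta^{-1}$ enters the constants yields an $S_p$ estimate for $f_1$ with prefactor $2^{\beta n}$ for some $\beta=\beta(p,\varkappa)$.

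For each boundary index $k\ge 2$, apply the diffeomorphism $\Psi$ (Subsection~\ref{subsection 1.4.1}) and the mirror extension (Subsection~\ref{subsubsection 1.4.2}) to produce $\overline{f_k}$; by Subsection~\ref{subsubsection 1.4.3} it weakly solves an equation of the form \eqref{eq1.19} on $\bR^7_T$ with mirror-extended coefficients $\mathbb{A},\mathbb{B},\mathbb{X}$ built from $\sigma_{G,n}$ and $b$. Since $\zeta_n$ is radial in $w$, multiplication by $\zeta_n$ commutes with the reflection $w\mapsto Rw$, so the symmetry condition \eqref{eq1.23} established for $\sigma_G$ in Appendix~\ref{Appendix E} passes to $\sigma_{G,n}$; hence $\mathbb{A}\in L_\infty((0,T),C^{\varkappa/3,\varkappa}_{x,v}(\bR^6))$ uniformly in $n$, with ellipticity constant $\sim 2^{-3n}$. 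After moving $\nabla_w\cdot(\mathbb{X}\overline{f_k})$ to the right-hand side and using the bound \eqref{eq1.7.4} on $\mathbb{X}$ to absorb it, Theorem~\ref{theorem D.1} delivers the same $S_p$ estimate with prefactor $2^{\beta n}$. Pulling the estimate for $\overline{f_k}$ back to $f_k$ via Lemma~\ref{lemma B.2} and summing over $k$ completes the proof of \eqref{eq4.8.0}.

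The remaining assertions follow by arguments identical to those in Lemma~\ref{lemma 3.7}: for $p<14$, the anisotropic Morrey embedding (Theorem~2.1 of \cite{PR_98}) applied to the whole-space functions $f_1$ and each $\overline{f_k}$ upgrades the $S_p$ estimate to an $L_r$ estimate on $f$ and $\nabla_v f$; for $p>14$, the same embedding yields $C^{\alpha/3,\alpha}_{x,v}$ estimates on $f_1$ and each $\overline{f_k}$, which by the discussion in Subsection~\ref{subsection 1.4.5} are preserved under the local inverse diffeomorphism $\Psi^{-1}$, and the weighted $L_\infty$ trace bound on $f_\pm$ follows by restricting the Hölder-continuous $f$ to $\gamma_\pm$. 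The main obstacle is to verify that the only $n$-dependent hypothesis of Theorem~\ref{theorem D.1} is the ellipticity lower bound $\delta^{-1}\sim 2^{3n}$, and to pin down the precise power $\beta$ with which it enters the a priori estimate; once this bookkeeping is carried out, the remainder of the argument is a faithful repetition of Proposition~\ref{proposition 4.2} and Lemma~\ref{lemma 3.7}.
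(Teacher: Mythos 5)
Your overall strategy mirrors the paper's proof: localize with the same partition of unity, handle the interior piece by extension to the whole space, handle the boundary pieces by the flattening diffeomorphism and mirror extension, verify that $\zeta_n$ radial preserves the symmetry condition on the mirror-extended $\mathbb{A}$, track the degenerating ellipticity $\delta\sim 2^{-3n}$, and finish with the Morrey embedding. The main gap is in the final step, and it is not just bookkeeping.

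You invoke Theorem~\ref{theorem D.1}, but that result only gives an estimate for $\lambda\ge\lambda_0(p,\delta,K,R_0)$, and here $\delta\sim 2^{-3n}$, so $\lambda_0\to\infty$ with $n$. Lemma~\ref{lemma 4.8} must hold for every $\lambda\ge 0$, including $\lambda=0$, so Theorem~\ref{theorem D.1} alone does not apply. The correct tool — and the one the paper uses — is the a priori estimate of Theorem~\ref{corollary 3.4}, which holds for all $\lambda\ge 0$ and exhibits the constants explicitly as $N\delta^{-\beta}$ together with $R_0^{-2}$. Relatedly, your claim that ``the only $n$-dependent hypothesis of Theorem~\ref{theorem D.1} is the ellipticity lower bound'' is not right: the threshold $\gamma_\star$ in Theorem~\ref{corollary 3.4} is itself $\delta^\kappa\,\widetilde\gamma_\star(p)$, so when $\delta\sim 2^{-3n}$ one is forced to take $R_0\sim\gamma_\star^{1/\varkappa}\sim 2^{-3\kappa n/\varkappa}$ to satisfy Assumption~\ref{assumption 3.1}$(\gamma_\star)$, and the resulting $R_0^{-2}\sim 2^{6\kappa n/\varkappa}$ enters the estimate. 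Combined with the $\delta^{-\beta}\sim 2^{3\beta n}$ factor from the ellipticity, this is precisely why the exponent in \eqref{eq4.8.0} depends on both $p$ and $\varkappa$. Without switching to Theorem~\ref{corollary 3.4} and accounting for the $R_0$-degeneration, the proof is incomplete.
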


\begin{proof}
We follow the argument of Lemma \ref{lemma 3.7}.
Let  $\varepsilon > 0$ be a number such that \eqref{eq11.2.1} is true and
 $f_k, k = 1, \ldots, m$ be the functions defined by \eqref{eq1.7.10}.

\textbf{Interior estimate.}
We prove the lemma by applying the a priori estimate in Theorem \ref{corollary 3.4} to Eq. \eqref{eq4.1.1}.
 Let us check its assumptions.

\textit{Assumption \ref{assumption 2.1} (see \eqref{eq2.1.0}).}
Due to \eqref{eq11.2.1} and \eqref{eq4.7.7},  for sufficiently small $\varepsilon > 0$,
$$
	N_0 2^{-3n} \le	\sigma_{G, n} \le N_0^{-1}  I_3
$$
 for some constant $N_0 > 0$ independent of $n$.

\textit{Assumption \ref{assumption 3.2} (see \eqref{eq3.2.0}).}
By \eqref{eq11.2.1} - \eqref{11.2.9} and \eqref{eq4.7.7}, one has
$$
	\|\nabla_v \sigma_{G, n}\|_{  L_{\infty} (\Sigma^T)  } \le N (K),
$$
and hence, \eqref{eq3.2.0} holds with $b^i = - \partial_{v_j} \sigma^{j i}_{G, n}, i = 1, 2, 3,$ and $c = 0$.

\textit{Assumption \ref{assumption 3.1} (see \eqref{eq3.1.0}).}
By Assumption \ref{assumption 11.1} (see \eqref{con11.1} - \eqref{con11.1'}) and Lemma \ref{lemma C.1}, we have
$$
	\|\sigma_{G, n}\|_{  L_{\infty} ((0, T), C^{\varkappa/3, \varkappa}_{x, v} (\bR^6))} \le N (K, \varkappa).
$$
Then, by Remark \ref{remark 3.1}, for any  $r \in (0, 1)$ and $Q_r (z_0)$,
$$
	\text{osc}_{x, v} (\sigma_{G, n}, Q_r (z_0)) \le N (K, \varkappa) r^{\varkappa},
$$
where $\text{osc}_{x, v} (\sigma_{G, n}, Q_r (z_0))$ is defined by \eqref{eq3.1}.
Hence,  for any $\gamma \in (0, 1)$, \eqref{eq3.1.0} in Assumption  \ref{assumption 3.1} $(\gamma)$ holds with
\begin{equation}
			\label{eq4.8.2}
	R_0  = N (K,  \varkappa) \gamma^{1/\varkappa}.
\end{equation}
Furthermore, let
$$
	\beta = \beta (p) > 0, \quad \kappa = \kappa (p) > 0, \quad \gamma_{   \star  } = \delta^{\kappa}   \widetilde \gamma_{\star} (p) > 0
$$
be the numbers in Theorem \ref{corollary 3.4} with $\delta  = 2^{-3n}$.
Then,  by  the above, \eqref{eq3.1.0} in Assumption \ref{assumption 3.1} $(\gamma_{ \star }   )$
holds with
$$
	R_0    = N_1 (K, \varkappa, p) 2^{-3 \kappa n/\varkappa}.
$$

Next,  by Theorem \ref{corollary 3.4} and Eq. \eqref{eq4.1.1},
\begin{equation}
			\label{eq4.8.1}
\begin{aligned}
	\|f_1\|_{ S_p (\bR^7_T)) } & \le N  2^{ 3\beta n} \|h_1\|_{ L_p (\bR^7_T) } + N 2^{ 6 \kappa n/\varkappa } \|f_1\|_{ L_p (\bR^7_T) }     \\
	&\le N 2^{ 3 \beta n + 6 \kappa n/\varkappa} (\|h\|_{ L_{p, \theta-2} (\bR^7_T) } + \|f\|_{ L_{p, \theta - 1} (\bR^7_T) } + \|\nabla_v f\|_{   L_{p, \theta} (\bR^7_T) }),
\end{aligned}
\end{equation}
  where $N = N (p, K,  \varkappa, \theta, \Omega) > 0$.
Next, recall the notation $B^p$ (see \eqref{eq7.2.6}).
By the embedding theorem for the $S_p$ space (see \cite{PR_98}), $\|f_1\|_{ B^p}$ is bounded above by the right-hand side of \eqref{eq4.8.1}.

\textbf{Boundary estimate.}
Recall that $\overline{f_k}, k  = 2, \ldots, n$, defined as the mirror extension of $f_k$, satisfies Eq. \eqref{eq1.4.3}
where
\begin{itemize}
\item[--]
$\mathbb{A}$ is defined by \eqref{eq1.4.3.2}, \eqref{eq1.4.3.3}, and \eqref{eq1.20} with $a$ replaced with $\sigma_{G, n}$,  where the latter is given by \eqref{eq4.7.7},
\item[--] $\mathbb{B}$ is defined by \eqref{eq1.4.3.2} and \eqref{eq1.4.3.4} with $b$,
\item[--] $\mathbb{X}$ is given by \eqref{eq1.4.3.5} and \eqref{eq1.4.3.6}.
\end{itemize}
By the conclusion in Appendix \ref{Appendix E} (see Lemma \ref{lemma E.1}), since $\zeta_n$ is a radially symmetric cutoff function, we have
$$
	\mathbb{A} \in L_{\infty} ((0, T), C^{\varkappa/3, \varkappa}_{x, v} (\bR^6)),
$$
and, furthermore,   by Lemma \ref{lemma C.1},
\begin{equation}
							\label{eq4.8.5}
	\|\mathbb{A}\|_{ L_{\infty} ((0, T), C^{\varkappa/3, \varkappa}_{x, v} (\bR^6)) } \le N (K, \varkappa)
\end{equation}
because $|\zeta_n| + |\nabla_v \zeta_n| \le N$ with $N$ independent of $n$.
Hence, as above, for any $\gamma_{ \star } > 0$, the function $\mathbb{A}$ satisfies \eqref{eq3.1.0} in Assumption \ref{assumption 3.1} $(\gamma_{  \star })$
with $R_0$ given by \eqref{eq4.8.2}.
Next, by Theorem \ref{corollary 3.4} and Eq. \eqref{eq1.4.3} combined with the estimates of $\mathbb{X}, \nabla_w \mathbb{X}$ in \eqref{eq1.7.4}, we obtain
\begin{equation}
			\label{eq4.8.3}
\begin{aligned}
	&\|\overline{f_k}\|_{ S_p (\bR^7_T)  } \le N  2^{3 \beta n}  (\||\overline{h_k}| + |\nabla_w \cdot (\mathbb{X} \overline{f_k})\|_{  L_p (\bR^7_T)  })  + N 2^{ 6 \kappa n/\varkappa} \|\overline{f_k}\|_{ L_p (\bR^7_T)  } \\
	&\le N 2^{3 \beta n + 6 \kappa n/\varkappa}  (\|h\|_{  L_{p, \theta-2} (\bR^7_T)  } + \|f\|_{  L_{p, \theta-1} (\bR^7_T)  } + \|\nabla_v f \|_{  L_{p, \theta} (\Sigma^T)  }),
\end{aligned}
\end{equation}
where $N = N (p, K, \varkappa, \theta, \Omega)$.
Combining the above inequality with \eqref{eq4.8.1}, we prove the desired estimate of $\|f\|_{  S_{p, \theta - 2} (\Sigma^T) }$.
Again, by using the  embedding theorem for the $S_p$ spaces, we bound
the norms of $\|\overline{f_k}\|_{  B^p }, k \ge 2$, where $B^p$ is defined by \eqref{eq7.2.6}. This and the bound of $\|f_1\|_{ B^p }$
yield the estimates of
$$
	\|f_{k}\|_{  L_{\infty} ((0, T), C^{\alpha/3, \alpha}_{x, v} (\overline{\Omega} \times \bR^3)) }, \|\nabla_v f_{k}\|_{  L_{\infty} ((0, T), C^{\alpha/3, \alpha}_{x, v} (\overline{\Omega} \times \bR^3)) }
$$
with $\alpha =1 - 14/p$, when $p > 14$.
Since $f_k = f \eta_j \jb^{\theta-2}$, we also obtain the bound of $ \|f_{\pm}\|_{ L_{\infty} (\Sigma^T_{\pm}, \jb^{\theta-2}) }$.
The lemma is proved.
\end{proof}

\begin{proof}[Proof of  Proposition \ref{proposition 4.7}]

We  follow the argument of Lemma \ref{lemma 3.7} with minor modifications.
The central part of the argument is the following assertion.

\textbf{Claim.}
Let $\varepsilon > 0$ be a sufficiently small number such that  \eqref{eq11.2.1} is satisfied. Let 
 $r  \in [2, \infty)\setminus \{14\}$ and  $\beta = \beta (r, \varkappa)$ be the number in the statement of Lemma \ref{lemma 4.8},
and $\theta >   2 + \beta$.
Assume that  $f \in \cK_{\theta,\lambda, p} (\Sigma^T)$ is a function that satisfies  Eq. \eqref{11.2div} with $f_0 \equiv 0$ weakly and in the almost everywhere sense (see conditions $(3)$ and $(4)$ of Definition \ref{definition 11.0}),
and, in addition, one has
 $$
	\||f| +  |\nabla_v f|\|_{  L_{r, \theta_r} (\Sigma^T) } \le M_r
 $$
for some  $2 + \beta  < \theta_r  \le \theta$ and  $M_r > 0$.
Then,  the following assertions hold:
 \begin{enumerate}
 \item  \label{4.7.i}
   \begin{equation}
			\label{eq4.7.10}
  \|f\|_{ S_{r,  \theta_r - 2 - \beta}  (\Sigma^T) }
    \le N   \big(\|h\|_{ L_{r, \theta_r} (\Sigma^T)   }
 +  M_r\big),
\end{equation}
where
$
	N = N (r, K, \theta,  \Omega, \varkappa) > 0.
$
\\
     \item  \label{4.7.ii} If $r \in [2, 14)$,
	$$
		\||f| + |\nabla_v f|\|_{  L_{r',   \theta_r - 2 - \beta } (\Sigma^T) }
	$$
	is bounded by the right-hand side of \eqref{eq4.7.10}
	where $r'$ is determined by the relation
	$$
		        \frac{1}{r'} = \frac{1}{r} - \frac{1}{14}.
	$$

	\item \label{4.7.iii}
	If $r > 14$, then for $\alpha = 1 - 14/r$, the norms
	$$
		\|f\|_{ L_{\infty} ((0, T), C^{\alpha/3, \alpha}_{x, v} (\overline{\Omega} \times \bR^3))  }, \|\nabla_v f\|_{ L_{\infty} ((0, T), C^{\alpha/3, \alpha}_{x, v} (\overline{\Omega} \times \bR^3))  }, \|f_{\pm}\|_{ L_{\infty} (\Sigma^T_{\pm}, \jb^{  \theta_r - 2 - \beta}) }
	$$
	are bounded by the  right-hand side of \eqref{eq4.7.10}.
  \end{enumerate}
If this claim is valid, then, repeating the argument of Step 2 in the proof of Theorem \ref{theorem 1.9} (see p. \pageref{8.1.7}), we prove the desired estimate \eqref{eq4.7.0}. In particular, we use  the same powers $r_k, k = 1, \ldots, 8$ defined in \eqref{8.1.7}, and similar  weight parameters
$$
	\theta_1 = \theta, \, \, \theta_{k+1} = \theta_k - 2 -  \beta, \, \, k = 1, \ldots, 7.
$$

\textbf{Proof of the claim.} Let  $\zeta_n, n \in \{0, 1, 2, \ldots\}$ be the functions defined above Lemma \ref{lemma 4.8} (see p. \pageref{eq4.7.16}), and $\xi_0 \in C^{\infty}_0 (B_{2^{1/2}})$,  $\xi  \in C^{\infty}_0 (\{2^{-1/2} <  |v| < 2\})$ be  functions such that
    \begin{itemize}[--]
    \item $\xi_0 = 1$ on $\overline{B}_1$, $0 \le \xi_0 < 1$ on $\{|v| > 1\}$,
    \item   $\xi = 1$ on $\{1 \le |v| \le 2^{1/2}\}$, otherwise  $0 \le \xi < 1$.
    \end{itemize}
 For $n  \in \{ 1, 2, \ldots\}$, we set $\xi_n (v) = \xi (v 2^{-n})$ and observe that
$$
	 \zeta_n  = 1 \, \,  \, \text{on} \, \,  \text{supp}\,  \xi_n.
$$
In this proof, we assume that $N$  is a constant  depending only on $K, r, \theta, \Omega, \varkappa$.

By direct calculations,  the function $f^{(n)} : = f \xi_n$ satisfies
$$
	\mathfrak{L}_n f^{(n)} + \lambda  f^{(n)} =  \eta^{(n)} \, \, \text{in} \, \, \Sigma^T,   \quad f^{(n)} (0, \cdot) \equiv 0,
$$
and the specular reflection boundary condition,
where
\begin{align*}
&	\mathfrak{L}_n = Y  - \nabla_v \cdot (\sigma_{G, n} \cdot \nabla_v)   - a_g \cdot \nabla_v + \lambda,\\
&\sigma_{G, n} = (\sigma_G  + \nu I_3) \zeta_n + (1 -  \zeta_n) I_3, \\
&	\eta^{(n)} = \xi_n h  + \xi_n \overline{K}_g f    -  a_g \cdot (\nabla_v \xi_n)  f +  (\partial_{v_i} \sigma^{i j}_{G, n}) (\partial_{v_j}  \xi_{ n}) f  \\
&\quad \quad  \quad -   \sigma^{i j}_{G, n}  (\partial_{v_i v_j}  \xi_{ n }) f
- 2 \sigma^{i j}_{G, n} (\partial_{v_j}  \xi_{ n}) (\partial_{v_i}  f).
 \end{align*}

By Lemma \ref{lemma 4.8}  with  $\theta_r -  \beta$ in place of $\theta$, we get
\begin{equation}
			\label{eq4.7.12}
\begin{aligned}
	\|f^{(n)}\|_{ S_{p,  \theta_r  - \beta  - 2  } (\Sigma^T)  }&  \le N  2^{ \beta n} \big(\||f^{(n)}| + |\nabla_v f^{(n)}|\|_{ L_{r, \theta_r - \beta} (\Sigma^T) } \\
	&
	\quad +  \|\eta^{(n)}\|_{ L_{r, \theta_r - \beta- 2 } (\Sigma^T) }\big).
\end{aligned}
\end{equation}
By Lemma \ref{lemma C.2} with $\theta_r$ in place of $\theta$, we get
\begin{equation}
			\label{eq4.7.14}
	\|\jb^{  \theta_r - \beta - 2  } \xi_n \overline{K}_g f\|_{ L_r (\Sigma^T) } \le N  2^{     - \beta n  -2 n } \||f| + |\nabla_v f|\|_{ L_{r, \theta_r} (\Sigma^T) },
\end{equation}
and, therefore,
\begin{equation}
			\label{eq4.7.13}
\begin{aligned}
	\|f^{(n)}\|_{ S_{p,   \theta_r - \beta  - 2 } (\Sigma^T)  } & \le N   \|(|f| + |\nabla_v f| + |h|) |\zeta_n| \|_{ L_{r, \theta_r} (\Sigma^T) }\\
	&   \quad +  N  2^{  - 2 n  }  \||f| + |\nabla_v f|\|_{ L_{r, \theta_r} (\Sigma^T) }.
\end{aligned}
\end{equation}
 Raising \eqref{eq4.7.13} to the power $p$ and summing up, we prove the validity of the claim $(\ref{4.7.i})$.
Finally, the assertions $(\ref{4.7.ii})$ and $(\ref{4.7.iii})$ of the claim follow from Lemma \ref{lemma 4.8} and the  estimate \eqref{eq4.7.13}.
The proposition is proved.
\end{proof}

\begin{proof}[Proof of Theorem \ref{theorem 11.2}]
\textbf{Uniqueness.}
Let $\varepsilon = \varepsilon (\theta)$ be a number in Lemma \ref{lemma 4.6}.
Let $f_1$ and $f_2$ be any  two finite energy strong solutions to Eq. \eqref{11.1}. Note that $u = f_1 - f_2$
satisfies Eq. \eqref{11.2} with $\nu = 0, \lambda = 0$, and $f_0 \equiv 0$.
The uniqueness now follows from Lemma \ref{lemma 4.6}.

\textbf{Existence.}
We assume, additionally, that $\varepsilon > 0$ is small enough so that \eqref{eq11.2.1} holds.
Replacing $f$ with $f - f_0 \phi$ where $\phi = \phi (t)$ is a cutoff function such that
$\phi (0) = 1$, we reduce \eqref{eq11.1} to the forced Landau equation
\begin{equation}
			\label{eq11.1'}
\begin{aligned}
	&Y f - \nabla_{v} \cdot (\sigma_G \nabla_{v} f) - a_g \cdot \nabla_v f - \overline{K}_g f = h \, \, \text{in} \, \, \Sigma^T,\\
	 &
	f (0, x, v) = 0, \, \,  (x, v) \in \Omega \times \bR^3,  \quad
		 f_{-} (t, x, v) = f_{+} (t, x, R_x v), z \in \Sigma^T_{-},
\end{aligned}
\end{equation}
where
$$
	h =   - \partial_t \phi f_0  - v \cdot \nabla_x (f_0) \phi + \nabla_v \cdot (\sigma_G \nabla_v f_0) \phi + a_g \cdot (\nabla_v f_0) \phi +  \phi \overline{K}_g f_0.
$$
Note that due to the $L_{\infty}$ estimates of $\sigma_G, \nabla_v \sigma_G, a_g$ (see \eqref{eq11.2.1} - \eqref{11.2.9}) and Remark \ref{remark 10.1}, one has
\begin{equation}
			\label{eq11.2.3}
	\|h\|_{  L_{2, \theta} (\Sigma^T) } + \|h\|_{  L_{\infty} (\Sigma^T) } \le N (K, \theta, \Omega) ( |f_0|_{ \cO_{2, \theta} } + |f_0|_{ \cO_{\infty} }).
\end{equation}

\textbf{Step 1: well-posedness of a viscous approximation scheme.}
We consider the equation
\begin{equation}
		\label{eq11.2.12}
\begin{aligned}
	&Y f^{(\nu)}  - \nabla_v \cdot (\sigma_G \nabla_v f^{(\nu)}) - \nu \Delta_v f^{(\nu)}  - a_g \cdot \nabla_v f^{(\nu)} - \overline{K}_g f^{(\nu)}  = h \,\, \text{in} \, \, \Sigma^T, \\
&
	 f^{(\nu)} (0, \cdot) = 0  \,\, \text{in} \, \, \Omega \times \bR^3,\,\,
 f^{(\nu)}_{-} (t, x, v) =  f^{(\nu)}_{+} (t, x, R_x v), \,   z \in \Sigma^T_{-}.
\end{aligned}
\end{equation}

 By Proposition \ref{lemma 10.2}, for any $\nu \in (0, 1]$ and  $\theta \ge 16$,
there exists $\lambda = \lambda (\theta, K, \nu, \varkappa, \Omega) > 0$ such that
Eq. \eqref{eq11.2.12} has a unique finite energy strong solution $f^{(\nu)}_{\lambda} \in \cK_{\theta, \lambda, p} (\Sigma^T)$
 in the sense of Definition \ref{definition 11.0}.
Then, the function
$$
	f^{(\nu)}: = e^{ \lambda t} f^{(\nu)}_{\lambda} \in  \cK_{\theta, \lambda, p } (\Sigma^T)
$$
is a finite energy strong solution to Eq. \eqref{eq11.1'} in the sense of  Definition \ref{definition 11.0}.

\textbf{Step 2: uniform bounds for $f^{(\nu)}$.}

\textit{Weighted energy bound.}
By   Lemma \ref{lemma 4.6} and Remark \ref{remark 2.17}, for sufficiently small $\varepsilon  = \varepsilon (\theta) > 0$,  we have
\begin{align}
			\label{11.2.8}
	& \|f^{(\nu)} (T, \cdot)\|_{ L_{2, \theta} (\Omega \times \bR^3) } + \|f^{(\nu)}\|_{  \sigma, \theta  } +  \|\nabla_v f^{(\nu)}\|_{ L_{2, \theta - 3} (\Sigma^T)  }   \\
	&  +    \|f^{(\nu)}\|_{ L_{2, \theta } (\Sigma^T)  }
	 \le   N (\theta, T) \|h\|_{ L_{2, \theta} ( \Sigma^T) }.\notag
\end{align}

\textit{$S_p$ bound.}
 By Proposition \ref{proposition 4.7} with $\theta - 3$ in place of $\theta$,
for sufficiently small $\varepsilon > 0$ and sufficiently large $\theta = \theta (\varkappa, p) > 4$, and $\theta' = \theta' (\varkappa, p), \theta'' = \theta'' (\varkappa, p) \in (1, \theta -3)$,  one has
\begin{align}
			\label{eq11.2.10}
& \|f^{(\nu)}\|_{ S_{2,  \theta' } (\Sigma^T) } +  \|f^{(\nu)}\|_{ S_{p, \theta'' } (\Sigma^T) }
	+ \|f^{(\nu)}_{\pm}\|_{ L_{\infty} (\Sigma^T_{\pm}, |v \cdot n_x|) }\\
&
	\quad\le N (\|h\|_{ L_{2, \theta-3} (\Sigma^T) } + \|h\|_{ L_{p, \theta-3} (\Sigma^T) }  + \|f^{(\nu)}\|_{ L_{2, \theta-3} (\Sigma^T) } + \|\nabla_v f^{(\nu)}|\|_{ L_{2, \theta - 3} (\Sigma^T) })\notag,
\end{align}
where
 $N = N (\theta, p, \varkappa, K,  \Omega)$.
Combining \eqref{11.2.8} with \eqref{eq11.2.10} and \eqref{eq11.2.3} gives
\begin{align}
	& \label{eq11.2.7}
	\|f^{(\nu)}\|_{ S_{2,  \theta' } (\Sigma^T) } +  \|f^{(\nu)}\|_{ S_{p,  \theta'' } (\Sigma^T) }  +  \|f^{(\nu)}_{\pm}\|_{ L_{\infty} (\Sigma^T_{\pm}, |v \cdot n_x|) }\\
	&
		 \quad \le N (\|h\|_{ L_{2, \theta} (\Sigma^T) } + \|h\|_{ L_{\infty} (\Sigma^T) })\notag,
\end{align}
where $N = N (\theta, p, \varkappa, K,  \Omega, T)$.

\textbf{Step 3: limiting argument.}
By \eqref{eq11.2.10}, the Banach-Alaoglu theorem, and   Eberlein-Smulian theorem,  there exists a subsequence $\nu'$ and  function $f \in S_{2, \theta' } (\Sigma^T)$ such that
\begin{itemize}
\item[--]$f^{(\nu')}  \to f$ weakly in $S_{2,  \theta' } (\Sigma^T)$;

\item[--] $f^{(\nu')}_{\pm} \to f_{\pm}$ in the weak* topology of $L_{\infty} (\Sigma^T_{\pm}, |v \cdot n_x|)$, respectively;

\item[--] $f^{(\nu')} (T, \cdot) \to f (T, \cdot)$ weakly  in $L_{2, \theta} (\Omega \times \bR^3)$;

\item[--] $f^{(\nu')}  \to f$ weakly in $H_{\sigma, \theta} (\Sigma^T)$.
\end{itemize}
Passing to the limit in   the Green's identity \eqref{7.2} (see Remark \ref{remark 2.2.3}),  we conclude that $f (T, \cdot), f_{\pm}$ are, indeed, traces of the function $f$.
Using the weak and the weak* convergence, we pass to the limit in \eqref{11.2.8} and \eqref{eq11.2.7} and prove
\begin{equation}
		\label{eq11.2.13}
\begin{aligned}
		&\|f (T, \cdot)\|_{ L_{2, \theta} (\Omega \times \bR^3) } + \|f\|_{  \sigma, \theta  } +  \|\nabla_v f\|_{ L_{2, \theta - 3} (\Sigma^T)  } \\
		& + \|f\|_{ S_{2,  \theta' } (\Sigma^T) } +  \|f\|_{ S_{p,  \theta'' } (\Sigma^T) }  +  \|f_{\pm}\|_{ L_{\infty} (\Sigma^T_{\pm}, |v \cdot n_x|) }\\
		&\le  N (\|h\|_{ L_{2, \theta} (\Sigma^T) } + \|h\|_{ L_{\infty} (\Sigma^T) }).
\end{aligned}
\end{equation}

Next, we show that $f$ satisfies the weak formulation of Eq. \eqref{eq11.1'} (cf. Definition \ref{definition 11.0}).
Recall that due to Lemma \ref{lemma 10.2},  $f^{(\nu)}$ satisfies the weak formulation of \eqref{eq11.2.12} with any $\phi \in C^1_0 (\overline{\Sigma^T})$.
Testing Eq. \eqref{eq11.2.12}  with $\phi \in C^1_0 (\overline{\Sigma^T} )$
gives
\begin{equation}
			\label{eq11.2.11}
\begin{aligned}
	&\int_{\Omega \times \bR^3} f^{(\nu)} (T, \cdot) \phi (T, x, v) \, dxdv \\
	&+ \int_{\Sigma^T_{+}} f^{(\nu)}_{+} \phi \, |v \cdot n_x| \, d\sigma dt
	  - \int_{\Sigma^T_{-}}  f^{(\nu)}_{-} \phi |v \cdot n_x| \, d\sigma dt \\
	&+ \int_{\Sigma^T}  [(\sigma_G + \nu I_3) \nabla_v f^{(\nu)}] \cdot \nabla_v \phi \, dz   = \int_{\Sigma^T} [a_g \cdot \nabla_v f^{(\nu)} + \overline{K}_g f^{(\nu)} + h] \phi \, dz.
\end{aligned}
\end{equation}
It follows from the definition of $\sfK$ (see \eqref{eq11.1.2}) that
$$
	\int_{\Sigma^T} (\sfK f^{(\nu)}) \phi \, dz = \int_{\Sigma^T} (\sfK  \phi) f^{(\nu)} \, dz,
$$
and, hence, by the explicit expression of the term $J_g f^{(\nu)}$ (see \eqref{eq11.2.1'}) and the fact that $\overline{K}_g = \sfK + J_g$, we obtain
$$
	\int_{\Sigma^T} (\overline{K}_g f^{(\nu)}) \phi \, dz = \int_{\Sigma^T} (\overline{K}_g  \phi) f^{(\nu)} \, dz.
$$
Then, by the weak convergence $f^{(\nu)} \to f$ in $L_2 (\Sigma^T)$ and Lemma \ref{lemma C.2}, we conclude
$$
	\lim_{\nu \to 0} \int_{\Sigma^T} (\overline{K}_g f^{(\nu)}) \phi \, dz  = \int_{\Sigma^T} (\overline{K}_g   f) \phi \, dz.
$$
Finally, by what just said and the aforementioned weak convergence,  we pass to the limit in \eqref{eq11.2.11} and  we conclude that $f$  satisfies the weak formulation of \eqref{eq11.1}.
By using the weak* convergence of $f^{\nu}_{\pm}$ in $L_{\infty} (\Sigma^T_{\pm}, |v \cdot n_x|)$, we  pass to the limit in the specular boundary condition for $f^{(\nu)}$ and prove that this boundary condition holds for the function $f$ as well.
 Using the  Green's identity again (see Remark \ref{remark 2.2.3}), we prove that $f$ satisfies  Eq. \eqref{eq11.1} a.e. and, thus, $f$ is a finite energy strong solution in the sense of Definition \ref{definition 11.0}.

\textbf{Step 4:  H\"older estimate.}
To bound the
$$
	L_{\infty} ((0, T), C^{\alpha/3, \alpha}_{x, v} (\overline{\Omega} \times \bR^3))$$
 norm
of $f$ and $\nabla_v f$, we repeat the above $L_2$ to $S_p$ bootstrap argument.
Again,  by Proposition \ref{proposition 4.7} with  $\theta$ replaced with $\theta''$, for sufficiently small $\varepsilon > 0$ and sufficiently large $\theta = \theta (\varkappa, p)$, we get
\begin{align*}
& \|f\|_{  L_{\infty} ((0, T), C^{\alpha/3, \alpha}_{x, v} (\overline{\Omega} \times \bR^3)) } + \|\nabla_v f\|_{  L_{\infty} ((0, T), C^{\alpha/3, \alpha}_{x, v} (\overline{\Omega} \times \bR^3)) }\\
&
	\quad\le N (\|h\|_{ L_{2, \theta''} (\Sigma^T) } + \|h\|_{ L_{p, \theta''} (\Sigma^T) }  + \|f\|_{ L_{2, \theta''} (\Sigma^T) } + \|\nabla_v f\|_{ L_{2, \theta''} (\Sigma^T) }).
\end{align*}
Combining this with \eqref{eq11.2.13}, we obtain
\begin{equation}
			\label{eq11.2.14}
\begin{aligned}
& \|f\|_{  L_{\infty} ((0, T), C^{\alpha/3, \alpha}_{x, v} (\overline{\Omega} \times \bR^3)) } + \|\nabla_v f\|_{  L_{\infty} ((0, T), C^{\alpha/3, \alpha}_{x, v} (\overline{\Omega} \times \bR^3)) }\\
&
	\quad\le N (\|h\|_{ L_{2, \theta''} (\Sigma^T) } + \|h\|_{ L_{\infty} (\Sigma^T) }).
\end{aligned}
\end{equation}

Finally, recall that  at the beginning of the proof, we replaced $f$ with $f  - f_0 \phi$. By this,  the definition of the $\cO_{p, \theta}$ norm (see \eqref{1.2.7}),
and \eqref{eq11.2.13}, and \eqref{eq11.2.14} , we prove the desired estimate \eqref{eq11.2.0}
\end{proof}

\appendix
\section{Verification of the identity (\ref{7.3.2})}
            \label{Appendix A}

Invoke the assumptions and the notation of Section \ref{subsection 1.4.1}
and denote
$$
    M = \big(\frac{\partial x}{\partial y}\big).
$$
Let $f$ be a finite energy weak solution to Eq. \eqref{7.0} (see Definition \ref{definition 7.0}) supported on $\bR \times \Omega_{r_0/2} \times \bR^3$ and $\phi \in C^1_0 (\overline{\Sigma^T})$.
The goal of this section is to justify the identity \eqref{7.3.2}.

\textit{Drift term.} By using a change of variables and the identity
\begin{equation}
                \label{eqA.5}
     \big(\nabla_v f \big) (t, x (y), v (y, w))  = \bigg(\big(\frac{\partial x}{ \partial y}\big)^T\bigg)^{-1} \nabla_w \widehat f  (t, y, w)  = (M^{-1})^T \nabla_w \widehat f  (t, y, w),
\end{equation}
we get
\begin{equation}
			\label{eqA3}
    \int_{\Sigma^T} b \cdot \nabla_v f  \phi \, dz
    = \int_{\mathbb{H}_{-}^T} \bigg[\underbrace{(M^{-1} b)}_{= B} \cdot \nabla_w (\widehat f J)\bigg]  \widehat\phi  \, d\widehat z,
\end{equation}
where  $\bH_{-}^T$ is defined in \eqref{eq1.2.0}.

\textit{Transport term.}
First, changing variables gives
$$
    \int_{\Sigma^T}  (v \cdot \nabla_x \phi) f \, dz
    =
    \int_{\mathbb{H}_{-}^T} (\widehat f J) \,  (M w)^T  \big(\nabla_x \phi\big) (t, x (y), v (y, w))  \, d\widehat z.
$$
	Next, by the chain rule and \eqref{eqA.5},
\begin{align*}
      &  \big(\nabla_x \phi\big) (t, x (y), v (y, w))\\
 &   = 	  \bigg(\big(\frac{\partial x}{ \partial y}\big)^T\bigg)^{-1}   \big[\nabla_y  	\widehat \phi (t, y, w)
	  -  \big(\frac{\partial v}{\partial y}\big)^T (\nabla_v \phi) (t, x (y), v (y, w))\big]\\
&
     = (M^{-1})^T  \nabla_y  	\widehat \phi (t, y, w)
	  - (M^{-1})^T \big(\frac{\partial v}{\partial y}\big)^T (M^{-1})^T \nabla_w \widehat \phi  (t, y, w),
 \end{align*}
and then,
\begin{align*}
        &v^T (y, w)  \big(\nabla_x \phi\big) (t, x (y), v (y, w)) \\
   & =  w^T  \nabla_y  	\widehat \phi (t, y, w)
    -    w^T \big(\frac{\partial v}{\partial y}\big)^T (M^{-1})^T \nabla_w \widehat \phi (t, y, w)\\
  &  = w \cdot   \nabla_y  	\widehat \phi (t, y, w)  - X \cdot \nabla_w  \widehat \phi (t, y, w),
 \end{align*}
 where
 $$
        X =  (X_1, X_2, X_3)^T =   M^{-1} \big(\frac{\partial v}{\partial y}\big) w
        =  M^{-1} \frac{\partial (M w)} {\partial y} w.
 $$
Thus, by the above computations,
\begin{equation}
                \label{eq.A1}
         \int_{\Sigma^T}  (Y \phi) f
    =  \int_{  \mathbb{H}_{-}^T }
   \big( Y (\widehat \phi)  - X \cdot \nabla_w \widehat \phi\big)   (\widehat f  J)  dz.
\end{equation}

\textit{Diffusion term.}
Applying  formula  \eqref{eqA.5} gives
\begin{equation}
			\label{eqA.4}
	\int_{\Sigma^T} (a \nabla_v \phi)^T \nabla_v f \, dz = \int_{  \mathbb{H}_{-}^T }  (A \nabla_w  \widehat \phi)^T  \nabla_w (\widehat f J)  \, dz,
\end{equation}
where
$$
	A  = M^{-1} \widehat a (M^{-1})^T.
$$

Finally, combining \eqref{eqA.5} - \eqref{eqA.4} and recalling $\widetilde f = \widehat f J$, we arrive to \eqref{7.3.2}.

\section{}
            \label{Appendix B}

The following lemma can be derived from the calculations in Appendix \ref{Appendix A}.
\begin{lemma}
            \label{lemma B.2}
Let $p > 1$, be a number, $T \in (-\infty, \infty]$, and $\Phi$ be the diffeomorphism
defined in Subsection \ref{subsection 1.4.1} (see \eqref{eq1.4.2.4} - \eqref{eq1.4.2.8}).
Then, for any function $u$ vanishing outside $(-\infty, T) \times B_{r_0} (x_0) \times \bR^3$, one has
$$
  \|u\|_{ S_p ((-\infty, T) \times \bR^6) } \le N (\Omega, p)  \|\widehat u\|_{ S_p ((-\infty, T) \times \bR^6) },
$$
where $\widehat u$ is defined in \eqref{eq1.4.2.1}.
\end{lemma}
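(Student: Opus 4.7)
The plan is to prove the estimate by direct change of variables and the chain rule. The diffeomorphism $\Psi$ of Subsection~\ref{subsection 1.4.1} maps $\Omega_{r_0}(x_0)\times\bR^3$ bijectively onto $\bH_-$; since $\rho\in C^3$, the total Jacobian $J=|\partial x/\partial y|^2$ and its reciprocal are bounded by a constant depending only on $\Omega$. After the change of variables $(t,x,v)\mapsto(t,y,w)=(t,\Psi(x,v))$, integrals over $\Sigma^T$ of quantities supported in $B_{r_0}(x_0)\times\bR^3$ are comparable to integrals over $\bH_-^T\subset(-\infty,T)\times\bR^6$, which in turn are bounded by the corresponding full-space integrals on the right-hand side.

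Applying the chain rule to $u(t,x,v)=\widehat u(t,\psi(x),D\psi(x)v)$, and using that $w$ is linear in $v$ with $x$-dependent coefficients while $y$ is independent of $v$, one finds
$$\nabla_v u=(D\psi)^T\nabla_w\widehat u,\qquad \partial_{v_iv_j}^2 u=(D\psi)_{ki}(D\psi)_{lj}\,\partial_{w_kw_l}^2\widehat u,$$
and, by reversing the manipulations carried out in Appendix~\ref{Appendix A},
$$Yu=Y_{\mathrm{new}}\widehat u+E(y,w)\cdot\nabla_w\widehat u,$$
where $Y_{\mathrm{new}}=\partial_t+w\cdot\nabla_y$ is the transport operator in the new coordinates and $E(y,w)$ is a quadratic polynomial in $w$ whose coefficients are built from $M=D\psi^{-1}$ and derivatives of $D\psi$ evaluated at $x(y)$ (essentially the field $X$ of \eqref{eq1.4.3.5}). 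Because $u$ is supported in $B_{r_0}(x_0)\times\bR^3$ in $(x,v)$, its pullback $\widehat u$ is supported in the bounded $y$-set $\psi(B_{r_0}(x_0))$, on which $D\psi$, $M$, and $D^2\psi$ are uniformly bounded by constants depending only on $\Omega$.

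Combining these identities with the $L_p$ change-of-variables formula and the bounded Jacobian, the $L_p$ norms of $u$, $\nabla_v u$, and $D^2_v u$ are directly controlled by the corresponding $L_p$ norms of $\widehat u$, $\nabla_w\widehat u$, and $D^2_w\widehat u$, which handles every term of the $S_p$ norm except the transport piece. The main obstacle is the transport term: although $\|Y_{\mathrm{new}}\widehat u\|_{L_p}$ is controlled by $\|\widehat u\|_{S_p}$, the correction $E\cdot\nabla_w\widehat u$ carries a coefficient with polynomial growth in $w$. The plan is to absorb it by exploiting the bounded $y$-support of $\widehat u$ together with an integration-by-parts argument that reroutes the $w$-factors in $E$ into derivatives already controlled by the $S_p$ norm of $\widehat u$; this is the most delicate step, and is where the $C^3$ regularity of $\rho$ (equivalently, of $\partial\Omega$) enters decisively to ensure boundedness of the coefficients.
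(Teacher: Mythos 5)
Your diagnosis is correct and it touches a genuine subtlety that the paper's own proof (a one-sentence pointer to Appendix \ref{Appendix A}) passes over silently: after the change of variables one has $Yu = Y_{\mathrm{new}}\widehat u + X\cdot\nabla_w\widehat u$ with $X$ quadratic in $w$, and since $\widehat u$ has compact support only in $y$ (not in $w$), the transport component of $\|u\|_{S_p}$ is \emph{not} controlled by the unweighted $\|\widehat u\|_{S_p}$ alone. The zeroth-, first-, and second-order $v$-derivatives pose no problem, exactly as you observe, because the transformation matrices depend only on $y$ and are bounded on the compact $y$-support; the difficulty is concentrated entirely in the transport term.

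However, your proposed resolution does not work. There is no integration-by-parts device that trades the two $w$-factors in $X$ against derivatives already present in $\|\widehat u\|_{S_p}$: the $S_p$ norm simply does not control $\|\langle w\rangle^2\nabla_w\widehat u\|_{L_p}$, and rewriting $X\cdot\nabla_w\widehat u = \nabla_w\cdot(X\widehat u) - (\nabla_w\cdot X)\widehat u$ merely moves the growth onto $\widehat u$. What is actually true, and what is needed, is
$$
  \|u\|_{ S_p ((-\infty, T) \times \bR^6) }
  \le N(\Omega,p)\,\Big(\|\widehat u\|_{ S_p ((-\infty, T) \times \bR^6) }
  + \|\langle w\rangle^2\nabla_w\widehat u\|_{ L_p ((-\infty, T) \times \bR^6) }\Big).
$$
In every application of Lemma \ref{lemma B.2} in the paper (the proofs of Theorem \ref{theorem 7.3}, Lemma \ref{lemma 3.7}, Proposition \ref{proposition 4.2}, and Lemma \ref{lemma 4.8}), this extra weighted gradient term is already furnished on the right-hand side of the intermediate bound for $\overline{f_k}$, as it arises from the source $\nabla_w\cdot(\mathbb{X}\,\overline{f_k})$ in the extended equations \eqref{eq1.7.1} and \eqref{eq1.4.3}, controlled via \eqref{eq1.7.4}. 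So the correct fix is to carry the weighted term explicitly in the lemma's conclusion, not to try to absorb it; as literally stated, with unweighted $S_p$ norms on both sides, the inequality does not hold.
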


\begin{lemma}
            \label{lemma B.3}
Let $\theta \ge 0$ be a number and $u$ be a function on $\Sigma^T$ such that $u, Y u \in L_{2, \theta} (\Sigma^T)$,
$u (T, \cdot), u (0, \cdot) \in L_{2, \theta} (\Omega \times \bR^3)$,
 $u_{\pm} \in L_{\infty} (\Sigma^T_{\pm}, |v \cdot n_x|)$,
 and the specular reflection boundary condition  is satisfied.
Then, the following variant of the energy identity holds:
$$
    \int_{ \Omega \times \bR^3 }  \big(u^2 (T, x, v) -   u^2 (0, x, v)\big) \, \jb^{\theta} dxdv = 2 \int_{\Sigma^T} u (Y u) \, \jb^{\theta} dz.
$$
\end{lemma}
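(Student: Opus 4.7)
The plan is to deduce the identity from the $L_2$-version of the weighted energy identity in Lemma \ref{lemma 7.1} and cancel the incoming and outgoing boundary contributions by the specular reflection boundary condition. The delicate point is that the hypothesis $u_\pm \in L_\infty(\Sigma^T_\pm, |v\cdot n_x|)$, combined with the $L_{2,\theta}$ bulk control, does not by itself ensure that $u_\pm$ is square integrable against $|v\cdot n_x|\jb^\theta\, d\sigma dt$ (the measure on $\Sigma^T_\pm$ is infinite in $v$), so $u$ may fail to lie in $E_{2,\theta}(\Sigma^T)$ and Lemma \ref{lemma 7.1} cannot be applied directly. To remedy this I would introduce a radial velocity cutoff and pass to the limit.

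Concretely, fix a nonnegative radial $\chi \in C^\infty_c(\bR^3)$ with $\chi \equiv 1$ on $B_1$ and form $u_N(t,x,v) := u(t,x,v)\chi(v/N)$. Because $\chi$ is radial and $|R_xv| = |v|$, the cutoff commutes with the reflection, so $u_N$ still satisfies the specular boundary condition; because $\chi(v/N)$ depends only on $v$, one has $Yu_N = \chi(v/N)Yu$. The bulk terms $u_N, Yu_N$ remain in $L_{2,\theta}(\Sigma^T)$, the initial and terminal slices lie in $L_2(\Omega\times \bR^3)$, and the truncated traces $u_{N,\pm} = u_\pm \chi(v/N)$ are bounded with compact $v$-support, hence belong to $L_2(\Sigma^T_\pm, |v\cdot n_x|\, d\sigma dt)$ since $(0,T)\times \partial\Omega$ has finite surface measure. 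Combined with the Green's identity provided by Remark \ref{remark 2.2.3} for $u$, which transfers to $u_N$ because multiplication by the $v$-dependent cutoff preserves it, this shows $u_N \in E_{2,\theta}(\Sigma^T)$, so Lemma \ref{lemma 7.1} with $p=2$ gives the weighted identity with the two boundary integrals and the bulk integral of $2(Yu_N)u_N \jb^\theta$.

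Next, under the specular condition $u_{N,-}(t,x,v) = u_{N,+}(t,x,R_xv)$, I would change variables $v \mapsto R_xv$ in the integral over $\Sigma^T_-$. Since $R_x$ is an involutive linear isometry with unit Jacobian sending $\gamma_-$ bijectively onto $\gamma_+$, with $|R_xv\cdot n_x| = |v\cdot n_x|$ and $|R_xv| = |v|$ (so $\jb^\theta$ is preserved), the $\gamma_-$ integral becomes precisely the $\gamma_+$ integral and the two cancel. Finally I pass to the limit $N\to\infty$ by the dominated convergence theorem, with the majorants $u^2(T,\cdot)\jb^\theta,\ u^2(0,\cdot)\jb^\theta \in L_1(\Omega\times\bR^3)$ and $|u\, Yu|\jb^\theta \in L_1(\Sigma^T)$ (the latter by Cauchy--Schwarz applied to the $L_{2,\theta}$ bounds on $u$ and $Yu$), to recover the desired identity. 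The main obstacle is the verification that $u_N \in E_{2,\theta}(\Sigma^T)$ so that Lemma \ref{lemma 7.1} is applicable; this hinges on the compatibility of the radial velocity cutoff with both the transport operator $Y$ and the specular reflection boundary condition, and without radiality one would lose the exact cancellation at the boundary.
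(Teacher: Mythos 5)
Your proof is correct and follows essentially the same route as the paper: the paper truncates $u$ by a Gaussian weight $\mu_\varepsilon(v)=e^{-|v|^2/\varepsilon}$ (radial in $v$, hence compatible with specular reflection and commuting with $Y$) to land the truncated function in $E_{2,\theta}(\Sigma^T)$, applies Lemma~\ref{lemma 7.1} with $p=2$, cancels the boundary integrals via the specular condition, and passes to the limit by dominated convergence, whereas you use a compactly supported radial bump $\chi(v/N)$ instead of the Gaussian; both cutoffs play the identical role.
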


\begin{proof}
For $\varepsilon > 0$, denote
$$
    \mu_{\varepsilon} (v) = e^{-|v|^2/\varepsilon}.
$$
Note that $u_{\varepsilon}: = u \mu_{\varepsilon} \in E_{2, \theta} (\Sigma^T)$ (see Definition \ref{definition 2.2.1} and \eqref{7.2}) and it satisfies the specular reflection boundary condition.
Then, by the energy identity (see \eqref{eq7.1.1} in Lemma \ref{lemma 7.1}),
$$
    \int_{\Omega \times \bR^3}  [u^2_{\varepsilon}  (T, x, v)  -   u^2_{\varepsilon} (0, x, v)]  \, \jb^{\theta} dxdv = 2 \int_{\Sigma^T} u_{\varepsilon} (Y u_{\varepsilon})  \, \jb^{\theta} dz.
$$
Passing to the limit in the above equality and using the dominated convergence theorem, we prove the lemma.
\end{proof}

For $T \in \bR$, let $\langle\cdot, \cdot\rangle_T$ be the duality pairing between $\bH^{-1}_2 (\bR^7_T)$ and $\bH^1_2 (\bR^7_T)$  given by
$$
	\langle f, g \rangle_{T} = \int_{-\infty}^T \int_{\bR^3} [f (t, x, \cdot), g (t, x, \cdot)] \, dx dt,
$$
where
$$
	[f, g] = \int ((1- \Delta_v)^{-1/2} f) \,  ((1- \Delta_v)^{1/2} g) \, dv.
$$

The proof of the  following variant of the energy identity can be found in \cite{DY_21b}.
\begin{lemma}
			\label{lemma B.5}
Let   $T \in \bR$ be a  number and   $u \in \bH^1_2 ((-\infty, T) \times \bR^6)$, and $Y u \in \bH^{-1}_2 ((-\infty, T) \times \bR^6)$.
Then, for a.e. $s \in  (-\infty, T]$,
$$
	\langle Y u,   u \rangle_s =  (1/2)\|u\|^2_{  L_2 (\bR^6)} (s).
$$
\end{lemma}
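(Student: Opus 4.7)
My plan is to prove Lemma B.5 by the standard energy method for kinetic equations, regularizing, verifying the identity for the regularization, and passing to the limit. Since the pairing $[\cdot,\cdot]$ coincides with the ordinary $L_2(\bR^3_v)$ inner product whenever both arguments happen to be in $L_2$ (by self-adjointness of $(1-\Delta_v)^{\pm 1/2}$), the target identity is really $\langle Yu,u\rangle_s=\tfrac12\|u(s)\|^2_{L_2(\bR^6)}$ interpreted through the $\bH^{-1}_2$--$\bH^{1}_2$ duality in the $v$-variable.

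\textbf{Step 1 (Regularization).} I would extend $u$ by $0$ for $t>T$ so that $u\in\bH^1_2(\bR^7)$ and $Yu\in\bH^{-1}_2(\bR^7)$ (the cutoff produces an admissible jump term only at $t=T$, which is harmless for the purposes of the identity at $s<T$). Let $\eta_\varepsilon(t)$ be a one-sided mollifier in $t$ supported in $[-\varepsilon,0]$ and $\rho_\delta(v)$ a standard mollifier in $v$. Set $u_{\varepsilon,\delta}=u*_t\eta_\varepsilon *_v\rho_\delta$. The convolution in $t$ commutes with $Y$ because $v\cdot\nabla_x$ acts on the frozen variable $v$; the convolution in $v$ does not commute with $v\cdot\nabla_x$, but by the DiPerna--Lions commutator lemma, $[v\cdot\nabla_x,\rho_\delta*_v](u)\to 0$ in $L_2$ as $\delta\to 0$ uniformly in $\varepsilon$. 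Thus $u_{\varepsilon,\delta}\to u$ in $\bH^1_2$ and $Yu_{\varepsilon,\delta}\to Yu$ in $\bH^{-1}_2$ after sending $\delta\to 0$ then $\varepsilon\to 0$.

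\textbf{Step 2 (Identity for the regularization).} Each $u_{\varepsilon,\delta}$ is smooth in $(t,v)$, belongs to $\bH^1_2$ with $Yu_{\varepsilon,\delta}\in L_2$, and (using the cutoff at $t\to-\infty$ and decay) vanishes for $t$ sufficiently negative. For such a function the map $t\mapsto\|u_{\varepsilon,\delta}(t)\|^2_{L_2(\bR^6)}$ is absolutely continuous and
\begin{equation*}
\tfrac{d}{dt}\|u_{\varepsilon,\delta}(t)\|^2_{L_2(\bR^6)}
=2\int_{\bR^6} u_{\varepsilon,\delta}\,\partial_t u_{\varepsilon,\delta}\,dxdv
=2\int_{\bR^6} u_{\varepsilon,\delta}\,Yu_{\varepsilon,\delta}\,dxdv,
\end{equation*}
where the transport contribution $\int u_{\varepsilon,\delta}\,v\cdot\nabla_x u_{\varepsilon,\delta}\,dxdv=\tfrac12\int v\cdot\nabla_x(u_{\varepsilon,\delta}^2)\,dxdv=0$ vanishes by integration by parts in $x$ and $L_1(\bR^3_x)$--decay of $u^2$. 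Integrating from $-\infty$ to $s$ and using that $(f,g)_{L_2(\bR^3_v)}=[f,g]$ whenever $f\in L_2$, one obtains $\tfrac12\|u_{\varepsilon,\delta}(s)\|^2_{L_2(\bR^6)}=\langle Yu_{\varepsilon,\delta},u_{\varepsilon,\delta}\rangle_s$ for every $s\le T$.

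\textbf{Step 3 (Passage to the limit).} The pairing $\langle \cdot,\cdot\rangle_s$ is continuous as a map $\bH^{-1}_2\times\bH^1_2\to\bR$, so the right-hand side converges to $\langle Yu,u\rangle_s$ for every $s$. For the left-hand side, the uniform bound $\sup_\varepsilon\|u_\varepsilon\|_{\bH^1_2}+\sup_\varepsilon\|Yu_\varepsilon\|_{\bH^{-1}_2}<\infty$ together with the identity from Step~2 shows $\{u_{\varepsilon,\delta}\}$ is uniformly bounded in $C_t(L_2(\bR^6))$; a standard Aubin--Lions type argument then provides a version of $u$ in $C((-\infty,T],L_2(\bR^6))$ with $u_{\varepsilon,\delta}(s)\to u(s)$ in $L_2(\bR^6)$ for a.e.\ $s$. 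This yields the identity $\tfrac12\|u(s)\|^2_{L_2(\bR^6)}=\langle Yu,u\rangle_s$ at almost every $s\in(-\infty,T]$.

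The main obstacle will be the commutator $[v\cdot\nabla_x,\rho_\delta*_v]$ generated by $v$-mollification; without it, $u_\varepsilon$ (mollified only in $t$) is not regular enough to justify the pointwise-in-$t$ integration by parts in $x$ needed to kill the transport term. The DiPerna--Lions commutator lemma is exactly the tool that handles this and makes the approximation scheme stable in $\bH^{-1}_2$.
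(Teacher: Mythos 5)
The step that fails is the commutator estimate. The DiPerna--Lions lemma controls $[\rho_\delta*,\,b\cdot\nabla]$ when the mollification is performed in the variables on which $b\cdot\nabla$ differentiates; for $v\cdot\nabla_x$ that means mollifying in $x$ (or jointly in $(x,v)$), not only in $v$. With your scheme $\rho_\delta$ acts only in $v$, and a Fourier computation in $x$ gives
$$
\bigl\|[v\cdot\nabla_x,\rho_\delta*_v]u_\varepsilon\bigr\|_{L_2}\le C\,\delta\,\|\nabla_x u_\varepsilon\|_{L_2},
$$
since the commutator multiplier is of size $\delta|\xi|$ with no cutoff in $\xi$. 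The quantity $\|\nabla_x u_\varepsilon\|_{L_2}$ is not controlled by $u_\varepsilon\in\bH^1_2$ together with $Yu_\varepsilon\in\bH^{-1}_2$; these spaces record regularity in $v$ only. The same absence of $x$-regularity undermines the rest of Step 2: after mollifying only in $(t,v)$ it is not justified that $Yu_{\varepsilon,\delta}\in L_2$, nor that $\int u_{\varepsilon,\delta}\,v\cdot\nabla_x u_{\varepsilon,\delta}\,dx\,dv$ can be integrated by parts in $x$, precisely because $\nabla_x u_{\varepsilon,\delta}$ need not exist as an $L_2$ function.

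The natural fix is to mollify in $x$ as well. With $\rho_\delta$ a standard mollifier in $(x,v)$, the commutator multiplier in $(\xi,\eta)$-Fourier becomes $\delta\,\xi\cdot(\nabla_\eta\hat\rho)(\delta\xi,\delta\eta)$, which is uniformly bounded in $\delta$ (as $\hat\rho$ is Schwartz) and tends to $0$ pointwise, so the commutator vanishes in $L_2$ by dominated convergence; alternatively, smooth in $(t,x)$ at scale $\varepsilon$ and then in $v$ at a scale $\delta(\varepsilon)=o(\varepsilon)$. With this modification $u_\delta\to u$ in $\bH^1_2$ and $Yu_\delta\to Yu$ in $\bH^{-1}_2$, and Steps 2--3 close. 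Two side remarks: the paper defers the proof of Lemma \ref{lemma B.5} to \cite{DY_21b}, so there is no in-paper proof to compare against; and Step 3 does not need Aubin--Lions, since $u_\delta\to u$ in $L_2((-\infty,T)\times\bR^6)$ already gives $u_\delta(s)\to u(s)$ in $L_2(\bR^6)$ along a subsequence for a.e.\ $s$.
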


\begin{lemma}
		\label{lemma B.4}
Let $T > 0$ be a number  and $u \in E_2 (\Sigma^T)$.
Then, $u \in C ([0, T], L_2 (\Sigma^T))$.
\end{lemma}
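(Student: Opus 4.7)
The plan is to establish continuity of $t \mapsto u(t,\cdot)$ into $L_2(\Omega \times \bR^3)$ by combining two ingredients: continuity of the $L^2$-norm (from the energy identity) and weak continuity in $L^2$ (from the Green's identity applied to suitable test functions). Since $L_2(\Omega \times \bR^3)$ is Hilbert, these two together imply strong continuity via
$$
\|u(t,\cdot) - u(t_0,\cdot)\|_{L_2}^2 = \|u(t,\cdot)\|_{L_2}^2 - 2\langle u(t,\cdot),u(t_0,\cdot)\rangle + \|u(t_0,\cdot)\|_{L_2}^2.
$$

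First, for the norm continuity, I would apply the energy identity of Lemma~\ref{lemma 7.1} with $p=2$, $\theta=0$ on sub-intervals $[\tau_1,\tau_2] \subset [0,T]$, using Remark~\ref{remark 2.2.2} to identify the relevant traces at $t=\tau_i$ with the time slices of $u$. The right-hand side is the integral over $[\tau_1,\tau_2]$ of an $L^1_t$ function (built from $u\cdot Yu$ and the boundary terms $u_\pm^2|v\cdot n_x|$), so $\tau \mapsto \|u(\tau,\cdot)\|_{L_2(\Omega\times\bR^3)}^2$ is absolutely continuous on $[0,T]$ and, in particular, uniformly bounded.

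Second, for weak continuity I would show that for every $\psi \in C^\infty_c(\Omega \times \bR^3)$ (compactly supported in the open set), the map $F_\psi(t) := \int u(t,\cdot)\,\psi\,dxdv$ admits a continuous representative on $[0,T]$. Given $\chi \in C^1_0((0,T))$, the product $\phi(t,x,v)=\psi(x,v)\chi(t)$ belongs to the test-function class $\mathsf{\Phi}$ of Definition~\ref{definition 2.2.3}: because $\mathrm{supp}\,\psi$ is compactly contained in $\Omega \times \bR^3$, the exit time from $\Omega$ along characteristics is uniformly bounded below on $\mathrm{supp}\,\psi$, giving the required lower bound on characteristic lengths inside $\Sigma^T$. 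Plugging $\phi$ into the Green's identity~\eqref{7.2} and using that $\psi$ vanishes near $\partial\Omega$ (killing the boundary integrals) and $\chi$ vanishes at $t=0,T$, I obtain
$$
\int_0^T \chi'(t) F_\psi(t)\,dt = -\int_0^T \chi(t)\left[\int_{\Omega\times\bR^3}\bigl((Yu)\psi - u\,v\cdot\nabla_x\psi\bigr)\,dxdv\right]dt,
$$
which identifies $F_\psi' \in L^1((0,T))$, so $F_\psi \in W^{1,1}((0,T)) \hookrightarrow C([0,T])$; matching at the endpoints with the prescribed values $u(0,\cdot)$, $u(T,\cdot)$ from Definition~\ref{definition 2.2.1} is handled by allowing $\chi$ not to vanish at $0$ or $T$ and reading off the limits. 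The uniform bound from Step~1 together with density of $C^\infty_c(\Omega \times \bR^3)$ in $L_2(\Omega \times \bR^3)$ then extends the continuity of $F_\psi$ to every $\psi \in L_2(\Omega \times \bR^3)$, which is weak continuity of $u(t,\cdot)$.

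The main subtlety I anticipate is confirming that test functions of the form $\psi(x,v)\chi(t)$ are genuinely admissible in the Bardos--Perthame framework, i.e., that they satisfy the length lower bound on characteristics intersecting $\mathrm{supp}\,\phi$; this is precisely why I insist that $\psi$ be compactly supported in the interior of $\Omega \times \bR^3$, and why density must be invoked afterward rather than trying to use test functions touching $\partial\Omega$ directly. Once weak continuity is in hand, combining it with the norm continuity from Step~1 in the Hilbert identity above completes the proof.
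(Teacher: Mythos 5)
Your proposal is correct and takes essentially the same route as the paper: norm continuity from the energy identity, weak continuity from the Green's identity applied to admissible test functions (using compact support in $\Omega\times\bR^3$ to secure the characteristic-length lower bound), and the Hilbert-space identity to upgrade to strong continuity. The only cosmetic differences are that you work with separable test functions $\psi(x,v)\chi(t)$ and an absolute-continuity/$W^{1,1}\hookrightarrow C$ formulation on the whole interval, whereas the paper fixes a general $\phi\in C^1_0(\ldots)$, proves convergence at $t=0$, and then separately passes from $\phi(t,\cdot)$ to $\phi(0,\cdot)$ via the uniform $L_2$-bound.
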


\begin{proof}
We only prove the continuity at $t = 0$ because the argument for other  points is similar.
First, note that by the energy identity,
$$
	\lim_{t \to 0+}   \|u (t, \cdot)\|_{  L_2 (\Omega \times \bR^3) } = \|u (0, \cdot)\|_{  L_2 (\Omega \times \bR^3) }.
$$
Hence, we only need to show that $u (t, \dot)$ is  weakly continuous in $L_2 (\Omega \times \bR^3)$   at $t  = 0$.

We fix an arbitrary test function
$$
	\phi \in	C^1_0 \big(([0, T] \times \overline{\Omega} \times \bR^3) \setminus ( (0, T) \times \gamma_0 \cup \{0\} \times \partial \Omega \times \bR^3   \cup \{T\} \times \partial \Omega \times \bR^3)\big),
$$
which belongs to the set $\mathsf{\Phi}$ (see Definition \ref{definition 2.2.3}) by Lemma 2.1 of \cite{G_93}.
Note that by the Green's identity (see \eqref{7.2}), we have
$$
	\lim_{t \to 0+ } \int_{\Omega \times \bR^3} u (t, x, v) \phi (t, x, v) \, dxdv   =	\int_{\Omega \times \bR^3} u (0, x, v) \phi (0, x, v) \, dxdv.
$$
We claim that the above convergence also holds if we replace   $\phi (t, x, v)$ with $\phi (0, x, v)$. To prove this, we first note that by Remark \ref{remark 2.2.2} and the energy identity  \eqref{eq7.1.1} in Lemma \ref{lemma 7.1}, we have
\begin{equation}
		\label{B.4.1}
	u \in L_{\infty} ((0, T), L_2 (\Omega \times \bR^3)).
\end{equation}
By this and the Cauchy-Schwartz inequality,
\begin{align*}
	&\bigg|\int_{\Omega \times \bR^3} u (t, x, v) (\phi (0, x, v) -   \phi (t, x, v))  \, dxdv\bigg| \\
	&\le \| u\|_{  L_{\infty} ((0, T), L_2 (\Omega \times \bR^3))  } \|\phi (0, \cdot) - \phi (t, \cdot)\|_{    L_2 (\Omega \times \bR^3) }.
\end{align*}
The right-hand side of the above inequality converges to $0$ as $t \to 0+$ due our choice of $\phi$, and this  proves the above claim.
Hence, for any continuously differentiable function $\xi$ with compact support in  $\Omega \times \bR^3$,
$$
	 \lim_{t \to 0+} \int_{\Omega \times \bR^3} u (t, x, v) \xi (x, v) \, dxdv = \int_{\Omega \times \bR^3} u (0, x, v) \xi (x, v) \, dxdv.
$$
By using a standard approximation argument combined with \eqref{B.4.1}, we conclude that $u (t, \cdot) \to u (0, \cdot)$ weakly in $L_2 (\Omega \times \bR^3)$
as $t \to 0+$. The lemma is proved.
\end{proof}

\section{}
            \label{Appendix C}
\begin{lemma}
			\label{lemma C.1}
Let $\varkappa \in (0, 1]$, $g \in L_{\infty} ((0, T),  C^{\varkappa/3, \varkappa}_{x, v} ( \overline{\Omega} ))$, and   $\sigma_G$ be a function defined in \eqref{eq11.5}.
Then, we have
$$
	 \|\sigma_G\|_{ L_{\infty} ((0, T),  C^{\varkappa/3, \varkappa}_{x, v} ( \overline{\Omega} ))  } \le N (1+ \|g\|_{ L_{\infty} ((0, T),  C^{\varkappa/3, \varkappa}_{x, v} (  \overline{\Omega}))  }),
$$
where
$N  = N (\varkappa) > 0$.

\end{lemma}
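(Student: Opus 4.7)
The plan is to split $\sigma_{G} = \sigma + \Phi \ast (\mu^{1/2} g)$ and handle each term separately, bounding the combined anisotropic Hölder seminorm via the two partial seminorms
\[
    [f]_{x,\varkappa/3} = \sup\frac{|f(t,x_{1},v) - f(t,x_{2},v)|}{|x_{1}-x_{2}|^{\varkappa/3}},
    \qquad
    [f]_{v,\varkappa} = \sup\frac{|f(t,x,v_{1}) - f(t,x,v_{2})|}{|v_{1}-v_{2}|^{\varkappa}},
\]
which together control $[\cdot]_{C^{\varkappa/3,\varkappa}_{x,v}}$ by the triangle inequality. The function $\sigma = \Phi \ast \mu$ depends only on $v$ and is well-known to be smooth with all derivatives bounded (this is part of Lemma 3 of \cite{G_02}, recalled in Remark \ref{remark 2.17}), so $\|\sigma\|_{C^{\varkappa/3,\varkappa}_{x,v}} \le N$ trivially. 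Thus the whole task reduces to controlling $\Phi \ast (\mu^{1/2} g)$.

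For the $x$-seminorm, I would write
\[
    (\Phi \ast (\mu^{1/2}g))(t,x_{1},v) - (\Phi \ast (\mu^{1/2}g))(t,x_{2},v)
    = \int \Phi(v-v')\mu^{1/2}(v')[g(t,x_{1},v')-g(t,x_{2},v')]\,dv'
\]
and apply $|g(t,x_{1},v') - g(t,x_{2},v')| \le \|g\|_{C^{\varkappa/3,\varkappa}_{x,v}}|x_{1}-x_{2}|^{\varkappa/3}$. The remaining integral $\int |\Phi(v-v')|\mu^{1/2}(v')\,dv'$ is bounded uniformly in $v$ by $N\langle v\rangle^{-1}$, as in Lemma 3 of \cite{G_02}.

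For the $v$-seminorm the useful decomposition is
\[
    (\Phi \ast (\mu^{1/2}g))(t,x,v) = g(t,x,v)\,(\Phi \ast \mu^{1/2})(v) + \int \Phi(v-v')\mu^{1/2}(v')\bigl[g(t,x,v')-g(t,x,v)\bigr]\,dv',
\]
which removes the principal part of $g$ and leaves a convolution with a controlled increment. Since $\Phi \ast \mu^{1/2}$ is smooth with bounded derivatives (again by the same reasoning used for $\sigma$), the first summand is the product of a $C^{\varkappa}_{v}$ function with a smooth function, hence in $C^{\varkappa}_{v}$ with norm $\le N\|g\|_{C^{\varkappa/3,\varkappa}_{x,v}}$. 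For the second summand I would take $v_{1},v_{2}$ with $h=|v_{1}-v_{2}|\le 1$ (the case $h>1$ is handled by the uniform bound $\|\Phi \ast (\mu^{1/2}g)\|_{\infty}\le N\|g\|_{\infty}$), difference the two integrals, add and subtract $\int \Phi(v_{2}-v')\mu^{1/2}(v')[g(v')-g(v_{1})]\,dv'$, and split the integration over $\{|v'-v_{1}|\le 2h\}$ and $\{|v'-v_{1}|>2h\}$, using $|\Phi(u)|\le C|u|^{-1}$, $|\nabla \Phi(u)|\le C|u|^{-2}$, $|g(v')-g(v_{1})|\le [g]_{v,\varkappa}|v'-v_{1}|^{\varkappa}$, and the Gaussian decay of $\mu^{1/2}$ to make each piece integrable and to extract the factor $h^{\varkappa}$.

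The main obstacle will be the large-$|v_{1}|,|v_{2}|$ regime in the $v$-Hölder estimate: since $|\Phi(u)|\lesssim |u|^{-1}$ has no global integrability, uniform control requires the Gaussian factor $\mu^{1/2}(v')$ to suppress the tail. The trick $g(v')-g(v) = [g(v')-g(v)]$ (rather than keeping $g(v')$ raw) is exactly what makes the integrand with $|v-v'|^{\varkappa-1}\mu^{1/2}(v')$ integrable; combining this with the pointwise estimate $|\Phi(v_{1}-v')-\Phi(v_{2}-v')|\le Ch\min(|v_{1}-v'|,|v_{2}-v'|)^{-2}$ on the far region and with direct bounds on $|\Phi|$ on the near region should yield the desired inequality with constant $N=N(\varkappa)$.
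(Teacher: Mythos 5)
Your proof is correct, and the $x$-H\"older part coincides with what the paper does (convolution acts only in $v$, so the $x$-difference of $\sigma_G$ is a convolution of the $x$-difference of $g$). But for the $v$-direction you take a genuinely different, more laborious route. The paper simply invokes the bound from Lemma~3 of \cite{G_02}, which already gives $|\sigma_G| + |\nabla_v \sigma_G| \le N(1+\|g\|_{L_\infty})$; since $\sigma_G$ is thus Lipschitz and bounded in $v$, it is automatically $C^\varkappa_v$ for any $\varkappa \le 1$, with constant independent of $[g]_{v,\varkappa}$. No near/far decomposition or direct potential-theoretic estimate is needed. Your argument --- splitting off the principal part $g(t,x,v)(\Phi\ast\mu^{1/2})(v)$, then bounding the remainder by a near/far split, the $|v'-v_1|^{\varkappa-1}$ integrability near the singularity, and the $|\nabla\Phi(u)|\lesssim |u|^{-2}$ mean-value bound away from it --- does reach the desired conclusion, and would be useful if only a $C^\varkappa_v$ bound on $g$ were available with no $L_\infty$ control on $\nabla_v g$; but here it redoes work already packaged into the cited lemma. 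One small inaccuracy: your pointwise estimate $|\Phi(v_1-v')-\Phi(v_2-v')|\le Ch\,\min(|v_1-v'|,|v_2-v'|)^{-2}$ on the far region is not quite what the mean value theorem yields in general; the robust bound under $|v_1-v'|>2h$ is $\le Ch\,|v_1-v'|^{-2}$ (the segment stays comparable to $|v_1-v'|$, not necessarily to $|v_2-v'|$), and this suffices.
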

\begin{proof}

Fix  almost any $t \in (0, T)$ and arbitrary $x_1, x_2 \in \overline{\Omega}$ recall that by Lemma 3 of \cite{G_02}, for any $v \in \bR^3$,
\begin{equation}
		\label{eqC.1.1}
	|\sigma_G (t, x_1, v)| + |\nabla_v \sigma_G (t, x_1, v)| \le N (1+\|g (t, x, \cdot)\|_{ L_{\infty} (\Sigma^T) }),
\end{equation}
where $N$ is independent of $\varkappa$.
Next, observe that
$$
	\sigma_G (t, x_1, v) - \sigma_G (t, x_2, v) = \big(\Phi \ast [\mu^{1/2} (g (t, x_1, \cdot) - g (t, x_2, \cdot))]\big) (v).
$$
Replacing $g (t, x_1, v)$ with $g (t, x_1, v) - g (t, x_2, v)$ and using   \eqref{eqC.1.1}, we get
$$
	|\sigma_G (t, x_1, v) - \sigma_G (t, x_2, v) | 
\le N |x_1-x_2|^{\varkappa/3}\sup_{ t \in (0, T), v \in \bR^3  } \|g (t, \cdot, v)\|_{  C^{\varkappa/3} (\overline{\Omega})   },
$$
where $C^{\varkappa/3} (\overline{\Omega})$ is the usual  H\"older space on $\overline{\Omega}$.
Combining the above inequalities and using the interpolation inequality for the H\"older norms, we prove the lemma.
\end{proof}

\begin{lemma}
			\label{lemma C.2}
Let $p > 3/2$ be a number  and $u \in L_p (\Sigma^T)$ be a function such that $\nabla_v u \in L_p (\Sigma^T)$.
For any $m =  \{0, 1, 2, \ldots\}$, we denote
$$
	\{v \sim m\} = \begin{cases}
				|v| < 1, \, \, m = 0, \\
			        c m < |v| < c^{-1} m, \, \, m \ge 1,
				\end{cases}
$$
where $c \in (0, 1)$ is some number.
If the condition \eqref{con11.1} holds, then for any $\theta \ge 0$,
$$
	\| \overline{K}_g u\|_{L_p ((0, T) \times \Omega \times \{v \sim m\})  }
	\le N  m^{-\theta} \||u| + |\nabla_v u|\|_{L_{p, \theta} (\Sigma^T)},
$$
where $\overline{K}_g$ is defined in \eqref{eq11.2} and $N = N (\varkappa, K, \theta, p, c) > 0$.
\end{lemma}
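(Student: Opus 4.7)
\textbf{Proof plan for Lemma \ref{lemma C.2}.}

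The plan is to split $\overline{K}_g = \sfK + J_g$ (see \eqref{eq11.2}) and to treat each operator separately, exploiting two different mechanisms for the $v$-decay. The term $J_g u$ is a pure multiplication operator $J_g u (t,x,v) = A(t,x,v)\, u(t,x,v)$, while $\sfK u$ produces Gaussian decay in $v$ through the Maxwellian weights in its definition. In both cases, on $\{v\sim m\}$ one has $\jb \ge c m$, so the weighted $\jb^\theta$ norm on the right can absorb the factor $m^{-\theta}$ trivially once an $L_\infty$-type bound on the ``symbol'' of the operator is in hand.

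First I would handle $J_g$. By the explicit formula \eqref{eq11.2.1'}, the multiplier $A$ is a sum of four terms: $\partial_{v_i}(\sigma^{ij}v_j)$, $-\sigma^{ij}v_i v_j$, $-\partial_{v_i}(\Phi^{ij}\ast(\mu^{1/2}\partial_{v_j}g))$ and $\Phi^{ij}\ast(v_i \mu^{1/2}\partial_{v_j}g)$. Using $\sigma = \Phi\ast\mu$, Lemma 3 of \cite{G_02} bounds the first term uniformly, Remark \ref{remark 2.17} gives $|\sigma^{ij}v_iv_j|\le N\jb^{-1}$, and the two convolution terms are bounded by $N(K)$ by the same lemma applied to $\mu^{1/2}\partial_{v_j}g$ (which is uniformly bounded in $v'$ with Gaussian decay thanks to the hypothesis \eqref{con11.1}); the derivative $\partial_{v_i}$ on the convolution is absorbed by moving it onto $\mu^{1/2}\partial_{v_j}g$, which is $C^{\varkappa}$ in $v$ with bounded norm. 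The result is $\|A\|_{L_\infty(\Sigma^T)}\le N(K)$. Hence
\begin{equation*}
\|J_g u\|_{L_p((0,T)\times\Omega\times\{v\sim m\})}
\le N\|u\|_{L_p((0,T)\times\Omega\times\{v\sim m\})}
\le N(cm)^{-\theta}\|\jb^\theta u\|_{L_p(\Sigma^T)},
\end{equation*}
which is the desired bound for the $J_g$ piece (note the $\nabla_v u$ term on the right is not needed here; it is present only to accommodate $\sfK$).

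Next I would treat $\sfK u = \mu^{-1/2}\cQ[\mu^{1/2}u,\mu]$. Expanding the collision operator \eqref{eq11.10} and using the crucial cancellation $\Phi(v-v')(v-v')=0$, the $-2v\,u(v')$ and $v'\,u(v')$ pieces combine to $-v\,u(v')$, giving a representation of the form
\begin{equation*}
\sfK u(v) = \nabla_v\!\cdot\!\Big(\mu^{1/2}(v)\int \mu^{1/2}(v')\Phi(v-v')\big[-v\,u(v')-\nabla_{v'}u(v')\big]dv'\Big),
\end{equation*}
so after distributing $\nabla_v$ we obtain $|\sfK u(v)|\le N\mu^{1/4}(v)(1+|v|^2)\,\mathcal{I}[|u|+|\nabla_v u|](v)$, where $\mathcal{I}[h](v)=\int |v-v'|^{-1}\mu^{1/8}(v')\,h(v')\,dv'$ is a standard convolution operator bounded on $L_p$ by Young's inequality since $|v-v'|^{-1}\mu^{1/8}(v')$ has finite $L_1^{v'}$-norm uniformly in $v$ (the Coulomb singularity is locally integrable in $\mathbb{R}^3$). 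On $\{v\sim m\}$ the prefactor satisfies $\mu^{1/4}(v)(1+|v|^2)\le N_\theta e^{-cm^2}\le N_\theta m^{-\theta}$ for any $\theta\ge 0$, so Minkowski's inequality in $x$ and integration in $t$ yield
\begin{equation*}
\|\sfK u\|_{L_p((0,T)\times\Omega\times\{v\sim m\})}\le N m^{-\theta}\,\||u|+|\nabla_v u|\|_{L_p(\Sigma^T)}\le N m^{-\theta}\,\||u|+|\nabla_v u|\|_{L_{p,\theta}(\Sigma^T)}.
\end{equation*}

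The main technical obstacle is verifying the uniform $L_\infty$ bound on the four components of $A$, and in particular handling the distributional derivative $\partial_{v_i}(\Phi^{ij}\ast(\mu^{1/2}\partial_{v_j}g))$: one should not differentiate $\Phi$ (its derivative has a non-integrable $|v|^{-2}$ singularity), but instead transfer the derivative onto $\mu^{1/2}\partial_{v_j}g$ by integration by parts, which is legitimate because $\mu^{1/2}\partial_{v_j}g$ is $C^{0,\varkappa}$ in $v$ with Gaussian decay, so the resulting convolution is well-defined and bounded. With this caveat understood, summing the $\sfK$ and $J_g$ bounds yields the claim; the constant depends on $\varkappa,K,\theta,p$ and the cutoff parameter $c$.
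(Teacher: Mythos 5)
Your treatment of $J_g$ is sound, and the overall split $\overline{K}_g = \sfK + J_g$ matches the paper. The gap is in the estimate of $\sfK u$.

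After the cancellation $\Phi(v-v')(v-v')=0$ combines the drift terms, you write $\sfK u$ as a $v$-divergence and claim that distributing $\nabla_v$ yields a pointwise bound with a $|v-v'|^{-1}$ kernel. That is false for the piece where $\partial_{v_i}$ falls on $\Phi^{ij}(v-v')$: one gets $\partial_{v_i}\Phi^{ij}(v-v') \sim |v-v'|^{-2}$, which is locally integrable in $\bR^3$ but too singular for the $L_p$ argument on the stated range $p>3/2$. Indeed, the paper's pointwise estimate uses H\"older with the dual exponent $q=p/(p-1)$, which needs $\int |v-v'|^{-q}\mu^{q/4}\,dv'<\infty$, i.e.\ $q<3$, i.e.\ $p>3/2$; with a $|v-v'|^{-2}$ kernel one would need $q<3/2$, i.e.\ $p>3$, strictly stronger than the hypothesis. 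This is precisely why the paper does not stop at your divergence form: it decomposes $\sfK=\sfK_1+\sfK_2+\sfK_3$, isolates the offending term $\sfK_3 u = -\mu^{1/2}(\partial_{v_i}\Phi^{ij}\ast(\mu^{1/2}\partial_{v_j}u))$, uses the identity $\partial_{v_iv_j}\Phi^{ij}=-8\pi\delta$, and integrates by parts once more to replace the $\partial\Phi$ kernel with $\Phi$ itself (picking up a local term $8\pi\mu u$). Your proposal never performs this reduction, so the claimed inequality $|\sfK u(v)|\le N\mu^{1/4}(v)(1+|v|^2)\mathcal{I}[|u|+|\nabla_v u|](v)$ does not hold for the $\partial\Phi$ contribution. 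You do flag the ``do not differentiate $\Phi$'' issue, but only for $J_g$, where it can be avoided by moving the derivative onto the H\"older-continuous $\mu^{1/2}\partial_v g$; for $\sfK$ the same move lands on $u$, which only has $L_p$ regularity, so the H\"older trick is unavailable and the delta-function identity is essential.

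A secondary point: the operator $\mathcal{I}[h](v)=\int|v-v'|^{-1}\mu^{1/8}(v')h(v')\,dv'$ is not bounded $L_p\to L_p$ by Young/Schur on its own, since $\sup_{v'}\int|v-v'|^{-1}\mu^{1/8}(v')\,dv=\infty$. Boundedness comes only after retaining part of the outgoing Gaussian weight $\mu^{1/4}(v)$ in the kernel, so you cannot cleanly peel off the full prefactor to produce $m^{-\theta}$ and then invoke boundedness of $\mathcal{I}$. The paper sidesteps this by using a pointwise H\"older bound (with exponent $q=p/(p-1)$), which produces $\|u(t,x,\cdot)\|_{L_p(\bR^3)}$ directly with a constant uniform in $v$; the $m^{-\theta}$ factor is then extracted from the residual Gaussian factor $\mu^{1/4}(v)$ restricted to $\{v\sim m\}$.

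In short: the $J_g$ part is correct; the $\sfK$ part is missing the key $\sfK_3$ reduction via $\partial_{v_iv_j}\Phi^{ij}=-8\pi\delta$, and the claimed $|v-v'|^{-1}$ pointwise bound for $\sfK u$ is not valid as stated.
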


\begin{proof}
Recall that $\overline{K}_g = \sfK + J_g$, where $\sfK$ and $J_g$ are defined in \eqref{eq11.1.2} and \eqref{eq11.2.1'}.
Furthermore, by the definition of the collision kernel $\cQ$ (see \eqref{eq11.10}),
\begin{align*}
	\sfK u &= -\mu^{-1/2}  \partial_{v_i} \bigg(\mu \big(\Phi^{i j} \ast (\mu^{1/2}[\partial_{v_j} u + v_j u])\big)\bigg)\\
	& =  2 v_i \mu^{1/2} \bigg(\Phi^{i j} \ast (\mu^{1/2}[\partial_{v_j} u + v_j u])\bigg)\\
	&- \mu^{1/2}  \bigg( \Phi^{i j} \ast \partial_{v_i} (\mu^{1/2} v_j u)\bigg)
	- \mu^{1/2}  \bigg(\partial_{v_i} \Phi^{i j} \ast (\mu^{1/2} \partial_{v_j} u)\bigg)
 := \sfK_1 u + \sfK_2 u + \sfK_3 u.
\end{align*}

\textit{Estimate of $J_g$.}
By using Lemmas 2 and 3 of \cite{G_02} and the condition \eqref{con11.1}, one can show that
there exist a constant $N = N (K) > 0$ such that
$$
	\||\sigma| + |\nabla_v \sigma| + | \partial_{v_i}\Phi^{i j}  \ast (\mu^{1/2} \partial_{v_j} g)|\\
	 +   |\Phi^{i j}  \ast (\mu^{1/2} v_{   i  } \partial_{v_j} g)|\|_{ L_{\infty} (\bR^7_T) }   \le N (K).
$$
See  the details in   Lemma 3.6 of \cite{DGO}.
Furthermore, by Lemma 3 of \cite{G_02},
$$
	|\sigma^{i j} v_i v_j| \le N \jb^{-1}.
$$
By the above inequalities,
$$
	\| J_g u\|_{ L_p ((0, T) \times \bR^3 \times \{v \sim m\})  } \le N (K) \|u\|_{   L_p ((0, T) \times \bR^3 \times \{v \sim m\})  }.
$$

\textit{Estimate of $\sfK_1$ and $\sfK_2$.}
Note that for any $\theta \ge 0$,
\begin{equation}
			\label{eqC.2.2}
\begin{aligned}
	&\|\sfK_1 u\|_{L_p ((0, T) \times \bR^3 \times \{v \sim m\}) }\\
	& \le N (\theta, c) m^{-\theta} \| \mu^{1/4}  \big(|v|^{-1} \ast [\mu^{1/4} (|u| + |\nabla_v u|)])\big\|_{L_p (\bR^7_T)  }.
\end{aligned}
\end{equation}
Furthermore,  using the H\"older's inequality with $p$ and $q = p/(p-1) \in (1, 3)$,  for any $(t, x, v) \in \bR^7_T$, we have
$$
	 \int |v-v'| ^{-1} \mu^{1/4} (v') |u| (t, x, v') \, dv' \le \bigg(\int |v-v'|^{-q} \mu^{q/4} (v') \, dv'\bigg)^{1/q} \|u (t, x, \cdot) \|_{L_p (\bR^3)  }.
$$
By using  the fact that for $q \in (1, 3)$ and $v \in \bR^3$,
$$
	\int_{\bR^3}  |v-v'|^{-q} \mu^{q/4} (v') \, dv' \le N (q)
$$
combined with \eqref{eqC.2.2}, we obtain
\begin{equation}
		\label{eqC.2.3}
	\|\sfK_1 u\|_{L_p ((0, T) \times \bR^3 \times \{v \sim m\})  } \le N (\theta, c) m^{-\theta} \||u| + |\nabla_v u|\|_{L_p (\bR^7_T)  }.
\end{equation}
By the same argument, we show that
$\sfK_2$ is bounded by the right-hand side of \eqref{eqC.2.3}.

\textit{Estimate of $\sfK_3$.}
By direct calculations,
$$
	\partial_{v_i v_j} \Phi^{i j}  =  - 8 \pi \delta (x),
$$
and since $u \in L_p (\Sigma^T)$, we get
$$
	\partial_{v_i v_j} \Phi^{i j} \ast u = - 8 \pi u.
$$
Hence, integrating by parts in $\sfK_3 u$ and using the above identity, we prove that
$$
	\sfK_3 u = 8 \pi \mu u-\mu^{1/2} \Phi^{i j} \ast \partial_{v_i} (v_j \mu^{1/2} u).
$$
Then, repeating the above argument we used to estimate $\sfK_1 u$, we get
$$
		\|\sfK_3 u\|_{L_p ((0, T) \times \bR^3 \times \{v \sim m\})  } \le N (\theta, c) m^{-\theta} \||u| +|\nabla_v u|\|_{L_p (\bR^7_T)  }.
$$
The assertion of the lemma follows from the above estimates.
\end{proof}

\section{\texorpdfstring{$S_p$}{} regularity theory for kinetic Fokker-Planck equations with   rough coefficients}
									\label{Appendix D}
In this section, we present the main results of \cite{DY_21a}.

Denote
$$
	Q_r (z_0): = \{z: t_0 - r^2 < t < t_0, |x  - x_0 - (t-t_0) v_0|^{1/3} < r, |v-v_0| < r      \},	
$$
which we call a kinetic cylinder.

\begin{assumption}$(\gamma_{\star})$
                  \label{assumption 3.1}
There exists $R_0 > 0$ such that for any $z_0$ and $R \in (0, R_0]$,
\begin{equation}
			\label{eq3.1.0}
     \text{osc}_{x, v} (a, Q_r (z_0)) \le \gamma_{  \star  },
\end{equation}
where
\begin{align}
	\label{eq3.1}
  & \text{osc}_{x, v} (a, Q_r (z_0))\\
&= r^{-(4d+2)} \int_{t_0 - r^2}^{t_0} \int_{  D_r (z_0, t) \times D_r (z_0, t)}
   |a (t, x_1, v_1) - a (t, x_2, v_2)| \, dx_1dv_1 dx_2dv_2 \, dt\notag,
\end{align}
  and
  $$
    D_r (z_0, t) =  \{(x, v): |x  - x_0 - (t-t_0) v_0|^{1/3} < r, |v-v_0| < r      \}.
  $$
\end{assumption}

\begin{remark}
			\label{remark 3.1}
Note that the following assumption  is stronger than Assumption \ref{assumption 3.1}, but somewhat easier to verify in practice.

\begin{assumption}
				\label{assumption 3.1'}
There exists an increasing function $\omega: [0, \infty) \to [0, \infty)$
such that $\omega (0+) = 0$ and
$$
	\sup_{t, x, v} r^{-8d} \int_{  x_1, x_2 \in B_{r^3} (x) } \int_{v_1, v_2 \in B_r (v) } |a (t, x_1, v_1) - a (t, x_2, v_2)| \, dx_1dx_2\, dv_1dv_2 \le \omega (r).
$$
\end{assumption}

Furthermore, note that if $a \in  L_{\infty} ((0, T), C^{\alpha/3, \alpha}_{x, v}  (\bR^6)), \alpha \in (0, 1]$, then,
  Assumption \ref{assumption 3.1'} holds
with $\omega (r)  = N r^{\alpha}$
for some constant $N > 0$.
\end{remark}

\begin{assumption}
                  \label{assumption 3.2}
Let  $b = (b^1, b^2, b^3)^T$ and $c$ be  functions such that
\begin{equation}
			\label{eq3.2.0}
	\||b| +|c|\|_{ L_{\infty} ((-\infty, T) \times \bR^6) } \le K.
\end{equation}

\end{assumption}

The following theorem is a simplified version of Theorem 2.4 of \cite{DY_21a}.
\begin{theorem}
            \label{theorem D.1}
Let $p > 1$, $K > 0$, be numbers,
$T \in (-\infty, \infty]$. Let
Assumptions   \ref{assumption 2.1} (see \eqref{eq2.1.0}),  \ref{assumption 3.2}  hold.
There exists a constant
$$
	\gamma_{ \star  }   =  \gamma_{   \star  } (\delta, p)   > 0
$$
such that if Assumption \ref{assumption 3.1} $(\gamma_{  \star })$ holds,
then, the following assertions are valid.

$(i)$ There exists a constant
$$
	\lambda_0 =    \lambda_0 (p, \delta,  K, R_0)  \ge 0
$$
such that
for any $\lambda \ge \lambda_0$
and any  $u \in S_p ((-\infty, T) \times \bR^6)$,
one has
\begin{align}
			\label{2.1.2}
 	&  \|\lambda |u| + \lambda^{1/2}  |\nabla_v u| + |D^2_v u| +  |(-\Delta_x)^{1/3} u|
	+|D_v (-\Delta_x)^{1/6} u|
	+ |Y u|\|_{ L_{p} ((-\infty, T) \times  \bR^{6})  } \notag \\
&    \le N  \|Y u - a^{i j} \partial_{v_i v_j} u +   b \cdot \nabla_{v} u + c u + \lambda u\|_{ L_{p} ((-\infty, T) \times  \bR^{6})  },
\end{align}
where $R_0 \in (0, 1)$ is the constant in  Assumption \ref{assumption 3.1} $(\gamma_{   \star  })$, and
 $
	N = N (p, \delta,  K).
$
In addition, for any $f \in L_{p} ((-\infty, T) \times  \bR^{6})$,
the equation
\begin{equation}
                \label{2.1.1}
        Y u  - a^{i j} \partial_{v_i v_j} u +  b \cdot \nabla_v u + c u + \lambda u = f
\end{equation}
has a unique solution $u \in S_{p} ((-\infty, T) \times  \bR^{6}).$

$(ii)$  For any numbers $-\infty<S < T<\infty$, $\lambda \ge 0$,
and
$f \in L_p ((S, T) \times \bR^{6})$,
the equation
$$
	   Y u  - a^{i j} \partial_{v_i v_j} u +  b \cdot \nabla_v u + c u + \lambda u = f, \quad u (0, \cdot) = 0
$$
 has a unique solution
$u  \in S_{p} ((S, T) \times \bR^{6})$.
In addition,
\begin{align*}
	  \|u\| + \|D_v u\| + \|D^2_v u\| +  \|(-\Delta_x)^{1/3} u\|
	+\|D_v (-\Delta_x)^{1/6} u\|
	+ \|Y u\|    \le N \|f\|,
\end{align*}
where $\|\,\cdot\,\|=\|\,\cdot\,\|_{L_p ((S, T) \times \bR^{6})  }$ and $N = N (\delta, p, K,  T-S)$.
\end{theorem}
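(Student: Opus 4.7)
The plan is a bootstrap argument: iteratively apply Lemma~\ref{lemma 3.7} to climb from the $S_{2}$ regularity produced by Theorem~\ref{theorem 7.3} up to the $S_p$/H\"older regularity claimed in \eqref{eq1.9.0}, improving the integrability exponent at each step via the Morrey-type embedding $S_p\hookrightarrow L_{p^*}$ with $1/p^{*}=1/p-1/14$ (Theorem~2.1 of \cite{PR_98}).

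First, as in the proof of Theorem~\ref{theorem 7.3}, I would reduce to the case $f_0\equiv 0$: subtracting $f_0\,\zeta(t)$ with $\zeta\in C^\infty_c(\mathbb{R})$, $\zeta(0)=1$, the new source remains in $L_{2,\theta-2}\cap L_{p,\theta-4}$ since $f_0\in\cO_{2,\theta-2}\cap\cO_{p,\theta-4}$, and the new solution has vanishing initial trace. Theorem~\ref{theorem 7.3} then provides the base step: $f\in S_{2,\theta-2}$ and $f,\nabla_v f\in L_{7/3,\theta-2}$, with norms controlled by the right-hand side of \eqref{eq1.9.0}.

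Next, introduce the exponent sequence $r_1=2$, $1/r_{k+1}=1/r_k-1/14$ for $k=1,\dots,5$ (so $r_2=7/3$, $r_3=14/5$, $r_4=7/2$, $r_5=14/3$, $r_6=7$), together with $1/r_7=1/p+1/14\in(1/14,1/7)$ and $r_8=p$. Running an induction on $k\in\{1,\dots,6\}$: given $f\in S_{r_k,\theta_k}$ and $f,\nabla_v f\in L_{r_{k+1},\theta_k}$ (the latter by the embedding applied to the former), invoke Lemma~\ref{lemma 3.7} with the lemma's exponent set equal to $r_k$ to upgrade to $f\in S_{r_{k+1},\theta_k-2}$ together with $f,\nabla_v f\in L_{r_{k+2},\theta_k-2}$. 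Six iterations produce $f\in S_{7,\theta-12}$ and $f,\nabla_v f\in L_{14,\theta-12}$, each intermediate estimate being absorbed into a sum of the same type as \eqref{eq3.7.0}.

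To clear the critical index $14$, I would interpolate between the $L_{7,\theta-12}$ and $L_{14,\theta-12}$ bounds from the previous step to place $f,\nabla_v f$ in $L_{r_7,\theta-12}$; then Lemma~\ref{lemma 3.7} applied with the lemma's exponent equal to $r_7\in(7,14)$ gives $f\in S_{r_7,\theta-14}$ and, via assertion $(ii)$, $f,\nabla_v f\in L_{p,\theta-14}$. A final application of Lemma~\ref{lemma 3.7} with exponent $p>14$ yields $f\in S_{p,\theta-16}$ and, by assertion $(iii)$, the anisotropic H\"older control of $f$ and $\nabla_v f$ with exponent $\alpha=1-14/p$. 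Summing the estimates from all eight applications, together with the base bound from Theorem~\ref{theorem 7.3}, produces \eqref{eq1.9.0}.

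The main obstacle is navigating the Sobolev threshold $r=14$ together with the weight bookkeeping. One cannot pass directly from $S_7$ to $S_p$ because the embedding exponent $p^*$ blows up as $p\to 14^{-}$; the $L_7$ plus $L_{14}$ control produced at the end of the first six iterations is exactly what makes the interpolation bridge through $r_7\in(7,14)$ work. Simultaneously, each of the seven applications of Lemma~\ref{lemma 3.7} costs two units in the velocity weight, which is why the hypothesis $\theta\ge 16$ is sharp for the proof to conclude with a nonnegative weight index, and why the final output sits in $S_{p,\theta-16}$. Once these two points are reconciled, the existence of the finite-energy \emph{strong} solution in Theorem~\ref{theorem 7.3} ensures that every step takes place within a space where Lemma~\ref{lemma 3.7} is legitimately applicable.
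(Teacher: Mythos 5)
Your proposal does not address the statement it is supposed to prove. Theorem \ref{theorem D.1} is the whole-space $S_p$ theory for the kinetic Fokker--Planck operator on $(-\infty,T)\times\bR^6$ with merely bounded, VMO$_{x,v}$-type leading coefficients: the a priori estimate \eqref{2.1.2}, including the gains $\lambda|u|$, $\lambda^{1/2}|\nabla_v u|$, $|D^2_v u|$, $(-\Delta_x)^{1/3}u$, $D_v(-\Delta_x)^{1/6}u$, together with unique solvability in $S_p$. There is no bounded domain, no specular reflection, no weight $\jb^{\theta}$, no initial datum in $\cO_{p,\theta}$ in this statement. What you wrote is instead a bootstrap proof of Theorem \ref{theorem 1.9} (the weighted $S_p$/H\"older regularity of the finite energy solution in $\Omega$), built on Theorem \ref{theorem 7.3}, Lemma \ref{lemma 3.7}, and the embedding of \cite{PR_98}. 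In the paper, Theorem \ref{theorem D.1} is not proved at all; it is imported verbatim as a simplified form of Theorem 2.4 of \cite{DY_21a}, and it is the input on which Lemma \ref{lemma 3.7}, Theorem \ref{theorem 7.3}, and Theorem \ref{corollary 3.4} are subsequently based.

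Because of this, your argument is also irreparably circular as a proof of Theorem \ref{theorem D.1}: every tool you invoke (the $S_2$ regularity of Theorem \ref{theorem 7.3}, the bootstrap step of Lemma \ref{lemma 3.7} via \eqref{eq3.7.0}, the a priori bound of Theorem \ref{corollary 3.4}) is established in the paper precisely by applying Theorem \ref{theorem D.1}. Moreover, no amount of bootstrapping in the integrability exponent can produce the content of \eqref{2.1.2}: the estimate must hold for \emph{all} $u\in S_p((-\infty,T)\times\bR^6)$ (not only for solutions arising from a finite energy weak solution), it must quantify the dependence on $\lambda$, and it must control the nonlocal derivatives $(-\Delta_x)^{1/3}u$ and $D_v(-\Delta_x)^{1/6}u$ as well as yield existence and uniqueness in $S_p$ for coefficients that are only measurable in $t$ and have small mean oscillation in $(x,v)$. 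That requires the harmonic-analysis machinery of \cite{DY_21a} (kernel/mean-oscillation estimates for the Kolmogorov operator and a Fefferman--Stein or level-set argument), none of which appears in, or can be replaced by, the embedding-plus-interpolation scheme you describe.
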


\begin{theorem}[Corollary 2.6 of \cite{DY_21a}]
                                                \label{corollary 3.4}
Under Assumptions \ref{assumption 2.1} (see \eqref{eq2.1.0}),  \ref{assumption 3.2} (see \eqref{eq3.2.0}),
there exist constants
$$
\kappa = \kappa (p) > 0, \quad
\beta = \beta (p) > 0,
\quad	\gamma_{  \star  }  =   \delta^{\beta}  \widetilde \gamma_{  \star } (p)  > 0,
$$
such that if \eqref{eq3.1.0} in Assumption \ref{assumption 3.1} $(\gamma_{ \star })$ holds,
then for any $u \in S_{p} ((-\infty, T) \times \bR^6)$ and $\lambda \ge 0$,
\begin{equation}
				\label{2.2.1}
\begin{aligned}
	\|u\|_{S_{p} ((-\infty, T) \times \bR^6) }
 &  \le N \delta^{-\beta} ( \|Y u  - a^{i j}\partial_{v_i v_j} u + b \cdot \nabla_v u + c u + \lambda u \|_{ L_{p} ((-\infty, T) \times \bR^6) }\\
&\quad+ R_0^{-2}\|u\|_{ L_{p} ((-\infty, T) \times \bR^6) }),
\end{aligned}
\end{equation}
where      $N  = N (p,  K)$, and $R_0 \in (0, 1)$ is the constant in Assumption \ref{assumption 3.1} $(\gamma_{    \star  })$.

\end{theorem}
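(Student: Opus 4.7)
The plan is to derive Theorem \ref{corollary 3.4} from Theorem \ref{theorem D.1}(i) by combining a kinetic rescaling (which isolates the $R_0^{-2}$ factor) with a cheap absorption trick (which removes the constraint $\lambda \ge \lambda_0$). The statement in Theorem \ref{theorem D.1}(i) already provides, under the same structural hypotheses, an estimate
$$
	\|u\|_{S_p((-\infty,T)\times\bR^6)} \le N\,\|\mathcal{L}u + \lambda u\|_{L_p((-\infty,T)\times\bR^6)},
	\qquad \mathcal{L}u := Yu - a^{ij}\partial_{v_iv_j}u + b\cdot\nabla_v u + cu,
$$
valid only for $\lambda \ge \lambda_0(p,\delta,K,R_0)$, with $N = N(p,\delta,K)$ but no explicit dependence on $R_0$ shown. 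Corollary 3.4 trades the restriction on $\lambda$ for an added lower-order term $R_0^{-2}\|u\|_{L_p}$, and makes the $\delta$-dependence quantitative.

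First, I would perform the standard kinetic parabolic rescaling $u_R(t,x,v)=u(R^2 t, R^3 x, Rv)$ with $R=R_0\in(0,1]$. A direct chain-rule computation shows
$$
	\mathcal{L}_R u_R + \lambda_R u_R = R_0^2 \big(\mathcal{L}u + \lambda u\big)(R_0^2 t, R_0^3 x, R_0 v),
$$
where $a_R(t,x,v)=a(R_0^2 t, R_0^3 x, R_0 v)$ (still $\text{Sym}(\delta)$-valued), $b_R = R_0\, b(\cdot)$, $c_R = R_0^2 c(\cdot)$, and $\lambda_R = R_0^2\lambda$. Since $R_0\le 1$, Assumption \ref{assumption 3.2} is preserved with the same constant $K$, and the oscillation condition \eqref{eq3.1.0} for $a_R$ on cylinders $Q_r(z_0)$ with $r\le 1$ follows from the same condition for $a$ on cylinders of radius $R_0 r\le R_0$; hence Assumption \ref{assumption 3.1}$(\gamma_\star)$ holds for $a_R$ with $R_0^{\text{new}}=1$. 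Applying Theorem \ref{theorem D.1}(i) to the rescaled equation yields some $\widetilde\lambda_0 = \widetilde\lambda_0(p,\delta,K)$ and a constant $\widetilde N = \widetilde N(p,\delta,K)$ with
$$
	\|u_R\|_{S_p} \le \widetilde N \,\|\mathcal{L}_R u_R + \lambda_R u_R\|_{L_p} \quad\text{whenever }\lambda_R\ge\widetilde\lambda_0,
$$
which upon unscaling becomes $\|u\|_{S_p}\le \widetilde N\|\mathcal{L}u+\lambda u\|_{L_p}$ whenever $\lambda\ge R_0^{-2}\widetilde\lambda_0$.

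For $0\le \lambda < R_0^{-2}\widetilde\lambda_0$, I would apply the estimate just obtained with $\lambda$ replaced by $R_0^{-2}\widetilde\lambda_0$, using the algebraic identity
$$
	\mathcal{L}u + R_0^{-2}\widetilde\lambda_0\, u = (\mathcal{L}u + \lambda u) + (R_0^{-2}\widetilde\lambda_0 - \lambda) u.
$$
The triangle inequality gives
$$
	\|u\|_{S_p} \le \widetilde N\|\mathcal{L}u+\lambda u\|_{L_p} + \widetilde N R_0^{-2}\widetilde\lambda_0\,\|u\|_{L_p},
$$
which is exactly the form of \eqref{2.2.1}, provided one can arrange $\widetilde N\cdot\widetilde\lambda_0 \lesssim \delta^{-\beta}$ for some $\beta=\beta(p)>0$. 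Combining both regimes of $\lambda$ with the case $\lambda\ge R_0^{-2}\widetilde\lambda_0$ (which is cleaner) yields the corollary.

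The main obstacle is the explicit $\delta^{-\beta}$ dependence. Theorem \ref{theorem D.1} merely records that $N$ and $\lambda_0$ depend on $\delta$ without exhibiting a power. To upgrade this to $\delta^{-\beta}$, I would revisit the proof of Theorem \ref{theorem D.1} in \cite{DY_21a} and trace $\delta$ through its two ingredients: (i) the $L_p$ estimate for the constant-coefficient model equation (where the dependence comes from the Calder\'on--Zygmund inversion of $\overline a^{ij}\partial_{v_iv_j}$ and is known to be polynomial in $\delta^{-1}$), and (ii) the freezing-of-coefficients/perturbation step, where the error is $\|(a-\overline a)D^2_v u\|_{L_p}$. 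Absorbing this error term into $\|u\|_{S_p}$ requires smallness of $a-\overline a$ on the scale of $\delta$ times the CZ constant, which is precisely where one must choose $\gamma_\star = \delta^{\beta}\widetilde\gamma_\star(p)$ (with $\beta$ equal to the power of $\delta$ appearing in the CZ constant). With this quantitative choice of $\gamma_\star$ in Assumption \ref{assumption 3.1}, the perturbation closes and the final constants take the form $\widetilde N \le \delta^{-\beta}N(p,K)$ and $\widetilde\lambda_0 \le \delta^{-\beta}\lambda_\star(p,K)$, producing the prefactor $\delta^{-\beta}$ and the $R_0^{-2}$ term in \eqref{2.2.1}.
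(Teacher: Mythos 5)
This theorem is not proved in the paper at all: it is imported verbatim as Corollary 2.6 of \cite{DY_21a}, so there is no internal argument to compare yours against. Judged on its own terms, your skeleton is sound and is indeed the standard way such a corollary is deduced from the large-$\lambda$ theorem: the kinetic rescaling $u_{R_0}(t,x,v)=u(R_0^2t,R_0^3x,R_0v)$ normalizes $R_0$ to $1$ (the drift and zeroth-order coefficients only shrink since $R_0\le 1$, and the mean oscillation in \eqref{eq3.1} is invariant under this scaling when normalized by the measure of the kinetic cylinder, which is what \eqref{eq3.1} intends), the four components of the $S_p$-norm scale with different powers of $R_0$ but all in the favorable direction, and the shift $\mathcal{L}u+\lambda_0'u=(\mathcal{L}u+\lambda u)+(\lambda_0'-\lambda)u$ with $\lambda_0'=R_0^{-2}\widetilde\lambda_0$ converts the restriction $\lambda\ge\lambda_0$ into the lower-order term $R_0^{-2}\|u\|_{L_p((-\infty,T)\times\bR^6)}$ for all $\lambda\ge 0$.

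The genuine gap is exactly the point you flag at the end. The content of Theorem \ref{corollary 3.4} is quantitative: the constant is $N(p,K)\,\delta^{-\beta}$ with $\beta=\beta(p)$ independent of $\delta$, and the smallness threshold may be taken of the form $\gamma_\star=\delta^{\beta}\widetilde\gamma_\star(p)$, rather than an unspecified $\gamma_\star(\delta,p)$. None of this can be extracted from the statement of Theorem \ref{theorem D.1} as recorded in this paper, where $N$, $\lambda_0$ and $\gamma_\star$ depend on $\delta$ in an unquantified way; your argument as written therefore only yields $\|u\|_{S_p}\le N(p,\delta,K)\big(\|\mathcal{L}u+\lambda u\|_{L_p}+R_0^{-2}\|u\|_{L_p}\big)$, and the final paragraph (``revisit the proof in \cite{DY_21a} and trace $\delta$ through the Calder\'on--Zygmund and freezing steps'') is a plan rather than a proof: the polynomial-in-$\delta^{-1}$ behavior of those constants is plausible but has to be verified inside \cite{DY_21a}, which is precisely what the present paper does not reproduce and simply cites. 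Note that the power dependence is not cosmetic here: Lemma \ref{lemma 4.8} applies the result with $\delta\sim 2^{-3n}$ and relies on the constants growing only like $2^{\beta n}$, so the non-quantitative version you actually establish would not suffice for the later arguments.
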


\begin{remark}	
			\label{remark 3.4}
The above theorem holds with $(-\infty, T)$ replaced with $(0, T)$ provided that $u (0, \cdot) \equiv 0$.
To prove this we use the estimate \eqref{2.2.1} with $u$ replaced with $u 1_{t \ge 0}$, which belongs to $S_p ((-\infty, T) \times \bR^6)$  because $u (0, \cdot) \equiv 0$.
\end{remark}

\begin{lemma}
			\label{lemma B.6}
Let $\lambda \ge 0$,  $1 <  q < p$ be numbers,   functions $a, b, c$ satisfy Assumptions \ref{assumption 2.1} (see \eqref{eq2.1.0}),  \ref{assumption 3.1'} and \ref{assumption 3.2},
and $u \in S_q (\bR^7_T)$
be a function such that $u (0, \cdot) \equiv 0$, and
$$
	h: = Y u - a^{i j} \partial_{v_i v_j} u + b \cdot \nabla_v u + (c + \lambda) u  \in L_p  (\bR^7_T).
$$
Then, $u \in S_p  (\bR^7_T)$.
\end{lemma}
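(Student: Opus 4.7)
The plan is to apply Theorem \ref{theorem D.1}$(ii)$ at exponent $p$ to produce a candidate $\tilde u\in S_p(\bR^7_T)$ with the same right-hand side and initial data as $u$, and then to identify $u$ with $\tilde u$. First, since $u\in S_q(\bR^7_T)$ and $a,b,c$ are bounded, the defining identity
\[
h = Yu - a^{ij}\partial_{v_iv_j}u + b\cdot\nabla_v u + (c+\lambda)u
\]
automatically places $h$ in $L_q(\bR^7_T)$ as well, so $h\in L_p\cap L_q$. The hypotheses of Theorem \ref{theorem D.1}$(ii)$ at either exponent are met: Assumptions \ref{assumption 2.1} and \ref{assumption 3.2} are given, while Assumption \ref{assumption 3.1}$(\gamma_\star)$ follows from Remark \ref{remark 3.1}: Assumption \ref{assumption 3.1'} implies Assumption \ref{assumption 3.1}$(\gamma_\star)$ for \emph{every} $\gamma_\star>0$ by choosing $R_0$ sufficiently small in terms of the modulus $\omega$, so in particular both $\gamma_\star(p)$ and $\gamma_\star(q)$ from Theorem \ref{theorem D.1} are permitted.

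Applying Theorem \ref{theorem D.1}$(ii)$ at exponent $p$ produces a unique $\tilde u\in S_p(\bR^7_T)$ with $\tilde u(0,\cdot)=0$ solving the equation with right-hand side $h$; at exponent $q$ the same theorem singles out $u$ as the unique $S_q(\bR^7_T)$ solution of this problem. Everything then reduces to showing $u=\tilde u$, which would immediately give $u\in S_p(\bR^7_T)$.

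For this identification I would proceed by density. Choose $h_n\in C^\infty_c((0,T)\times\bR^6)$ with $h_n\to h$ simultaneously in $L_p$ and $L_q$, which is possible since $h\in L_p\cap L_q$. Denote by $\tilde u_n\in S_p$ and $u_n\in S_q$ the solutions of the corresponding inhomogeneous problems provided by Theorem \ref{theorem D.1}$(ii)$. For smooth compactly supported forcing and zero initial data, the classical solution $v_n$ inherits Gaussian-type decay in $(x,v)$ from the fundamental solution of the underlying hypoelliptic Kolmogorov--Fokker--Planck operator, and thus belongs to $S_r(\bR^7_T)$ for every $r\in(1,\infty)$. By the uniqueness statement in Theorem \ref{theorem D.1}$(ii)$ at exponents $p$ and $q$, this common $v_n$ must coincide with both $\tilde u_n$ and $u_n$, so $\tilde u_n=u_n$ as functions. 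The continuous dependence built into Theorem \ref{theorem D.1}$(ii)$ then gives $\tilde u_n\to\tilde u$ in $S_p$ and $u_n\to u$ in $S_q$; both convergences pass to $L^1_{\mathrm{loc}}(\bR^7_T)$, so $u=\tilde u$ almost everywhere, whence $u\in S_p(\bR^7_T)$.

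I expect the main obstacle to be the claim $\tilde u_n=u_n$ for smooth compactly supported $h_n$: proving membership of the classical solution in $S_r$ for every $r$ when the coefficients are only VMO-type continuous in $(x,v)$ is not immediate. A robust backup strategy is to first establish the result in the common $L_2$ framework of \cite{DY_21a}, where standard energy arguments together with mollification readily give $v_n\in S_2$, upgrade to $S_r$ via iteration on bounded cylinders combined with the $S_p$ theory of Theorem \ref{theorem D.1}, and then invoke the uniqueness half of Theorem \ref{theorem D.1}$(ii)$ at $r=p$ and $r=q$ to deduce $v_n=\tilde u_n=u_n$.
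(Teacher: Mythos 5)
Your opening move matches the paper: produce the $S_p$ solution $\tilde u$ (called $U$ in the paper) from Theorem \ref{theorem D.1}$(ii)$ and try to identify it with $u$. The gap is in your identification step. The claim that a classical solution $v_n$ with smooth, compactly supported forcing ``inherits Gaussian-type decay in $(x,v)$ from the fundamental solution'' and hence lies in $S_r(\bR^7_T)$ for every $r$ is not justified in this setting: the coefficients $a^{ij}(t,x,v)$ are merely bounded, uniformly elliptic, and VMO-type continuous in $(x,v)$, and there is no available fundamental solution with pointwise Gaussian bounds for such operators. Your backup plan (establish the result at $L_2$, then iterate on bounded cylinders) gestures at something reasonable but does not address the real difficulty, which is global integrability on the unbounded domain $\bR^7_T$ at a \emph{smaller} exponent $q<p$: that requires spatial decay of $v_n$, which is precisely what you have not proved. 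So as written the argument does not close.

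The paper resolves the identification differently and more elementarily. After reducing by induction to the case $\frac1q-\frac1p<\frac1{4d}$, it takes the $S_p$ solution $U$ and multiplies it by a slowly-expanding cutoff $\phi_n(x,v)=\phi(x/n^3,v/n)$. The truncated function $U_n=U\phi_n$ then lies in $S_q$ (because it has compact support) and solves the equation with forcing $h\phi_n$ plus commutator errors of size $O(n^{-1})$ times first-order quantities involving $U$ on the annular region $\mathrm{supp}\,\nabla\phi_n$. Applying the $S_q$ a priori estimate of Theorem \ref{theorem D.1}$(ii)$ to $U_n$ and then using H\"older to pass from $L_q$ of the error terms to $L_p$ of $|U|+|\nabla_v U|$ (which is finite, since $U\in S_p$), one finds the error contributions scale like $n^{4d(p-q)/(pq)-1}$, which $\to 0$ under the induction hypothesis. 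This shows $U\in S_q$ directly, and then the uniqueness half of Theorem \ref{theorem D.1}$(ii)$ at exponent $q$ forces $U=u$. In short: instead of building a common smooth solution and matching at both exponents, the paper demotes the $S_p$ solution to $S_q$ by cutoff and absorbs the error via H\"older; this sidesteps any need for decay estimates. You would need to replace your Gaussian-decay step by an argument of this kind for the proof to go through.
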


\begin{proof}
We follow the proof of Theorem 4.3.12 $(i)$ of \cite{Kr_08}.
By Theorem \ref{theorem D.1} $(ii)$, the equation
$$
	Y U - a^{i j} \partial_{v_i v_j} U + b \cdot \nabla_v U + (c + \lambda) U = h, \quad U (0, \cdot) \equiv 0
$$
has a unique solution $U \in S_p  (\bR^7_T)$. We will prove that $u = U$ a.e.
By using an induction argument, we may assume that
\begin{equation}
			\label{B.6.1}
	\frac{1}{q} - \frac{1}{p} < \frac{1}{4d}.
\end{equation}
Let  $\phi, \xi \in C^{\infty}_0 (\bR^6)$ be functions such that $\phi (0) = 1$, $\xi (0) = 1$,   and $\text{supp} \, \phi \subset \text{supp} \, \xi$.
For $n \ge 1$, denote
$$
	\phi_n (x, v) = \phi (x/n^3, v/n), \quad \xi_n (x, v) = \xi (x/n^3, v/n).
$$
Then, $U_n := U \phi_n \in S_q (\bR^7_T)$ satisfies the equation
\begin{align*}
	&Y U_n - a^{i j} \partial_{v_i v_j} U_n + b \cdot \nabla_v U_n + (c + \lambda) U_n \\
&= h \phi_n + U (v \cdot \nabla_x \phi_n - a^{i j} \partial_{v_i v_j} \phi_n + b \nabla_v \phi_n)\\
&  - 2 (a \nabla_v \phi_n) \cdot \nabla_v U , \quad U_n (0, \cdot) \equiv 0,
\end{align*}
 and, in addition, by Theorem \ref{theorem D.1} $(ii)$,
$$
	 \|U_n\|_{ S_q  (\bR^7_T)  } \le N \|h \phi_n\|_{ L_q  (\bR^7_T)  } + N n^{-1} \|(|U| + |\nabla_v U|) |\xi_n|\|_{ L_q  (\bR^7_T)  },
$$
where $N = N (q, \delta, K, T)$.
By  using the H\"older's inequality with $p/q$ and $p/(p-q)$ and changing variables, we get
\begin{equation}
			\label{eq8.6.2}
	n^{-1} \|(|U| + |\nabla_v U|) |\xi_n|\|_{ L_q  (\bR^7_T)  } \le  N (p, q) n^{ 4d (p-q)/(pq) - 1 } \||U| + |\nabla_v U|\|_{ L_p  (\bR^7_T)  }.
\end{equation}
Note that the $4d (p-q)/(pq) - 1 < 0$ due to \eqref{B.6.1},
and, hence, passing to the limit in \eqref{eq8.6.2} as $n \to \infty$, we prove that $U \in S_q (\bR^7_T)$.
Then, by the uniqueness part of Theorem \ref{theorem D.1} $(ii)$, we  conclude that $U \equiv u$.
The lemma is proved.
\end{proof}

\section{H\"older continuity of the extended leading  coefficients \texorpdfstring{$\mathbb{A}$}{} on the whole space}
																\label{Appendix E}

Invoke the notation of Section \ref{subsubsection 1.4.3}.

\subsection{Kinetic Fokker-Planck equation}
Here we show that   the coefficients $\mathbb{A}$
defined by \eqref{eq1.4.3.2} - \eqref{eq1.20}, with $a = I_3$, are of class $C^{1/3, 1}_{x, v} (\bR^6)$.
Since $\mathbb{A} \in C^{1/3, 1}_{x, v} (\overline{\bH_{\pm}})$, we only need to check the continuity
on the  set $\{y_3  = 0\} \times \bR^3$.

Note that for $a = I_3$, by \eqref{eq1.4.3.2}
$$
	A  =  \bigg(\frac{\partial y}{\partial x}\bigg) \bigg(\frac{\partial y}{\partial x}\bigg)^T.
$$
By \eqref{eq1.21}, \eqref{eq1.23} and the fact that $A$ is independent of $w$,  it suffices to show that
\begin{equation}
			\label{eqE.0}
	A^{i 3} (t, y_1, y_2, 0) = 0, \, \,  i = 1, 2,
\end{equation}
By using \eqref{eq1.4.2.8}, we compute
$$
	\frac{\partial x}{\partial y} = \begin{pmatrix}
						 1-y_3 \rho_{1 1}  &  - y_3 \rho_{12} & - \rho_1\\
						-y_3 \rho_{1 2}  & 1-y_3 \rho_{22} & - \rho_2\\
						\rho_1	&		\rho_2		& 1
						\end{pmatrix},
$$
where $\rho_{i j}$ is the second-order partial derivatives with respect to $y_i y_j$  variables.
Hence,
\begin{align}
			\label{eqE.1}
	& A^{-1} (y_1, y_2, 0) = (\frac{\partial x}{\partial y}|_{y_3 = 0})^T \frac{\partial x}{\partial y}|_{y_3 = 0}\\
		&				= \begin{pmatrix}
						 1  &  0&  \rho_1\\
						0  &  1 &  \rho_2\\
						-\rho_1	&		-\rho_2		& 1\\
						\end{pmatrix}
						 \begin{pmatrix}
						 1  &  0& - \rho_1\\
						0  &  1 & - \rho_2\\
						\rho_1	&		\rho_2		& 1\\
						\end{pmatrix}
						= \begin{pmatrix}
						 1   + \rho_1^2&  \rho_1 \rho_2 &  0\\
						\rho_1 \rho_2  & 1+ \rho^2_{2} &  0\\
						0	&		0		&1+\rho_1^2+\rho_2^2
						\end{pmatrix}\notag.
\end{align}
It follows from the  formula for the matrix inverse that the condition \eqref{eqE.0} is satisfied.
Thus, the desired regularity holds.

\subsection{Linearized Landau equation}
Let $A, \mathcal{A}, \mathbb{A}$ be the functions defined by \eqref{eq1.4.3.2}, \eqref{eq1.4.3.3}, \eqref{eq1.20} with $a = \sigma_G$.

In this subsection, will  show that, under Assumption \ref{assumption 11.1} (see \eqref{con11.1} - \eqref{con11.1'}),
\begin{itemize}
 \item[--] $\mathbb{A} \in L_{\infty} ((0, T) \times C^{\varkappa/3, \varkappa}_{x, v} (\bR^6))$.

\item[--] for any  radially symmetric $\xi \in C^{\infty}_0 (\bR^3)$,
$(t, y, w) \to \xi (\frac{\partial x}{\partial y}w) \mathbb{A} (t, y, w) \in L_{\infty} ((0, T), C^{\varkappa/3, \varkappa}_{x, v} (\bR^6))$.
\end{itemize}
Due to Lemma \ref{lemma C.1}, $\sigma_G \in L_{\infty} ((0, T) \times C^{\varkappa/3, \varkappa}_{x, v} (\bR^6))$,
and, hence,
$$\mathbb{A} \in L_{\infty} ((0, T), C^{\varkappa/3, \varkappa}_{x, v} (\overline{\mathbb{H}_{\pm}})).$$
Then, we only need to show that, for any $t \in (0, T)$,  the functions  $(y, w) \to \mathbb{A} (t, y, w)$ and $(y, w) \to \xi (\frac{\partial x}{\partial y}w)  \mathbb{A} (t, y, w)$
are continuous on the set $\{y_3 = 0\} \times \bR^3$.
These assertions will follow directly from the next lemma and corollary.

\begin{lemma}
			\label{lemma E.1}
Let $u$ be a function on $\Sigma^T$ satisfying the specular reflection boundary condition
and such that the convolution $U$ defined below makes sense. Denote
\begin{align*}
	& U^{i j} (z) = \Phi^{i j}  \ast u (z),\\
&
\mathfrak{U} (t, y, w)
=      \bigg(\frac{\partial y}{\partial x}\bigg) \widehat U (t, y, w) \bigg(\frac{\partial y}{\partial x}\bigg)^T,
\end{align*}
 where $\widehat U$ is defined by \eqref{eq1.4.2.1} with $U$ in place of $u$.
Then,
\begin{equation}
			\label{eqE.3}
	\mathfrak{U}^{i 3} (t, y_1, y_2, 0, w) =  - \mathfrak{U}^{i 3} (t, y_1, y_2, 0, R w), i = 1, 2.
\end{equation}
\end{lemma}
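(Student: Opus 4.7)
\textbf{Proof proposal for Lemma \ref{lemma E.1}.}

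The plan is to reduce the identity \eqref{eqE.3} on $\{y_3 = 0\} \times \bR^3$ to a conjugation identity for the matrix-valued convolution $U$ itself on the physical boundary $\partial \Omega$, and then translate it through the boundary-flattening diffeomorphism using \eqref{7.3.6}. The key input is the $O(3)$-covariance of the Landau kernel: for any orthogonal $3 \times 3$ matrix $O$ and any $v \in \bR^3$, a direct computation from $\Phi(v) = (I_3 - \hat v \otimes \hat v)|v|^{-1}$ gives
$$
    \Phi(O v) = O\, \Phi(v)\, O^T.
$$
In particular this holds for $O = R_x$, which is symmetric and orthogonal.

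First I would fix $x \in \partial \Omega$ and compute $U(t, x, R_x v)$ by changing variables $v' \mapsto R_x v'$ in the convolution. Using $|\det R_x| = 1$, the above covariance identity for $\Phi$, and the specular reflection condition $u(t, x, R_x v') = u(t, x, v')$, this yields the fundamental matrix identity
\begin{equation}
    U(t, x, R_x v) = R_x\, U(t, x, v)\, R_x, \qquad x \in \partial \Omega. \label{planE.1}
\end{equation}

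Next I would translate \eqref{planE.1} into the flattened coordinates. At $y_3 = 0$ the identity \eqref{7.3.6} can be rewritten, since $w = (\partial y/\partial x)\,v$, as the conjugation
$$
    R_x = \bigg(\frac{\partial x}{\partial y}\bigg)\bigg|_{y_3 = 0} R \bigg(\frac{\partial y}{\partial x}\bigg)\bigg|_{y_3 = 0}, \qquad R = \mathrm{diag}(1,1,-1),
$$
equivalently
$$
    \bigg(\frac{\partial y}{\partial x}\bigg)\bigg|_{y_3=0} R_x = R \bigg(\frac{\partial y}{\partial x}\bigg)\bigg|_{y_3=0}.
$$
Combining \eqref{planE.1} with the definition $\mathfrak{U} = (\partial y/\partial x)\, \widehat U\, (\partial y/\partial x)^T$ and evaluating at $y_3 = 0$, this gives
$$
    \mathfrak{U}(t, y_1, y_2, 0, R w) = R\, \mathfrak{U}(t, y_1, y_2, 0, w)\, R.
$$

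Finally, reading off the $(i,3)$-entry with $i \in \{1,2\}$ yields $\mathfrak{U}^{i3}(t,y_1,y_2,0,Rw) = -\mathfrak{U}^{i3}(t,y_1,y_2,0,w)$, which is \eqref{eqE.3}. The main (and only nontrivial) obstacle is the covariance identity $\Phi(Ov) = O \Phi(v) O^T$; everything else is bookkeeping through the change of variables and the explicit form of $R_x$ in the flattened frame provided by \eqref{7.3.6}.
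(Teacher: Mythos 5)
Your proof is correct, and it takes a genuinely different route from the paper's. The paper changes variables inside the convolution \emph{after} passing to the flattened coordinates, writes out $\widehat\Phi$ and the conjugated kernel $\Xi = (\partial y/\partial x)\,\widehat\Phi\,(\partial y/\partial x)^T$ explicitly, decomposes $\Xi$ into three matrix pieces (a multiple of $(\partial y/\partial x)(\partial y/\partial x)^T$, a multiple of $ww^T$, and a scalar weight $V(y,w) = |(\partial x/\partial y)w|$), and checks each piece separately satisfies the required vanishing or sign-flip condition at $y_3 = 0$ by direct computation. Your argument instead packages the structure in two clean algebraic facts: the $O(3)$-covariance $\Phi(Ov) = O\Phi(v)O^T$ of the Landau kernel, applied with $O = R_x$ together with the reflection symmetry of $u$ to get $U(t,x,R_x v) = R_x U(t,x,v) R_x$ on $\partial\Omega$; and the rewriting of \eqref{7.3.6} as the conjugation $R_x = (\partial x/\partial y)|_{y_3=0}\, R\, (\partial y/\partial x)|_{y_3=0}$. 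Pushing these through the definition of $\mathfrak{U}$ gives the full conjugation identity $\mathfrak{U}(t,y_1,y_2,0,Rw) = R\,\mathfrak{U}(t,y_1,y_2,0,w)\,R$, which is actually slightly stronger than \eqref{eqE.3} (it controls all entries, not only the $(i,3)$ ones), and \eqref{eqE.3} falls out by reading off entries with $R = \mathrm{diag}(1,1,-1)$. Your version buys conceptual clarity, sidesteps the explicit decomposition of the kernel, and explains the result as pure covariance; the paper's version buys self-containedness at the cost of more hands-on computation. Either way the argument is complete.
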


\begin{proof}
Denote
$$
\Xi (y, w) = \bigg(\frac{\partial y}{\partial x}\bigg) \widehat \Phi (y, w) \bigg(\frac{\partial y}{\partial x}\bigg)^T,
$$
where $\Phi$ is the Landau kernel (see \eqref{eq11.10}).
Then, by the change of variables $v' = \frac{\partial y}{\partial x} w'$,
$$
	\mathfrak{U} (t, y, w) = \bigg|\text{det} \frac{\partial y}{\partial x}\bigg|  \int_{\bR^3} \Xi (y, w') \widehat u (t, y,  w-w') \, dw',
$$
and then, by changing variables $w' \to Rw'$, we obtain
$$
	\mathfrak{U} (t, y, R w) = \bigg|\text{det} \frac{\partial y}{\partial x}\bigg| \int_{\bR^3}  \Xi (y, Rw') \widehat u (t, y, R (w-w')) \, dw'.
$$
Since $u$ satisfies the specular reflection boundary condition, we have, for any $t \in (0, T)$, $y_1, y_2 \in \bR$, $w \in \bR^3$,
$$
	\widehat u (t, y_1, y_2, 0,  w) = \widehat u (t, y_1, y_2, 0, R w).
$$
Thus, it suffices to show that \eqref{eqE.3}  holds with $\mathfrak{U}$ replaced with $\Xi$.

 By direct computations,
$$
	\widehat \Phi (y, w) =  \bigg|\frac{\partial x}{\partial y} w\bigg|^{-1} I_3  - \bigg|\frac{\partial x}{\partial y} w\bigg|^{-3}   \frac{\partial x}{\partial y} w w^T \bigg(\frac{\partial x}{\partial y}\bigg)^Tg,
$$
and, therefore,
$$
	 \Xi (y, w) =   \bigg|\frac{\partial x}{\partial y} w\bigg|^{-1} \frac{\partial y}{\partial x}\bigg(\frac{\partial y}{\partial x}\bigg)^T  -   \bigg|\frac{\partial x}{\partial y} w\bigg|^{-3}  w w^T.
$$
Observe  that \eqref{eqE.3} trivially holds with $\mathfrak{U}$ replaced with $w w^T$, and the matrix $ \frac{\partial y}{\partial x}(\frac{\partial y}{\partial x})^T$ satisfies the condition \eqref{eqE.0}.
Hence, we only need to show that the function
$$
	V (y, w) :=  \bigg|\frac{\partial x}{\partial y} w\bigg|
$$
satisfies the specular reflection boundary condition
\begin{equation}
			\label{eqE.5}
	V (y_1, y_2, 0,  w)  = V (y_1, y_2, 0, R w).
\end{equation}
Note that
$$
	|V (y, R w)|^2 = C^{i j} (y) (R w)_i  (R w)_j, \quad C (y) =  \bigg(\frac{\partial x}{\partial y}\bigg)^T \frac{\partial x}{\partial y}.
$$
Note that when $y_3 = 0$, the matrix $C (y)$ is given by the right-hand side of \eqref{eqE.1}.
Hence, since  \eqref{eqE.0} holds with $A$ replaced with $C$, the condition \eqref{eqE.5} is valid.
The lemma is proved.
\end{proof}

The above lemma holds if we cutoff $U$ by a radially symmetric function depending only on $v$.
\begin{corollary}
Let $\xi  = \xi (v) \in C^{\infty}_0 (\bR^3)$ be a radially symmetric function.
Then, the condition \eqref{eqE.3} holds with $	\mathfrak{U}$ is replaced with the matrix-valued function
$$
(t, y, w) \to \xi (\frac{\partial x}{\partial y}w) \mathfrak{U} (t, y, w).
$$
\end{corollary}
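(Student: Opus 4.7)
The plan is to derive this corollary as an almost immediate consequence of Lemma~\ref{lemma E.1}. The scalar prefactor $\xi\big(\frac{\partial x}{\partial y}w\big)$ is just a multiplier, so the identity \eqref{eqE.3} will be preserved provided this multiplier itself is invariant under the reflection $w \mapsto Rw$ when $y_3 = 0$. The only nontrivial task is therefore to establish this scalar invariance.

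To do so, I will exploit the hypothesis that $\xi$ is radially symmetric. Writing $\xi(v) = \widetilde\xi(|v|)$, one has
$$\xi\Big(\frac{\partial x}{\partial y}w\Big) = \widetilde\xi\Big(\Big|\frac{\partial x}{\partial y}w\Big|\Big) = \widetilde\xi(V(y,w)),$$
where $V(y,w) = \big|\frac{\partial x}{\partial y}w\big|$ is precisely the scalar function that appeared near the end of the proof of Lemma~\ref{lemma E.1}. In that proof it was shown, via the explicit form of the matrix $C(y_1,y_2,0) = \big(\frac{\partial x}{\partial y}\big)^T\frac{\partial x}{\partial y}\big|_{y_3=0}$ given by the right-hand side of \eqref{eqE.1}, that $V^2$ is a quadratic form in $w$ whose mixed entries between $w_3$ and $(w_1,w_2)$ vanish at $y_3=0$. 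Consequently the specular-reflection identity \eqref{eqE.5} holds, i.e., $V(y_1,y_2,0,w) = V(y_1,y_2,0,Rw)$, and so
$$\xi\Big(\frac{\partial x}{\partial y}\Big|_{y_3=0} w\Big) = \xi\Big(\frac{\partial x}{\partial y}\Big|_{y_3=0} Rw\Big).$$

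Multiplying this scalar identity by the matrix identity \eqref{eqE.3} furnished by Lemma~\ref{lemma E.1} yields
$$\xi\Big(\tfrac{\partial x}{\partial y}w\Big)\,\mathfrak U^{i3}(t,y_1,y_2,0,w) \;=\; -\,\xi\Big(\tfrac{\partial x}{\partial y}Rw\Big)\,\mathfrak U^{i3}(t,y_1,y_2,0,Rw), \qquad i=1,2,$$
which is exactly the claim of the corollary. Since the argument reduces to combining one scalar identity with one matrix identity, both already inside the proof of Lemma~\ref{lemma E.1}, there is no genuine obstacle; the radial symmetry hypothesis on $\xi$ is essential and is used in precisely one place, namely to reduce $\xi\big(\frac{\partial x}{\partial y}w\big)$ to a function of $V(y,w)$ alone.
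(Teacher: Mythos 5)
Your proposal is correct and follows essentially the same route as the paper: reduce the scalar factor $\xi\big(\tfrac{\partial x}{\partial y}w\big)$ to a function of $V(y,w)$ via radial symmetry, invoke the invariance \eqref{eqE.5} established in the proof of Lemma~\ref{lemma E.1}, and multiply the resulting scalar identity by the matrix identity \eqref{eqE.3}.
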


\begin{proof}
The assertion follows from Lemma \ref{lemma E.1} and the fact that
due to radial symmetry and \eqref{eqE.5},   the function $(y, w) \to \xi (\frac{\partial x}{\partial y}w)$ satisfies the specular reflection boundary condition on $\bH^T_{-}$.
\end{proof}

\textbf{Acknowledgments.} We thank Dr. Zhimeng Ouyang for her work on the setup of the problem, attending weekly group meeting and participating in discussion,
and her helpful comments to improve the presentation of the paper.


\begin{thebibliography}{m}

\bibitem{AM}
Dallas Albritton, Scott Armstrong, Jean-Christophe Mourrat,  Matthew Novack,
Variational methods for the kinetic Fokker-Planck equation,   	arXiv:1902.04037


\bibitem{DLSS} Renjun Duan,  Shuangqian Liu, Shota Sakamoto, Robert M Strain,  Global mild solutions of the Landau and non-cutoff Boltzmann equations. Comm. Pure Appl. Math. 74 (2021), no. 5, 932--1020.


\bibitem{BP_87}
Richard W. Beals, Vladimir Protopopescu,  Abstract time-dependent transport equations. J. Math. Anal. Appl. 121 (1987), no. 2, 370--405.

\bibitem{DGO} Hongjie Dong, Yan Guo, Zhimeng Ouyang,
 The Vlasov-Poisson-Landau System with the Specular Reflection Boundary Condition, 	arXiv:2010.05314



\bibitem{DY_21a} Hongjie Dong, Timur Yastrzhembskiy, Global $L_p$ estimates for kinetic Kolmogorov-Fokker-Planck equations in nondivergence form, 	arXiv:2107.08568

\bibitem {DY_21b} Hongjie Dong, Timur Yastrzhembskiy, Global $L_p$ estimates for kinetic Kolmogorov-Fokker-Planck equations in divergence form (in preparation)



\bibitem{G_93} Yan Guo,  Global weak solutions of the Vlasov-Maxwell system with boundary conditions. Comm. Math. Phys. 154 (1993), no. 2, 245--263.

\bibitem{G_95} Yan Guo,  Singular Solutions of the Vlasov-Maxwell System on a Half Line, Arch. Rational Mech. Anal. 131 (1995) 241--304.

\bibitem{G_02} Yan Guo,  The Landau equation in a periodic box, Comm. Math. Phys. 231 (2002), no. 3, 391--434.


\bibitem{GHJO_20}
Yan Guo, Hyung Ju Hwang, Jin Woo Jang, Zhimeng Ouyang,
The Landau equation with the specular reflection boundary condition
Arch. Ration. Mech. Anal. 236 (2020), no. 3, 1389--1454.





\bibitem{HJV_14}
Hyung Ju Hwang, Juhi Jang, Juan J. L. Vel\'azquez,
The Fokker-Planck Equation with Absorbing Boundary Conditions,
Arch. Ration. Mech. 214  (2014),  183--233.






\bibitem{HJJ_18} Hyung Ju Hwang; Juhi Jang; Jaewoo Jung, The Fokker-Planck equation with absorbing boundary conditions in bounded domains. SIAM J. Math. Anal. 50 (2018), no. 2, 2194--2232.


\bibitem{HJV_19}  Hyung Ju Hwang, Juhi Jang, Juan J. L. Vel\'azquez,  Nonuniqueness for the kinetic Fokker-Planck equation with inelastic boundary conditions. Arch. Ration. Mech. Anal. 231 (2019), no. 3, 1309--1400.




\bibitem{LN_21} Malte Litsg\aa rd, Kaj Nystr\"om,
 The Dirichlet problem for Kolmogorov-Fokker-Planck type equations with rough coefficients, 	arXiv:2012.11410






\bibitem{KGH}  Jinoh Kim, Yan Guo, Hyung Ju Hwang,
An $L^2$ to $L^{\infty}$ framework for the Landau equation, Peking Math. J. 3 (2020), no. 2, 131--202.


\bibitem{Kr_08} Nicolai V. Krylov,  Lectures on elliptic and parabolic equations in Sobolev spaces. Graduate Studies in Mathematics, 96. American Mathematical Society, Providence, RI, 2008.


\bibitem{Kr_11} Nicolai V. Krylov,  On a representation of fully nonlinear elliptic operators in terms of pure second order derivatives and its applications, Problems in mathematical analysis, No. 59., J. Math. Sci. (N.Y.) 177 (2011), no. 1, 1--26.


\bibitem{MW_53} Theodore S. Motzkin, Wolfgang R. Wasow,
On the approximation of linear elliptic differential equations by difference equations with positive coefficients,
J. Math. Physics 31, (1953), 253--259.



\bibitem{PR_98}
Sergio Polidoro, Maria A. Ragusa,  Sobolev-Morrey spaces related to an ultraparabolic equation. Manuscripta Math. 96 (1998), no. 3, 371--392.




\end{thebibliography}
\end{document}